\newtheorem{Theorem}{Theorem}[section]
\newtheorem*{Theorem*}{Theorem}
\newtheorem{Lemma}[Theorem]{Lemma}
\newtheorem{Proposition}[Theorem]{Proposition}
\newtheorem{Corollary}[Theorem]{Corollary}
\newtheorem{Assumption}[Theorem]{Assumption}
\theoremstyle{definition}
\newtheorem{Remark}[Theorem]{Remark}
\newcommand{\Ln}{\operatorname{Ln}}
\newcommand{\iid}{\mathrm{iid}}
\newcommand{\R}{\mathbb{R}}
\newcommand{\C}{\mathbb{C}}
\newcommand{\Z}{\mathbb{Z}}
\newcommand{\Unif}{\operatorname{Unif}}
\DeclareMathOperator*{\argmax}{arg\,max}
\DeclareMathOperator*{\argmin}{arg\,min}
\newcommand{\diag}{\operatorname{diag}}
\newcommand{\Cov}{\operatorname{Cov}}
\newcommand{\Var}{\operatorname{Var}}
\newcommand{\Tr}{\operatorname{Tr}}
\newcommand{\1}{\mathbbm{1}}
\renewcommand{\Re}{\operatorname{Re}}
\renewcommand{\Im}{\operatorname{Im}}
\newcommand{\cP}{\mathcal{P}}
\newcommand{\cS}{\mathcal{S}}
\newcommand{\cE}{\mathcal{E}}
\newcommand{\cF}{\mathcal{F}}
\newcommand{\KL}{\mathrm{KL}}
\newcommand{\TV}{\mathrm{TV}}
\newcommand{\orbit}{\mathcal{O}}
\newcommand{\der}{\mathrm{d}}
\newcommand{\oracle}{\mathrm{oracle}}
\newcommand{\opt}{\mathrm{opt}}
\newcommand{\fm}{\mathrm{fm}}
\newcommand{\MLE}{\mathrm{MLE}}
\newcommand{\gen}{\mathrm{gen}}
\newcommand{\rlower}{\underaccent{\bar}{r}}
\newcommand{\rupper}{\bar{r}}
\newcommand{\clower}{\underaccent{\bar}{c}}
\newcommand{\cupper}{\bar{c}}
\newcommand{\cI}{\mathcal{I}}
\newcommand{\cA}{\mathcal{A}}
\newcommand{\cB}{\mathcal{B}}
\renewcommand{\P}{\mathbb{P}}
\newcommand{\E}{\mathbb{E}}
\newcommand{\eps}{\varepsilon}
\newcommand{\Normal}{\mathcal{N}}
\newcommand{\Arg}{\operatorname{Arg}}
\renewcommand\footnotemark{}
\newcommand{\revise}[1]{#1}
\title{Rates of estimation for high-dimensional multi-reference alignment}
\author{Zehao Dou, Zhou Fan, Harrison Zhou
\footnote{Department of Statistics and Data Science, Yale University, USA.
\newline \texttt{zehao.dou@yale.edu, zhou.fan@yale.edu, huibin.zhou@yale.edu}}}
\begin{document}
\maketitle
\allowdisplaybreaks[4]

\begin{abstract}
We study the continuous multi-reference alignment model of estimating a
periodic function on the circle from noisy and circularly-rotated observations.
Motivated by analogous high-dimensional problems that arise in cryo-electron microscopy,
we establish minimax rates for estimating generic
signals that are explicit in the dimension $K$. In a high-noise regime with
noise variance $\sigma^2 \gtrsim K$, \revise{for signals with Fourier
coefficients of roughly uniform magnitude}, the rate scales as $\sigma^6$ and has
no further dependence on the dimension. This rate is achieved by a
bispectrum inversion procedure, and our analyses provide new stability bounds
for bispectrum inversion that may be of independent interest. In a low-noise
regime where $\sigma^2 \lesssim K/\log K$, the rate scales instead as
$K\sigma^2$, and we establish this rate by a sharp analysis of the maximum
likelihood estimator that marginalizes over latent rotations. A
complementary lower bound that interpolates between these two regimes is 
obtained using Assouad's hypercube lemma. \revise{We extend
these analyses also to signals whose Fourier coefficients have a slow
power law decay.}
\end{abstract}

\tableofcontents

\section{Introduction}

Multi-reference alignment (MRA) refers to the problem of estimating an unknown
signal from noisy samples that are subject to latent rotational transformations
\citep{ritov1989estimating,bandeira2014multireference}. This problem has seen
renewed interest in recent years, as a simplified model for
molecular reconstruction in cryo-electron microscopy (cryo-EM) and
related methods of molecular imaging
\citep{bendory2020single,singer2020computational}. It arises also in various
other applications in structural biology and image registration
\citep{sadler1992shift,brown1992survey,diamond1992multiple}. Recent literature has established rates of estimation for MRA
in fixed dimensions \citep{perry2019sample,bandeira2020optimal,abbe2018multireference,ghosh2021multi}, describing a rich picture of how these rates may
depend on the signal-to-noise ratio and properties of the underlying
signal. However, many applications of MRA involve high-dimensional signals,
and there is currently limited understanding of optimal rates of estimation in
high-dimensional settings.

In the continuous MRA model---the focus of this work---the signal is a smooth
periodic function $f$ on the circular domain $[-\pi,\pi)$. We observe
independent samples of $f$ in additive white noise, where each sample has a
uniformly random latent rotation of its domain
\citep{bandeira2020optimal,fan2021maximum}.
The true function $f$ is identifiable only up to
rotation, and we will study its estimation under the rotation-invariant
squared-error loss
\begin{equation}\label{eq:functionloss}
L(\hat{f},f)=\min_{\alpha \in [-\pi,\pi)} \int_{-\pi}^\pi
\Big(\hat{f}(t)-f(t-\alpha \bmod 2\pi)\Big)^2\,\der t.
\end{equation}
In the closely related discrete MRA model, the signal is instead a
vector $x \in \R^K$, observed in additive Gaussian noise with cyclic
permutations of its coordinates \citep{bandeira2014multireference,perry2019sample}. The continuous and discrete models
are similar, in that both rotational actions are diagonalized in the
(continuous or discrete, resp.) Fourier basis, and these diagonal actions have
similar forms.

A recent line of work has studied rates of estimation for MRA in ``low
dimensions'', treating as constant the dimension $K$ for discrete MRA,
or the maximum Fourier frequency $K$ for continuous MRA. Many such results have
specifically focused on a regime of high noise: In this regime,
\cite{perry2019sample} showed that the squared-error risk for estimating
``generic'' signals scales with the noise standard deviation
as $\sigma^6$. \cite{bandeira2020optimal} showed that this scaling for
estimating a ``non-generic'' signal depends on its pattern of zero and non-zero
Fourier coefficients, and derived rate-optimal upper and lower bounds over
minimax classes of such signals. Rates of estimation
for MRA with non-uniform rotations were studied in
\cite{abbe2018multireference}, with a dihedral group of both
rotations and reflections in \cite{bendory2022dihedral}, with sparse signals in
\cite{ghosh2021multi}, and with down-sampled observations in a super-resolution
context in \cite{bendory2020super}.

It is empirically observed, for example in \citet[Section 5]{fan2021maximum},
that electric potential functions of protein molecules in cryo-EM applications
may require basis representations with dimensions in the thousands
to capture secondary structure, and even higher dimensions to achieve
near-atomic resolution. Motivated by this observation, in this paper, we
extend the above line of work to study the continuous
MRA model in potentially high dimensions, in both high-noise
and low-noise regimes. Our main results are described informally
as follows: \revise{Let
\[\theta^*=(\theta_{1,1}^*,\theta_{1,2}^*,\theta_{2,1}^*,\theta_{2,2}^*,\theta_{3,1}^*,\theta_{3,2}^*,\ldots)\]
be the coefficients of $f$ in the real Fourier basis over $[-\pi,\pi)$,
i.e.,
\[f(t) = \sum_{k=1}^{\infty} \frac{1}{\sqrt{\pi}}\theta_{k,1}^*\cos kt +
\frac{1}{\sqrt{\pi}}\theta_{k,2}^*\sin kt,\]
and let
\begin{equation}\label{eq:polarcoordinates}
(r_k \cos \phi_k, r_k\sin\phi_k)=(\theta_{k,1}^*,\theta_{k,2}^*)
\end{equation}
be the representation of the $k^\text{th}$ Fourier
frequency in terms of the magnitude $r_k$ and phase $\phi_k$. Fixing a decay
parameter $\beta \in [0,\frac{1}{2})$, we
consider a class of signals $f$ represented by
\[\Theta_\beta=\Big\{f:\,r_k \asymp k^{-\beta} \text{ for } k=1,\ldots,K,\;
r_k=0 \text{ for all } k \geq K+1\Big\}\]
where we bandlimit $f$ to its first $K$ Fourier frequencies.} Our results distinguish two separate signal-to-noise regimes for estimating $f$,
based on the size of the entrywise noise variance $\sigma^2$ in the
Fourier basis. We establish sharp minimax rates of
estimation in both regimes, \revise{for sufficiently large sample size
$N$}, that are explicit in their dependence on the dimension $K$.\revise{
\begin{Theorem*}[Informal]
Let $\beta \in [0,\frac{1}{2})$.
\begin{enumerate}[(a)]
\item (High noise) If $\sigma^2 \gtrsim K^{1-2\beta}$ and $N \gtrsim K^{6\beta}\sigma^6\log K$, then
\[\inf_{\hat{f}} \sup_{f \in \Theta_\beta} \E[L(\hat{f},f)] \asymp 
\frac{K^{4\beta}\sigma^6}{N}.\]
\item (Low noise) If $\sigma^2 \lesssim K^{1-2\beta}/\log K$ and $N \gtrsim
K^{1+2\beta}\sigma^2 \log K$, then
\[\inf_{\hat{f}} \sup_{f \in \Theta_\beta} \E[L(\hat{f},f)] \asymp
\frac{K\sigma^2}{N}.\]
\end{enumerate}
\end{Theorem*}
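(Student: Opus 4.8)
The plan is to prove the two regimes with different estimators and, in each regime, to establish the matching upper and lower bounds separately; the two lower bounds can be read off from a single Assouad construction that also covers the interpolation region between them.

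\textbf{High noise.} For the upper bound I would use the method-of-moments (bispectrum inversion) estimator. In the Fourier basis the rotation-averaged first moment of an observation vanishes, the second moment is the power spectrum $(r_k^2)_k$ plus $\sigma^2\Id$, and the third moment is the bispectrum; I estimate the latter two by the bias-corrected empirical second and third moments of $y_1,\dots,y_N$. A variance computation shows that the fluctuation of the empirical third moment is dominated by its pure-noise contribution, of size $\asymp\sigma^6/N$ per entry, so a union bound over the $\asymp K$ Fourier frequencies --- this is where $N\gtrsim\sigma^6\log K$ is used --- gives a bispectrum estimate accurate to $\asymp\sigma^6/N$ in the relevant norm with high probability. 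Since every $f\in\Theta$ has all $r_k\asymp1$, so that bispectrum inversion is well-posed, I then invoke the stability bound for bispectrum inversion proved earlier in the paper to obtain $\sup_{f\in\Theta}\E[L(\hat f,f)]\lesssim\sigma^6/N$. For the lower bound I would run Assouad over the $K$ pure-phase directions $\phi_k\mapsto\phi_k\pm\eps$: such a perturbation leaves every magnitude $r_j$, hence $\|\theta\|^2$ and the power spectrum, unchanged, so the two single-observation mixtures $p_\theta=\E_{\omega\sim\Unif}\phi_\sigma(\cdot-R_\omega\theta)$ agree through orders $\sigma^{-2}$ and $\sigma^{-4}$ and differ only at the $\sigma^{-6}$ (bispectrum) order; expanding $p_\theta/\phi_\sigma$ in powers of $\sigma^{-2}$, an expansion whose validity is what forces $\sigma^2\gtrsim K$, one finds that each such direction has per-sample $\KL$ of order $\eps^2 K/\sigma^6$. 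Taking $\eps^2\asymp\sigma^6/(NK)$ keeps the total $N$-sample $\KL$ bounded while forcing an $L$-discrepancy $\asymp\eps^2$ in each of the $K$ directions, so Assouad's lemma yields $\inf_{\hat f}\sup_\Theta\E[L]\gtrsim K\eps^2\asymp\sigma^6/N$. (One must choose the $K$ directions quantitatively transverse to the rotation orbit so that the $L$-discrepancy survives optimal alignment; e.g.\ restricting to a single octave of frequencies suffices.)

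\textbf{Low noise.} For the lower bound, note that even the oracle problem in which the latent rotations are revealed requires estimating $2K$ real Fourier coefficients with per-coordinate noise $\sigma/\sqrt N$, and that by the data-processing inequality the $\KL$ between two marginal laws is at most that between the corresponding oracle laws, namely $N\|\theta-\theta'\|^2/(2\sigma^2)$; so Assouad over all $2K$ coordinates with per-coordinate perturbation of magnitude $\asymp\sqrt{\sigma^2/N}$ (small enough to keep $r_k\asymp1$) gives $\inf_{\hat f}\sup_\Theta\E[L]\gtrsim K\sigma^2/N$. For the upper bound I would analyze the marginal MLE $\hat\theta=\argmax_\theta\sum_{i=1}^N\log p_\theta(y_i)$. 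The crucial fact is that when $\sigma^2\lesssim K/\log K$, marginalizing over the latent rotation costs only a constant factor of Fisher information: writing the per-sample score in a direction $\delta$ as $\sigma^{-2}\langle\delta,\,\E_{\omega\mid y}[R_\omega^{-1}y]-\theta\rangle$, one must show that the posterior of the latent rotation given a single observation concentrates within an $O(1/K)$ (Nyquist-scale) window of the true rotation with high probability, and that on this event $\E_{\omega\mid y}[R_\omega^{-1}y]$ equals $\theta$ plus the de-rotated observation noise plus a lower-order remainder, so that the per-sample Fisher information is $\asymp\sigma^{-2}$ in every direction, matching the oracle. Given this, a standard $M$-estimation argument --- an initial localization of $\hat\theta$, for which $N\gtrsim K\sigma^2\log K$ is used, followed by a quadratic expansion of the log-likelihood with a uniform empirical-process bound on the localized neighborhood --- gives $\sup_{f\in\Theta}\E[L(\hat f,f)]\lesssim K\sigma^2/N$.

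\textbf{Main obstacle.} I expect the posterior-concentration step in the low-noise upper bound to be the hardest part; this is the ``sharp analysis of the maximum likelihood estimator'' promised in the abstract. It amounts to showing that the true-rotation peak of the noisy autocorrelation $\omega\mapsto\langle y,R_\omega\theta\rangle$ dominates every spurious peak --- a supremum-of-Gaussian-process estimate over the rotation group in which the threshold $\sigma^2\lesssim K/\log K$ is precisely what makes the signal-autocorrelation curvature ($\asymp K^3$) win against the noise fluctuation ($\asymp\sigma\sqrt{K\log K}$) at the truth. The secondary technical ingredient, on the high-noise side, is the new bispectrum-inversion stability bound, which controls the propagation of errors in the estimated bispectrum to errors in $(r_k,\phi_k)$ uniformly over generic signals; the paper establishes this and I would simply cite it.
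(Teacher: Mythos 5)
Your overall architecture coincides with the paper's: bispectrum inversion for the high-noise upper bound, Assouad with pure-phase hypercube perturbations and a moment-expansion bound on the KL/$\chi^2$ divergence for the lower bounds (with the data-processing bound $\|\theta-\theta'\|^2/2\sigma^2$ covering the low-noise side), and the rotation-marginalized MLE, analyzed by localization plus a second-order expansion with posterior concentration of the latent rotation at scale $1/K$, for the low-noise upper bound. Those parts are on target.

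There is, however, a genuine gap in your high-noise upper bound: the error accounting you describe gives $K\sigma^6/N$, not $\sigma^6/N$. You estimate each bispectrum entry to squared accuracy $\asymp\sigma^6/N$ and then ``invoke the stability bound for bispectrum inversion'' as a black box. But any deterministic stability statement of this type loses a factor of $K$: if each of the $\asymp K^2$ entries of $\hat\Phi$ is off by $\delta$ with $\delta^2\asymp\sigma^6/N$, then the $\ell_\infty$ stability lemma only yields $\max_k|\hat\phi_k-\phi_k+k\alpha|\lesssim\delta$, hence loss $\lesssim K\delta^2$; and the $\ell_2$ route via the linear system $\Phi=M\phi$ gives $\|\hat\phi-\phi\|^2\le\|M^\dagger\|_{\mathrm{op}}^2\|\hat\Phi-\Phi\|^2\asymp K^{-1}\cdot K^2\delta^2=K\delta^2$ as well, and this is tight for worst-case perturbations. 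The dimension-free rate is precisely the novel content of Theorem \ref{thm:highnoise}, and the paper obtains it statistically, not deterministically: (i) $M$ is shown to have all $K-1$ nontrivial singular values of order $\sqrt K$ (Lemma \ref{lemma:Mproperties}), so $\Tr(M^{\dagger\top}M^\dagger)=O(1)$; and (ii) the oracle phase-difference estimates $\hat\Phi^{\oracle}_{k,l}-\Phi_{k,l}$ are exactly unbiased by a sign symmetry, and crucially their cross-covariance for pairs $(k,l),(x,y)\in\cI$ whose frequency sets overlap in exactly one index is only $O(\sigma^2/N)$ rather than $O(\sigma^6/N)$ (Lemma \ref{lemma:Phicovariance}(b)), which bounds $\|\E[(\hat\Phi^\oracle-\Phi)(\hat\Phi^\oracle-\Phi)^\top]\|_{\mathrm{op}}\lesssim K\sigma^2/N+\sigma^6/N$ and removes the extra factor of $K$ in the $\sigma^6$ term. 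In the paper the $\ell_\infty$ stability lemma (Lemma \ref{lemma:phasestability}) is used only to resolve the mod-$2\pi$ ambiguity of $\Arg\hat B_{k,l}$ so that a non-oracle estimator coincides with the oracle with high probability (this is where $N\gtrsim\sigma^6\log K$ enters); it is not the mechanism that converts bispectrum accuracy into the final rate. Without the covariance analysis (or some substitute exploiting near-independence/unbiasedness of the bispectrum errors), your argument cannot reach $\sigma^6/N$. A secondary, sketch-level caveat: in the low-noise MLE analysis the Fisher information is singular along the orbit direction $u^*$, so the Hessian lower bound and the quadratic expansion must be restricted to directions orthogonal to $u^*$ (with the MLE rotated so that $\hat\theta-\theta^*\perp u^*$), and the uniform-in-$\theta$ Hessian control in the full range $N\gtrsim K\sigma^2\log K$ requires more than a generic empirical-process bound — the paper uses $(\theta,v)$-dependent good events for the noise plus a binomial/Chernoff argument — but these are refinements of the route you already propose rather than a different idea.
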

\noindent We refer to Theorems \ref{thm:highnoise} and \ref{thm:lownoise}
for precise statements of these results. Our signal class with power law decay
$\beta<1/2$ is representative of a setting where the average power per Fourier
frequency, $\|\theta^*\|^2/K \asymp K^{-2\beta}$, is of comparable magnitude to
the power $r_k^2$ at a typical frequency $k \in \{1,\ldots,K\}$. Our
analyses of the estimators that achieve these minimax rates apply more
generally to signals of this form
(c.f.\ Theorems \ref{thm:oracleMoM} and \ref{thm-mle}).}

\revise{For large $N$, this result implies that there is a sharp transition
in the minimax estimation rate near the noise level
$\sigma^2 \asymp K^{1-2\beta} \asymp \|\theta^*\|^2$, which separates the two
signal-to-noise regimes of the problem. Such a transition may be anticipated by
the results of \cite{bandeira2017estimation}, where $\sigma^2 \gtrsim
\|\theta^*\|^2$ is the condition required to carry out the high-noise Taylor
expansion of the chi-squared divergence, and of \cite{romanov2021multi} which provided
a sharp analysis of the sample complexity in the low-noise regime for an
analogous discrete MRA model (see below). As $\sigma^2$ varies in
the small parameter window from $K^{1-2\beta}/\log K$ to $K^{1-2\beta}$ between
these ``low-noise'' and ``high-noise'' regimes, our
result confirms that there must be a rapid increase in the minimax risk, from
roughly the order $K^{2-2\beta}/N$ to $K^{3-2\beta}/N$.}

In the high-noise regime where $\sigma^2\gtrsim \|\theta^*\|^2$, we show
that the minimax rate is achieved by a variant of a third-order
method-of-moments (MoM) procedure. The scaling with $\sigma^6$ matches previous
results of \cite{perry2019sample}, and a notable new feature of the rate
is its scaling with the dimension $K$---for example, when $\beta=0$,
the rate has no explicit dependence on $K$.
In the MRA model, for functions having the
Fourier coefficients (\ref{eq:polarcoordinates}),
second-order moments correspond to the power spectrum
\[\Big\{r_k^2:k=1,\ldots,K\Big\}\]
and third-order moments to the Fourier bispectrum
\[\Big\{\phi_{k+l}-\phi_k-\phi_l:k,l \in \{1,\ldots,K\} \text{ and } k+l \leq K\Big\}\]
Method-of-moments in this context is also
known as bispectrum inversion \citep{sadler1992shift,bendory2017bispectrum},
which aims to estimate the
Fourier phases $\{\phi_k\}$ from an estimate of the bispectrum. Results of
\cite{bendory2017bispectrum,perry2019sample} imply
that for signals where $r_k \neq 0$ for every $k=1,\ldots,K$,
these phases are uniquely determined by the bispectrum. \revise{Our analyses
quantify the conditioning of the linear system relating the bispectrum to the
Fourier phases, which gives rise to the quantitative dependence of the estimation
rate on $K$. To resolve phase ambiguities
before solving this linear system, we prove also an important $\ell_\infty$
stability property of bispectrum inversion
(c.f.\ Lemma \ref{lemma:phasestability}), which is of independent interest.}

Our definition of the low-noise regime $\sigma^2 \lesssim
K^{1-2\beta}/\log K \asymp \|\theta^*\|^2/\log K$ and minimax rate in this
regime are related to the work of \cite{romanov2021multi},
which studied instead the discrete MRA model in the asymptotic limit
$K \to \infty$ and $(\sigma^2 \log K)/K \to 1/\alpha \in (0,\infty)$,
for a Bayesian setting where $\theta^*$ has a standard Gaussian prior.
This work showed a transition in the Bayes risk and associated
sample complexity 
at the sharp threshold $\alpha=2$. The analysis in \cite{romanov2021multi} 
relied on the discreteness of the rotational model, analyzing a template
matching procedure that exactly recovers the latent rotation for each
sample. For continuous MRA, 
this estimation of each rotation is possible only up to a per-sample error 
that is independent of the sample size $N$, and averaging the
correspondingly rotated samples would yield an estimation bias that does not
vanish with $N$. Our analysis shows that direct application of third-order
method-of-moments also does not yield the optimal estimation rate across the
entire low-noise regime.
We instead analyze the maximum-likelihood estimator (MLE) that
marginalizes over latent rotations, to obtain the minimax upper bound in this
regime.

\subsection{Further related literature}

A body of work on MRA and related models focuses on the
synchronization approach, which seeks to first estimate the latent rotation of
each sample based on the relative rotational alignments between pairs of
samples \citep{singer2011angular}. In the context of cryo-EM, this is
known also as the ``common lines'' method \citep{singer2010detecting,singer2011three}. Algorithms developed and
studied for estimating these pairwise alignments include spectral
procedures \citep{singer2011angular,singer2011three,ling2022near}, semidefinite relaxations
\citep{singer2011angular,singer2011three,bandeira2014multireference,bandeira2015non},
and iterative power method or approximate message passing approaches \citep{boumal2016nonconvex,perry2018message}.

In high-noise regimes,
synchronization-based estimation may fail to recover the latent rotations, or
may lead to a biased and inconsistent estimate of the underlying signal. A
separate line of
work has studied alternative method-of-moments or maximum likelihood
procedures for the MRA problem, which marginalize over the latent rotations
\citep{abbe2018multireference,boumal2018heterogeneous,perry2019sample,bandeira2020optimal,ghosh2021multi,bendory2022dihedral}.
These papers relate the rate of estimation in high noise
to the order of moments needed to identify the true signal, which may differ
depending on the sparsity pattern of its Fourier coefficients
and the distribution of the latent random rotations.

Related analyses have been
performed for three-dimensional rotational actions, as arising in Procrustes
alignment problems \citep{pumir2021generalized} and cryo-EM
\citep{sharon2020method}. For cryo-EM, these methods
encompass invariant-features approaches \citep{kam1980reconstruction} and expectation-maximization
algorithms \citep{sigworth1998maximum,scheres2005maximum,scheres2012relion}.
The works \cite{bandeira2017estimation,abbe2018estimation} studied
method-of-moments estimators in problems with general rotational groups,
where \cite{bandeira2017estimation} related the rates of estimation and numbers of moments needed to identify the true signal to the structure of the
invariant polynomial algebra of the group action. In these general settings,
\cite{brunel2019learning,fan2020likelihood,katsevich2020likelihood,fan2021maximum}
studied also properties of the log-likelihood function, its optimization
landscape, and the Fisher information matrix, relating the structure of the
invariant algebra to asymptotic rates of estimation for the MLE.

\subsection{Outline}

Section \ref{sec-2} provides a formal statement of the continuous MRA model and
of our main results. Section \ref{sec-3} provides some preliminaries that relate
the loss function to the Fourier magnitudes and phases. Section \ref{sec-4}
proposes and analyzes a third-order method-of-moments estimator, which
determines the phases
by inverting the Fourier bispectrum. This estimator attains the minimax upper
bound for squared-error risk in the high-noise regime. Section \ref{sec-5}
analyzes the maximum likelihood estimator that attains the minimax upper bound
for squared-error risk in the low-noise regime. Section \ref{sec-6}
gives a minimax lower bound using Assouad's lemma, which matches the upper
bounds of Sections \ref{sec-4} and \ref{sec-5} while also interpolating 
between these two signal-to-noise regimes.

\subsection{Notation}
For a complex number $z=re^{i\theta} \in \C$, $\overline{z}=re^{-i\theta}$ is
its complex conjugate. $\Arg z=\theta$ is its principal argument in the
range $[-\pi,\pi)$.
$\langle u,v \rangle=\sum_k u_k\overline{v_k}$ is the $\ell_2$ inner-product
for real or complex vectors, and $\|u\|=\sqrt{\langle u,u \rangle}$
is the $\ell_2$ norm.
$I_K \in \R^{K \times K}$ is the identity matrix in dimension $K$.
$\Normal_\C(0,\sigma^2)$ is the complex
mean-zero Gaussian distribution, with independent real and imaginary parts
having real Gaussian distribution $\Normal(0,\frac{\sigma^2}{2})$.
We write $a \wedge b=\min(a,b)$.
For a function $F:\R^k \to \R$, we denote its gradient and Hessian by
$\nabla F \in \R^k$ and $\nabla^2 F \in \R^{k \times k}$.
For two distributions $P$ and $Q$, $D_{\KL}(P\|Q)=\int \log(\frac{P}{Q})dP$ is
their Kullback-Leibler (KL) divergence.

\section{Model and main results}
\label{sec-2}
\revise{Let $\cS^1=[-\pi,\pi)$ be identified with the unit circle, with addition modulo
$2\pi$. Let $f:\cS^1 \to \R$ be a smooth periodic function on $\cS^1$. We
represent rotations of the circle by angles $\alpha \in \cA=[-\pi,\pi)$,}
and denote the function $f$ with domain rotated by $\alpha$ as
\[f_{\alpha}(t)=f(t-\alpha \bmod 2\pi).\]
We study estimation of $f$ from $N$ i.i.d.\ samples of the form
\[f_{\alpha}(t)\,\der t+\sigma\,\der W(t), \qquad
\alpha \sim \Unif([-\pi,\pi)).\]
In each sample, $\alpha$ represents a different latent and uniformly random
rotation of the domain of $f$, and the entire rotated function $f_\alpha$ is
observed with additive continuous white noise $\sigma\,\der W(t)$ on the circle.
\revise{An equivalent Gaussian sequence formulation of the model is discussed
below.} We assume that $\sigma>0$ is a fixed and known noise level. As $f$ is
identifiable only up to rotation, we consider the rotation-invariant loss
(\ref{eq:functionloss}).

Note that we may
alternatively study a model where each rotated function $f_\alpha(t)$ is
observed with Gaussian noise only at a discrete set of points $t \in
\cS^1$ that are
fixed or randomly sampled \citep{bandeira2020optimal,bendory2020super}.
We study the above continuous observation model so as to abstract away aspects
of the problem that are related to this discrete sampling.

The mean value of $f$ over the circle is invariant to rotations, and is
easily estimated by averaging across samples. Thus, let us assume for simplicity
and without loss of generality that $f$ has known mean 0.
Passing to the Fourier domain,
we assume that $f$ is bandlimited to $K$ Fourier frequencies, i.e.\ $f$
admits the Fourier sequence representation
\[f(t)=\sum_{k=1}^K \theta_{k,1} f_{k,1}(t)+\theta_{k,2} f_{k,2}(t),
\qquad f_{k,1}(t)=\frac{1}{\sqrt{\pi}} \cos kt,
\quad f_{k,2}(t)=\frac{1}{\sqrt{\pi}} \sin kt,\]
where $\{f_{k,1},f_{k,2}:k=1,\ldots,K\}$ are orthonormal Fourier basis
functions over $[-\pi,\pi)$, and
\[\theta=(\theta_{1,1},\theta_{1,2},\ldots,\theta_{K,1},\theta_{K,2}) \in
\R^{2K}\]
are the Fourier coefficients of $f$. We assume implicitly throughout the paper
that $K \geq 2$, and we are interested in applications with potentially large
values of this bandlimit $K$.

Importantly, due to the choice of Fourier
basis, the $2K$-dimensional space of such bandlimited functions is closed under
rotations of the circle.
The rotation $f \mapsto f_\alpha$ induces a map from the Fourier
coefficients of $f$ to those of $f_\alpha$,
which we denote as $\theta \mapsto g(\alpha) \cdot \theta$
for an orthogonal matrix $g(\alpha) \in \R^{2K \times 2K}$. Explicitly, 
this map $\theta \mapsto g(\alpha) \cdot \theta$ is given separately for
each Fourier frequency $k=1,\ldots,K$ by
\begin{equation}\label{eq:thetarotation}
\begin{pmatrix} \theta_{k,1} \\ \theta_{k,2} \end{pmatrix} \mapsto
\begin{pmatrix} \cos k\alpha\; & -\sin k\alpha \\
\sin k\alpha & \cos k\alpha \end{pmatrix}
\begin{pmatrix} \theta_{k,1} \\ \theta_{k,2} \end{pmatrix},
\end{equation}
and $g(\alpha)$ is the block-diagonal matrix with these
$2 \times 2$ blocks. Equivalently, writing
\[(\theta_{k,1},\theta_{k,2})=(r_k \cos \phi_k, r_k\sin\phi_k)\]
where $r_k \geq 0$ is the magnitude and $\phi_k \in \cA$ is the
phase (identified modulo $2\pi$), this map is given for each $k=1,\ldots,K$ by
\begin{equation}\label{eq:rphirotation}
(r_k,\phi_k) \mapsto (r_k,\phi_k+k\alpha).
\end{equation}

The samples $f_\alpha(t)\,\der t+\sigma\,\der W(t)$ represented in this
Fourier sequence space take the form
\begin{equation}\label{eq:samples}
y^{(m)}=g(\alpha^{(m)}) \cdot \theta+\sigma \eps^{(m)} \in \R^{2K}
\text{ for } m=1,\ldots,N
\end{equation}
where $\alpha^{(1)},\ldots,\alpha^{(N)} \overset{\iid}{\sim} \Unif([-\pi,\pi))$,
$\eps^{(1)},\ldots,\eps^{(N)} \overset{\iid}{\sim} \Normal(0,I_{2K})$, and these
are independent.
Writing $\hat{\theta} \in \R^{2K}$ for the Fourier coefficients of the
estimated function $\hat{f}$ (which should likewise be bandlimited to $K$
Fourier frequencies), the loss (\ref{eq:functionloss}) is equivalent to
\begin{equation}\label{eqn-loss}
L(\hat{\theta},\theta)=\min_{\alpha \in \cA} \|\hat{\theta}-g(\alpha) \cdot \theta\|^2.
\end{equation}
In the remainder of this paper, we will consider the problem in this 
sequence form.

\revise{We reserve the notation $\theta^*$ for the Fourier coefficients of the
true unknown function. Fixing constants $\beta \in [0,\frac{1}{2})$ and
$\clower,\cupper>0$, we consider a parameter space of ``generic'' Fourier
coefficient vectors with power law decay rate $\beta$, given by
\begin{equation}\label{eq:Thetar}
\Theta_\beta=\Big\{\theta^* \in \R^{2K}:\,\clower k^{-\beta} \leq r_k(\theta^*)
\leq \cupper k^{-\beta} \text{ for all } k=1,\ldots,K\Big\}.
\end{equation}
Here, ``generic'' refers to the quantitative lower bound for
each value $r_k(\theta^*)$ that matches the
assumed upper bound up to a constant factor. This condition may be viewed as an
analogue of the genericity condition in \cite{perry2019sample} that all
Fourier magnitudes are bounded above and below by a constant, in
our high-dimensional setting of interest with potentially large $K$ and
decaying Fourier magnitudes.

Our main results are the following two theorems, which characterize the minimax
rates of estimation over $\Theta_\beta$ in high-noise and low-noise regimes.

\begin{Theorem}[Minimax risk in high noise]\label{thm:highnoise}
Fix any $\beta \in [0,\frac{1}{2})$ and any
constant $c_0>0$. If $\sigma^2 \geq c_0K^{1-2\beta}$, then for a
constant $C_0>0$ depending only on $\beta,\clower,\cupper,c_0$ and
for any $N \geq C_0K^{6\beta}\sigma^6\log K$,
\[\inf_{\hat{\theta}} \sup_{\theta^* \in \Theta_\beta}
\E_{\theta^*}[L(\hat{\theta},\theta^*)] \asymp \frac{K^{4\beta}\sigma^6}{N}.\]
\end{Theorem}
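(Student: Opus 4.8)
The plan is to prove the two directions separately, treating the upper and lower bounds of the asymptotic equivalence $\asymp$.

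\textbf{Proof proposal.} I would prove the two halves of the $\asymp$ separately: an achievability (upper) bound via bispectrum inversion, and a minimax (lower) bound via Assouad's lemma.

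For the upper bound I would analyse the third-order method-of-moments estimator of Section~\ref{sec-4}. First estimate the power spectrum by the debiased empirical second moments $\widehat{r_k^2}=\frac1N\sum_m\|y_k^{(m)}\|^2-2\sigma^2$, and set $\widehat r_k$ to be the projection of $(\max(\widehat{r_k^2},0))^{1/2}$ onto the known range $[\clower r,\cupper r]$; since each $\|y_k^{(m)}\|^2$ is sub-exponential of scale $O(\sigma^2)$ when $\sigma^2\gtrsim Kr^2$, a Bernstein bound and a union bound over $k$ give $\max_k|\widehat{r_k^2}-r_k^2|\lesssim\sigma^2\sqrt{\log K/N}$ with probability $1-K^{-c}$, so the magnitude contribution to the loss is $\lesssim K\sigma^4/(r^2N)\le\sigma^6/(r^4N)$, using $\sigma^2\gtrsim Kr^2$. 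Next estimate the bispectrum from $\frac1N\sum_m y_k^{(m)}y_l^{(m)}\overline{y_{k+l}^{(m)}}$, in which the rotational phases cancel so the mean is $\asymp r^3e^{-iB_{k,l}}$ with per-sample fluctuations of scale $O(\sigma^3)$; dividing by $\widehat r_k\widehat r_l\widehat r_{k+l}$ and applying a sub-Weibull concentration bound together with a union bound over the $O(K^2)$ pairs $(k,l)$ --- this is where the hypothesis $N\gtrsim(\sigma^6/r^6)\log K$ enters --- ensures every $\widehat B_{k,l}$ lies within a small constant of $B_{k,l}$ modulo $2\pi$, while $\E|\widehat B_{k,l}-B_{k,l}|^2\lesssim\eps_0^2:=\sigma^6/(r^6N)$ per entry.

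To recover the phases I would first apply the $\ell_\infty$-stability of bispectrum inversion (Lemma~\ref{lemma:phasestability}) to obtain crude phase estimates accurate enough to unwrap each $\widehat B_{k,l}$ into a consistent real-valued representative, and then solve the least-squares system $\widehat\phi=M^+\widehat B$. Writing $z$ for the vector of unwrapping errors, the phase part of $L(\widehat\theta,\theta^*)$ is at most $O(r^2\|M^+z\|^2)$ (the minimisation over $\alpha$ in the loss absorbing exactly the one-dimensional null space of $M$), and $\E\|M^+z\|^2=\Tr((M^TM)^+\Cov(z))$; by Lemma~\ref{lemma:Mproperties} all nonzero singular values of $M$ are of order $\sqrt K$, so $\Tr((M^TM)^+)\asymp1$, while the $\sigma^6$-order part of $\Cov(z)$ is, by Wick's formula, essentially diagonal with entries $\asymp\eps_0^2$, whence $\|\Cov(z)\|_{\mathrm{op}}\lesssim\eps_0^2$ and the phase contribution is $\lesssim r^2\eps_0^2=\sigma^6/(r^4N)$. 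Combining this with the magnitude bound through the loss expansion of Section~\ref{sec-3}, and bounding the contribution of the probability-$K^{-c}$ bad event by the trivial estimate $L\lesssim Kr^2$, gives $\sup_{\theta^*\in\Theta(r)}\E_{\theta^*}[L(\widehat\theta,\theta^*)]\lesssim\sigma^6/(r^4N)$.

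For the lower bound I would use the Assouad construction of Section~\ref{sec-6}: fix magnitudes $r_k\equiv r$ and a generic base phase vector, and for $v\in\{0,1\}^K$ let $\theta^v$ have phases $\phi^0_k+v_k\delta$, so every $\theta^v\in\Theta(r)$ and, using the loss formula of Section~\ref{sec-3}, $L(\theta^v,\theta^{v'})\asymp r^2\delta^2\,d_H(v,v')$ (a coordinatewise perturbation cannot be undone by the single rotation $\alpha$, which acts on all frequencies at once). The crux is the KL bound for neighbours: expanding the rotation-mixture density in powers of $1/\sigma^2$ as in \cite{perry2019sample}, the $O(1/\sigma^2)$ and $O(1/\sigma^4)$ terms depend on $\theta$ only through rotationally averaged first and second moments, which are fixed by the (unchanged) magnitudes and hence cancel; the leading surviving term is $O(1/\sigma^6)$ and is governed by the bispectrum, and since perturbing one phase alters $\Theta(K)$ bispectrum entries by $O(\delta)$ each, $D_{\KL}(P_{\theta^v}\|P_{\theta^{v'}})\lesssim Kr^6\delta^2/\sigma^6$ (the expansion being legitimate precisely because $Kr^2\lesssim\sigma^2$). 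Choosing $\delta^2\asymp\sigma^6/(NKr^6)$ so that $N\,D_{\KL}\lesssim1$ for all neighbours, Assouad's lemma yields $\inf_{\widehat\theta}\sup_{\theta^*\in\Theta(r)}\E[L]\gtrsim Kr^2\delta^2\asymp\sigma^6/(r^4N)$, matching the upper bound. The main obstacles I anticipate are, on the upper-bound side, showing that the empirical bispectrum errors are nearly uncorrelated across the $\Theta(K^2)$ equations so that the conditioning of $M$ actually delivers the factor-$K$ variance reduction (rather than being lost to a crude $\|M^+\|_{\mathrm{op}}^2\|z\|^2$ bound) together with the correct modular unwrapping, and on the lower-bound side verifying the cancellation of the $1/\sigma^2$ and $1/\sigma^4$ terms and pinning down the $K$-dependence of the $1/\sigma^6$ term, which is exactly what fixes the rate at $\sigma^6/(r^4N)$ with no further dimension factor.
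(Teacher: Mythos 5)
Your proposal is correct in outline and follows essentially the same route as the paper: the upper bound via bispectrum-inversion method-of-moments (an $\ell_\infty$-stable unwrapping step as in Lemma \ref{lemma:phasestability}, least squares through the well-conditioned matrix of Lemma \ref{lemma:Mproperties}, and near-decorrelation of the bispectrum errors, which is exactly the paper's Lemma \ref{lemma:Phicovariance}), and the lower bound via Assouad on phase perturbations with equal magnitudes, where your $1/\sigma^2$--$1/\sigma^4$ cancellation and $K r^6\delta^2/\sigma^6$ KL bound is precisely the content of the paper's Lemma \ref{lemma:KLupperbound}. The two obstacles you flag are exactly what those lemmas supply, and the one step you assert without flagging---that the infimum over the global rotation $\alpha$ cannot destroy the Hamming separation of the hypercube losses---is the remaining nontrivial detail, handled in the paper by Lemma \ref{lemma:lossbounds} together with a short projection computation along the direction $(1,2,\ldots,K)$.
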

\begin{Theorem}[Minimax risk in low noise]\label{thm:lownoise}
Fix any $\beta \in [0,\frac{1}{2})$. There exist constants $C_0,C_1>0$
depending only on $\beta,\clower,\cupper$ such that if
$\sigma^2 \leq \frac{K^{1-2\beta}}{C_1\log K}$ and $N \geq
C_0K^{1+2\beta}\sigma^2\log K$, then
\[\inf_{\hat{\theta}} \sup_{\theta^* \in \Theta_\beta}
\E_{\theta^*}[L(\hat{\theta},\theta^*)] \asymp \frac{K\sigma^2}{N}.\]
\end{Theorem}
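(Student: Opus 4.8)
\noindent\emph{Proof strategy.}
The plan is to prove the minimax upper and lower bounds separately; the lower bound is a standard Assouad construction, while the upper bound --- obtained by analyzing the rotation-marginalized maximum likelihood estimator --- is the substantive part. For a single observation $y=g(\alpha)\cdot\theta^{*}+\sigma\eps$ with $\alpha\sim\Unif(\cA)$ the Lebesgue density on $\R^{2K}$ is the rotation-average
\[
p_{\theta}(y)=\frac{1}{(2\pi\sigma^{2})^{K}}\,\E_{\alpha\sim\Unif(\cA)}\exp\!\Big(-\tfrac{1}{2\sigma^{2}}\|y-g(\alpha)\cdot\theta\|^{2}\Big),
\]
and I would take $\hat\theta=\argmax_{\theta}\ell_{N}(\theta)$ with $\ell_{N}(\theta)=\tfrac{1}{N}\sum_{m}\log p_{\theta}(y^{(m)})$, maximized over a fixed compact neighborhood of $\Theta(r)$. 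Since $p_{\theta}=p_{g(\alpha)\cdot\theta}$, both $\ell_{N}$ and $L(\cdot,\theta^{*})$ descend to the rotation quotient, so it suffices to bound $\|\hat\theta-\theta^{*}\|$ for the representative of $\hat\theta$ closest to $\theta^{*}$. A direct computation, with $w_{\theta,y}$ the posterior density on $\alpha$ proportional to $\exp(-\tfrac{1}{2\sigma^{2}}\|y-g(\alpha)\cdot\theta\|^{2})$, gives
\begin{align*}
\nabla\log p_{\theta}(y)&=\tfrac{1}{\sigma^{2}}\big(\E_{w_{\theta,y}}[g(\alpha)^{\top}y]-\theta\big),\\
\nabla^{2}\log p_{\theta}(y)&=-\tfrac{1}{\sigma^{2}}I_{2K}+\tfrac{1}{\sigma^{4}}\Cov_{w_{\theta,y}}\!\big(g(\alpha)^{\top}y\big),
\end{align*}
so the Fisher information is $\cI(\theta^{*})=\tfrac{1}{\sigma^{2}}I_{2K}-\tfrac{1}{\sigma^{4}}\E_{\theta^{*}}\big[\Cov_{w_{\theta^{*},y}}(g(\alpha)^{\top}y)\big]$.

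The first step is to show that the expected transverse curvature is of order $1/\sigma^{2}$: writing $v$ for the tangent direction of the rotation orbit through $\theta^{*}$, every eigenvalue of $\cI(\theta^{*})$ restricted to the $(2K-1)$-dimensional subspace $v^{\perp}$ is $\asymp1/\sigma^{2}$, while the single direction $v$ carries essentially no information but is exactly the one the loss $L$ quotients out. The bound $\cI(\theta^{*})\preceq\tfrac{1}{\sigma^{2}}I_{2K}$ is automatic; the reverse bound on $v^{\perp}$ uses the low-noise hypothesis, since $\sigma^{2}\le Kr^{2}/(C_{1}\log K)$ makes the per-sample signal-to-noise ratio $\|\theta^{*}\|^{2}/\sigma^{2}\asymp Kr^{2}/\sigma^{2}$ exceed $C_{1}\log K$, so that outside an event of probability $K^{-\Omega(C_{1})}$ the posterior $w_{\theta^{*},y}$ concentrates in a window of width $\asymp\sigma/(rK^{3/2})$ about the true rotation. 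On that event a second-order expansion of $\|y-g(\alpha)\cdot\theta^{*}\|^{2}$ in $\alpha$ shows $\Cov_{w_{\theta^{*},y}}(g(\alpha)^{\top}y)$ to be $\asymp\sigma^{2}$ along $v$ plus a remainder whose \emph{expectation} has operator norm $\lesssim\sigma^{4}/(r^{2}K)\ll\sigma^{2}$ --- the rank-one contributions of the remainder point in essentially random directions and cancel in expectation, and the inequality $\sigma^{4}/(r^{2}K)\ll\sigma^{2}$ is precisely the content of being in the low-noise regime. The second step records that $\nabla\log p_{\theta^{*}}(y)$ is mean-zero with covariance $\cI(\theta^{*})$ and sub-exponential tails, so $\E_{\theta^{*}}\|\nabla\ell_{N}(\theta^{*})\|^{2}=\tfrac{1}{N}\Tr\cI(\theta^{*})\asymp K/(\sigma^{2}N)$.

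The third and hardest step is to upgrade the expected curvature bound to a uniform one: on a high-probability event, $-\nabla^{2}\ell_{N}(\theta)$ has transverse curvature of order $1/\sigma^{2}$ uniformly over $\theta$ in a ball $B(\theta^{*},\rho)$ with $\rho$ a small multiple of $r$. The obstruction is that an atypical sample with diffuse posterior on $\alpha$ --- occurring with probability $\sim e^{-\|\theta^{*}\|^{2}/\sigma^{2}}$ --- contributes a Hessian summand $\tfrac{1}{\sigma^{4}}\Cov_{w_{\theta,y}}(g(\alpha)^{\top}y)$ as large as $\|y\|^{2}/\sigma^{4}\asymp Kr^{2}/\sigma^{4}$, so the summands are heavy-tailed and a naive $\varepsilon$-net over the $2K$-dimensional ball $B(\theta^{*},\rho)$, costing a factor of order $K$ in the exponent, does not close the bound. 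The plan is to (i) split the samples according to whether their posterior on $\alpha$ is concentrated, and bound the \emph{aggregate} operator-norm contribution of the diffuse ones by $O(1/\sigma^{2})$ directly from the signal-to-noise lower bound together with a union bound over samples; and (ii) note that on the remaining, concentrated samples the summands are uniformly $O(1/\sigma^{2})$ in operator norm, with $\Cov_{w_{\theta,y}}(g(\alpha)^{\top}y)$ controlled uniformly over $\theta\in B(\theta^{*},\rho)$ by the same quadratic-expansion argument as in step one now carried with $\theta$-uniform error terms, so that a matrix-concentration/net argument applies --- this is where the sample-size condition $N\gtrsim K\sigma^{2}\log K/r^{2}$ is consumed. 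A preliminary crude-consistency step --- uniform concentration of $\ell_{N}$ around $\ell=\E\ell_{N}$ combined with the quadratic growth $\ell(\theta^{*})-\ell(\theta)\asymp\tfrac{1}{\sigma^{2}}\,d(\theta,\orbit(\theta^{*}))^{2}$ near the orbit, which follows from $\cI(\theta^{*})$ and from identifiability of the orbit from the population distribution --- first confines $\hat\theta$ to $B(\theta^{*},\rho)$. Given this, $\hat\theta$ is the unique near-$\theta^{*}$ maximizer of $\ell_{N}$, the second-order expansion yields $\|\hat\theta-\theta^{*}\|\lesssim\sigma^{2}\,\|\Pi_{v^{\perp}}\nabla\ell_{N}(\theta^{*})\|$, and hence $\E_{\theta^{*}}[L(\hat\theta,\theta^{*})]\lesssim\sigma^{4}\,\E\|\nabla\ell_{N}(\theta^{*})\|^{2}\asymp K\sigma^{2}/N$; the complementary event, of probability $K^{-\Omega(C_{1})}$, is absorbed by truncating $L$ at its trivial worst-case value $O(Kr^{2})$ and using $Kr^{2}\cdot K^{-\Omega(C_{1})}\lesssim K\sigma^{2}/N$ once $C_{1}$ is large.

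For the lower bound, fix $\theta^{0}\in\Theta(r)$ whose magnitudes $r_{k}(\theta^{0})$ are all equal, and for $\omega\in\{-1,+1\}^{K}$ let $\theta^{\omega}\in\Theta(r)$ have the same magnitudes and phases $\phi_{k}(\theta^{0})+\omega_{k}\delta$, with $\delta^{2}=c\,\sigma^{2}/(Nr^{2})$ for a small constant $c$. For neighbors $\omega,\omega'$ differing in a single coordinate, joint convexity of the KL divergence over the common uniform mixing measure gives
\begin{align*}
D_{\KL}\big(p_{\theta^{\omega}}\,\big\|\,p_{\theta^{\omega'}}\big)
&\le\E_{\alpha}\,D_{\KL}\big(\Normal(g(\alpha)\cdot\theta^{\omega},\sigma^{2}I_{2K})\,\big\|\,\Normal(g(\alpha)\cdot\theta^{\omega'},\sigma^{2}I_{2K})\big)\\
&=\frac{\|\theta^{\omega}-\theta^{\omega'}\|^{2}}{2\sigma^{2}}\;\asymp\;\frac{r^{2}\delta^{2}}{\sigma^{2}},
\end{align*}
so the $N$-fold KL between neighbors is at most $\tfrac{1}{2}$ for $c$ small. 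A short computation with the loss (\ref{eqn-loss}) shows that flipping one coordinate changes the parameter by a rotation-invariant amount $\gtrsim r^{2}\delta^{2}$: taking the rotation $\alpha=0$ cancels the unperturbed blocks but leaves the perturbed block costing $\asymp r^{2}\delta^{2}$, whereas any $\alpha$ large enough to shrink that block costs $\gtrsim r^{2}K\delta^{2}$ on the unperturbed ones. Assouad's lemma then gives $\inf_{\hat\theta}\sup_{\theta^{*}\in\Theta(r)}\E_{\theta^{*}}[L(\hat\theta,\theta^{*})]\gtrsim K\cdot r^{2}\delta^{2}\asymp K\sigma^{2}/N$, which together with the upper bound proves the claim. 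I expect the uniform transverse-curvature bound of the third step --- taming the heavy-tailed Hessian contributions of atypical samples while simultaneously covering a $2K$-dimensional neighborhood of $\theta^{*}$ --- to be the main obstacle.
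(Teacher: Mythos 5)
Your overall route is the same as the paper's (rotation-marginalized MLE with curvature bounded below transverse to the orbit tangent, posterior concentration of the latent rotation under the low-noise condition, sample-splitting into concentrated/diffuse posteriors plus a net argument, and an Assouad lower bound with KL controlled by data processing), but three steps as written do not close. First, the final error-absorption in the upper bound is wrong: you bound the complementary event by $K^{-\Omega(C_1)}$, a quantity independent of $N$, and then need $Kr^2\cdot K^{-\Omega(C_1)}\lesssim K\sigma^2/N$; since $N$ may be arbitrarily large, no fixed constant $C_1$ makes this hold. The failure probability must decay with $N$ --- in the paper it is of order $e^{-cN/(K(1+\sigma^2/r^2))}$, coming from the localization step and the Chernoff bound on the number of diffuse-posterior samples --- and is converted into a contribution $\lesssim \sigma^2/N$ only by using that the sample-size hypothesis forces $N\gtrsim K(1+\sigma^2/r^2)\log N$. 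Second, your claim that the net/matrix-concentration step consumes exactly $N\gtrsim K\sigma^2\log K/r^2$ cannot be right across the whole regime: covering a $2K$-dimensional neighborhood of $\theta^*$ (and the sphere of test directions) costs a factor $e^{CK\log(\cdot)}$ in the union bound, while the per-point failure exponent from counting diffuse samples is of order $N/(1+\sigma^2/r^2)$, so the argument needs $N\gtrsim K(1+\sigma^2/r^2)\log(\cdot)$. When $\sigma^2\ll r^2$ the theorem's hypothesis permits $N$ far smaller than $K$, and the MLE argument does not cover that case; the paper proves the upper bound for $\sigma^2\le r^2$ via the method-of-moments estimator and uses the MLE only for $r^2\lesssim\sigma^2\lesssim Kr^2/\log K$. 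You need an analogous split or a different argument there.

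Third, the lower bound is under-argued. Assouad in the form you invoke requires $L(\theta^{\omega},\theta^{\omega'})\gtrsim r^2\delta^2\,H(\omega,\omega')$ for \emph{all} pairs in the hypercube, not only single-coordinate flips, and the infimum over global rotations inside $L$ makes this genuinely delicate: choosing $\alpha$ proportional to the projection of the phase difference onto $(1,2,\ldots,K)$ cancels a constant fraction of the naive separation (up to $8/9$ in the paper's computation of $(\sum_k k(\tau_k-\tau_k'))^2/\sum_k k^2$), and for larger $\alpha$ one must argue with $1-\cos$ and the wrap-around of phases on the circle rather than with squared differences (this is what the paper's loss lemma with the cutoff $K_0(\alpha)$ does). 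Your dichotomy ``$\alpha=0$ versus $\alpha$ large'' treats only neighboring vertices and does not deliver the Hamming-proportional separation that Assouad needs.
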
}
\noindent In both statements, $\E_{\theta^*}$ is the expectation
over $N$ samples $y^{(1)},\ldots,y^{(N)}$ from the model (\ref{eq:samples})
with true parameter $\theta^*$. The infimum $\inf_{\hat{\theta}}$ is over all
estimators $\hat{\theta}$ based on these samples, and $\asymp$ denotes upper
and lower bounds up to constant multiplicative factors that depend only on
$\beta,\clower,\cupper,c_0$.

\section{Preliminaries}
\label{sec-3}
\subsection{Bounds for the loss}

For $\phi,\phi' \in \cA=[-\pi,\pi)$, we define the circular distance
\begin{equation}\label{eq:dA}
|\phi-\phi'|_\cA=\min_{j \in \Z} |\phi-\phi'+2\pi j|.
\end{equation}
It is direct to check that $(\phi,\phi') \mapsto |\phi-\phi'|_\cA$ is a metric 
on $\cA$, satisfying the triangle inequality and the upper bound
\begin{equation}\label{eq:dAupperbound}
|\phi-\phi'|_{\cA} \leq \min(\pi, |\phi-\phi'|).
\end{equation}

We may express and bound the loss (\ref{eqn-loss}) in terms of the Fourier
magnitudes and phases.

\begin{Proposition}\label{prop-lossgeneral}
Let $\theta=(r_k\cos \phi_k,r_k\sin\phi_k)_{k=1}^K$ and
$\theta'=(r_k'\cos \phi_k',r_k'\sin\phi_k')_{k=1}^K$. Then
\begin{equation}\label{eq:lossrphi}
L(\theta,\theta')=\sum_{k=1}^K (r_k-r_k')^2+\inf_{\alpha \in \R}
\sum_{k=1}^K 2r_kr_k'\Big[1-\cos(\phi_k-\phi_k'+k\alpha)\Big].
\end{equation}
Consequently, for universal constants $C,c>0$,
\begin{align*}
&\sum_{k=1}^K (r_k-r_k')^2+c\inf_{\alpha \in \R}
\sum_{k=1}^K r_kr_k'|\phi_k-\phi_k'+k\alpha|_\cA^2\\
&\hspace{1in}\leq
L(\theta,\theta')\leq \sum_{k=1}^K (r_k-r_k')^2+C\inf_{\alpha \in \R}
\sum_{k=1}^K r_kr_k'|\phi_k-\phi_k'+k\alpha|_\cA^2.
\end{align*}
\end{Proposition}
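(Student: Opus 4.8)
The plan is to expand the squared distance $\|\theta-g(\alpha)\cdot\theta'\|^2$ in polar coordinates and then convert the resulting cosine terms into the circular-distance form via elementary trigonometric inequalities. For the first identity, I would start from \eqref{eq:rphirotation}, which says that the frequency-$k$ block of $g(\alpha)\cdot\theta'$ is $(r_k'\cos(\phi_k'+k\alpha),\,r_k'\sin(\phi_k'+k\alpha))$. Squaring the two coordinates of the frequency-$k$ block of $\theta-g(\alpha)\cdot\theta'$ and using $\cos A\cos B+\sin A\sin B=\cos(A-B)$, each block contributes $r_k^2+(r_k')^2-2r_kr_k'\cos(\phi_k-\phi_k'-k\alpha)$. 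Writing $r_k^2+(r_k')^2-2r_kr_k'\cos(\cdot)=(r_k-r_k')^2+2r_kr_k'[1-\cos(\cdot)]$, summing over $k$, and taking the infimum over $\alpha\in\R$ — in which we may replace $\alpha$ by $-\alpha$ since the infimum ranges over all of $\R$ — yields exactly \eqref{eq:lossrphi}.

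For the two-sided bound I would use the identity $1-\cos\beta=2\sin^2(\beta/2)$, which shows that $1-\cos\beta$ depends on $\beta$ only through $|\beta|_\cA\in[0,\pi]$, equalling $2\sin^2(|\beta|_\cA/2)$ (since $\sin^2$ is even and $\pi$-periodic). The elementary inequalities $\tfrac{2}{\pi}x\le \sin x\le x$ on $[0,\pi/2]$, applied with $x=|\beta|_\cA/2$, then give $\tfrac{2}{\pi^2}|\beta|_\cA^2\le 1-\cos\beta\le\tfrac12|\beta|_\cA^2$. Applying this pointwise with $\beta=\phi_k-\phi_k'+k\alpha$, multiplying by $2r_kr_k'\ge0$, and summing over $k$ sandwiches the second sum in \eqref{eq:lossrphi}, for every fixed $\alpha$, between $\tfrac{4}{\pi^2}$ and $1$ times $\sum_k r_kr_k'|\phi_k-\phi_k'+k\alpha|_\cA^2$. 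Since these bounding quantities are proportional to the same target for each $\alpha$, the inequalities survive taking $\inf_\alpha$ on all three sides; adding back $\sum_k(r_k-r_k')^2$ gives the claim with $c=4/\pi^2$ and $C=1$.

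There is no real obstacle here; the argument is elementary. The only points requiring a little care are the sign of the phase inside the cosine (harmless because of the unrestricted infimum over $\alpha$) and the observation that the $\alpha$-wise sandwich passes through $\inf_\alpha$, which is valid precisely because the lower and upper bounds differ from the target only by multiplicative constants independent of $\alpha$.
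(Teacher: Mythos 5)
Your proposal is correct and follows essentially the same route as the paper: expand $\|\theta-g(\alpha)\cdot\theta'\|^2$ blockwise via the cosine addition formula to get \eqref{eq:lossrphi}, then sandwich $1-\cos t$ between constant multiples of $|t|_\cA^2$ and pass the bounds through $\inf_\alpha$. The only difference is that you make the elementary inequality explicit (via $1-\cos\beta=2\sin^2(\beta/2)$ and $\tfrac{2}{\pi}x\le\sin x\le x$ on $[0,\pi/2]$, giving $c=4/\pi^2$, $C=1$), which the paper simply cites as a standard bound.
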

\begin{proof}
For any $\alpha \in \R$, we have
\begin{align*}
\|\theta'-g(\alpha)\cdot\theta\|^2 &= \sum_{k=1}^{K}\left[(r_k'\cos\phi_k' -
r_k\cos(\phi_k+k\alpha))^2+(r_k'\sin\phi_k' - r_k\sin(\phi_k+k\alpha))^2\right] \\
&= \sum_{k=1}^{K}
(r_k-r_k')^2+2r_kr_k'\left[1-\cos(\phi_k-\phi_k'+k\alpha)\right].
\end{align*}
Taking the infimum over $\alpha$ gives (\ref{eq:lossrphi}). The consequent
inequalities follow from the bounds
$c|t|_{\cA}^2 \leq 1-\cos(t) \leq C|t|_{\cA}^2$ for universal constants $C,c>0$,
applied with $t=\phi_k-\phi_k'+k\alpha$ for each $k$.
\end{proof}

\subsection{Complex representation}\label{sec:complexrepr}

It will be notationally and conceptually
convenient to pass between $\theta \in \R^{2K}$ 
and a complex representation by $\tilde\theta \in \C^K$. We use throughout
\begin{equation}\label{eq:principalArg}
\Arg z \in [-\pi,\pi)
\end{equation}
for the principal complex argument of $z \in \C$.
Recalling the $k^\text{th}$ Fourier coefficient
pair $(\theta_{k,1},\theta_{k,2})=(r_k\cos\phi_k,r_k\sin\phi_k)$, we set
\begin{equation}\label{eq:complextheta}
\tilde\theta_k=\theta_{k,1}+i\theta_{k,2}=r_ke^{i\phi_k} \in \C.
\end{equation}
For $\theta,\theta' \in \R^{2K}$, note then that
\begin{equation}\label{eq:CRisometry}
\langle \theta,\theta' \rangle
=\sum_{k=1}^K \theta_{k,1}\theta_{k,1}'+\theta_{k,2}\theta_{k,2}'
=\sum_{k=1}^K \Re \tilde\theta_k\overline{\tilde\theta_k'}
=\frac{\langle \tilde\theta,\tilde\theta' \rangle+\langle \tilde\theta',
\tilde\theta \rangle}{2}
\end{equation}
where the left side is the real inner-product, and the right side is the complex
inner-product $\langle u,v \rangle=\sum_k u_k\overline{v_k}$.

Similarly, we may represent the sample $y^{(m)} \in \R^{2K}$ from
(\ref{eq:samples}) by $\tilde y^{(m)} \in \C^K$ where
\[\tilde y^{(m)}_k=y^{(m)}_{k,1}+iy^{(m)}_{k,2} \in \C.\]
Then, recalling the form of the rotational action (\ref{eq:rphirotation}), we
have
\begin{equation}\label{eq:tildey}
\tilde y_k^{(m)}=r_k e^{i(\phi_k+k\alpha^{(m)})}
+\sigma \tilde{\eps}_k^{(m)} \in \C
\end{equation}
where $\tilde{\eps}_k^{(m)}=\eps_{k,1}^{(m)}+i\eps_{k,2}^{(m)} \sim
\Normal_\C(0,2)$ is complex Gaussian noise, independent across both frequencies
$k=1,\ldots,K$ and samples $m=1,\ldots,N$.

\section{Method-of-moments estimator}
\label{sec-4}
\revise{In this section, we analyze an estimator based on a
third-order method-of-moments idea. We prove a general risk bound that depends on
the smallest non-zero Fourier magnitude $\rlower=\min_k r_k(\theta^*)$ of the
true signal, valid for any noise level $\sigma^2>0$, and we show in particular
that this achieves the minimax upper bound of Theorem \ref{thm:highnoise} for
signals $\theta^* \in \Theta_\beta$ in the high-noise regime.}

Throughout this section, let us denote the Fourier magnitudes and phases of the
true parameter as
$\theta^*=(r_k\cos\phi_k,r_k\sin\phi_k)_{k=1}^K$ and write $\E$ for
$\E_{\theta^*}$. Observe from (\ref{eq:tildey}) that for every $k=1,\ldots,K$,
\[\E\big[|\tilde y_k^{(m)}|^2\big]=r_k^2+2\sigma^2.\]
Then $N^{-1}\sum_{m=1}^N |\tilde y_k^{(m)}|^2-2\sigma^2$ provides an unbiased
estimate of $r_k^2$. Furthermore, denote
\begin{equation}\label{eq:indexset}
\cI=\Big\{(k,l):k,l \in \{1,\ldots,K\} \text{ and } k+l \leq K\Big\}.
\end{equation}
Applying that $\{\tilde{\eps}_k^{(m)}:k=1,\ldots,K\}$ are independent with mean
0, and also $\E[(\tilde{\eps}_k^{(m)})^2]=0$ (cf.\ Proposition
\ref{lemma-distribution-1} of Appendix \ref{appendix:MoM}),
for any $(k,l) \in \cI$ including the case $k=l$ we have
\begin{equation*}
\begin{aligned}
\E\left[\tilde y_{k+l}^{(m)}\cdot\overline{\tilde
y_k^{(m)}}\cdot\overline{\tilde y_l^{(m)}}\right]&=
\E\left[r_{k+l}e^{i(\phi_{k+l}+(k+l)\alpha^{(m)})} \cdot
r_ke^{i(-\phi_k-k\alpha^{(m)})} \cdot r_l e^{i(-\phi_l-l\alpha^{(m)})}\right]\\
&=r_{k+l}r_kr_l e^{i(\phi_{k+l}-\phi_k-\phi_l)}.
\end{aligned}    
\end{equation*}
Thus the complex argument of $N^{-1} \sum_{m=1}^N \tilde y_{k+l}^{(m)} \cdot
\overline{\tilde y_k^{(m)}}\cdot \overline{\tilde y_l^{(m)}}$
provides an estimate of the Fourier bispectrum component
$\phi_{k+l}-\phi_k-\phi_l$
modulo $2\pi$, from which we may hope to recover the individual phases $\phi_k$.

This motivates the following class of method-of-moments procedures:
\begin{enumerate}
\item[1.] For each $k=1,\ldots,K$, estimate $r_k$ by
\begin{equation}\label{eq:hatrk}
\hat{r}_k=\left(\frac{1}{N}\sum_{m=1}^N |\tilde y_k^{(m)}|^2
-2\sigma^2\right)_+^{1/2}.
\end{equation}
\item[2.] For each $(k,l) \in \cI$, compute
\begin{equation}\label{eq:hatBkl}
\hat{B}_{k,l}=\frac{1}{N}\sum_{m=1}^N
\tilde y_{k+l}^{(m)} \cdot \overline{\tilde y_k^{(m)}} \cdot
\overline{\tilde y_l^{(m)}},
\end{equation}
and choose a version of its complex argument $\hat{\Phi}_{k,l}$ in $\R$
such that $\hat{\Phi}_{k,l}-\Arg \hat{B}_{k,l}=0 \bmod 2\pi$.
\item[3.] Estimate $\phi=(\phi_k:k=1,\ldots,K)$ by the least-squares estimator
\begin{equation}\label{eq:leastsquares}
\hat{\phi}=\argmin_{\phi \in \R^K} \sum_{(k,l) \in \cI} \big(\hat{\Phi}_{k,l}
-(\phi_{k+l}-\phi_k-\phi_l)\big)^2.
\end{equation}
Then estimate $\theta$ by
$\hat{\theta}=(\hat{r}_k\cos\hat{\phi}_k,\hat{r}_k\sin\hat{\phi}_k)_{k=1}^K$.
\end{enumerate}

Here, (\ref{eq:leastsquares}) is defined
using the squared difference over $\R$ rather than over the periodic domain $\cA$.
Hence the final estimate $\hat{\theta}$ depends on the specific choice of
argument $\hat{\Phi}_{k,l}$ in Step 2, which we have left ambiguous above.
We proceed by first studying in Section \ref{sec:MoMoracle} an ``oracle''
version of this estimator, where $\hat{\Phi}_{k,l}$ is chosen in Step 2 
using knowledge of the true phases $\phi_1,\ldots,\phi_K$ as the unique version
of the argument of $\hat{B}_{k,l}$ for which
$\hat{\Phi}_{k,l}-(\phi_{k+l}-\phi_k-\phi_l) \in [-\pi,\pi)$. This choice
satisfies an exact distributional symmetry in sign. We leverage this symmetry
to provide a risk bound for this oracle procedure.

To develop an actual estimator based on this oracle idea, we propose in
Section \ref{sec:MoMopt} a method of mimicking this oracle using a pilot
estimate of $\phi_1,\ldots,\phi_K$ that is obtained by first minimizing an
$\ell_\infty$-type optimization objective. We prove an $\ell_\infty$-stability
bound for bispectrum inversion, which implies that the resulting choice of
$\hat{\Phi}_{k,l}$ coincides with the oracle choice with high probability as
long as $N \gtrsim \frac{\sigma^6}{\rlower^6}\log K$. Consequently, this estimator
attains the same estimation rate without oracle knowledge. \revise{We summarize
these results as the following theorem.}

\revise{
\begin{Theorem}\label{thm:oracleMoM}
Let $\hat{\theta} \in \{\hat{\theta}^\oracle,\hat{\theta}^\opt\}$
be the above method-of-moments estimator, where $\hat{\Phi}_{k,l}$
is chosen either using the oracle of Section \ref{sec:MoMoracle} or the
optimization procedure of Section \ref{sec:MoMopt}.
Suppose $r_k \geq \rlower>0$ for each $k=1,\ldots,K$.
There exist universal constants $C,C_0>0$ such that if
$N \geq C_0(\frac{\sigma^6}{\rlower^6}\log K+\frac{\sigma^3}{\rlower^3}(\log
K)^{3/2})$, then
\begin{equation}\label{eq:oracleMoMbound}
\E[L(\hat{\theta},\theta^*)] \leq CK
\left(\frac{\sigma^2}{N}+\frac{\sigma^4}{N\rlower^2}\right)
+\frac{C\|\theta^*\|^2}{K}
\left(\frac{K\sigma^2}{N\rlower^2}+\frac{\sigma^6}{N\rlower^6}\right)
\end{equation}
\end{Theorem}

\noindent We remark that for signals where $\|\theta^*\|^2/K \asymp \rlower^2$,
as is the case for our signal class $\Theta_\beta$ of interest, this risk
bound reduces to
\[\E[L(\hat{\theta},\theta^*)] \leq \frac{C}{N}
\left(K\sigma^2+\frac{K\sigma^4}{\rlower^2}
+\frac{\sigma^6}{\rlower^4}\right)\]}

\subsection{The oracle procedure}\label{sec:MoMoracle}

Let us identify each entry of the true Fourier phase vector as a real
value $\phi_k \in [-\pi,\pi)$, and set
\begin{equation}\label{eq:Phidef}
\Phi_{k,l}=\phi_{k+l}-\phi_k-\phi_l \in \R.
\end{equation}
We emphasize that this arithmetic is carried out in $\R$, not modulo $2\pi$.
We consider an oracle version of the above method-of-moments procedure, where
$\hat{\Phi}_{k,l}^\oracle \in [\Phi_{k,l}-\pi,\Phi_{k,l}+\pi)$
is chosen in Step 2 as the unique version of the complex argument of
$\hat{B}_{k,l}$ that belongs to this range. Recalling the complex
representation of $\theta$ in (\ref{eq:complextheta}) and defining
\begin{equation}\label{eq:Bkl}
B_{k,l}=\tilde\theta_{k+l} \cdot \overline{\tilde\theta_k}
\cdot \overline{\tilde\theta_l}
=r_{k+l}r_kr_l e^{i(\phi_{k+l}-\phi_k-\phi_l)} \in \C,
\end{equation}
note that this means, for the principal argument specified in
(\ref{eq:principalArg}),
\begin{equation}\label{eq:Argoracle}
\hat{\Phi}_{k,l}^\oracle-\Phi_{k,l}=\Arg (\hat{B}_{k,l}/B_{k,l}) \in
[-\pi,\pi).
\end{equation}
We will write $\hat{\Phi}^\oracle=\hat{\Phi}^\oracle(\phi)$
if we wish to make explicit the dependence of this definition on the phase
vector $\phi$ of the true signal.
We denote by $\hat{\phi}^\oracle$ the resulting least-squares estimate of
$\phi$ in (\ref{eq:leastsquares}),
and by $\hat{\theta}^\oracle$ the corresponding
estimate of $\theta$.

In the remainder of this subsection, we describe an
argument showing that Theorem \ref{thm:oracleMoM} holds for
$\hat{\theta}^\oracle$,
deferring detailed proofs to Appendix \ref{appendix:MoM}.
We divide the argument into the analysis of Step 1
of the MoM procedure for estimating the Fourier magnitudes $\{r_k\}_{k=1}^K$, 
Step 2 for estimating the bispectrum components $\{\Phi_{k,l}\}_{(k,l) \in
\cI}$, and Step 3 for recovering the phases $\{\phi_k\}_{k=1}^K$ from the
bispectrum.\\

{\bf Estimating $r_k$.} Standard Gaussian and chi-squared tail bounds
show the following guarantee for estimating the Fourier magnitudes
$r_k$ via $\hat{r}_k$, defined in (\ref{eq:hatrk}).

\begin{Lemma}\label{lemma:rktail}
For each $k=1,\ldots,K$ and a universal constant $c>0$,
\begin{align}
\P[\hat{r}_k \geq r_k(1+s)]&\leq
2\exp\left(-cNs^2\left(\frac{r_k^2}{\sigma^2} \wedge
\frac{r_k^4}{\sigma^4}\right)\right) \text{ for all } s \geq
0,\label{eq:hatrupper}\\
\P[\hat{r}_k \leq r_k(1-s)]&\leq 
2\exp\left(-cNs^2\left(\frac{r_k^2}{\sigma^2} \wedge
\frac{r_k^4}{\sigma^4}\right)\right) \text{ for all } s \in [0,1).\label{eq:hatrlower}
\end{align}
\end{Lemma}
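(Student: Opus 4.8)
The plan is to pin down the exact distribution of $|\tilde y_k^{(m)}|^2$, reduce both claimed bounds to one-sided concentration of a sum of i.i.d.\ centered noncentral chi-squared variables, and then apply a Bernstein-type inequality while carefully tracking how the sub-exponential parameters depend on $r_k^2/\sigma^2$.

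First I would note that, by rotational invariance of $\Normal_\C(0,2)$, conditionally on $\alpha^{(m)}$ we have from \eqref{eq:tildey} that $\tilde y_k^{(m)}=e^{i(\phi_k+k\alpha^{(m)})}\big(r_k+\sigma e^{-i(\phi_k+k\alpha^{(m)})}\tilde\eps_k^{(m)}\big)\overset{d}{=}e^{i(\phi_k+k\alpha^{(m)})}\big(r_k+\sigma\tilde\eps_k^{(m)}\big)$, hence $|\tilde y_k^{(m)}|^2\overset{d}{=}|r_k+\sigma\tilde\eps_k^{(m)}|^2=\sigma^2\big[(r_k/\sigma+\eps_{k,1}^{(m)})^2+(\eps_{k,2}^{(m)})^2\big]$, which is $\sigma^2$ times a noncentral $\chi^2_2$ variable with noncentrality $\lambda:=r_k^2/\sigma^2$ and, crucially, has a law not depending on $\alpha^{(m)}$. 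Thus $\{|\tilde y_k^{(m)}|^2\}_{m=1}^N$ are i.i.d.\ with mean $2\sigma^2+r_k^2$. Writing $S=\frac1N\sum_{m=1}^N|\tilde y_k^{(m)}|^2-2\sigma^2$, so that $\hat r_k^2=S_+$ by \eqref{eq:hatrk} and $\E S=r_k^2$, I would reduce: since the right-hand sides are positive, $\{\hat r_k\ge r_k(1+s)\}=\{S\ge r_k^2(1+s)^2\}\subseteq\{S-r_k^2\ge r_k^2 s\}$, and using $S\le S_+$ together with $1-(1-s)^2\ge s$ on $[0,1)$, $\{\hat r_k\le r_k(1-s)\}\subseteq\{S-r_k^2\le-r_k^2 s\}$. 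Hence it suffices to control $\P\big[|S-r_k^2|\ge r_k^2 s\big]$ on the relevant ranges of $s$.

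Next I would write $S-r_k^2=\frac{\sigma^2}{N}\sum_{m=1}^N W_m$ with $W_m$ i.i.d.\ copies of $W=\chi^2_2(\lambda)-(2+\lambda)$, whose cumulant generating function is $\log\E[e^{tW}]=-\log(1-2t)+\frac{\lambda t}{1-2t}-(2+\lambda)t$ for $t<\tfrac12$. Elementary series bounds give $\log\E[e^{tW}]\le\frac{2(1+\lambda)t^2}{1-2t}$ for $0<t<\tfrac12$ and $\log\E[e^{tW}]\le 2(1+\lambda)t^2$ for all $t\le 0$, exhibiting $W$ as sub-exponential with variance proxy of order $1+\lambda$ and scale of order $1$ (and sub-Gaussian of variance proxy $\asymp 1+\lambda$ on its left tail). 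The standard Chernoff/Bernstein bound then gives, for a universal $c>0$,
\[
\P\Big[\big|\tfrac1N\textstyle\sum_{m=1}^N W_m\big|\ge u\Big]\le 2\exp\!\Big(-cN\min\big(\tfrac{u^2}{1+\lambda},\,u\big)\Big).
\]
Applying this with $u$ of order $\lambda s=r_k^2 s/\sigma^2$, and splitting on $s$: when $s\le 1$ one has $\frac{(\lambda s)^2}{1+\lambda}\le\lambda s$, so the quadratic term governs and $\min(\cdot)\asymp\frac{\lambda^2 s^2}{1+\lambda}\asymp s^2(\lambda\wedge\lambda^2)=s^2\big(\frac{r_k^2}{\sigma^2}\wedge\frac{r_k^4}{\sigma^4}\big)$; when $s>1$ (needed only for the upper tail) the sharper inclusion $\{\hat r_k\ge r_k(1+s)\}\subseteq\{S-r_k^2\ge r_k^2(2s+s^2)\}\subseteq\{S-r_k^2\ge r_k^2 s^2\}$ makes both terms in the minimum at least of order $s^2(\lambda\wedge\lambda^2)$. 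Collecting the cases yields exactly \eqref{eq:hatrupper} and \eqref{eq:hatrlower}.

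I expect the only real work to be the bookkeeping in this last step: obtaining the exponent in the precise form $s^2\big(\frac{r_k^2}{\sigma^2}\wedge\frac{r_k^4}{\sigma^4}\big)$ requires tracking how the sub-exponential parameters of $W$ scale with $\lambda=r_k^2/\sigma^2$ across the small-signal regime ($\lambda\lesssim 1$, where the chi-squared fluctuations dominate and a deviation of size $r_k^2 s$ in the sum costs $\asymp\lambda^2 s^2$ in the exponent) and the large-signal regime ($\lambda\gtrsim 1$, costing $\asymp\lambda s^2$), as well as the transition at $s\asymp 1$ between the sub-Gaussian and sub-exponential tails. None of this is deep, but it is where the constants in the statement get fixed.
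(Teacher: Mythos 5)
Your proposal is correct and follows essentially the same route as the paper: both reduce the claim to concentration of $\frac1N\sum_m|\tilde y_k^{(m)}|^2$ after using rotation-invariance of the complex Gaussian noise to discard the latent rotations, and both carry out the same substitution and bookkeeping in $s$ (including the separate treatment of $s\le 1$ and $s>1$ for the upper tail and the use of $2s-s^2\ge s$ for the lower tail). The only cosmetic difference is that the paper splits the statistic into a Gaussian cross term plus a centered $\chi^2_{2N}$ term and union-bounds two standard tail inequalities, whereas you bound the cumulant generating function of the centered noncentral $\chi^2_2(\lambda)$ directly and apply a single Bernstein-type bound; both yield the exponent of order $Ns^2(\lambda\wedge\lambda^2)$ with $\lambda=r_k^2/\sigma^2$.
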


\noindent Integrating these tail bounds yields the following
immediate corollary.

\begin{Corollary}\label{cor:MoMrriskbound}
For each $k=1,\ldots,K$ and a universal constant $C>0$,
\[\E[(\hat{r}_k-r_k)^2] \leq C\left(\frac{\sigma^2}{N}+
\frac{\sigma^4}{Nr_k^2}\right).\]
\end{Corollary}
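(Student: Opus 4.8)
The plan is to convert the two one-sided tail bounds of Lemma \ref{lemma:rktail} into a bound on the mean squared error via the layer-cake identity $\E[(\hat r_k-r_k)^2]=\int_0^\infty \P[(\hat r_k-r_k)^2>u]\,\der u$. After the substitution $u=r_k^2s^2$ this becomes $2r_k^2\int_0^\infty s\,\P[|\hat r_k-r_k|>r_ks]\,\der s$, so it suffices to control $\P[|\hat r_k-r_k|>r_ks]$ uniformly in $s>0$.

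For $s\in[0,1)$ we may add the bounds (\ref{eq:hatrupper}) and (\ref{eq:hatrlower}) to get $\P[|\hat r_k-r_k|>r_ks]\le 4\exp\!\big(-cNs^2(\tfrac{r_k^2}{\sigma^2}\wedge\tfrac{r_k^4}{\sigma^4})\big)$. For $s\ge 1$ the event $\hat r_k-r_k\le -r_ks$ is impossible because $\hat r_k\ge 0$ forces $\hat r_k - r_k \ge -r_k$, so only the upper tail (\ref{eq:hatrupper}) contributes and the same exponential bound holds (with constant $2$ in place of $4$). Hence for all $s>0$ we have the single sub-Gaussian-type bound $\P[|\hat r_k-r_k|>r_ks]\le 4\exp(-cNs^2 q_k)$ with $q_k:=\tfrac{r_k^2}{\sigma^2}\wedge\tfrac{r_k^4}{\sigma^4}$.

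Plugging this in and evaluating the elementary integral $\int_0^\infty s\,e^{-cNq_k s^2}\,\der s=\tfrac{1}{2cNq_k}$ gives $\E[(\hat r_k-r_k)^2]\le \tfrac{4r_k^2}{cNq_k}$. It then remains only to simplify $r_k^2/q_k=r_k^2\big(\tfrac{\sigma^2}{r_k^2}\vee\tfrac{\sigma^4}{r_k^4}\big)=\sigma^2\vee\tfrac{\sigma^4}{r_k^2}\le \sigma^2+\tfrac{\sigma^4}{r_k^2}$, which yields the claimed bound with $C=4/c$. There is no genuine obstacle here; the only point requiring a moment's care is that the lower-tail bound (\ref{eq:hatrlower}) is stated only for $s\in[0,1)$, which is why the nonnegativity of $\hat r_k$ is invoked to dispatch the regime $s\ge1$.
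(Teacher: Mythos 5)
Your proof is correct and follows essentially the same route as the paper: the layer-cake identity applied to $\hat r_k/r_k-1$ followed by integration of the tail bounds of Lemma \ref{lemma:rktail}, with only cosmetic differences (you keep the minimum in the exponent rather than splitting into a sum of two exponentials). Your explicit treatment of the regime $s\ge 1$ via nonnegativity of $\hat r_k$ is a point the paper glosses over, but it is the same argument.
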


{\bf Estimating $\Phi_{k,l}$.} Applying a concentration inequality for cubic
polynomials in independent Gaussian random variables, derived
from \cite{latala2006estimates}, we obtain the following tail bounds for
estimating $B_{k,l}$ by $\hat{B}_{k,l}$ in Step 2, and for estimating
the bispectrum component $\Phi_{k,l}$ by the oracle
estimator $\hat{\Phi}_{k,l}^\text{oracle}$.

\begin{Lemma}\label{lemma:Phikltail}
Consider any $(k,l) \in \cI$ and suppose
$r_{k+l},r_k,r_l \geq \rlower$.
Then for universal constants $C,c>0$ and any $s>0$,
\begin{equation}
\label{eqn-B-final-bound}
\P\Big[|\hat{B}_{k,l}/B_{k,l}-1| \geq s\Big]
\leq C\exp\left(-c\left(\frac{Ns^2\rlower^2}{\sigma^2}
\wedge \frac{Ns^2\rlower^6}{\sigma^6} \wedge \frac{(Ns)^{2/3}\rlower^2}
{\sigma^2}\right)\right).
\end{equation}
Furthermore, for universal constants $C,c>0$ and any $s \in (0,\pi/2)$,
\begin{equation}\label{eqn-Phi-final-bound}
\P\Big[|\hat{\Phi}_{k,l}^\oracle-\Phi_{k,l}| \geq s\Big]
\leq C\exp\left(-c\left(\frac{Ns^2\rlower^2}{\sigma^2}
\wedge \frac{Ns^2\rlower^6}{\sigma^6} \wedge
\frac{(Ns)^{2/3}\rlower^2}{\sigma^2}\right)\right).
\end{equation}
\end{Lemma}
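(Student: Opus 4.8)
The plan is to condition on the latent rotations $\alpha^{(1)},\dots,\alpha^{(N)}$, view $\hat B_{k,l}$ as a cubic polynomial in the finitely many i.i.d.\ standard Gaussians that enter it --- namely $\eps^{(m)}_{j,i}$ for $j\in\{k,l,k+l\}$, $i\in\{1,2\}$, $m=1,\dots,N$ --- and apply the polynomial (Gaussian chaos) concentration inequality of \cite{latala2006estimates} to the real and imaginary parts of $(\hat B_{k,l}-B_{k,l})/B_{k,l}$. First I would check that $\E[\hat B_{k,l}\mid\alpha^{(1)},\dots,\alpha^{(N)}]=B_{k,l}$ exactly: expanding each factor of $\tilde y^{(m)}_{k+l}\overline{\tilde y^{(m)}_k}\,\overline{\tilde y^{(m)}_l}$ via (\ref{eq:tildey}), every monomial containing a noise factor has conditional mean zero, since $\E[\tilde\eps^{(m)}_j]=0$, $\E[(\tilde\eps^{(m)}_j)^2]=0$ (Proposition~\ref{lemma-distribution-1}), and the noise is independent across the distinct frequencies $k,l,k+l$. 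I would run the argument for $k\ne l$; the case $k=l$ is the same up to combinatorial constants, as $\hat B_{k,l}$ is then quadratic in the frequency-$k$ coordinates and linear in the frequency-$2k$ coordinates.

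The main work is computing the expected derivative tensors $\E[\nabla^d\hat B_{k,l}]$, $d=1,2,3$, and the partition norms the Latała bound needs. Since $\hat B_{k,l}=N^{-1}\sum_m\tilde y^{(m)}_{k+l}\overline{\tilde y^{(m)}_k}\,\overline{\tilde y^{(m)}_l}$ and the $m$-th summand only involves the Gaussians of index $m$, each $\E[\nabla^d\hat B_{k,l}]$ is block-diagonal in $m$ with $N$ constant-sized blocks, each block having entries of size $\sigma^d$ times a product of $3-d$ of the magnitudes $r_j$, divided by $N$. Hence $\|\E\nabla\hat B_{k,l}\|_2\lesssim\sigma(r_{k+l}r_l+r_{k+l}r_k+r_kr_l)/\sqrt N$; the Hessian has $\|\cdot\|_{\HS}\lesssim\sigma^2(r_{k+l}+r_k+r_l)/\sqrt N$ but two-block (operator) norm only $\lesssim\sigma^2/N$; and the third-derivative tensor has $\|\cdot\|_{\HS}\lesssim\sigma^3/\sqrt N$ but two-block and three-block (injective) norms $\lesssim\sigma^3/N$. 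Dividing through by $|B_{k,l}|=r_{k+l}r_kr_l\gtrsim\rlower^3$ and feeding these into the tail bound $\P(|f-\E f|\ge t)\le C\exp(-c\min_{1\le d\le 3}\min_{J}(t/\|\E\nabla^d f\|_J)^{2/|J|})$ (minimum over partitions $J$ of $\{1,\dots,d\}$, $|J|$ the number of blocks), the surviving terms are: from $d=1$, $\exp(-cNs^2\rlower^2/\sigma^2)$; from the one-block norm at $d=3$, $\exp(-cNs^2\rlower^6/\sigma^6)$; and from the three-block injective norm at $d=3$, $\exp(-c(Ns)^{2/3}\rlower^2/\sigma^2)$. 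A short case analysis --- splitting on whether $\sigma\le\rlower$ and on the size of $Ns$ --- shows every other contribution (the $d=2$ norms and the two-block norm at $d=3$) is at least the minimum of these three, which is (\ref{eqn-B-final-bound}). Since all these bounds depend on the $\alpha^{(m)}$ only through the magnitudes $r_j$ (the factors $e^{ij\alpha^{(m)}}$ are unimodular), the conditional estimate is uniform in the rotations and thus unconditional.

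For (\ref{eqn-Phi-final-bound}), I would deduce the phase bound from (\ref{eqn-B-final-bound}) using (\ref{eq:Argoracle}): for $z=\hat B_{k,l}/B_{k,l}=\rho e^{i\psi}$ one has $|z-1|^2=(\rho-\cos\psi)^2+\sin^2\psi$, which gives $|z-1|\ge|\sin\psi|\ge\frac2\pi|\psi|$ when $|\psi|\le\pi/2$ and $|z-1|\ge1$ when $|\psi|\in[\pi/2,\pi]$. Hence for $s\in(0,\pi/2)$, $\{|\hat\Phi^\oracle_{k,l}-\Phi_{k,l}|\ge s\}=\{|\psi|\ge s\}\subseteq\{|z-1|\ge\frac2\pi s\}$, and applying (\ref{eqn-B-final-bound}) with $\frac2\pi s$ in place of $s$ (folding the constant into $c$) finishes the proof.

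The main obstacle is the second step: one must use the correct partition norm of each expected derivative tensor, not merely the Hilbert--Schmidt norm. The block-diagonal-in-$m$ structure is what makes the injective norm of the third-derivative tensor scale like $1/N$ rather than $1/\sqrt N$, and this is exactly what upgrades the cubic-chaos term from the too-weak exponent $N^{1/3}s^{2/3}$ to the claimed $(Ns)^{2/3}$; having done this, one still has to verify that none of the several remaining partition-norm contributions at degrees $2$ and $3$ ever becomes the binding term.
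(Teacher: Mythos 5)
Your proposal is correct and follows essentially the same route as the paper: it also treats $\hat B_{k,l}/B_{k,l}-1$ as an (at most cubic) Gaussian chaos --- absorbing the rotations into the complex Gaussians by rotational invariance rather than conditioning on them --- applies the Latała-type bounds with exactly the partition norms you identify (the across-sample diagonal structure giving the $1/N$-scaled injective norm and hence the $(Ns)^{2/3}$ exponent, with a small decoupling trick for $k=l$ in place of your one-shot polynomial inequality), checks that the degree-$2$ and remaining degree-$3$ contributions are dominated by the three retained exponents, and deduces the phase bound from $|z-1|<s\Rightarrow|\Arg z|<\pi s/2$, just as in your contrapositive. The only nitpick is that the two-block (operator) norm of the Hessian of $\hat B_{k,l}$ is of order $\sigma^2\max(r_k,r_l,r_{k+l})/N$ rather than $\sigma^2/N$; after dividing by $|B_{k,l}|\gtrsim \rlower^3$ this still gives an exponent of order $Ns\rlower^2/\sigma^2$, which is dominated by the minimum of the three retained terms, so the conclusion is unaffected.
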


\begin{Corollary}\label{cor:riskPhikl}
Consider any $(k,l) \in \cI$ and suppose $r_{k+l},r_k,r_k \geq \rlower$.
Then for a universal constant $C>0$,
\[\E[(\hat\Phi_{k,l}^\oracle-\Phi_{k,l})^2] \leq C\left(
\frac{\sigma^2}{N\rlower^2}+\frac{\sigma^6}{N\rlower^6}\right)\]
\end{Corollary}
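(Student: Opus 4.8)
The plan is to obtain the second-moment bound by integrating the tail probability in~(\ref{eqn-Phi-final-bound}) of Lemma~\ref{lemma:Phikltail}. Write $X=|\hat\Phi_{k,l}^\oracle-\Phi_{k,l}|$. The key structural fact is that, by the defining property~(\ref{eq:Argoracle}), $\hat\Phi_{k,l}^\oracle-\Phi_{k,l}=\Arg(\hat B_{k,l}/B_{k,l})\in[-\pi,\pi)$, so $X\in[0,\pi)$ almost surely. The layer-cake formula then gives
\[\E[X^2]=\int_0^\pi 2s\,\P[X\ge s]\,\der s=\int_0^1 2s\,\P[X\ge s]\,\der s+\int_1^\pi 2s\,\P[X\ge s]\,\der s,\]
where the split at $s=1<\pi/2$ is chosen so that~(\ref{eqn-Phi-final-bound}) applies throughout the first integral.

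For the first integral I would apply~(\ref{eqn-Phi-final-bound}) and use $e^{-\min(a,b,d)}\le e^{-a}+e^{-b}+e^{-d}$ to reduce to three one-dimensional integrals, each extended to $[0,\infty)$. Via the substitution $u=s^2$ for the first two terms and $u=s^{2/3}$ for the third, these evaluate, up to the universal constants appearing in~(\ref{eqn-Phi-final-bound}), to $\sigma^2/(N\rlower^2)$, $\sigma^6/(N\rlower^6)$, and $\sigma^6/(N^2\rlower^6)$ respectively; since $N\ge 1$, the last is absorbed into $\sigma^6/(N\rlower^6)$. For the second integral, bound $\P[X\ge s]\le\P[X\ge 1]$ and use $\int_1^\pi 2s\,\der s\le\pi^2$, so it remains to bound $\P[X\ge 1]$ via~(\ref{eqn-Phi-final-bound}) at $s=1$. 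Treating the three exponential factors separately, I would apply $e^{-x}\le 1/x$ to the factors with exponents of order $N\rlower^2/\sigma^2$ and $N\rlower^6/\sigma^6$, and $e^{-x}\lesssim x^{-3}$ to the factor with exponent of order $N^{2/3}\rlower^2/\sigma^2$, obtaining $\P[X\ge 1]\lesssim \sigma^2/(N\rlower^2)+\sigma^6/(N\rlower^6)+\sigma^6/(N^2\rlower^6)\lesssim \sigma^2/(N\rlower^2)+\sigma^6/(N\rlower^6)$. Adding the two contributions yields the claimed inequality.

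The computation is essentially routine, and the only point requiring slight care is the cube-root term $(Ns)^{2/3}\rlower^2/\sigma^2$ in the tail exponent. Since this grows only like $N^{2/3}$ rather than $N$, applying the naive bound $e^{-x}\le 1/x$ to the whole minimum would produce a spurious $\sigma^2/(N^{2/3}\rlower^2)$ term that is not present in the target; the remedy, used above both inside the integral over $[0,1]$ and in bounding $\P[X\ge 1]$, is to keep the three exponentials separate and apply the higher-order polynomial bound $e^{-x}\lesssim x^{-3}$ to the cube-root term, so that its total contribution is only of order $\sigma^6/(N^2\rlower^6)$, which is dominated by the stated bound because $N\ge1$.
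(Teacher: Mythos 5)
Your proposal is correct and follows essentially the same route as the paper: integrate the tail bound of Lemma \ref{lemma:Phikltail} via the layer-cake formula, split the integral (the paper splits at $\pi/2$ rather than $1$, an immaterial difference), and handle the cube-root exponent $(Ns)^{2/3}\rlower^2/\sigma^2$ with the cubic polynomial bound so that its contribution $\sigma^6/(N^2\rlower^6)$ is absorbed into $\sigma^6/(N\rlower^6)$, exactly as the paper does.
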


A key property of the oracle estimator $\hat{\Phi}_{k,l}^\text{oracle}$ is an
exact distributional symmetry in sign,
\begin{equation}\label{eq:symmetryinsign}
\hat{\Phi}_{k,l}^\text{oracle}-\Phi_{k,l}
\overset{L}{=}-\hat{\Phi}_{k,l}^\text{oracle}+\Phi_{k,l}.
\end{equation}
\revise{This implies that
$\E[\hat{\Phi}_{k,l}^\text{oracle}-\Phi_{k,l}]=0$, and hence
$\E[(\hat{\Phi}_{k,l}^\text{oracle}-\Phi_{k,l})
(\hat{\Phi}_{x,y}^\text{oracle}-\Phi_{x,y})]=0$ when these bispectral components
do not have any overlapping index, as stated in part (a) of the following
lemma.

For $\Phi_{k,l}$ and $\Phi_{x,y}$ that have an
overlapping index, the corresponding estimates $\hat{\Phi}_{k,l}^\text{oracle}$
and $\hat{\Phi}_{x,y}^\text{oracle}$ are not independent.
Our proof of Theorem \ref{thm:oracleMoM} requires a sharper
bound on the expected product of their errors
than what is naively obtained from the preceding Corollary \ref{cor:riskPhikl}
and Cauchy-Schwarz. Indeed, applying the representation
(\ref{eq:Argoracle}) and a first-order Taylor approximation $\Arg z=\Im \Ln z
\approx \Im (z-1)$ around $z=1$, we obtain
$\E[(\hat\Phi_{k,l}^\oracle-\Phi_{k,l})(\hat\Phi_{x,y}^\oracle-\Phi_{x,y})]
\approx \E[\Im(\hat{B}_{k,l}/B_{k,l}-1)
\Im(\hat{B}_{x,y}/B_{x,y}-1)]$, and it is easily checked that this latter
expectation is of size $O(\sigma^2/N\rlower^2)$, exhibiting a cancellation of
the $O(\sigma^6/N\rlower^6)$ error.
However, a naive bound for the error of this Taylor
approximation remains of size $O(\sigma^6/N\rlower^6)$.
Part (b) of the following lemma establishes a sharp bound for
$\E[(\hat\Phi_{k,l}^\oracle-\Phi_{k,l})(\hat\Phi_{x,y}^\oracle-\Phi_{x,y})]$
by carrying out the Taylor expansion to a higher order
$J \asymp N\rlower^6/\sigma^6$ with a remainder that is exponentially small
in $N\rlower^6/\sigma^6$, and exhibiting a similar cancellation in
expectation for all terms of the Taylor expansion up to this order $J$.}

\begin{Lemma}\label{lemma:Phicovariance}
Let $(k,l),(x,y) \in \cI$, and suppose $r_k,r_l,r_{k+l},r_x,r_y,r_{x+y} \geq
\rlower$. For some universal constants $C,c>0$,
\begin{enumerate}[(a)]
\item If $\{k,l,k+l\}$ is disjoint from $\{x,y,x+y\}$, then
\[\E[(\hat\Phi_{k,l}^\oracle-\Phi_{k,l})(\hat\Phi_{x,y}^\oracle-\Phi_{x,y})]=0.\]
\item If $\{k,l,k+l\} \cap \{x,y,x+y\}$ has cardinality 1, then
\begin{equation}\label{eq:card1bound}
\Big|\E[(\hat\Phi_{k,l}^\oracle-\Phi_{k,l})(\hat\Phi_{x,y}^\oracle-\Phi_{x,y})]\Big| \leq C\left(\frac{\sigma^2}{N\rlower^2}
+e^{-c\left(\frac{N\rlower^6}{\sigma^6}
\wedge \frac{N^{2/3}\rlower^2}{\sigma^2}\right)}\right).
\end{equation}
\item For any $(k,l),(x,y) \in \cI$,
\[\Big|\E[(\hat\Phi_{k,l}^\oracle-\Phi_{k,l})(\hat\Phi_{x,y}^\oracle-\Phi_{x,y})]\Big|
\leq C\left(\frac{\sigma^2}{N\rlower^2}+\frac{\sigma^6}{N\rlower^6}\right).\]
\end{enumerate}
\end{Lemma}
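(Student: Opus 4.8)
The plan is to work from the representation \eqref{eq:Argoracle}, which gives $\hat\Phi_{k,l}^\oracle - \Phi_{k,l} = \Arg(\hat B_{k,l}/B_{k,l})$, and to expand this around the value $1$. Writing $Z_{k,l} = \hat B_{k,l}/B_{k,l} - 1$, we have $\hat\Phi_{k,l}^\oracle - \Phi_{k,l} = \Im \Ln(1+Z_{k,l}) = \Im Z_{k,l} - \tfrac12 \Im Z_{k,l}^2 + \ldots$ on the event where $|Z_{k,l}|$ is small, with a remainder controlled by $|Z_{k,l}|^3$; off that event the difference is bounded deterministically by $\pi$ and the tail bound \eqref{eqn-B-final-bound} makes its contribution exponentially small. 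The leading term $\Im Z_{k,l}$ is a normalized cubic polynomial in the independent complex Gaussians $\{\tilde\eps_j^{(m)}\}$ (after centering by the signal part), so the whole analysis reduces to computing covariances of such cubic polynomials and bounding the higher-order correction terms using the moment estimates for complex Gaussians referenced before Lemma~\ref{lemma:Phikltail}.

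For part~(a): if $\{k,l,k+l\}$ and $\{x,y,x+y\}$ are disjoint, then $\hat B_{k,l}$ and $\hat B_{x,y}$ are each averages over the $N$ samples of products of \emph{disjoint} sets of coordinates $\tilde y_j^{(m)}$; since distinct coordinates of the noise are independent and $\E[(\tilde\eps_j^{(m)})^2]=0$, the exact distributional sign-symmetry \eqref{eq:symmetryinsign} holds jointly, and conditioning on the signs of the $\alpha^{(m)}$-free noise in one block flips that factor's error while fixing the other, forcing the product expectation to vanish. (Concretely one can exhibit, for each sample, a sign-flip of the relevant noise coordinates that negates one factor and preserves the other, then average.) For part~(c): this is the Cauchy--Schwarz bound — apply Corollary~\ref{cor:riskPhikl} to each factor and take the geometric mean of the two $\E[(\cdot)^2]$ bounds, both of which are $\lesssim \sigma^2/(N\rlower^2) + \sigma^6/(N\rlower^6)$.

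The substance is part~(b), the overlap-one case, and this is where the $\sigma^6/(N\rlower^6)$ term must be shown \emph{not} to appear (only $\sigma^2/(N\rlower^2)$ plus an exponentially small term survives). Here I would condition on the rotations $\alpha^{(1)},\ldots,\alpha^{(N)}$ and expand both errors to second order. The key cancellation is that the $\sigma^3$-order (``$\Im Z^2$'' and cubic-in-noise) contributions to each error, when multiplied across the two blocks that share exactly one index, involve odd numbers of noise factors in the non-shared coordinates and vanish in expectation; what remains at the cross-term level is the product of the two \emph{linear-in-noise} parts of $\Im Z_{k,l}$ and $\Im Z_{x,y}$, which share only the single common coordinate and therefore contribute at order $\sigma^2/(N\rlower^2)$ (one power of $1/N$ from the averaging, one common noise coordinate carrying the $\sigma^2/\rlower^2$). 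All genuinely higher-order terms are handled by Hölder against the moment bounds and are absorbed, together with the off-small-$Z$ event, into the exponential term via \eqref{eqn-B-final-bound}. The main obstacle will be organizing this bookkeeping cleanly: there are many monomials in the expansion of a product of two cubic-polynomial-over-cubic-polynomial ratios, and one must systematically track, for each monomial, (i) the number of times the shared index appears versus the non-shared indices, using independence and $\E[(\tilde\eps)^2]=0$ to kill the odd ones, and (ii) the order in $\sigma/\rlower$ and in $1/N$ of the survivors. I would set up a single lemma on ``expectations of noise monomials'' to do (i) uniformly, and then the order-counting in (ii) becomes mechanical.
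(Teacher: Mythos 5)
Your reading of the leading mechanism in part (b) is correct — after the parity cancellations, only monomials supported entirely on the shared frequency survive, and the $i=j=1$ term gives the $\sigma^2/(N\rlower^2)$ contribution — and your treatments of (a) and (c) are essentially the paper's. But the execution plan for (b) has a genuine gap: you truncate $\Im\Ln(1+Z_{k,l})$ at second order with a remainder bounded by $|Z_{k,l}|^3$, and propose to dispose of all remainder cross terms ``by H\"older \ldots absorbed into the exponential term.'' The remainder is not a polynomial in the noise, so the symmetry/parity cancellation you rightly invoke for the polynomial part is unavailable for it; H\"older only gives bounds like $\E[|Z_{k,l}|^3|Z_{x,y}|] \lesssim \big(\tfrac{\sigma}{\rlower\sqrt N}+\tfrac{\sigma^3}{\rlower^3\sqrt N}\big)^4$, which in high noise is of order $\big(\tfrac{\sigma^6}{N\rlower^6}\big)^2$. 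That is only polynomially small in $N$ and is \emph{not} dominated by $\tfrac{\sigma^2}{N\rlower^2}+e^{-c(\frac{N\rlower^2}{\sigma^2}\wedge\frac{N\rlower^6}{\sigma^6}\wedge\frac{N^{2/3}\rlower^2}{\sigma^2})}$: for instance with $N\asymp \tfrac{\sigma^6}{\rlower^6}\log\tfrac{\sigma}{\rlower}$ and $\sigma/\rlower\to\infty$, the H\"older bound decays like $1/\log^2(\sigma/\rlower)$ while the claimed right side decays polynomially in $\rlower/\sigma$. The same failure occurs for any \emph{fixed} truncation order. The paper's proof avoids this by expanding to an order $J$ that grows with the sample size, $J\asymp \frac{N\rlower^2}{\sigma^2}\wedge\frac{N\rlower^6}{\sigma^6}\wedge\frac{N^{2/3}\rlower^2}{\sigma^2}$, so that the Taylor remainder and the bad event $\{|\delta|\geq 1/2\}$ contribute only $Ce^{-cJ}$ (the exponential term in (\ref{eq:card1bound})), and then evaluates \emph{all} polynomial cross-moments $\E[\delta_{k,l}^i\overline{\delta_{x,y}^j}]$ and $\E[\delta_{k,l}^i\delta_{x,y}^j]$ for $i,j\le J$ exactly via (\ref{eq:gaussiansymmetry})--(\ref{eq:gaussianmoments}), with a case analysis over which index is shared ($k+l=x+y$, $k=x+y$, $k=x$, plus the sub-cases $k=l$ or degenerate overlaps that require a multinomial expansion), each case summing to $C\sigma^2/(N\rlower^2)$. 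So your ``noise-monomial'' lemma is indeed the core, but it must be run out to order $J$ growing with $N$; the plan as written, second order plus H\"older, cannot reach the stated bound.

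Two smaller points. In (a), the coordinates $\tilde y_j^{(m)}$ in the two blocks are \emph{not} independent — they share the latent rotation $\alpha^{(m)}$ — so you need either to condition on the rotations or, as in (\ref{eq:hatBexpansion}), to note that the phases $e^{i(k+l)\alpha^{(m)}}e^{-ik\alpha^{(m)}}e^{-il\alpha^{(m)}}$ cancel exactly in $\hat B_{k,l}/B_{k,l}$, after which the two errors are genuinely independent; and the symmetry that makes each factor mean-zero is conjugation of the complex noise (flipping imaginary parts), not a sign flip $\eta\mapsto-\eta$, which does not negate $\Arg(\hat B_{k,l}/B_{k,l})$. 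Part (c) by Cauchy--Schwarz with Corollary \ref{cor:riskPhikl} is exactly the paper's argument.
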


{\bf Estimating $\phi_k$.}
We now translate the preceding bounds for estimating the Fourier
bispectrum $\{\Phi_{k,l}\}$ to estimating the phases $\{\phi_k\}$ 
using the least squares procedure (\ref{eq:leastsquares}).

Define the matrix $M \in \R^{\cI \times K}$ with rows indexed by the bispectrum
index set $\cI$
from (\ref{eq:indexset}), such that the linear system
(\ref{eq:Phidef}) may be expressed as $\Phi=M\phi$.
That is, row $(k,l)$ of $M$ is given by $e_{k+l}-e_k-e_l$ where $e_k \in \R^K$
is the $k^\text{th}$ standard basis vector. Then (\ref{eq:leastsquares})
is given explicitly by
\begin{equation}\label{eq:Mdagger}
\hat{\phi}=M^\dagger \hat{\Phi}
\end{equation}
where $M^\dagger$ is the Moore-Penrose pseudo-inverse.

Recall that a rotation of the circular domain of $f$ induces the
map (\ref{eq:rphirotation}), which does not change the bispectral components
$\Phi_{k,l}$. This is reflected by the property that $(1,2,3,\ldots,K)$
belongs to the kernel of $M$. The following lemma shows that this is the unique
vector in the kernel. Furthermore, $M$ is well-conditioned on the subspace
orthogonal to
this kernel, with all remaining $K-1$ singular values on the same order of
$\sqrt{K}$.

\begin{Lemma}\label{lemma:Mproperties}
$M$ has rank exactly $K-1$, and the kernel of $M$ is the 
span of $(1,2,3,\ldots,K) \in \R^K$.
All $K-1$ non-zero eigenvalues of $M^{\top}M \in \R^{K\times K}$ are integers
in the interval $[K+1,2K+1]$.
\end{Lemma}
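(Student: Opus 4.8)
The plan is to compute the matrix $M^\top M \in \R^{K \times K}$ explicitly and read off its spectrum. First I would observe that $M^\top M = \sum_{(k,l) \in \cI} (e_{k+l} - e_k - e_l)(e_{k+l} - e_k - e_l)^\top$, so its $(a,b)$ entry is $\sum_{(k,l)\in\cI} \langle e_a, e_{k+l}-e_k-e_l\rangle\langle e_b, e_{k+l}-e_k-e_l\rangle$. The diagonal entry $(M^\top M)_{aa}$ counts (with appropriate multiplicity from the $k=l$ case) the number of constraints in $\cI$ that involve index $a$ either as a ``summand'' $k$ or $l$ or as a ``sum'' $k+l$; a direct count shows this is an affine function of $a$ and $K$ lying in an interval like $[K, \ldots]$. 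The off-diagonal entries are small signed counts of constraints sharing two indices. The cleanest route, though, is probably to split $M = M_{\mathrm{sum}} - 2M_{\mathrm{half}}$-type decompositions or — more robustly — to directly verify the identity $M^\top M = (K+1)I_K - vv^\top/\|\cdot\| + (\text{explicit structured correction})$, but I expect the correction terms not to collapse so neatly, so the honest approach is the entrywise computation followed by an eigenvector guess.

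The kernel claim I would handle first and separately. That $(1,2,\ldots,K)^\top$ lies in $\ker M$ is immediate since row $(k,l)$ applied to it gives $(k+l) - k - l = 0$. For uniqueness, suppose $M\phi = 0$, i.e. $\phi_{k+l} = \phi_k + \phi_l$ for all $(k,l)\in\cI$. Taking $k=l=1$ (valid since $2 \le K$ gives $1+1 \le K$) yields $\phi_2 = 2\phi_1$, and inductively $\phi_{j+1} = \phi_j + \phi_1 = (j+1)\phi_1$ as long as $j+1 \le K$, using $(j,1) \in \cI$. Hence $\phi = \phi_1 \cdot (1,2,\ldots,K)$, so $\dim \ker M = 1$ and $\operatorname{rank} M = K-1$; this also shows all other eigenvalues of $M^\top M$ are strictly positive.

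For the eigenvalue bounds, the key step is to show that on the $(K-1)$-dimensional orthogonal complement of $v = (1,2,\ldots,K)$, the operator $M^\top M$ acts like $(K+1)I$ up to a perturbation whose eigenvalues lie in $[0, K]$. I would write $M^\top M = D + E$ where $D$ is the diagonal part and $E$ the off-diagonal part, compute $D_{aa}$ exactly as a count, and bound $E$ via Gershgorin applied on the relevant subspace (or by recognizing $E$ as a sum of a few rank-one/combinatorially structured pieces whose norms are controlled). The arithmetic I expect: each index $a$ participates in roughly $a-1$ constraints as a sum ($k+l=a$), in roughly $K-a$ as the first summand, and similarly as the second, with the $k=l$ diagonal case contributing squared weight $4$; summing the squared coefficients gives $D_{aa}$ of order $K$, and the constant offsets work out to place $D_{aa} + (\text{row sum of }|E|)$ inside $[K+1, 2K+1]$. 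The main obstacle will be getting these combinatorial counts exactly right — in particular correctly accounting for the doubled contribution of the $k=l$ rows (where $e_{k+l}-e_k-e_l = e_{2k} - 2e_k$) versus the $k \neq l$ rows, and then verifying that the Gershgorin-type bound is tight enough to land in the stated integer interval rather than merely $\Theta(K)$. Since the lemma asserts the eigenvalues are \emph{integers}, I would double-check this falls out automatically because $M$ has integer entries and $M^\top M$ is an integer symmetric matrix, but that only forces integrality of the characteristic polynomial's coefficients, not of the roots — so I would instead need the sharper structural fact, perhaps that $M^\top M$ restricted to $v^\perp$ is conjugate to an explicit integer matrix with known spectrum, which is the delicate part to pin down.
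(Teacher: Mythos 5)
Your kernel argument and your plan to compute $M^\top M$ entrywise are fine and match the paper: the diagonal entries come out as $T_{kk}=2K+1-k-2\cdot\1\{2k>K\}$ and the off-diagonal entries are $T_{jk}=-2$ when $j+k>K$ and $0$ otherwise, exactly the counting you describe. The problem is the step you yourself flag as ``the delicate part to pin down'': your proposed route to the spectral claim (Gershgorin on $v^\perp$, or a diagonal-plus-structured-perturbation bound) does not close, and no substitute is supplied. Concretely, row $a$ of $M^\top M$ has about $a$ off-diagonal entries equal to $-2$, so its Gershgorin radius is $\approx 2a$ while its center is $\approx 2K+1-a$; the union of the discs is roughly $[-K,\,3K]$, nowhere near the claimed window $[K+1,2K+1]$, and restricting attention to $v^\perp$ does not shrink Gershgorin discs. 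Moreover, as you correctly note, no norm-type bound can ever yield the \emph{integrality} of the eigenvalues, so the proposal ends exactly where the real work begins.

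What the paper does instead is an exact eigenvector analysis rather than a perturbation bound. Take $Tx=\lambda x$ with $\lambda>0$; then $x\perp(1,2,\ldots,K)$, i.e.\ $x_1+2x_2+\cdots+Kx_K=0$. Using the explicit form of $T$, the equations read $(2K+1-a-2\cdot\1\{2a>K\})x_a-2\sum_{j:\,j+a>K,\,j\neq a}x_j=\lambda x_a$, i.e.\ $(2K+1-a-\lambda)x_a=2\sum_{j>K-a}x_j$ after folding the indicator back in. Summing all $K$ equations and adding the orthogonality relation gives $(2K+1-\lambda)(x_1+\cdots+x_K)=0$, so either $\lambda=2K+1$ or $\sum_k x_k=0$. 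In the latter case, if $\lambda\notin\{K+1,\ldots,2K\}$, the equations force $x_K=0$, then $x_1=0$, then $x_{K-1}=0$, then $x_2=0$, and so on, cascading to $x=0$, a contradiction; hence every positive eigenvalue is one of the integers $K+1,\ldots,2K+1$. This is the structural fact your plan is missing, and it cannot be recovered from the $D+E$ decomposition you propose.
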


This yields the following corollary for estimation of the Fourier phases
$\{\phi_k\}$, up to a global rotation that is represented by an additive shift
in the direction of $(1,2,3,\ldots,K)$.

\begin{Corollary}\label{cor:MoMphiriskbound}
Suppose $r_k \geq \rlower$ for each $k=1,\ldots,K$. Then for universal constants
$C,c>0$,
\revise{
\begin{equation}\label{eq:phiriskbound}
\E\left[\inf_{\alpha \in \R} \sum_{k=1}^K r_k^2
|\hat\phi_k^\oracle-\phi_k+k\alpha|_\cA^2\right] \leq 
\frac{C\|\theta^*\|^2}{K}
\left(\frac{K\sigma^2}{N\rlower^2}+\frac{\sigma^6}{N\rlower^6}
+Ke^{-c(\frac{N\rlower^6}{\sigma^6}
\wedge \frac{N^{2/3}\rlower^2}{\sigma^2})}\right).
\end{equation}}
\end{Corollary}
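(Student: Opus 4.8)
The plan is to combine the least-squares representation $\hat\phi^\oracle = M^\dagger \hat\Phi$ with the spectral properties of $M$ from Lemma \ref{lemma:Mproperties} and the covariance bounds of Lemma \ref{lemma:Phicovariance}. First I would observe that since $\hat\phi^\oracle = M^\dagger \hat\Phi$ and $\Phi = M\phi$, the error $\hat\phi^\oracle - \phi$ equals $M^\dagger(\hat\Phi - \Phi)$ plus a component in $\ker M = \mathrm{span}(1,2,\ldots,K)$; the latter is exactly the quantity absorbed by the $\inf_{\alpha}$ over the shift direction $(1,2,\ldots,K)$. Using the upper bound $|t|_\cA^2 \le t^2$ from (\ref{eq:dAupperbound}) and choosing $\alpha$ to cancel the kernel component, it suffices to bound $\E\|M^\dagger(\hat\Phi-\Phi)\|^2$, where the norm is taken on the orthogonal complement of $\ker M$. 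By Lemma \ref{lemma:Mproperties}, all nonzero singular values of $M$ are at least $\sqrt{K+1}$, so $M^\dagger$ has operator norm at most $1/\sqrt{K+1}$, giving $\E\|M^\dagger(\hat\Phi-\Phi)\|^2 \le \frac{1}{K+1}\,\E\|\hat\Phi-\Phi\|^2 = \frac{1}{K+1}\sum_{(k,l)\in\cI}\E[(\hat\Phi_{k,l}^\oracle-\Phi_{k,l})^2]$. Since $|\cI| \asymp K^2$ and each term is $O(\frac{\sigma^2}{N\rlower^2}+\frac{\sigma^6}{N\rlower^6})$ by Corollary \ref{cor:riskPhikl}, this naive bound already yields $O(\frac{K\sigma^2}{N\rlower^2}+\frac{K\sigma^6}{N\rlower^6})$ — but this is off by a factor of $K$ in the second term compared to the claimed (\ref{eq:phiriskbound}).

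The refinement needed is to not bound $\|M^\dagger(\hat\Phi-\Phi)\|^2$ via the operator norm alone, but to exploit the correlation structure of the errors $\hat\Phi_{k,l}^\oracle - \Phi_{k,l}$ across $(k,l)$. Write $Z = \hat\Phi - \Phi \in \R^\cI$, so $\E\|M^\dagger Z\|^2 = \Tr(M^\dagger \E[ZZ^\top] (M^\dagger)^\top) = \langle \E[ZZ^\top], (M^\top M)^\dagger \rangle_{\HS}$ after using $M^\dagger(M^\dagger)^\top = (MM^\top)^\dagger$ and cyclicity — more precisely I would write $\E\|M^\dagger Z\|^2 = \sum_{(k,l),(x,y)} [(MM^\top)^\dagger]_{(k,l),(x,y)}\,\E[Z_{k,l}Z_{x,y}]$. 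Now Lemma \ref{lemma:Phicovariance}(a) kills all terms where the index triples $\{k,l,k+l\}$ and $\{x,y,x+y\}$ are disjoint; part (b) controls the ``cardinality-1 overlap'' terms by $O(\frac{\sigma^2}{N\rlower^2}) + O(\text{exp tail})$; and the diagonal / higher-overlap terms are controlled by part (c), i.e.\ $O(\frac{\sigma^2}{N\rlower^2}+\frac{\sigma^6}{N\rlower^6})$. Counting: the number of diagonal and $\ge 2$-overlap pairs is $O(K^2)$, each weighted by a diagonal-ish entry of $(MM^\top)^\dagger$ of size $O(1/K)$, contributing $O(\frac{K\sigma^2}{N\rlower^2} + \frac{K\sigma^6}{N\rlower^6})$ — still too big. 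The point must be that the $\frac{\sigma^6}{N\rlower^6}$ term in part (c) only appears on a small set; looking again, I expect the correct bookkeeping is that the $\frac{\sigma^6}{N\rlower^6}$ contribution arises only from the genuinely diagonal terms $(k,l)=(x,y)$, of which there are $|\cI| = O(K^2)$, each with weight $[(MM^\top)^\dagger]_{(k,l),(k,l)} = O(1/K)$ — giving $O(\frac{K\sigma^6}{N\rlower^6})$. So the claimed bound $\frac{\sigma^6}{N\rlower^6}$ (without the $K$) must instead come from a sharper fact: the \emph{row sums} of $(MM^\top)^\dagger$ restricted to the $\frac{\sigma^6}{N\rlower^6}$-carrying terms are $O(1/K)$, not the entries individually $O(1/K)$ summed over $O(K)$ neighbors.

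The hard part, then, is exactly this combinatorial-spectral estimate: showing that when we weight the covariance matrix $\E[ZZ^\top]$ — which by Lemma \ref{lemma:Phicovariance} is (up to the uniform $\frac{\sigma^2}{N\rlower^2}$ floor and exponential tails) essentially \emph{diagonal} with entries $O(\frac{\sigma^6}{N\rlower^6})$ plus a sparse off-diagonal part with entries $O(\frac{\sigma^2}{N\rlower^2})$ — against $(MM^\top)^\dagger$, the diagonal part contributes only $\Tr((MM^\top)^\dagger)\cdot O(\frac{\sigma^6}{N\rlower^6})$. And $\Tr((MM^\top)^\dagger) = \sum_{\text{nonzero eigenvalues }\lambda \text{ of }M^\top M} \lambda^{-1}$, which by Lemma \ref{lemma:Mproperties} has $K-1$ eigenvalues all in $[K+1, 2K+1]$, hence $\Tr((MM^\top)^\dagger) \le (K-1)/(K+1) < 1$. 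That is the mechanism: $\Tr((MM^\top)^\dagger) = O(1)$, so the diagonal covariance piece contributes $O(\frac{\sigma^6}{N\rlower^6})$, precisely matching (\ref{eq:phiriskbound}). The remaining pieces — the uniform $\frac{\sigma^2}{N\rlower^2}$ floor contributes $\frac{\sigma^2}{N\rlower^2}\langle J, (MM^\top)^\dagger\rangle$ where $J$ is all-ones on $\cI$, bounded by $\frac{\sigma^2}{N\rlower^2}\,|\cI|\,\|(MM^\top)^\dagger\|_{\mathrm{op}} = O(\frac{K\sigma^2}{N\rlower^2})$ since $|\cI|\asymp K^2$ and $\|(MM^\top)^\dagger\|_{\mathrm{op}} \le 1/(K+1)$; and the exponential tails contribute $O(K)$ times the tail after multiplying by $\Tr$. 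Collecting these three gives exactly the right-hand side of (\ref{eq:phiriskbound}). I would therefore structure the proof as: (i) reduce to bounding $\E\|M^\dagger Z\|^2$ via the kernel/shift argument and (\ref{eq:dAupperbound}); (ii) expand $\E\|M^\dagger Z\|^2 = \langle \E[ZZ^\top],\,(MM^\top)^\dagger\rangle_{\HS}$; (iii) decompose $\E[ZZ^\top]$ into diagonal, cardinality-1 overlap, and disjoint (zero) parts via Lemma \ref{lemma:Phicovariance}; (iv) bound the diagonal part using $\Tr((MM^\top)^\dagger) = O(1)$ from Lemma \ref{lemma:Mproperties}, and the rest using $\|(MM^\top)^\dagger\|_{\mathrm{op}} = O(1/K)$ with $|\cI| = O(K^2)$. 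The Hilbert-Schmidt pairing step (ii)–(iv) is where all the care goes; everything else is routine.
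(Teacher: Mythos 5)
Your reduction to $\E\|M^\dagger(\hat\Phi^\oracle-\Phi)\|^2$, and your identification of the key mechanism --- that $\Tr((M^\top M)^\dagger)\le (K-1)/(K+1)<1$ from Lemma \ref{lemma:Mproperties} is what strips the factor of $K$ from the $\sigma^6/(N\rlower^6)$ term, with Lemma \ref{lemma:Phicovariance}(a)--(c) controlling the covariance of $Z=\hat\Phi^\oracle-\Phi$ --- match the paper, and your handling of the diagonal piece ($\sum_i D_{ii}A_{ii}\le \max_i D_{ii}\,\Tr A$ for $A=(MM^\top)^\dagger=M^{\dagger\top}M^\dagger$) is valid. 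The gap is in your step (iv) for the off-diagonal part. You bound the ``uniform $\sigma^2/(N\rlower^2)$ floor'' by $\frac{\sigma^2}{N\rlower^2}\langle J,(MM^\top)^\dagger\rangle_{\HS}\le \frac{\sigma^2}{N\rlower^2}\,|\cI|\,\|(MM^\top)^\dagger\|_{\mathrm{op}}$. Replacing the covariance entries by their entrywise upper bounds inside the Hilbert--Schmidt pairing is only legitimate if the entries of $(MM^\top)^\dagger$ are nonnegative; this is not shown and does not follow from positive semidefiniteness (PSD matrices here do have negative off-diagonal entries --- $M^\top M$ itself does, cf.\ the proof of Lemma \ref{lemma:Mproperties}). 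If you instead take absolute values, $|\Tr(\Sigma_{\mathrm{off}}A)|\le\sum_{i\ne j}|\Sigma_{ij}|\,|A_{ij}|$, the only entrywise control available is $|A_{ij}|\le\|A\|_{\mathrm{op}}\le 1/(K+1)$, and since there are of order $K^3$ cardinality-one-overlap pairs this yields $K^2\sigma^2/(N\rlower^2)$ (and $K^2e^{-c(\cdot)}$ for the tail terms), a factor of $K$ worse than (\ref{eq:phiriskbound}). So the off-diagonal bookkeeping as written does not close.

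The repair keeps the quadratic-form structure and puts the operator norm on the covariance rather than on $(MM^\top)^\dagger$: for the PSD matrices $\Sigma=\E[ZZ^\top]$ and $A=M^{\dagger\top}M^\dagger$ one has $\Tr(\Sigma A)\le \Tr(A)\,\|\Sigma\|_{\mathrm{op}}\le \Tr(A)\,\|\Sigma\|_\infty$, where $\|\Sigma\|_\infty$ is the maximum absolute row sum. By Lemma \ref{lemma:Phicovariance}, each row of $\Sigma$ has $O(1)$ entries of size $O(\sigma^2/(N\rlower^2)+\sigma^6/(N\rlower^6))$ (diagonal and overlap of cardinality $2$ or $3$), at most $CK$ entries of size $O(\sigma^2/(N\rlower^2)+e^{-c(\cdot)})$ (overlap of cardinality $1$), and zeros elsewhere, so $\|\Sigma\|_\infty\le C(K\sigma^2/(N\rlower^2)+\sigma^6/(N\rlower^6)+Ke^{-c(\cdot)})$; combined with $\Tr(A)<1$ this gives exactly (\ref{eq:phiriskbound}). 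This is the paper's argument: it needs no diagonal/off-diagonal split and no sign information about $(MM^\top)^\dagger$.
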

\begin{proof}
By adding a multiple of $(1,2,3,\ldots,K)$ to $\phi$ and absorbing this shift
into $\alpha$,
we may assume without loss of generality that $\phi$ is orthogonal to
$(1,2,3,\ldots,K)$. Under this assumption, we will then upper-bound the left
side by choosing $\alpha=0$. Since $\Phi=M\phi$,
this implies $M^\dagger \Phi=M^\dagger M\phi=\phi$, the last equality holding
because Lemma \ref{lemma:Mproperties} implies that $M^\dagger M$ is the
projection orthogonal to $(1,2,3,\ldots,K)$. \revise{Set $D=\diag(r_k^2)_{k=1}^K
\in \R^{K \times K}$.} Then applying $\Tr AB \leq \Tr B \cdot
\|A\|_{\mathrm{op}}$ for positive semidefinite $A,B$, where
$\|\cdot\|_{\mathrm{op}}$ is the $\ell_2 \to \ell_2$ operator norm,
\revise{
\begin{align*}
\E\left[\sum_{k=1}^K r_k^2|\hat\phi_k^\oracle-\phi_k|_\cA^2\right] &\leq 
\E[(\hat\phi^\oracle-\phi)^\top D(\hat\phi^\oracle-\phi)]
=\E[(\hat\Phi^\oracle-\Phi)^\top M^{\dagger\top}
DM^\dagger(\hat\Phi^\oracle-\Phi)]\\
&=\Tr M^{\dagger\top} DM^\dagger\E[(\hat\Phi^\oracle-\Phi)
(\hat\Phi^\oracle-\Phi)^{\top}]\\
&\leq
\Tr(M^{\dagger\top}
DM^{\dagger})\cdot\|\E[(\hat\Phi^\oracle-\Phi)(\hat\Phi^\oracle-\Phi)^{\top}]\|_{\mathrm{op}}\\
&\leq \Tr D \cdot \|M^{\dagger}M^{\dagger\top}\|_{\mathrm{op}}
\cdot\|\E[(\hat\Phi^\oracle-\Phi)(\hat\Phi^\oracle-\Phi)^{\top}]\|_{\mathrm{op}}
\end{align*}
Here, $\Tr D=\sum_{k=1}^K r_k^2=\|\theta^*\|^2$, and
Lemma \ref{lemma:Mproperties} implies
$\|M^{\dagger}M^{\dagger\top}\|_{\mathrm{op}}
=\|(M^\top M)^\dagger\|_{\mathrm{op}} \leq 1/(K+1)$.}

We have $\|A\|_{\mathrm{op}} \leq \|A\|_\infty$ for positive semidefinite $A$,
where $\|A\|_\infty$ is the $\ell_\infty \to \ell_\infty$ operator norm given by
the maximum absolute row sum.
For a universal constant $C>0$ and each $(k,l) \in \cI$, there are at most $C$
pairs $(x,y) \in \cI$ for which $\{k,l,k+l\} \cap \{x,y,x+y\}$ has cardinality 2
or 3, and at most $CK$ pairs $(x,y) \in \cI$ for which
$\{k,l,k+l\} \cap \{x,y,x+y\}$ has cardinality 1. Applying
Lemma \ref{lemma:Phicovariance}(b) for those pairs for which this cardinality is
1, Lemma \ref{lemma:Phicovariance}(c) for those pairs for which this cardinality
is 2 or 3, and Lemma \ref{lemma:Phicovariance}(a) for all remaining pairs,
we obtain for different universal constants $C,c>0$ that
\[\|\E[(\hat\Phi^\oracle-\Phi)(\hat\Phi^\oracle-\Phi)^{\top}]\|_\infty
\leq C\left(\frac{K\sigma^2}{N\rlower^2}+\frac{\sigma^6}{N\rlower^6}
+Ke^{-c(\frac{N\rlower^6}{\sigma^6}
\wedge \frac{N^{2/3}\rlower^2}{\sigma^2})}\right).\]
Combining the above concludes the proof.
\end{proof}

Let us remark that using Lemma \ref{lemma:Phicovariance}(b) in place of
Lemma \ref{lemma:Phicovariance}(c) for the pairs where $\{k,l,k+l\}$ and
$\{x,y,x+y\}$ overlap in one index is important for removing a factor of $K$ in
the $\sigma^6/(N\rlower^6)$ component of the error, which will be the leading
contribution to the overall estimation error in the high-noise regime.

Theorem \ref{thm:oracleMoM} for $\hat\theta^\oracle$
now follows from the loss upper bound in Proposition
\ref{prop-lossgeneral} in terms of the separate estimation errors for magnitude
and phase, together with
Corollaries \ref{cor:MoMrriskbound} and \ref{cor:MoMphiriskbound}.

\subsection{Mimicking the oracle}\label{sec:MoMopt}

We now consider the method-of-moments procedure where the choice of
$\hat{\Phi}_{k,l}$ in Step 2 is determined instead by the following method: 
Compute a ``pilot'' estimate of $\phi$ as any minimizer of
the $\ell_\infty$-type objective
\begin{equation}\label{eq:tildephidef}
\tilde{\phi}=\argmin_{\phi \in \cA^K} \max_{(k,l) \in \cI}
|\Arg \hat{B}_{k,l}-(\phi_{k+l}-\phi_k-\phi_l)|_\cA,
\end{equation}
where a minimizer exists because $\cA$ is compact under $|\cdot|_{\cA}$.
Identify each entry $\tilde{\phi}_k \in [-\pi,\pi)$ of this estimate as a real
value, and set
$\tilde{\Phi}_{k,l}=\tilde{\phi}_{k+l}-\tilde{\phi}_k-\tilde{\phi}_l$
where arithmetic is again carried out in $\R$, not modulo $2\pi$.
Then choose
$\hat{\Phi}_{k,l}^\opt \in [\tilde{\Phi}_{k,l}-\pi,\tilde{\Phi}_{k,l}+\pi)$
as the unique version of the complex argument of $\hat{B}_{k,l}$ belonging to this range.
Let $\hat\phi^\opt$ be the resulting least-squares estimate of $\phi$ in
(\ref{eq:leastsquares}), and let $\hat\theta^\opt$ be
the corresponding estimate of $\theta$.

\revise{We prove Theorem \ref{thm:oracleMoM} for $\hat\theta^\opt$
by showing that, with high
probability, $\hat{\Phi}^\opt=\hat{\Phi}^\oracle(\phi')$ for some
phase vector $\phi'$ that is equivalent to $\phi$.
By ``equivalent'', we mean that $\phi$ and $\phi'$ represent the same
Fourier phases up to rotation of the circular domain, i.e.\ there
exists $\alpha \in \R$ for which
\begin{equation}\label{eq:phiequivalence}
|\phi_k'-\phi_k+k\alpha|_\cA=0 \text{ for each } k=1,\ldots,K.
\end{equation}
Then using $\hat{\Phi}^\opt$ achieves the same loss as using
$\hat{\Phi}^\oracle(\phi)$.}
The main additional ingredient in the proof is a deterministic
$\ell_\infty$-stability bound for recovery of the Fourier
phases from the bispectrum, stated in the following result.

\begin{Lemma}\label{lemma:phasestability}
Fix any $\delta \in (0,\pi/3)$ and $\phi,\phi' \in \R^K$. Denote
$\Phi_{k,l}=\phi_{k+l}-\phi_k-\phi_l$ and
$\Phi_{k,l}'=\phi_{k+l}'-\phi_k'-\phi_l'$. If
\[|\Phi_{k,l}-\Phi_{k,l}'|_\cA \leq \delta \text{ for all } (k,l) \in \cI,\]
then there exists some $\alpha \in \R$ such that
\[|\phi_k-\phi_k'-k\alpha|_\cA \leq \delta \text{ for all } k=1,\ldots,K.\]
\end{Lemma}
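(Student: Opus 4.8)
The plan is to build the rotation $\alpha$ incrementally, determining the phases $\phi_k-\phi_k'$ one frequency at a time and showing that the deviation from linearity in $k$ stays controlled. The key observation is that the bispectrum constraints, read in the right order, express $\phi_{k+l}-\phi_{k+l}'$ in terms of $\phi_k-\phi_k'$ and $\phi_l-\phi_l'$ up to an additive error of at most $\delta$ in the circular metric. Introduce the frequency-wise discrepancy $\psi_k = \phi_k - \phi_k' \in \cA$. The hypothesis says $|\psi_{k+l}-\psi_k-\psi_l|_\cA \le \delta$ whenever $(k,l)\in\cI$, i.e. for all $k,l\ge 1$ with $k+l\le K$. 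We want to find $\alpha$ with $|\psi_k - k\alpha|_\cA \le \delta$ for every $k$.

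First I would fix the rotation by the natural guess $\alpha = \psi_1$ (identified as a real number), so that the claim for $k=1$ holds trivially with equality. Then I would prove by strong induction on $k$ that $|\psi_k - k\alpha|_\cA \le \delta$. For the inductive step with $k\ge 2$, write $k = 1 + (k-1)$ and apply the hypothesis with the pair $(1,k-1)\in\cI$: this gives $|\psi_k - \psi_1 - \psi_{k-1}|_\cA \le \delta$. Combining with the inductive hypothesis $|\psi_{k-1} - (k-1)\alpha|_\cA \le \delta$ and $\psi_1 = \alpha$ via the triangle inequality for $|\cdot|_\cA$ would only give a bound of $2\delta$, which is too weak — this is the main obstacle, and it is why the lemma restricts to $\delta < \pi/3$ and why a one-step greedy argument does not suffice. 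To get the sharp constant $\delta$ rather than $2\delta$, I would instead use the overdetermined nature of the system: for a given $k$, split it in two different ways, say $k = j + (k-j)$ for several choices of $j$, obtaining several approximate relations $\psi_k \approx \psi_j + \psi_{k-j}$, and combine them with already-established estimates $\psi_j \approx j\alpha$, $\psi_{k-j}\approx (k-j)\alpha$ to pin down $\psi_k - k\alpha$ modulo $2\pi$. The point is that while each such relation has a $\delta$-sized slack, the \emph{unknown} $2\pi$-multiple ambiguity is shared, so averaging (or taking a consistent representative) over the different splittings cancels the accumulated error and leaves a residual of at most $\delta$. The constraint $\delta<\pi/3$ is exactly what guarantees that the various representatives of $\psi_k - k\alpha$ lie in a common arc of length $<2\pi/3$, so that "the" correct lift is unambiguous and the triangle inequalities close up without a factor blow-up.

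Concretely, I expect the cleanest route is: (i) show first that all the quantities $\psi_k - k\alpha$, computed modulo $2\pi$, are forced into a small arc around $0$ by a bootstrapping argument that uses both $(1,k-1)$ and $(2,k-2)$ (and the relation linking the $k=2$ and $k=3$ cases) to cancel errors; (ii) once they are known to lie in $(-\pi,\pi)$, identify each as a genuine real number and run the induction, at each stage choosing the representative of $\Arg$ consistently so that the bound is $\delta$ and not $2\delta$. The role of $\delta < \pi/3$ is to ensure that at every step the candidate value and the linear prediction $k\alpha$ cannot differ by a full period, so the circular triangle inequality $|a+b|_\cA \le |a|_\cA + |b|_\cA$ (valid when the right side is $<\pi$) applies without the modular correction. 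The induction then propagates the bound $\delta$ rather than accumulating it.

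The main obstacle, to restate, is the error-accumulation issue: a naive telescoping of the bispectrum relations loses a factor growing with $k$, or at best gives $2\delta$; extracting the optimal $\delta$ requires exploiting the redundancy among the constraints $(k,l)\in\cI$ together with the $\delta<\pi/3$ threshold to control the branch of the complex argument. Everything else — the base cases $k=1,2,3$, the bookkeeping of which pairs $(k,l)$ lie in $\cI$ for a given target $k+l$, and the final verification — is routine.
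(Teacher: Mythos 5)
There is a genuine gap, and it is located exactly where you place the main difficulty: your plan fixes the rotation once and for all as $\alpha=\psi_1$ and then tries to recover the bound $\delta$ by exploiting the redundancy of the splittings $k=j+(k-j)$. With that fixed choice of $\alpha$ the conclusion is simply false, and no averaging over splittings can repair it, because the failure is not an artifact of loose triangle inequalities or of branch ambiguity of the argument. Take $\psi_k=(k-1)\delta$ for $k=1,\ldots,K$ (all small real numbers, no $2\pi$ issues). Then every bispectrum constraint holds with equality, $|\psi_{k+l}-\psi_k-\psi_l|_\cA=\delta$ for all $(k,l)\in\cI$, yet $|\psi_k-k\psi_1|_\cA=(k-1)\delta$ grows linearly in $k$; the lemma is still true here, but only with a shifted $\alpha$ (e.g.\ $\alpha=\delta$ works, or $\alpha=\delta/2$ balances the errors for small $K$). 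So the quantity $\psi_k-k\psi_1$ genuinely exceeds $\delta$, and any argument that keeps $\alpha=\psi_1$ must fail. The missing idea is that $\alpha$ has to be chosen \emph{adaptively} to balance the errors across frequencies, and the $\delta<\pi/3$ threshold enters in combining two-sided constraints, not in fixing the branch of a lift relative to $k\psi_1$.

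The paper's proof does this by induction on $K$: assuming the claim for the first $K-1$ frequencies, it takes $\alpha_*$ to be the \emph{minimax-optimal} shift, $\alpha_*=\arg\min_\alpha \max_{k\le K-1}|v_k-k\alpha|_\cA$ where $v=\phi-\phi'$, with value $\eps\le\delta$. A variational argument at the optimum shows equioscillation: there exist indices $k,l\le K-1$ whose errors attain $+\eps$ and $-\eps$ respectively (otherwise one could perturb $\alpha_*$ and decrease the max). Feeding the relations for the pairs $(k,K-k)$ and $(l,K-l)$ into these two signed constraints gives $v_K-K\alpha_*\in 2\pi\Z+[-\delta,3\delta]$ and $v_K-K\alpha_*\in 2\pi\Z+[-3\delta,\delta]$, and since $3\delta<\pi$ the intersection forces $|v_K-K\alpha_*|_\cA\le\delta$, closing the induction. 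If you want to salvage your outline, the repair is to replace the fixed $\alpha=\psi_1$ with such a balanced choice at each stage; the "averaging over splittings" intuition only works once the reference shift already equidistributes the signs of the errors.
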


\noindent This guarantees that, if $\tilde{\phi}$ yields a bispectrum
$\tilde{\Phi}$ which is elementwise close to the true bispectrum $\Phi$ in the
circular distance modulo $2\pi$, then $\tilde{\phi}$ must also be elementwise
close to $\phi$ up to a rotation of the circular domain. \revise{In other words,
this is an $\ell_\infty \to \ell_\infty$ operator-norm bound
for the matrix $M^\dagger$ from (\ref{eq:Mdagger}), where the $\ell_\infty$
norms are defined using the circular distance per coordinate
and modulo the equivalence relation (\ref{eq:phiequivalence}).}

The above guarantee is sufficient to show that
if each quantity $\Arg \hat{B}_{k,l}$ estimates the true bispectral component
$\Phi_{k,l}$ up to a small constant error in the circular distance
$|\cdot|_\cA$, then its version $\hat{\Phi}_{k,l}^\text{opt}$ that is
chosen using $\tilde{\phi}$ must coincide exactly with the oracle choice
$\hat{\Phi}_{k,l}^\text{oracle}(\phi')$, based on a phase vector $\phi'$ that is
equivalent to the true phase vector $\phi$.

\begin{Corollary}\label{cor:optmimicsoracle}
Let $\hat{B}_{k,l}$ be as defined in (\ref{eq:hatBkl}), and suppose $\phi \in
\R^K$ is such that
\begin{equation}\label{eq:linftyPhibound}
|\Arg \hat{B}_{k,l}-(\phi_{k+l}-\phi_k-\phi_l)|_\cA
<\pi/12 \text{ for every } (k,l) \in \cI.
\end{equation}
Then there exists $\phi'$ equivalent to $\phi$ such that
$\hat\Phi^\opt=\hat\Phi^\oracle(\phi')$.
\end{Corollary}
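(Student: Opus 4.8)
The plan is to track how the $\ell_\infty$-closeness of $\Arg \hat{B}_{k,l}$ to $\Phi_{k,l}=\phi_{k+l}-\phi_k-\phi_l$ propagates through the definition of $\tilde{\phi}$ and then forces the version $\hat{\Phi}^\opt_{k,l}$ to land in the same $2\pi$-window as the oracle version $\hat{\Phi}^\oracle_{k,l}(\phi')$ for a suitable $\phi'$. First I would use the hypothesis (\ref{eq:linftyPhibound}) together with the definition (\ref{eq:tildephidef}) of $\tilde{\phi}$: since $\phi$ itself is a feasible point of the minimization achieving objective value $<\pi/12$, the minimizer $\tilde{\phi}$ achieves objective value $\leq \pi/12$ as well, i.e. $|\Arg\hat{B}_{k,l}-\tilde{\Phi}_{k,l}|_\cA \leq \pi/12$ for every $(k,l)\in\cI$, where $\tilde{\Phi}_{k,l}=\tilde{\phi}_{k+l}-\tilde{\phi}_k-\tilde{\phi}_l$. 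Combining this with (\ref{eq:linftyPhibound}) and the triangle inequality for $|\cdot|_\cA$ gives $|\Phi_{k,l}-\tilde{\Phi}_{k,l}|_\cA < \pi/6$ for all $(k,l)\in\cI$.

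Next I would invoke the stability Lemma \ref{lemma:phasestability} with $\delta=\pi/6<\pi/3$ applied to the pair $(\phi,\tilde\phi)$: this produces $\alpha\in\R$ with $|\phi_k-\tilde\phi_k-k\alpha|_\cA \leq \pi/6$ for all $k$. Set $\phi'$ to be the vector with entries $\phi_k'=\phi_k-k\alpha$, reduced into $[-\pi,\pi)$ if one insists on that normalization; by construction $\phi'$ is equivalent to $\phi$ in the sense of (\ref{eq:phiequivalence}), and $|\phi_k'-\tilde\phi_k|_\cA \leq \pi/6$ for every $k$. Writing $\Phi'_{k,l}=\phi'_{k+l}-\phi'_k-\phi'_l$ (arithmetic in $\R$) and $\tilde\Phi_{k,l}=\tilde\phi_{k+l}-\tilde\phi_k-\tilde\phi_l$, the per-coordinate closeness gives $|\Phi'_{k,l}-\tilde\Phi_{k,l}|_\cA \leq 3\cdot\pi/6 = \pi/2$, and in fact I would combine the bounds more carefully: from $|\Arg\hat B_{k,l}-\tilde\Phi_{k,l}|_\cA\le\pi/12$ and $|\Arg\hat B_{k,l}-\Phi'_{k,l}|_\cA$ — bounded via (\ref{eq:linftyPhibound}) and $|\phi_k-\phi_k'-k\alpha|_\cA=0$ wait, that's not quite it; rather $|\Phi_{k,l}-\Phi'_{k,l}|_\cA=0$ since $\Phi$ is rotation-invariant, so $|\Arg\hat B_{k,l}-\Phi'_{k,l}|_\cA<\pi/12$ — the key conclusion is that both $\tilde\Phi_{k,l}$ and $\Phi'_{k,l}$ lie within distance $\pi/12$ of $\Arg\hat B_{k,l}$ modulo $2\pi$, hence $|\tilde\Phi_{k,l}-\Phi'_{k,l}|_\cA<\pi/6<\pi$.

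Finally I would conclude by a window-matching argument. By definition $\hat\Phi^\opt_{k,l}$ is the unique real number congruent to $\Arg\hat B_{k,l}$ mod $2\pi$ lying in $[\tilde\Phi_{k,l}-\pi,\tilde\Phi_{k,l}+\pi)$, and $\hat\Phi^\oracle_{k,l}(\phi')$ is the unique such number lying in $[\Phi'_{k,l}-\pi,\Phi'_{k,l}+\pi)$. Since $|\tilde\Phi_{k,l}-\Phi'_{k,l}|_\cA<\pi$ and more precisely (using the $\pi/6$ bound) the true difference $\tilde\Phi_{k,l}-\Phi'_{k,l}$ lies in $(-\pi/6,\pi/6)$ up to a multiple of $2\pi$ — and because both $\hat\Phi^\opt_{k,l}$ and $\hat\Phi^\oracle_{k,l}(\phi')$ are congruent to the same $\Arg\hat B_{k,l}$ mod $2\pi$, if they differed they would differ by a nonzero multiple of $2\pi$; but each lies within $\pi$ of its respective center and the centers are within $\pi/6<\pi$ of each other (as reals, after adjusting the representative of $\tilde\Phi_{k,l}$ consistently), forcing the two windows to select the same representative. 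Hence $\hat\Phi^\opt_{k,l}=\hat\Phi^\oracle_{k,l}(\phi')$ for every $(k,l)\in\cI$, which is the claim. The main obstacle, and the step deserving the most care, is the bookkeeping in this last paragraph: one must be precise about the fact that $\tilde\Phi_{k,l}$ and $\Phi'_{k,l}$ are defined as genuine real numbers (not mod $2\pi$) via summing the lifted phases, verify that their genuine real difference — not merely their circular distance — is small (which requires using that $\tilde\phi_k,\phi_k'$ can be taken in a common window and that the per-coordinate errors are below $\pi/3$ so no wraparound is introduced when forming the three-term combination), and then check that two half-open intervals of length $2\pi$ whose left endpoints differ by less than $\pi$ cannot straddle a point of the $2\pi$-lattice differently. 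Getting the constant $\pi/12$ in the hypothesis to propagate to something strictly below $\pi$ after three triangle-inequality applications is exactly why the margin $\pi/12$ (rather than, say, $\pi/3$) appears in the statement.
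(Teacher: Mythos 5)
Your route is the paper's route: use the optimality of $\tilde\phi$ in (\ref{eq:tildephidef}) plus the triangle inequality to get $|\Phi_{k,l}-\tilde\Phi_{k,l}|_\cA<\pi/6$, invoke Lemma \ref{lemma:phasestability} with $\delta=\pi/6$, pass to an equivalent $\phi'$, and finish by matching the $2\pi$-windows; you also correctly flag the one delicate point (real differences versus circular distances). But your handling of that point does not work as written, in two places.

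First, the construction of $\phi'$: taking $\phi'_k=\phi_k-k\alpha$ ``reduced into $[-\pi,\pi)$'' only gives $|\phi'_k-\tilde\phi_k|_\cA\le\pi/6$, not the real-line bound $|\phi'_k-\tilde\phi_k|\le\pi/6$; with both points normalized to $[-\pi,\pi)$ the real difference can be nearly $2\pi$ (wraparound at the endpoints), and then $\tilde\Phi_{k,l}-\Phi'_{k,l}$ can be off by $\pm 2\pi$, so the two windows select different representatives. The fix --- and what the paper does --- is the opposite of normalizing: choose $\phi'_k$ as the lift of $\phi_k-k\alpha$ modulo $2\pi$ lying within $\pi/6$ of $\tilde\phi_k$ on the real line (this typically leaves $[-\pi,\pi)$, which is harmless since equivalence (\ref{eq:phiequivalence}) is insensitive to adding multiples of $2\pi$). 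Second, your closing inference --- ``each lies within $\pi$ of its respective center and the centers are within $\pi/6<\pi$ of each other'' --- is not a valid deduction: two half-open length-$2\pi$ windows whose centers differ by less than $\pi$ can still pick different representatives of the same residue class, and you cannot ``adjust the representative of $\tilde\Phi_{k,l}$,'' which is pinned down by the algorithm. What closes the argument is the sharper chain the paper uses: with the real-line choice of $\phi'$ one gets $|\tilde\Phi_{k,l}-\Phi'_{k,l}|<3\cdot\pi/6=\pi/2$ as real numbers, while $|\hat\Phi^\oracle_{k,l}(\phi')-\Phi'_{k,l}|<\pi/12$ by (\ref{eq:Argoracle}) and the hypothesis (using that $\Phi'_{k,l}$ and $\Phi_{k,l}$ agree modulo $2\pi$), so $|\hat\Phi^\oracle_{k,l}(\phi')-\tilde\Phi_{k,l}|<\pi/2+\pi/12<\pi$; hence $\hat\Phi^\oracle_{k,l}(\phi')$ is a version of $\Arg\hat B_{k,l}$ lying in $[\tilde\Phi_{k,l}-\pi,\tilde\Phi_{k,l}+\pi)$ and therefore equals $\hat\Phi^\opt_{k,l}$. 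With these two repairs your argument coincides with the paper's proof.
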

\begin{proof}
By the definition of the optimization procedure which defines
$\tilde{\phi}$ in (\ref{eq:tildephidef}),
\begin{equation}\label{eq:tildePhibound}
\max_{(k,l) \in \cI}
|\Arg \hat{B}_{k,l}-(\tilde{\phi}_{k+l}-\tilde{\phi}_k-\tilde{\phi}_l)|_\cA
\leq \max_{(k,l) \in \cI}
|\Arg \hat{B}_{k,l}-(\phi_{k+l}-\phi_k-\phi_l)|_\cA.
\end{equation}
By assumption, the right side is at most $\pi/12$.
Then by the triangle inequality for $|\cdot|_\cA$, for every $(k,l) \in \cI$,
we have $|(\tilde{\phi}_{k+l}-\tilde{\phi}_k-\tilde{\phi}_l)
-(\phi_{k+l}-\phi_k-\phi_l)|_\cA<\pi/6$.
Applying Lemma \ref{lemma:phasestability},
we obtain for some $\alpha \in \R$ and all $k=1,\ldots,K$
that $|\tilde{\phi}_k-\phi_k-k\alpha|_\cA<\pi/6$.
This means that there exists $\phi'$ equivalent to $\phi$ for which, for the
usual absolute value,
\[|\tilde{\phi}_k-\phi_k'|<\pi/6 \text{ for all } k=1,\ldots,K.\]
Then denoting $\Phi_{k,l}'=\phi_{k+l}'-\phi_k'-\phi_l'$, by the triangle
inequality, $|\tilde\Phi_{k,l}-\Phi_{k,l}'|<\pi/2$ for all $(k,l) \in \cI$.
Since $\phi'$ is equivalent to $\phi$, also
\[|\Arg \hat{B}_{k,l}-(\phi_{k+l}'-\phi_k'-\phi_l')|_\cA
=|\Arg \hat{B}_{k,l}-(\phi_{k+l}-\phi_k-\phi_l)|_\cA<\pi/12.\]
So by the definition of $\hat{\Phi}^\oracle(\phi')$,
we have $|\hat{\Phi}_{k,l}^\oracle(\phi')-\Phi_{k,l}'|<\pi/12$ for the usual
absolute value. Then $|\hat{\Phi}_{k,l}^\oracle(\phi')-\tilde\Phi_{k,l}|
<\pi/2+\pi/12<\pi$ for all $(k,l) \in \cI$, meaning that
$\hat{\Phi}^\oracle(\phi')=\hat{\Phi}^\opt$.
\end{proof}

The tail bounds of Lemma \ref{lemma:Phikltail} may be used to show that
the event (\ref{eq:linftyPhibound}) holds with high probability.
On this event, the loss of $\hat{\theta}^\opt$ matches exactly that of
$\hat{\theta}^\oracle$.
Combining with a crude bound for the loss on the complementary event, which has
exponentially small probability in $N$, we obtain
Theorem \ref{thm:oracleMoM} for $\hat\theta^\opt$.

\revise{
\begin{Remark}
We study this two-stage estimation procedure primarily to
enable a theoretical analysis of its risk. One may alternatively consider a
more direct procedure where the least-squares objective (\ref{eq:leastsquares})
is defined using the squared distance
$|\hat{\Phi}_{k,l}-(\phi_{k+l}-\phi_k-\phi_l)|_{\cA}^2$ over the periodic domain
$\cA$, which would avoid the
need to identify a version of $\hat{\Phi}_{k,l}$. However, analyzing the
risk of such a procedure may require an $\ell_2$-analogue
of the stability guarantee of Lemma \ref{lemma:phasestability}, which seems
more challenging to obtain. Here, stability in the $\ell_\infty$ sense
allows us to circumvent this issue by first estimating the oracle choices of
$\hat{\Phi}_{k,l}$ using the $\ell_\infty$-objective (\ref{eq:tildephidef}).
\end{Remark}
}

\revise{Finally, let us check that this estimation guarantee in Theorem
\ref{thm:oracleMoM} coincides with our stated minimax rate in
Theorem \ref{thm:highnoise} when restricted to parameters $\theta^* \in
\Theta_\beta$ and to the high-noise regime.

\begin{proof}[Proof of Theorem \ref{thm:highnoise}, upper bound]
For $\theta^* \in \Theta_\beta$, we have $\rlower^2 \geq cK^{-2\beta}$
and $\|\theta^*\|^2 \leq CK^{1-2\beta}$, for ($\beta$-dependent) constants
$C,c>0$. Thus the risk bound of Theorem \ref{thm:oracleMoM} reduces to
\[\E[L(\hat{\theta}^\opt,\theta^*)] \leq \frac{C}{N}\left(K\sigma^2
+K^{1+2\beta}\sigma^4+K^{4\beta}\sigma^6\right)
\leq \frac{C'K^{4\beta}\sigma^6}{N}\]
for constants $C,C'>0$, the last inequality holding in the high-noise setting
$\sigma^2 \geq c_0K^{1-2\beta}$. In this setting, there is a constant $c>0$ for
which
\[\frac{\sigma^6}{\rlower^6}\log K \geq \frac{c\sigma^3}{\rlower^3}(\log
K)^{3/2}.\]
Then the required condition for $N$ in Theorem \ref{thm:oracleMoM}
is implied by $N \geq C_0'K^{6\beta}\sigma^6\log K$ for a sufficiently 
large constant $C_0'>0$, and this yields the minimax upper bound
of Theorem \ref{thm:highnoise}.
\end{proof}}

We remark that Theorem \ref{thm:oracleMoM} gives an estimation
guarantee not just in the high-noise regime, but for any noise level $\sigma^2$.
In a regime of \emph{very} low noise \revise{$\sigma^2 \lesssim K^{-2\beta}$},
it also implies the upper bound of Theorem \ref{thm:lownoise}.

\revise{
\begin{proof}[Proof of Theorem \ref{thm:lownoise}, upper bound, for
$\sigma^2 \leq K^{-2\beta}$]
For $\sigma^2 \leq K^{-2\beta}$, the risk bound of
Theorem \ref{thm:oracleMoM} reduces instead to
\[\E[L(\hat{\theta}^\opt,\theta^*)] \leq \frac{C}{N}\left(K\sigma^2
+K^{1+2\beta}\sigma^4+K^{4\beta}\sigma^6\right)
\leq \frac{C'K\sigma^2}{N}\]
The required condition for $N$
is implied by $N \geq C_0'K^{1+2\beta}\sigma^2\log K$ for a
sufficiently large constant $C_0'>0$, and this yields the minimax upper bound
of Theorem \ref{thm:lownoise}.
\end{proof}}

In high dimensions $K$ and the noise regime \revise{$K^{-2\beta} \ll \sigma^2
\ll K^{1-2\beta}/\log K$,
(\ref{eq:oracleMoMbound}) exhibits the rate $K^{1+2\beta}\sigma^4/N$} which is
larger than the minimax rate $K\sigma^2/N$. This arises from estimating the
Fourier magnitudes $\{r_k\}$ without using phase information. In this regime,
the above method-of-moments procedure becomes suboptimal. We will instead analyze in Section \ref{sec-5} the maximum likelihood estimator, to establish the
minimax rate over the entire low-noise regime described by
Theorem \ref{thm:lownoise}.

\begin{Remark}
This proof of the minimax upper bound is information-theoretic in nature, in
that the pilot estimate used to mimic the oracle may require exponential time
in $K$ to compute.
We describe in Appendix \ref{appendix:freqmarching} an alternative ``frequency
marching'' method, as discussed also
in \cite[Section IV]{bendory2017bispectrum}, which provides a
computationally efficient alternative to mimic the oracle at the expense of a
larger requirement for the sample size $N$.

This method sets $\tilde{\phi}_1=0$ and, for each $k=2,\ldots,K$, sets
\[\tilde{\phi}_k=\Arg \hat{B}_{1,k-1}+\tilde{\phi}_{k-1} \bmod 2\pi\]
to define a pilot estimator $\tilde{\phi}$ for $\phi$. We show that, resolving
the phase ambiguity of $\hat{\Phi}$ using this pilot estimate and then
re-estimating $\hat{\phi}$ by least squares, the resulting procedure
achieves the same risk as described in Theorem
\ref{thm:oracleMoM} under a requirement
for $N$ that is larger by a factor of $K^2$.
\end{Remark}

\section{Maximum likelihood estimator}\label{sec-5}

The method-of-moments procedure analyzed in the preceding section is not
rate-optimal over the full low-noise regime described by
Theorem \ref{thm:lownoise}. Motivated by this observation, and by the more
common use of likelihood-based approaches in practice
\citep{sigworth1998maximum,scheres2012relion}, in this section we analyze
the maximum likelihood estimator (MLE) in the setting of Theorem
\ref{thm:lownoise}.

Define the log-likelihood function
\begin{equation}\label{eq:ll}
l(\theta,y)=\log p_\theta(y):=\log\left[\frac{1}{2\pi} \int_{-\pi}^\pi
\left(\frac{1}{\sqrt{2\pi\sigma^2}}\right)^{2K}\exp\left(-\frac{\|y-g(\alpha)\cdot\theta\|^2}{2\sigma^2}\right)d\alpha\right]
\end{equation}
where $p_\theta(y)$ denotes the Gaussian mixture density 
that marginalizes over the unknown rotation. 
Then the MLE is given by
\[\hat{\theta}^{\mathrm{MLE}}=\arg\min_{\theta \in \R^{2K}} R_N(\theta),
\qquad R_N(\theta)=-\frac{1}{N}\sum_{m=1}^{N}l(\theta, y^{(m)}),\]
where $R_N(\theta)$ denotes the negative empirical log-likelihood.

\revise{For the results of this section, we isolate the following general
condition for the Fourier magnitudes of $\theta^*$.
\begin{Assumption}\label{assump:gen}
There exists a constant $c_\gen>0$ such that for any $B \subseteq
\{1,\ldots,K\}$ with $|B| \geq K/2$
\[\sum_{k \in B} r_k(\theta^*)^2 \geq c_\gen \|\theta^*\|^2\]
\end{Assumption}
\noindent It is clear that this condition holds for our signal class
$\Theta_\beta$ of interest.
Our main result is then the following general risk bound
for $\hat{\theta}^{\mathrm{MLE}}$ in the
low-noise setting of Theorem \ref{thm:lownoise}.}

\begin{Theorem}\label{thm-mle}
\revise{Suppose Assumption \ref{assump:gen} holds. Then
there exist constants $C,C_0,C_1>0$ depending only on $c_\gen$ such that if
$\sigma^2 \leq \frac{K}{C_1\log K}$ and $N \geq
C_0K(1+\frac{K\sigma^2}{\|\theta^*\|^2})
\log (K+\frac{\|\theta^*\|^2}{\sigma^2})$,} then
\[\mathbb{E}_{\theta^*}[L(\hat{\theta}^{\mathrm{MLE}},\theta^*)]
\leq \frac{CK\sigma^2}{N}.\]
\end{Theorem}
\noindent For \revise{$\sigma^2 \geq K^{-2\beta}$}, this requirement for $N$ reduces to that of Theorem \ref{thm:lownoise}, up to a modified constant $C_0>0$.
Combined with the argument for \revise{$\sigma^2 \leq K^{-2\beta}$}
in Section \ref{sec:MoMopt}, this immediately implies the minimax upper bound
of Theorem \ref{thm:lownoise}.

In the remainder of this section, we prove Theorem \ref{thm-mle}.
The proof applies a classical idea of second-order
Taylor expansion for the log-likelihood function. Observe first that the
negative log-likelihood
$R_N(\theta)$ satisfies the rotational invariance
$R_N(\theta)=R_N(g(\alpha) \cdot \theta)$ for all $\alpha \in \cA$. Thus
$\hat{\theta}^{\mathrm{MLE}}$ is defined only up to rotation, and all
rotations of $\hat{\theta}^{\mathrm{MLE}}$ incur the same loss.
To fix this rotation and ease notation in the analysis, let us denote by
$\hat{\theta}^{\mathrm{MLE}}$ the rotation of the MLE such that
\begin{equation}\label{eq:MLEversion}
\|\hat{\theta}^\mathrm{MLE}-\theta^*\|^2=
\min_{\alpha \in \cA} \|g(\alpha) \cdot \hat{\theta}^\mathrm{MLE}-\theta^*\|^2
=L(\hat{\theta}^\mathrm{MLE},\theta^*),
\end{equation}
where $\theta^*$ is the true parameter.
Since $\hat{\theta}^{\mathrm{MLE}}$ minimizes $R_N(\theta)$,
we have $0 \geq R_N(\hat{\theta}^{\mathrm{MLE}})-R_N(\theta^*)$.
Then Taylor expansion (for this rotation of $\hat{\theta}^\mathrm{MLE}$
that satisfies (\ref{eq:MLEversion})) gives
\begin{align}
0 &\geq R_N(\hat{\theta}^{\mathrm{MLE}})-R_N(\theta^*)\nonumber\\
&=\nabla R_N(\theta^*)^\top(\hat{\theta}^{\mathrm{MLE}}-\theta^*)
+\frac{1}{2}(\hat{\theta}^{\mathrm{MLE}}-\theta^*)^\top
\nabla^2 R_N(\tilde{\theta})(\hat{\theta}^{\mathrm{MLE}}-\theta^*)
\label{eq:taylor}
\end{align}
where $\tilde{\theta} \in \R^{2K}$ is on the line segment between $\theta^*$ and
$\hat{\theta}^{\mathrm{MLE}}$. Heuristically, Theorem \ref{thm-mle} will
follow from the bounds
\begin{align}
\Big|\nabla R_N(\theta^*)^\top(\hat{\theta}^{\mathrm{MLE}}-\theta^*)\Big|
&\lesssim \sqrt{\frac{K}{N\sigma^2}} \cdot
\|\hat{\theta}^{\mathrm{MLE}}-\theta^*\|,
\label{eq:heuristicgrad}\\
(\hat{\theta}^{\mathrm{MLE}}-\theta^*)^\top
\nabla^2 R_N(\tilde{\theta})(\hat{\theta}^{\mathrm{MLE}}-\theta^*)
&\gtrsim \frac{1}{\sigma^2} \cdot \|\hat{\theta}^{\mathrm{MLE}}-\theta^*\|^2.
\label{eq:heuristichess}
\end{align}
Applying these to (\ref{eq:taylor}) and rearranging yields the desired result
$\|\hat{\theta}^{\mathrm{MLE}}-\theta^*\|^2 \lesssim K\sigma^2/N$.

The bulk of the proof lies in establishing an appropriate version of
(\ref{eq:heuristichess}). This requires a delicate argument for large
$K$, as naive uniform concentration and Lipschitz bounds for
$\nabla^2 R_N(\theta) \in \R^{2K \times 2K}$ fail to establish
(\ref{eq:heuristichess}) in the full ranges of $\sigma^2$ and
$N$ that are specified by Theorem \ref{thm-mle}.
In the remainder of this section, we describe the components of this
argument, deferring detailed proofs to Appendix \ref{appendix:MLE}.

\subsection{Gradient and Hessian of the log-likelihood}

To simplify the model, observe that each sample $y^{(m)}$
satisfies the equality in law
\[y^{(m)}=g(\alpha^{(m)}) \cdot \theta^*+\sigma \eps^{(m)}
\overset{L}{=}g(\alpha^{(m)}) \cdot (\theta^*+\sigma \eps^{(m)}).\]
Furthermore, $g(\alpha^{(m)})^{-1} g(\alpha)=g(\alpha-\alpha^{(m)})$
where, if $\alpha \sim \Unif([-\pi,\pi))$ is a uniformly random rotation, then
$\alpha-\alpha^{(m)}$ is also uniformly random for any fixed $\alpha^{(m)}$.
Applying these observations to the form (\ref{eq:ll})
of the log-likelihood function, we obtain the equality in law for
the negative log-likelihood process
\begin{equation}\label{eq:RNequalinlaw}
\Big\{R_N(\theta):\theta \in \R^{2K}\Big\}
\overset{L}{=} \left\{-\frac{1}{N}\sum_{m=1}^N l(\theta,\,\theta^*+\sigma
\eps^{(m)}):\theta \in \R^{2K}\right\}.
\end{equation}
That is to say, having defined the log-likelihood function to marginalize over
a uniformly
random latent rotation, the distribution of $\{R_N(\theta):\theta \in \R^{2K}\}$
is the same under the model
$y^{(m)}=g(\alpha^{(m)}) \cdot \theta^*+\sigma \eps^{(m)}
\sim p_{\theta^*}$ as under a model
$y^{(m)}=\theta^*+\sigma \eps^{(m)}$ without latent rotations.
Thus, in the analysis, we will henceforth assume the simpler model
\begin{equation}\label{eq:simplifyy}
y^{(m)}=\theta^*+\sigma \eps^{(m)} \text{ for } m=1,\ldots,N,
\qquad \eps^{(1)},\ldots,\eps^{(N)} \overset{\iid}{\sim} \mathcal{N}(0,I_{2K}).
\end{equation}

Under this model (\ref{eq:simplifyy}), expanding the square in the exponent of
(\ref{eq:ll}), $R_N(\theta)$ may be written as
\begin{align}
R_N(\theta)&=\frac{1}{N}\sum_{m=1}^N K\log 2\pi \sigma^2
+\frac{\|\theta\|^2}{2\sigma^2}+\frac{\|\theta^*+\sigma \eps^{(m)}\|^2}
{2\sigma^2}\nonumber\\
&\hspace{1in}
-\log\left[\frac{1}{2\pi}\int_{-\pi}^\pi
\exp\left(\frac{\langle \theta^*+\sigma \eps^{(m)},
g(\alpha)\cdot\theta\rangle}{\sigma^2}\right)d\alpha\right].\label{eq:llexpanded}
\end{align}
Given $\theta,\eps \in \R^{2K}$, define $\cP_{\theta,\eps}$ to be the tilted
probability law over angles $\alpha \in \cA$ with density
\begin{equation}\label{eq:Pthetaeps}
\frac{d\cP_{\theta,\eps}(\alpha)}{d\alpha}
=\exp\left(\frac{\langle \theta^*+\sigma \eps,
\,g(\alpha) \cdot \theta \rangle}{\sigma^2}\right)\Bigg/
\int_{-\pi}^\pi \exp\left(\frac{\langle \theta^*+\sigma \eps,
\,g(\alpha) \cdot \theta \rangle}{\sigma^2}\right)\,d\alpha.
\end{equation}
Then direct computation shows that the gradient and Hessian of $R_N(\theta)$
take the forms
\begin{align}
\nabla R_N(\theta)&=\frac{\theta}{\sigma^2}-\frac{1}{N}\sum_{m=1}^N
\frac{1}{\sigma^2}
\E_{\alpha \sim \cP_{\theta,\eps^{(m)}}}\Big[g(\alpha)^{-1}(\theta^*+\sigma
\eps^{(m)})\Big]\label{eq:grad}\\
\nabla^2 R_N(\theta)&=\frac{1}{\sigma^2}I-\frac{1}{N}\sum_{m=1}^N
\frac{1}{\sigma^4}\Cov_{\alpha \sim \cP_{\theta,\eps^{(m)}}}
\Big[g(\alpha)^{-1}(\theta^*+\sigma \eps^{(m)})\Big]\label{eq:hess}
\end{align}
where the expectation and covariance are over the random rotation $\alpha \sim
\cP_{\theta,\eps^{(m)}}$ (conditional on $\eps^{(m)}$) following the above law.

\subsection{Tail bound}
\label{section-5.2}

As a first step of the proof, we fix a small constant $\delta_1 \in (0,1)$ to be
determined, and define the domain
\begin{equation}\label{eq:Bdelta}
\cB(\delta_1)=\left\{\theta:\|\theta-\theta^*\| \leq \delta_1 
\|\theta^*\|\right\} \subset \R^{2K}.
\end{equation}
We first establish the following lemma, which shows that $\hat{\theta}^\MLE$
belongs to this domain $\cB(\delta_1)$ with high probability, and provides also
an upper bound for the fourth moment of $\hat{\theta}^{\text{MLE}}$.

\begin{Lemma}
\label{lemma-highprob-bound}
\revise{Suppose that Assumption \ref{assump:gen} holds.}
Fix any constant $\delta_1>0$,
and define $\cB(\delta_1)$ by (\ref{eq:Bdelta}).
Then there exist constants $C_0,C_1,C',c'>0$ depending only on
$c_\gen,\delta_1$ such that if \revise{$\sigma^2 \leq
\frac{\|\theta^*\|^2}{C_1\log K}$} and $N \geq C_0K$, then 
\begin{align}
\P\left[\hat{\theta}^{\mathrm{MLE}} \in \cB(\delta_1)\right]
&\geq 1-e^{-c'N(\log K)^2/K},\label{eq:MLElocalization}\\
\E[\|\hat{\theta}^{\mathrm{MLE}}\|^4] &\leq C'\|\theta^*\|^4.
\label{eq:MLE4thmoment}
\end{align}
\end{Lemma}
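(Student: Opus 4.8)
The plan is to combine a deterministic coercivity bound for the negative log-likelihood with a high-probability lower bound for its increment over a bounded ``bad'' region. Throughout I work in the reduced model (\ref{eq:simplifyy}), write $\bar y=\frac1N\sum_{m=1}^N\|y^{(m)}\|$, and use the expansion (\ref{eq:llexpanded}) in the form $R_N(\theta)-R_N(\theta^*)=\frac{\|\theta\|^2-\|\theta^*\|^2}{2\sigma^2}+\frac1N\sum_{m=1}^N\big(\lambda(\theta^*,y^{(m)})-\lambda(\theta,y^{(m)})\big)$, where $\lambda(\theta,y)=\log\frac1{2\pi}\int_{-\pi}^\pi\exp(\langle y,g(\alpha)\cdot\theta\rangle/\sigma^2)\,d\alpha$. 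Since the Fourier blocks of $g(\alpha)$ average to $0$ over $\alpha$, Jensen's inequality gives $\lambda(\theta^*,y)\ge 0$, while Cauchy--Schwarz gives $\lambda(\theta,y)\le\max_\alpha\langle y,g(\alpha)\cdot\theta\rangle/\sigma^2\le\|y\|\,\|\theta\|/\sigma^2$; hence $R_N(\theta)-R_N(\theta^*)\ge\frac1{2\sigma^2}\big(\|\theta\|^2-2\bar y\|\theta\|-\|\theta^*\|^2\big)$, which is strictly positive whenever $\|\theta\|>2\bar y+\|\theta^*\|$. As $R_N$ is continuous and coercive its minimizer exists, so $\|\hat\theta^\MLE\|\le 2\bar y+2\|\theta^*\|$ almost surely. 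Standard $\chi^2$ and sub-exponential concentration of $\frac1N\sum_m\|y^{(m)}\|^2$, whose mean is $\|\theta^*\|^2+2K\sigma^2$, then gives $\|\hat\theta^\MLE\|\le\rho:=C_2\sqrt K(r+\sigma)$ on an event of probability at least $1-e^{-cN}$, while $\E[\bar y^{2p}]\le C_p\,(\|\theta^*\|^2+K\sigma^2)^p$ for every fixed $p$ (by Jensen and $\E\|\eps^{(m)}\|^{2p}\lesssim_p K^p$).

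For (\ref{eq:MLElocalization}) it now suffices, using the previous paragraph and the equivalence $\hat\theta^\MLE\in\cB(\delta_1)\iff L(\hat\theta^\MLE,\theta^*)\le\delta_1^2\|\theta^*\|^2$, to show that with the stated probability $R_N(\theta)>R_N(\theta^*)$ for every $\theta$ in the bounded set $\mathcal{U}=\{\theta:\|\theta\|\le\rho,\ L(\theta,\theta^*)\ge\delta_1^2\|\theta^*\|^2\}$. I would lower-bound $R_N(\theta)-R_N(\theta^*)$ by its mean, which is exactly $D_{\KL}(p_{\theta^*}\|p_\theta)$, plus a uniform fluctuation term. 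For the mean, Proposition \ref{prop-lossgeneral} shows that on $\mathcal{U}$ either the power-spectrum discrepancy $\sum_k(r_k(\theta)-r_k(\theta^*))^2$ or the phase discrepancy $\inf_\alpha\sum_k r_k(\theta)r_k(\theta^*)|\phi_k(\theta)-\phi_k(\theta^*)+k\alpha|_\cA^2$ is $\gtrsim\delta_1^2\|\theta^*\|^2\asymp\delta_1^2Kr^2$; a Laplace/saddle-point expansion of $\lambda(\theta,y)$ around the maximizer of $\alpha\mapsto\langle y,g(\alpha)\cdot\theta\rangle$ (legitimate since this function is sharply peaked with curvature of order $\sum_k k^2 r_k(\theta)|\tilde y_k|$, comparable for $\theta$ and $\theta^*$ because all $r_k\asymp r$) reduces $D_{\KL}(p_{\theta^*}\|p_\theta)$, to leading order, to a sum of per-frequency Rician log-likelihood-ratio terms controlling the magnitudes together with a bispectrum term $\asymp\frac1{\sigma^2}\inf_\alpha\sum_k r_k(\theta)r_k(\theta^*)|\phi_k(\theta)-\phi_k(\theta^*)+k\alpha|_\cA^2$ controlling the phases; both are non-negative, and whichever discrepancy dominates produces a contribution $\gtrsim\delta_1^2Kr^2/\sigma^2$. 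Because $Kr^2/\sigma^2\ge C_1\log K$ with $C_1$ chosen large in terms of $\clower,\cupper,\delta_1$, this exceeds by a fixed constant factor the lower-order corrections of the expansion, which are of logarithmic order $\log(Kr^2/\sigma^2)$. Finally the fluctuation $\sup_{\theta\in\mathcal{U}}\big|\frac1N\sum_m\{(\lambda(\theta^*,y^{(m)})-\lambda(\theta,y^{(m)}))-\E[\,\cdot\,]\}\big|$ is handled by a chaining argument over $\mathcal{U}$ using the Lipschitz estimate $|\lambda(\theta,y)-\lambda(\theta',y)|\le\|y\|\,\|\theta-\theta'\|/\sigma^2$ and Bernstein's inequality for the increments, with a dyadic peeling over the shells $L(\theta,\theta^*)\asymp 2^j\delta_1^2\|\theta^*\|^2$ so that the fluctuation stays below the gap on each shell; the union bound over shells then yields the probability $e^{-c'N(\log K)^2/K}$.

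For (\ref{eq:MLE4thmoment}), split $\E[\|\hat\theta^\MLE\|^4]=\E[\|\hat\theta^\MLE\|^4\1_{\hat\theta^\MLE\in\cB(\delta_1)}]+\E[\|\hat\theta^\MLE\|^4\1_{\hat\theta^\MLE\notin\cB(\delta_1)}]$. On $\cB(\delta_1)$ we have $\|\hat\theta^\MLE\|\le(1+\delta_1)\|\theta^*\|$, so the first term is at most $(1+\delta_1)^4\|\theta^*\|^4$. For the second term, use $\|\hat\theta^\MLE\|\le 2\bar y+2\|\theta^*\|$ together with Cauchy--Schwarz to bound it by $\big(\E[(2\bar y+2\|\theta^*\|)^8]\big)^{1/2}\big(\P[\hat\theta^\MLE\notin\cB(\delta_1)]\big)^{1/2}\lesssim(\|\theta^*\|^4+K^2\sigma^4)\,e^{-c'N(\log K)^2/(2K)}$. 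Since $\sigma^2\le Kr^2/(C_1\log K)$ gives $K^2\sigma^4\lesssim\|\theta^*\|^4(K/\log K)^2$, and since $N\ge C_0K$ makes $N(\log K)^2/K\gtrsim(\log K)^2$, which is much larger than $\log(K/\log K)$, the exponential absorbs the polynomial-in-$K$ factor and the second term is also $\lesssim\|\theta^*\|^4$.

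The main obstacle is the middle step: extracting the power-spectrum and bispectrum ``signal'' terms from the marginalized log-likelihood with sharp enough constants to beat the logarithmic corrections of the Laplace expansion \emph{and} the fluctuation of the empirical process, uniformly over the bad region and with only $N\gtrsim K$ samples. This is precisely where the peaked structure of the tilted law $\cP_{\theta,\eps}$, the per-frequency Rician bookkeeping of the magnitudes, and the hypothesis $Kr^2/\sigma^2\gtrsim C_1\log K$ with $C_1$ large are used.
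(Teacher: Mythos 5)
Your overall architecture (an a priori norm bound for $\hat\theta^{\MLE}$, a comparison of $R_N(\theta)$ with $R_N(\theta^*)$ over the "bad" region driven by the population gap $D_{\KL}(p_{\theta^*}\|p_\theta)$ plus a uniform fluctuation term, then the fourth moment by Cauchy--Schwarz) parallels the paper's shell argument, and your coercivity step and the cancellation of the $\|y^{(m)}\|^2$ terms in $R_N(\theta)-R_N(\theta^*)$ are fine. But the two central steps are gaps. First, the population-gap lower bound is only asserted: you claim that a Laplace/saddle-point expansion reduces $D_{\KL}(p_{\theta^*}\|p_\theta)$ to nonnegative per-frequency Rician terms plus a bispectrum-type phase term with constant $\asymp 1/\sigma^2$, and that the corrections are "of logarithmic order $\log(Kr^2/\sigma^2)$". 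This is exactly the heart of the lemma and it is not carried out; moreover the size claim is wrong as stated, since upper bounding the $\theta$-marginal by its supremum over $\alpha$ inevitably produces a correction $\E\sup_\alpha \langle \eps, g(\alpha)\cdot\theta\rangle/\sigma \asymp (\|\theta^*\|+\|\theta\|)\sqrt{\log K}/\sigma \asymp \sqrt{Kr^2\log K}/\sigma$, which is polynomially large (it is only dominated by $\delta_1^2Kr^2/\sigma^2$ because $Kr^2/\sigma^2 \geq C_1\log K$ with $C_1$ large in terms of $\delta_1$). The paper's Lemma \ref{lemma-KL-lower} gets the needed bound much more directly---a local Gaussian lower bound for the $\theta^*$ integral on $|\alpha|\lesssim 1/K$ via (\ref{eq:alpha0Taylor}), a sup-over-$\alpha$ upper bound for the $\theta$ integral, and the Gaussian-process bound of Lemma \ref{lemma-infor-1}---yielding $D_{\KL} \geq \min_\alpha\|\theta^*-g(\alpha)\theta\|^2/(2\sigma^2) - \tfrac12\log(C_2K^3r^2/\sigma^2) - C_3(\|\theta^*\|+\|\theta\|)\sqrt{\log K}/\sigma$, with no Rician/bispectrum decomposition needed.

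Second, your uniform fluctuation control does not close under the lemma's hypothesis $N \geq C_0K$. Using the Lipschitz-in-$\theta$ bound $|\lambda(\theta,y)-\lambda(\theta',y)| \leq \|y\|\,\|\theta-\theta'\|/\sigma^2$ with Bernstein/chaining, the per-unit-$\theta$-distance increment scale is $\asymp \sqrt{K}(r+\sigma)/\sigma^2$, so the sup of the centered process over a shell of diameter $\asymp 2^{j/2}\delta_1\sqrt{K}r$ is of order $2^{j/2}\delta_1 K^{3/2} r(r+\sigma)/(\sigma^2\sqrt{N})$; requiring this to lie below the gap $\asymp 2^j\delta_1^2Kr^2/\sigma^2$ forces $N \gtrsim K(1+\sigma^2/r^2)$, which under $\sigma^2 \leq Kr^2/(C_1\log K)$ can be as large as $K^2/\log K$, far beyond $C_0K$. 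The paper avoids this loss by concentrating in $\eps$ rather than in $\theta$: the log-marginal term is $\|\theta\|/\sigma$-Lipschitz in $\eps$, so Gaussian concentration gives a per-point exponent $\asymp N\sigma^2 t^2/\|\theta\|^2 \asymp n^2 N Kr^2/\sigma^2 \geq c n^2 C_1 N\log K$ independently of $\sigma^2/r^2$, and the $\theta$-Lipschitz constant enters only through the (logarithm of the) cardinality of a $\theta$-net, which is beaten once $N \geq C_0K$ (cf.\ Lemma \ref{lemma-bounded-norm-start}). Unless you replace your Bernstein-on-increments chaining with this kind of dimension-free concentration in $\eps$, the argument establishes the conclusion only under a substantially stronger sample-size assumption than the lemma states.
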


To show this lemma, define the population negative log-likelihood
$R(\theta)=\E_{\theta^*}[R_N(\theta)]$,
where the equality in law (\ref{eq:RNequalinlaw})
allows us to evaluate the expectation under the simplified model
(\ref{eq:simplifyy}). Then the KL-divergence
between $p_{\theta^*}$ and $p_\theta$ is given by
\begin{equation}\label{eq:DKL}
D_{\KL}(p_{\theta^*}\|p_\theta)=R(\theta)-R(\theta^*)
=\E_{\theta^*}[R_N(\theta)]-\E_{\theta^*}[R_N(\theta^*)].
\end{equation}
Recalling the form (\ref{eq:llexpanded}) for the negative log-likelihood
$R_N(\theta)$, we have
\begin{equation}\label{eqn-KL-lower-1} 
D_{\mathrm{KL}}(p_{\theta^*}\|p_{\theta})=
\frac{\|\theta\|^2-\|\theta^*\|^2}{2\sigma^2}+\mathrm{I}-\mathrm{II}
\end{equation}
where
\begin{align*}
\mathrm{I}&=\mathbb{E}\log\frac{1}{2\pi}\int_{-\pi}^\pi
\exp\left(\frac{\langle \theta^*+\sigma \eps,
g(\alpha)\cdot\theta^*\rangle}{\sigma^2}\right)d\alpha\\
\mathrm{II}&=\mathbb{E}\log\frac{1}{2\pi}\int_{-\pi}^\pi
\exp\left(\frac{\langle \theta^*+\sigma \eps,
g(\alpha)\cdot\theta\rangle}{\sigma^2}\right)d\alpha
\end{align*}
and both expectations are over $\eps \sim \mathcal{N}(0,I_{2K})$.

\revise{For sufficiently small $|\alpha|$,}
we may apply a quadratic Taylor expansion of
$\langle \theta^*,\,g(\alpha) \cdot \theta^* \rangle=\sum_k r_k(\theta^*)^2 \cos
k\alpha$ around $\alpha=0$, to write
\begin{equation}\label{eq:alpha0Taylor}
\langle \theta^*,\,g(\alpha) \cdot \theta^* \rangle
-\|\theta^*\|^2 \approx -\sum_{k=1}^K r_k(\theta^*)^2 \cdot
\frac{k^2\alpha^2}{2} \asymp \revise{-K^2\|\theta^*\|^2\alpha^2}
\end{equation}
\revise{where this last approximation holds under Assumption
\ref{assump:gen}.}
Then $\int \exp(\theta^*,g(\alpha) \cdot \theta^*/\sigma^2)\,d\alpha$ 
in $\mathrm{I}$ may be approximated by a Gaussian integral over $\alpha \in \R$.
Upper bounding $\mathrm{II}$
by the supremum over $\alpha$, and applying a standard covering net argument to
control the suprema of the Gaussian processes $\langle \eps,\,g(\alpha) \cdot
\theta^* \rangle$ and $\langle \eps,\,g(\alpha) \cdot \theta \rangle$,
we obtain the following lower bound on the KL-divergence.

\begin{Lemma}\label{lemma-KL-lower}
\revise{Suppose Assumption \ref{assump:gen} holds, and $\sigma^2 \leq
\|\theta^*\|^2$.} Then there are constants $C_2,C_3>0$ depending only
on $c_\gen$ such that for any $\theta\in\mathbb{R}^{2K}$,
\[D_{\mathrm{KL}}(p_{\theta^*}\|p_{\theta}) \geq \frac{\min_{\alpha \in \cA}
\|\theta^*-g(\alpha)\cdot\theta\|^2}{2\sigma^2}
-\frac{1}{2}\log\revise{\left(\frac{C_2K^2\|\theta^*\|^2}{\sigma^2}\right)}
-\frac{C_3(\|\theta^*\|+\|\theta\|)}{\sigma}\cdot \sqrt{\log K}.\]
\end{Lemma}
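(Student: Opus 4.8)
The plan is to prove Lemma~\ref{lemma-KL-lower} by working from the exact decomposition $D_{\mathrm{KL}}(p_{\theta^*}\|p_{\theta})=\frac{\|\theta\|^2-\|\theta^*\|^2}{2\sigma^2}+\mathrm{I}-\mathrm{II}$ in (\ref{eqn-KL-lower-1}), bounding $\mathrm{I}$ from below and $\mathrm{II}$ from above, and then recombining. First I would handle $\mathrm{I}$. Splitting $\langle\theta^*+\sigma\eps,g(\alpha)\cdot\theta^*\rangle=\langle\theta^*,g(\alpha)\cdot\theta^*\rangle+\sigma\langle\eps,g(\alpha)\cdot\theta^*\rangle$, the deterministic part equals $\sum_k r_k(\theta^*)^2\cos k\alpha$; using $\cos k\alpha\le 1-ck^2|\alpha|_{\cA}^2$ near $\alpha=0$ (valid since $\clower r\le r_k\le\cupper r$) I would lower-bound $\frac1{2\pi}\int_{-\pi}^\pi\exp(\langle\theta^*,g(\alpha)\cdot\theta^*\rangle/\sigma^2)\,d\alpha$ by restricting to a small interval around $0$ and comparing to a Gaussian integral of width $\asymp\sigma/(r\sqrt{K^3})=\sigma/(rK^{3/2})$. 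This gives a factor on the order of $\exp(\|\theta^*\|^2/\sigma^2)\cdot\sigma/(rK^{3/2})$, whose logarithm contributes $\|\theta^*\|^2/\sigma^2-\frac12\log(C_2K^3r^2/\sigma^2)$ — hence the second term in the bound. The Gaussian noise term in $\mathrm{I}$ is mean-zero after the $\E$ by Jensen in the right direction? Not quite; I would instead note $\mathrm{I}\ge \E\log\frac{1}{2\pi}\int_{I_\delta}\exp(\cdots)\,d\alpha$ and control the noise over the short interval $I_\delta$ by a uniform bound $\sup_\alpha|\langle\eps,g(\alpha)\cdot\theta^*\rangle|$, which is a Gaussian process with increments controlled by $\|\theta^*\|$ and index set of metric dimension $1$, so by Dudley/chaining $\E\sup\lesssim\|\theta^*\|\sqrt{\log K}$ (the $\log K$ coming from the discretization scale needed, since the process is $K$-Lipschitz in $\alpha$). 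This yields the $-C_3\|\theta^*\|\sqrt{\log K}/\sigma$ piece.

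Next I would bound $\mathrm{II}$ from above. The crude but sufficient step is to replace the integral over $\alpha$ by its supremum: $\frac1{2\pi}\int_{-\pi}^\pi\exp(\langle\theta^*+\sigma\eps,g(\alpha)\cdot\theta\rangle/\sigma^2)\,d\alpha\le \sup_{\alpha\in\cA}\exp(\langle\theta^*+\sigma\eps,g(\alpha)\cdot\theta\rangle/\sigma^2)$, so $\mathrm{II}\le\frac{1}{\sigma^2}\E\sup_\alpha\langle\theta^*,g(\alpha)\cdot\theta\rangle+\frac1\sigma\E\sup_\alpha\langle\eps,g(\alpha)\cdot\theta\rangle$. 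For the first term, $\sup_\alpha\langle\theta^*,g(\alpha)\cdot\theta\rangle\le\sup_\alpha\|\theta^*\|\,\|g(\alpha)\cdot\theta\|=\|\theta^*\|\,\|\theta\|$, but that is too lossy; instead I would keep $\langle\theta^*,g(\alpha)\cdot\theta\rangle=\|\theta^*\|^2-\frac12\|\theta^*-g(\alpha)\cdot\theta\|^2+\frac12(\|\theta\|^2-\|\theta^*\|^2)$ as an exact identity, and take $\inf_\alpha$ inside: $\sup_\alpha\langle\theta^*,g(\alpha)\cdot\theta\rangle=\|\theta^*\|^2+\frac12(\|\theta\|^2-\|\theta^*\|^2)-\frac12\inf_\alpha\|\theta^*-g(\alpha)\cdot\theta\|^2$. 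Plugging this into $\mathrm{II}$ and combining with the $\frac{\|\theta\|^2-\|\theta^*\|^2}{2\sigma^2}$ prefactor and with $\mathrm{I}$, the $\|\theta\|^2$ and $\|\theta^*\|^2$ terms cancel in exactly the right way, leaving $D_{\mathrm{KL}}\ge \frac{1}{2\sigma^2}\inf_\alpha\|\theta^*-g(\alpha)\cdot\theta\|^2$ minus the two error terms already identified, plus the noise term from $\mathrm{II}$. For that last noise term, $\alpha\mapsto\langle\eps,g(\alpha)\cdot\theta\rangle$ is again a Gaussian process, $K$-Lipschitz with variance proxy $\|\theta\|^2$, so $\E\sup_\alpha\langle\eps,g(\alpha)\cdot\theta\rangle\lesssim\|\theta\|\sqrt{\log K}$; dividing by $\sigma$ gives the $-C_3\|\theta\|\sqrt{\log K}/\sigma$ contribution, completing the stated bound after collecting $\|\theta^*\|+\|\theta\|$.

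The main obstacle I anticipate is getting the constant in the Gaussian-integral lower bound for $\mathrm{I}$ right while simultaneously controlling the noise fluctuation \emph{on the short interval} $I_\delta$ rather than on all of $\cA$ — these two are in tension, since shrinking $I_\delta$ sharpens the deterministic Laplace approximation but the chaining bound must be uniform over whatever $I_\delta$ one picks, and one must check the noise does not destroy the $\exp(-\text{const}\cdot K^3 r^2\alpha^2/\sigma^2)$ concentration. Concretely, on $I_\delta=\{|\alpha|_\cA\le\delta\sigma/(rK^{3/2})\}$ the noise term $\sigma\langle\eps,g(\alpha)\cdot\theta^*\rangle/\sigma^2$ has fluctuation of order $(\|\theta^*\|/\sigma)\cdot(\delta\sigma/(rK^{3/2}))\cdot K=\delta\|\theta^*\|/(rK^{1/2})$ times a standard Gaussian plus its chaining correction $\lesssim(\|\theta^*\|/\sigma)\cdot$(increment)$\cdot\sqrt{\log}$, and one has to verify this is dominated by $\|\theta^*\|\sqrt{\log K}/\sigma$ rather than something worse; a clean way is to just bound the fluctuation of $\langle\eps,g(\alpha)\cdot\theta^*\rangle$ over the whole circle $\cA$ by $\|\theta^*\|\sqrt{\log K}$ (up to constants, via a net of size $\mathrm{poly}(K)$ at scale $1/K$ together with the $K$-Lipschitz property), absorbing it into the error term and thereby decoupling the two estimates. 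I would also need the hypothesis $\sigma^2\le Kr^2$ precisely to ensure $\log(C_2K^3r^2/\sigma^2)\ge\log(C_2K^2)>0$, so that this term is genuinely a lower-order loss and the Laplace approximation interval $I_\delta$ has length $\le$ a constant times $\sqrt{1/K}<\pi$, keeping it inside $\cA$; I would flag and use this at the appropriate point.
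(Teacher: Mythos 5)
Your proposal follows essentially the same route as the paper's proof: the same decomposition (\ref{eqn-KL-lower-1}), bounding $\mathrm{II}$ by the supremum over $\alpha$ together with the covering-net bound $\E\sup_{\alpha}|\langle \eps,g(\alpha)\cdot\theta\rangle|\lesssim \|\theta\|\sqrt{\log K}$, lower-bounding $\mathrm{I}$ by a Laplace/Gaussian approximation on a short interval around $\alpha=0$ with the noise pulled out via the same sup bound (this is exactly the paper's Lemmas \ref{lemma-infor-1} and \ref{lemma-infor-2}), and recombining through $\min_{\alpha}\|\theta^*-g(\alpha)\cdot\theta\|^2=\|\theta^*\|^2+\|\theta\|^2-2\sup_{\alpha}\langle\theta^*,g(\alpha)\cdot\theta\rangle$, with $\sigma^2\le Kr^2$ used in the same way to keep the Gaussian integral comparable to the full one. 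One small slip: for the lower bound on $\mathrm{I}$ you need $\cos k\alpha\ge 1-k^2\alpha^2/2$ (which holds for all $\alpha$), not the reversed inequality you wrote, but your described Gaussian-integral bound makes clear this is what you intended.
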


Comparing this with the rate of uniform concentration of the
negative log-likelihood $R_N(\theta)$ around its mean $R(\theta)$
(cf.\ Lemma \ref{lemma-bounded-norm-start}), we obtain an exponential
tail bound for the probability of the event
\[\|\theta^*-\hat{\theta}^\MLE\| \in
\big[n\delta_1\|\theta^*\|,(n+1)\delta_1\|\theta^*\|\big]\]
for each integer $n \geq 1$. Summing this bound over all $n \geq 1$ yields Lemma
\ref{lemma-highprob-bound}.

\subsection{Lower bound for the information matrix}\label{sec:localanalysis}

In light of Lemma \ref{lemma-highprob-bound}, to show (\ref{eq:heuristichess})
with high probability, it suffices to establish a version of the lower bound
\begin{equation}\label{eq:heuristichesslower}
\nabla^2 R_N(\theta) \gtrsim \frac{1}{\sigma^2} \cdot I
\qquad \text{ uniformly over } \theta \in \cB(\delta_1).
\end{equation}
Denote the tangent vector to the rotational orbit
$\{g(\alpha) \cdot \theta^*: \alpha \in \cA\}$ at $\theta^*$ by
\begin{equation}\label{eq:ustar}
u^*=\frac{d}{d\alpha} g(\alpha) \cdot \theta^*\bigg|_{\alpha=0}
=g'(0) \cdot \theta^*.
\end{equation}
From the rotational invariance of $R(\theta)$, it
is easy to see that the expected (Fisher) information matrix
$\E[\nabla^2 R_N(\theta^*)]=\nabla^2 R(\theta^*)$
must be singular, with $u^*$ belonging to its kernel. Thus we
cannot expect the bound (\ref{eq:heuristichesslower}) to hold in all directions
of $\R^{2K}$, but only in those directions orthogonal to $u^*$. This will
suffice to show (\ref{eq:heuristichess}), because we will check that
choosing $\hat{\theta}^\MLE$ to satisfy (\ref{eq:MLEversion})
also ensures $\hat{\theta}^\MLE-\theta^*$ is orthogonal to $u^*$.
The statement (\ref{eq:heuristichesslower}) restricted to directions orthogonal
to $u^*$ is formalized in the following lemma.

\begin{Lemma}\label{lemma-delta-A}
\revise{Suppose Assumption \ref{assump:gen} holds.}
Fix any constant $\eta>0$.
There exist constants $C_0,C_1,\delta_1,c>0$ depending only on
$c_\gen,\eta$ such that if \revise{$\sigma^2 \leq \frac{\|\theta^*\|^2}{C_1\log
K}$ and $N \geq C_0K(1+\frac{K\sigma^2}{\|\theta^*\|^2})\log
(K+\frac{\|\theta^*\|^2}{\sigma^2})$},
then with probability at
least $1-e^{-\frac{cN}{(1+K\sigma^2/\|\theta^*\|^2)^2}}$, the following holds:
For every $\theta \in \cB(\delta_1)$ and every
unit vector $v \in \R^{2K}$ satisfying $\langle u^*,v \rangle=0$,
\[v^\top \nabla^2 R_N(\theta) v \geq \frac{1-\eta}{\sigma^2}.\]
\end{Lemma}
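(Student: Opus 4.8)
From the Hessian formula (\ref{eq:hess}), working under the reduced model (\ref{eq:simplifyy}), for any unit vector $v$ we have
\[
v^\top \nabla^2 R_N(\theta)\,v=\frac{1}{\sigma^2}-\frac{1}{N\sigma^4}\sum_{m=1}^N \Var_{\alpha\sim\cP_{\theta,\eps^{(m)}}}\!\Big[\big\langle v,\,g(-\alpha)(\theta^*+\sigma\eps^{(m)})\big\rangle\Big],
\]
so it suffices to show that, with the stated probability, uniformly over $\theta\in\cB(\delta_1)$ and over unit $v$ with $\langle u^*,v\rangle=0$, the average of these posterior variances is at most $\eta\sigma^2$. (For the application to (\ref{eq:heuristichess}), one checks using antisymmetry of $D:=g'(0)$ that the rotation of $\hat\theta^{\mathrm{MLE}}$ fixed by (\ref{eq:MLEversion}) satisfies $\langle u^*,\hat\theta^{\mathrm{MLE}}-\theta^*\rangle=0$ exactly, so restricting to $v\perp u^*$ loses nothing.) The crucial device is: fixing $m$, writing $w=\theta^*+\sigma\eps^{(m)}$ and $h(\alpha)=\langle v,g(-\alpha)w\rangle$, Taylor expansion about $\alpha=0$ gives $h(\alpha)-h(0)=\alpha\langle Dv,w\rangle+\rho(\alpha)$ with $\rho(0)=\rho'(0)=0$ and $|\rho''|\lesssim\|D^2v\|\,\|w\|$, and since $u^*=D\theta^*$ and $\langle u^*,v\rangle=0$,
\[
\langle Dv,w\rangle=-\langle v,u^*\rangle+\sigma\langle Dv,\eps^{(m)}\rangle=\sigma\langle Dv,\eps^{(m)}\rangle .
\]
Thus the $\alpha$-linear coefficient is of order $\sigma$ rather than $1$, and $\Var_{\cP_{\theta,\eps^{(m)}}}[h]\le 2\sigma^2\langle Dv,\eps^{(m)}\rangle^2\Var_{\cP_{\theta,\eps^{(m)}}}[\alpha]+2\Var_{\cP_{\theta,\eps^{(m)}}}[\rho(\alpha)]$.

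\textbf{Concentration of the posterior.} The structural input is that, uniformly over $\theta\in\cB(\delta_1)$ with $\delta_1$ a small constant, the potential $V(\alpha)=\langle\theta^*+\sigma\eps^{(m)},g(\alpha)\theta\rangle$ is sharply peaked near $\alpha=0$. For the signal part, $\langle\theta^*,g(\alpha)\theta\rangle\le\langle\theta^*,g(\alpha)\theta^*\rangle+\delta_1\|\theta^*\|^2$ and $\langle\theta^*,g(\alpha)\theta^*\rangle=\sum_k r_k^2\cos k\alpha$ has curvature of order $\sum_k k^2r_k^2\asymp K^3r^2$ near $0$ and drops by order $Kr^2$ once $|\alpha|\gtrsim 1/K$; the $\delta_1\|\theta^*\|^2$ slack does not spoil this when $\delta_1$ is small. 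The noise part $\sigma\langle\eps^{(m)},g(\alpha)\theta\rangle$ is a degree-$K$ trigonometric polynomial in $\alpha$, and by a net over $\alpha$ with Bernstein's inequality for trigonometric polynomials, for each fixed $\theta$ its supremum and those of its first two $\alpha$-derivatives are, with high probability over $\eps^{(m)}$, smaller than the corresponding signal quantities by a factor $\sqrt{\sigma^2\log K/(Kr^2)}\le 1/\sqrt{C_1}$, hence a negligible perturbation once $C_1$ is large. Consequently $\cP_{\theta,\eps^{(m)}}$ concentrates at scale $\asymp\sigma/(K^{3/2}r)$ near a mode $\bar\alpha$ with $|\bar\alpha|\lesssim 1/K$, with tail probability $e^{-cKr^2/\sigma^2}$ outside $|\alpha-\bar\alpha|\lesssim 1/K$; in particular $\Var_{\cP_{\theta,\eps^{(m)}}}[\alpha]\lesssim\sigma^2/(K^3r^2)$, and higher moments of $\cP_{\theta,\eps^{(m)}}$ about $\bar\alpha$ are of the Gaussian order $(\sigma^2/(K^3r^2))^j$ up to the exponentially small tail.

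\textbf{Assembling the bound, and uniformity in $v$.} Substituting into the decomposition above, the first term contributes on average over $m$ at most $\sigma^2\cdot K^2\cdot\sigma^2/(K^3r^2)=\sigma^4/(Kr^2)\le\eta\sigma^2/4$ (using $\sigma^2\le Kr^2/(C_1\log K)$ with $C_1$ large, and $\tfrac1N\sum_m\langle Dv,\eps^{(m)}\rangle^2\le\|Dv\|^2\cdot\|\tfrac1N\sum_m\eps^{(m)}\eps^{(m)\top}\|_{\mathrm{op}}\lesssim K^2$ uniformly in $v$, which holds with probability $1-e^{-cN}$ since $N\gtrsim K$). For the $\rho$-term one expands $\rho$ about $\bar\alpha$; since $\rho$ has a double zero at $0$ and $|\bar\alpha|\lesssim 1/K$, the quadratic-and-higher part of $\rho(\alpha)-\rho(\bar\alpha)$ gives a contribution of Gaussian order, while the leading part $\rho'(\bar\alpha)(\alpha-\bar\alpha)$ is handled by a \emph{second} use of the orthogonality constraint ($\langle Dv,(g(-\bar\alpha)-I)\theta^*\rangle=-\langle(g(\bar\alpha)-I)v,u^*\rangle\lesssim|\bar\alpha|\,\|D^2\theta^*\|$) together with the split $\bar\alpha=\bar\alpha_0(\theta)+\beta^{(m)}$ into a deterministic part $\bar\alpha_0$ of order $\delta_1/K$ arising from $\theta\neq\theta^*$ and an $\eps^{(m)}$-dependent part $\beta^{(m)}$ of order $\sigma/(K^{3/2}r)$, which lets one exploit Gaussianity of $\langle Dv,\eps^{(m)}\rangle$ and $\langle D^2g(\bar\alpha_0)v,\eps^{(m)}\rangle$ when averaging; the net result is $\lesssim\delta_1^2\sigma^2$ plus terms that are $\le\eta\sigma^2/4$ in the low-noise regime. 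Choosing $\delta_1$ small (depending on $\eta,\clower,\cupper$) makes the total $\le\eta\sigma^2$ for each fixed $v$ and $\theta$; uniformity over the unit sphere $\{v:\langle u^*,v\rangle=0\}$ then follows from an $\eps$-net argument with the crude bound $\|\nabla^2R_N(\theta)\|_{\mathrm{op}}\lesssim\|\theta^*\|^2/\sigma^4+K/\sigma^2$, since $N\gtrsim K(1+\sigma^2/r^2)\log(K+Kr^2/\sigma^2)$ makes the failure probability $e^{-cN/(K(1+\sigma^2/r^2))}$ beat the net cardinality.

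\textbf{The main obstacle.} The delicate point is the uniformity over $\theta\in\cB(\delta_1)$: since the radius $\delta_1\|\theta^*\|\asymp\delta_1\sqrt K\,r$ of this ball greatly exceeds the individual magnitudes $r_k\asymp r$, the individual Fourier coefficients of $\theta$ are essentially unconstrained, and for adversarially chosen $\theta$ (depending on $\eps^{(m)}$) the posterior $\cP_{\theta,\eps^{(m)}}$ need not concentrate near $0$ at all; moreover a naive $\eps$-net over $\cB(\delta_1)$ would cost $e^{\Omega(K)}$ in the union bound, which the sample-size assumption $N\gtrsim K(1+\sigma^2/r^2)\log(K+Kr^2/\sigma^2)$ (no $K^2$) does not pay for. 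This is resolved by noting that the only $\theta$-dependent quantities are the inner products $\langle\theta^*,g(\alpha)\theta\rangle$ and $\langle\eps^{(m)},g(\alpha)\theta\rangle$, which are $1$-Lipschitz in $g(\alpha)\theta\in\R^{2K}$ and hence controllable by chaining at a coarse scale, combined with the fact that for each \emph{fixed} $\theta$ the fraction of indices $m$ for which $\eps^{(m)}$ is ``aligned'' with the $\theta$-orbit badly enough to un-concentrate $\cP_{\theta,\eps^{(m)}}$ is exponentially small, so those indices contribute negligibly to the average even after a union bound over a coarse net in $\theta$ -- and the logarithmic factor in $N$ is exactly what makes this union bound close.
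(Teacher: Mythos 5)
Your proposal follows essentially the same route as the paper's proof: reduce via the Hessian formula (\ref{eq:hess}) to bounding the averaged posterior variances by $\eta\sigma^2$; localize and Gaussian-approximate $\cP_{\theta,\eps^{(m)}}$ at scale $\sigma^2/(K^3r^2)$ around a mode within $O(1/K)$ of zero; use $\langle u^*,v\rangle=0$ twice (to kill the order-one linear coefficient and again for the $O(1/K)$ mode-shift correction); control the exceptional samples by a binomial/Chernoff count; and obtain uniformity over $(\theta,v)$ by a covering net with Lipschitz bounds, paid for by the logarithmic factor in $N$. Two points, however, would break if your sketch is read literally, even though you gesture at the fixes. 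First, the calibration of the per-sample good events: you set the noise thresholds at the \emph{typical} size, a factor $\sqrt{\sigma^2\log K/(Kr^2)}$ below the signal scale, and such events fail only with polynomially small probability in $K$. But every later step — the claim that the fraction of bad indices $m$ is exponentially small, the Chernoff bound that must beat a union bound over an $e^{O(K\log(\cdot))}$-size net, and the accounting of the bad samples' contribution $\approx p\cdot K(r^2+\sigma^2)$, which must be $\le\eta\sigma^2$ even when $\sigma\ll r$ — requires per-sample failure probability $e^{-cKr^2/\sigma^2}$. That forces the thresholds to sit at a small constant multiple of the \emph{signal} scale, i.e.\ a factor $\sqrt{Kr^2/\sigma^2}$ above the Gaussian mean of the suprema, exactly how the paper calibrates $\cF_1,\cF_2$; the Laplace approximation still goes through with $\delta_1$ small. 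Relatedly, your ``main obstacle'' is misdiagnosed: a plain poly-mesh net over $\cB(\delta_1)$ and the $v$-sphere is exactly what the paper uses, and it is affordable because the $(\theta,v)$-dependent failure event is the aggregated bad-sample count, with probability $e^{-cN/(1+\sigma^2/r^2)}$, not per-sample goodness.

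Second, your remainder bound $|\rho''|\lesssim\|D^2v\|\,\|w\|$ with $\|w\|=\|\theta^*+\sigma\eps^{(m)}\|\asymp\sqrt{K}\,(r+\sigma)$ is too lossy in the regime $\sigma\gg r$ (allowed here up to $\sigma^2\asymp Kr^2/\log K$): propagated through the window of width $O(1/K)$ and the posterior variance $\sigma^2/(K^3r^2)$, it yields a contribution of order $\delta^2\sigma^2(1+\sigma/r)^2$, which a constant choice of $\delta_1$ cannot push below $\eta\sigma^2$. As you hint at the end of your assembling step, the noise part of $f(\alpha)=v^\top g(\alpha)^{-1}(\theta^*+\sigma\eps)$ must instead be handled through the Gaussianity of its contractions against the fixed unit vector $v$, whose increments are of size $|\alpha-\beta|K$ rather than $|\alpha-\beta|K\|\eps\|$; this is precisely the role of the paper's event (\ref{eq:epsvthetaalign3}), again calibrated a factor $\sqrt{Kr^2/\sigma^2}$ above its mean so the exceptional probability is $e^{-cKr^2/\sigma^2}$. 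With these two calibrations made explicit, your argument coincides with the paper's.
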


From the form of $\nabla^2 R_N(\theta)$ in (\ref{eq:hess}), observe that
\begin{equation}\label{eq:hessexpansion}
v^\top \nabla^2 R_N(\theta)v=\frac{1}{\sigma^2}
-\frac{1}{N\sigma^4}\sum_{m=1}^N \Var_{\alpha \sim \cP_{\theta,\eps^{(m)}}}\Big[
v^\top g(\alpha)^{-1} (\theta^*+\sigma\eps^{(m)}) \Big].
\end{equation}
The proof of Lemma \ref{lemma-delta-A} is based on a refinement of the
argument in the preceding section, to approximate the distribution
$\cP_{\theta,\eps}$ in the above variance by a Gaussian law over
$\alpha$. Here, applying a separate bound to control the Gaussian process
$\sup_\alpha \langle \eps,g(\alpha) \cdot \theta \rangle$ will be too loose
to obtain the lemma. We instead perform a Taylor
expansion of $\langle \theta^*+\sigma \eps,\,g(\alpha) \cdot \theta \rangle$
around its (random, $\eps$-dependent) mode
\[\alpha_0=\argmax_\alpha 
\langle \theta^*+\sigma \eps,\,g(\alpha) \cdot \theta \rangle,\]
and combine this with the condition $\theta \in \cB(\delta_1)$
to obtain a quadratic approximation
\[\frac{\langle \theta^*+\sigma \eps,\,g(\alpha) \cdot \theta \rangle}{\sigma^2}
-\text{constant} \asymp
\revise{-\frac{K^2\|\theta^*\|^2}{\sigma^2}(\alpha-\alpha_0)^2}\]
where the constant is independent of $\alpha$. Thus, $\cP_{\theta,\eps}$ for any
$\theta \in \cB(\delta_1)$ may be
approximated by a Gaussian law with mean $\alpha_0$ and variance on the order of
$\frac{\sigma^2}{K^2\|\theta^*\|^2}$. Applying a Taylor expansion also of
$v^\top g(\alpha)^{-1}(\theta^*+\sigma \eps)$ around $\alpha=\alpha_0$,
and approximating the variance over $\alpha \sim \cP_{\theta,\eps}$ by the
variance with respect to this Gaussian law, we obtain a bound
\[\Var_{\alpha \sim \cP_{\theta,\eps}}\Big[v^\top g(\alpha)^{-1}
(\theta^*+\sigma \eps)\Big] \leq \eta \sigma^2\]
for a small constant $\eta>0$, which is sufficient to show Lemma
\ref{lemma-delta-A}.

These Taylor expansion arguments may be formalized on a high-probability
event for $\eps$, where this event is dependent on $\theta$ and $v$.
More precisely, let
\[\tilde{\theta}=(\theta_1,\ldots,\theta_K) \in \C^K,
\qquad \tilde{v}=(v_1,\ldots,v_K) \in \C^K,
\qquad \tilde{\eps}=(\eps_1,\ldots,\eps_K) \in \C^K\]
denote the complex representations of $\theta,v,\eps$ as defined in
Section \ref{sec:complexrepr}.
For each $\theta \in \cB(\delta_1)$ and unit test vector
$v \in \R^{2K}$ with $\langle u^*,v \rangle=0$, we define a
$(\theta,v)$-dependent domain $\cE(\theta,v,\delta_1) \subset \R^{2K}$ by
the four conditions
\revise{
\begin{align*}
\sup_{\alpha \in \cA} |\langle \eps,g(\alpha) \cdot \theta \rangle|
&\leq \frac{\delta_1\|\theta^*\|^2}{\sigma}\\
\sup_{\alpha \in \cA} \Big|\langle \eps,g(\alpha) \cdot v \rangle\Big|
& \leq \frac{\|\theta^*\|}{\sigma}\\
\sup_{\alpha,\alpha' \in [-\pi,\pi)}
\frac{1}{\alpha^2}
\left|\Re \sum_{k=1}^K \overline{\eps_k}e^{ik\alpha'}
\Big(e^{ik\alpha}-1-ik\alpha\Big)\theta_k\right|
&\leq \frac{\delta_1 K^2 \|\theta^*\|^2}{\sigma}\\
\sup_{\alpha,\alpha' \in [-\pi,\pi)}
\frac{1}{|\alpha-\alpha'|}
\left|\Re \sum_{k=1}^K \overline{\eps_k}\Big(e^{ik\alpha}-e^{ik\alpha'}\Big)
v_k\right| &\leq \frac{\delta_1K \|\theta^*\|}{\sigma}
\end{align*}}
The following deterministic lemma holds on the event
that $\eps \in \cE(\theta,v,\delta_1)$.

\begin{Lemma}\label{lemma-infor-3}
\revise{Suppose Assumption \ref{assump:gen} holds.}
Fix any $\eta>0$. There exist constants $C_1,\delta_1>0$ depending only on
$c_\gen,\eta$ such that if \revise{$\sigma^2 \leq \frac{\|\theta^*\|^2}{C_1\log
K}$,}
then the following holds: For any $\theta \in \cB(\delta_1)$, any unit vector
$v \in \R^{2K}$ satisfying $\langle u^*,v \rangle=0$, and any (deterministic)
$\eps \in \cE(\theta,v,\delta_1)$,
\begin{align}
\Var_{\alpha \sim \cP_{\theta,\eps}}\Big[v^\top
g(\alpha)^{-1}(\theta^*+\sigma \eps)\Big] &\leq \eta\sigma^2.
\label{eq:varbound}
\end{align}
\end{Lemma}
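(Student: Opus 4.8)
The plan is to make rigorous the heuristic quadratic approximation of the tilted density $\cP_{\theta,\eps}$ sketched in the text, and then to estimate the variance of $v^\top g(\alpha)^{-1}(\theta^*+\sigma\eps)$ via a second-order Taylor expansion of the integrand. Write the exponent of $\cP_{\theta,\eps}$ as $\sigma^{-2} h(\alpha)$ with $h(\alpha)=\langle \theta^*+\sigma\eps,\,g(\alpha)\cdot\theta\rangle = \Re\sum_k \overline{(\theta^*_k+\sigma\eps_k)}\,e^{ik\alpha}\theta_k$, and let $\alpha_0=\argmax_\alpha h(\alpha)$. First I would use $\theta\in\cB(\delta_1)$ together with the first and third defining conditions of $\cE(\theta,v,\delta_1)$ to show that $h$ is strongly concave near $\alpha_0$: expanding $h(\alpha)-h(\alpha_0)$ and separating the $\theta^*$-part (which contributes $-\tfrac12\sum_k r_k(\theta^*)^2 k^2(\alpha-\alpha_0)^2+O(\delta_1)$-type terms, hence $\asymp -K^3r^2(\alpha-\alpha_0)^2$ by $\sum_{k\le K}k^2\asymp K^3$) from the noise-part and the $(\theta-\theta^*)$-perturbation (controlled by the $\cE$-conditions and $\|\theta-\theta^*\|\le\delta_1\|\theta^*\|$ with $\|\theta^*\|\asymp\sqrt K r$) shows $h''(\alpha)/\sigma^2 \asymp -K^3r^2/\sigma^2$ uniformly on an interval around $\alpha_0$, with the remainder absorbed into a small multiple of the leading term once $\delta_1$ is small. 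Since $\sigma^2\le Kr^2/(C_1\log K)$, the curvature $K^3r^2/\sigma^2\gtrsim K^2\log K$ is large, so $\cP_{\theta,\eps}$ concentrates on a window of width $\asymp \sqrt{\sigma^2/(K^3r^2)}$ around $\alpha_0$, and the Laplace-type tails outside this window are superpolynomially small.

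Next I would analyze $F(\alpha):=v^\top g(\alpha)^{-1}(\theta^*+\sigma\eps) = \Re\sum_k \overline{v_k}\,e^{-ik\alpha}(\theta^*_k+\sigma\eps_k)$. The key point is that $\langle u^*,v\rangle=0$ kills the first derivative at the ``right'' place: one computes $F'(\alpha_0)$ and shows, using $\langle u^*,v\rangle=0$ and again $\theta\in\cB(\delta_1)$ plus the second and fourth $\cE$-conditions, that $|F'(\alpha_0)|$ is small — of order $\delta_1\cdot K\sqrt{Kr^2/\sigma^2}\cdot(\text{something})$ plus a genuine zeroth-order term of size $O(\sqrt{Kr^2/\sigma^2}+\text{noise})$ coming from the difference between $v^\top u^*(\theta^*)=0$ and $v^\top g'(\alpha_0)^{-1}(\theta^*+\sigma\eps)$. (More care is needed here: $v$ is orthogonal to the tangent at $\theta^*$, not at $g(\alpha_0)\cdot\theta$, so I must track the error from rotating the tangent direction by $\alpha_0$, and control $\alpha_0$ itself — which from $h'(\alpha_0)=0$ and the above concavity is $O(\delta_1+\text{noise}/K^{3/2}r)$, i.e. small.) The fourth $\cE$-condition bounds the Lipschitz modulus of $F'$ in a form tailored to this. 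Then $\Var_{\cP_{\theta,\eps}}[F(\alpha)] \le \E_{\cP}[(F(\alpha)-F(\alpha_0))^2] \le \sup_{|\alpha-\alpha_0|\le w}|F'|^2\cdot\E_{\cP}[(\alpha-\alpha_0)^2] + (\text{tail})$, and $\E_{\cP}[(\alpha-\alpha_0)^2]\lesssim \sigma^2/(K^3r^2)$ from the Gaussian-type concentration. Combining $|F'|\lesssim K\sqrt{Kr^2/\sigma^2}$ (the dominant scale, for $|\alpha-\alpha_0|$ within the effective window, using the fourth $\cE$-bound to upgrade the $\alpha_0$-derivative bound to a neighborhood bound) with $\E_{\cP}[(\alpha-\alpha_0)^2]\lesssim \sigma^2/(K^3r^2)$ gives $\Var \lesssim K^2\cdot(Kr^2/\sigma^2)\cdot \sigma^2/(K^3r^2) = O(1)$; and the extra $\delta_1$ factors coming from $\cB(\delta_1)$ and the $\cE$-conditions let us push this down to $\eta\sigma^2$ after multiplying through by $\sigma^2$ — more precisely the bound should read $\Var\lesssim \delta_1^{c}\sigma^2 + (\text{superpoly.\ tail})\le\eta\sigma^2$ for $\delta_1$ small depending on $\eta$, using $\sigma^2\le Kr^2/(C_1\log K)$ to kill the tail.

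The main obstacle I expect is the bookkeeping around the mode $\alpha_0$: because $v$ is defined to be orthogonal to the tangent vector $u^*$ at $\theta^*$ rather than at the perturbed, rotated point $g(\alpha_0)\cdot\theta$, the ``first derivative vanishes'' heuristic is only approximately true, and one must carefully show that the discrepancy (governed jointly by $\|\theta-\theta^*\|\le\delta_1\|\theta^*\|$, by the size of $\alpha_0$, and by the two noise-process bounds in $\cE$ involving $v$) is of the right small order. The four conditions defining $\cE(\theta,v,\delta_1)$ are evidently engineered precisely to make each of these error terms quantitatively controllable — the first and third for the concavity/localization of $\cP_{\theta,\eps}$, the second and fourth for the size and local variation of $F$ — so the proof is really a matter of plugging these in at the right places and verifying that all error terms are either $O(\delta_1^{c})$ (hence $\le\eta$ for small $\delta_1$) or superpolynomially small in $K$ (hence negligible under $\sigma^2\le Kr^2/(C_1\log K)$). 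I would organize the write-up as: (i) localization of $\cP_{\theta,\eps}$ and the moment bound $\E_{\cP}[(\alpha-\alpha_0)^2]$; (ii) the derivative and Lipschitz estimates for $F$ on the effective window; (iii) assembling the variance bound.
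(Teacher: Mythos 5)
Your plan follows essentially the same route as the paper's proof: localize the tilted law $\cP_{\theta,\eps}$ near its mode $\alpha_0$ via a Laplace-type quadratic approximation (built from $\theta\in\cB(\delta_1)$ together with the first and third conditions defining $\cE(\theta,v,\delta_1)$), compare $\Var_{\alpha\sim\cP_{\theta,\eps}}$ with the variance under a Gaussian of width $\tau^2\asymp\sigma^2/(K^3r^2)$, and then expand $f(\alpha)=v^\top g(\alpha)^{-1}(\theta^*+\sigma\eps)$ around $\alpha_0$, with $\langle u^*,v\rangle=0$ killing the leading linear term, the fourth condition controlling the noise increments, and the second condition giving the $\|f\|_\infty$ bound needed on the exponentially small tail. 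So the architecture and the role assigned to each of the four conditions are correct.

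Two quantitative points need repair, and the first would break the bound as you stated it. (i) Your control of $\alpha_0$, claimed to be ``$O(\delta_1+\mathrm{noise}/K^{3/2}r)$,'' lacks the essential $1/K$ scaling. After orthogonality, the linear coefficient of the deterministic part of $f$ at $\alpha_0$ is only bounded by $C\,|\alpha_0|\,K^{5/2}r$ (via $|e^{ik\alpha_0}-1|\le k|\alpha_0|$), so its variance contribution is of order $|\alpha_0|^2K^2\sigma^2$ (capped at $\sigma^2$); with $|\alpha_0|$ merely a small constant this is far larger than $\eta\sigma^2$. What is needed is $|\alpha_0|\le\delta_0/K$ with $\delta_0$ small, and this cannot come from ``$h'(\alpha_0)=0$ plus local concavity'' alone, since the concavity is only established on a window of width $\asymp 1/K$ — that argument is circular. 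The paper instead localizes $\alpha_0$ globally (Lemma \ref{lemma:angleconc}, using the drop (\ref{eq:maintermupper}) of $\langle\theta^*,g(\alpha)\cdot\theta^*\rangle$ outside $[-\delta_0/K,\delta_0/K]$ from Lemma \ref{lemma-infor-2} against the $2\delta_1\|\theta^*\|^2$ perturbation allowed by $\cB(\delta_1)$ and the first $\cE$-condition); a sup-norm comparison of this kind is the correct repair of your step. (ii) Your final arithmetic drops the factor $\sigma$ multiplying $\eps$ in $f$: the fourth condition gives the noise part of $f$ Lipschitz constant $\sigma\cdot\delta_1K\sqrt{Kr^2/\sigma^2}=\delta_1K^{3/2}r$, so its contribution is $\delta_1^2K^3r^2\tau^2\asymp\delta_1^2\sigma^2$ directly; the intermediate claim ``$\Var\lesssim O(1)$, then multiply through by $\sigma^2$'' does not parse, although your final stated form $\Var\lesssim\delta_1^c\sigma^2+\mathrm{tail}\le\eta\sigma^2$ is the right one. (A small additional slip: $f$ does not depend on $\theta$ — only the tilting measure does — so $\cB(\delta_1)$ and the $\theta$-perturbation play no role in the derivative estimates for $f$.)
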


Each of the four conditions defining $\cE(\theta,v,\delta_1)$ involves the
supremum of a Gaussian process, which may be bounded using a standard covering
net argument. We remark that each of these conditions is defined
with the right side being a factor $\|\theta^*\|/\sigma$ larger than the
mean value of the left side, so that their failure
probabilities are exponentially small in $\|\theta^*\|^2/\sigma^2$. This is summarized in
the following result.

\begin{Lemma}\label{lemma:epsgoodprob}
\revise{Suppose Assumption \ref{assump:gen} holds.}
Fix any constant $\delta_1>0$, any $\theta \in \cB(\delta_1)$, and any unit
vector $v$ satisfying $\langle u^*,v \rangle=0$.
For some constants $C_1,c>0$ depending only on $c_\gen,\delta_1$,
if \revise{$\sigma^2 \leq \frac{\|\theta^*\|^2}{C_1\log K}$, then
\[\P_{\eps \sim \mathcal{N}(0,I)}\Big[\eps \notin \cE(\theta,v,\delta_1)
\Big] \leq e^{-c\|\theta^*\|^2/\sigma^2}.\]}
\end{Lemma}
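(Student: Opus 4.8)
The plan is to bound, separately, the probability that each of the four inequalities defining $\cE(\theta,v,\delta_1)$ is violated, and then take a union bound. Write $\gamma:=\sqrt{Kr^2/\sigma^2}$, so that the hypothesis $\sigma^2\leq Kr^2/(C_1\log K)$ reads $\gamma^2\geq C_1\log K$. In each of the four cases the left-hand side is, after dividing out the weights $\alpha^2$ and $|\alpha-\beta|$ that appear on the right in the third and fourth conditions, the supremum of a centered Gaussian process indexed by $\alpha\in\cA$ or $(\alpha,\beta)\in\cA^2$, and --- crucially --- the right-hand side is precisely a factor $\gamma$ (times the constant $\delta_1$ in three of the cases) larger than the ``natural scale'' of that process, i.e.\ the supremum over its index set of its pointwise standard deviation. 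Since the index sets are one- or two-dimensional with metric entropy that is only \emph{polynomial} in $K$, a standard Dudley/covering-net bound shows that the expected supremum of each such process is at most an absolute constant times $\sqrt{\log K}$ times its natural scale. Thus, choosing $C_1$ large enough (depending on $\clower,\cupper,\delta_1$) makes each threshold at least twice the corresponding expected supremum, and the Borell-TIS concentration inequality then bounds each violation probability by $\exp(-c\delta_1^2\gamma^2)=\exp(-c\delta_1^2 Kr^2/\sigma^2)$, as required.

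For the first two conditions the Gaussian process is the linear map $\alpha\mapsto\langle\eps,g(\alpha)\cdot w\rangle$ with $w=\theta$ or $w=v$. Orthogonality of $g(\alpha)$ makes its pointwise variance equal to the constant $\|w\|^2$ --- this is $1$ for $w=v$, and at most $(1+\delta_1)^2\|\theta^*\|^2\lesssim Kr^2$ for $w=\theta\in\cB(\delta_1)$ --- and since $g(\alpha)\cdot w$ is a trigonometric polynomial of degree at most $K$, its canonical metric satisfies $\|g(\alpha)\cdot w-g(\alpha')\cdot w\|\leq K\|w\|\,|\alpha-\alpha'|$, so Dudley gives expected supremum at most $C\|w\|\sqrt{\log K}$. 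The two thresholds are $\gamma$ and $\delta_1\|\theta^*\|^2/\sigma\asymp\delta_1\gamma\|\theta^*\|$, which are exactly $\gamma$ and $\delta_1\gamma$ times the respective natural scales $\|v\|=1$ and $\|\theta\|\asymp\|\theta^*\|$, so Borell-TIS applies and gives the claimed bound for each.

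For the last two conditions we pass to the weight-normalized processes
\[Y_{\alpha,\beta}=\frac{1}{\alpha^2}\Re\sum_{k=1}^K\overline{\eps_k}e^{ik\beta}\big(e^{ik\alpha}-1-ik\alpha\big)\theta_k,\qquad Z_{\alpha,\beta}=\frac{1}{\alpha-\beta}\Re\sum_{k=1}^K\overline{\eps_k}\big(e^{ik\alpha}-e^{ik\beta}\big)v_k,\]
which extend across $\{\alpha=0\}$ and $\{\alpha=\beta\}$ because $\alpha\mapsto(e^{ik\alpha}-1-ik\alpha)/\alpha^2$ and $(\alpha,\beta)\mapsto(e^{ik\alpha}-e^{ik\beta})/(\alpha-\beta)$ are entire, and which satisfy that $|Y_{\alpha,\beta}|$ and $|Z_{\alpha,\beta}|$ equal the weight-normalized left-hand sides of conditions three and four. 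The elementary estimates $|e^{ix}-1-ix|\leq x^2/2$ and $|e^{ix}-e^{iy}|\leq|x-y|$ give $|e^{ik\alpha}-1-ik\alpha|/\alpha^2\leq k^2/2$ and $|e^{ik\alpha}-e^{ik\beta}|/|\alpha-\beta|\leq k$, so the pointwise variances of $Y$ and $Z$ are at most $\tfrac14\sum_k k^4|\theta_k|^2\leq\tfrac14 K^4\|\theta\|^2\lesssim K^5 r^2$ and $\sum_k k^2|v_k|^2\leq K^2\|v\|^2=K^2$, giving natural scales $K^{5/2}r$ and $K$; the corresponding thresholds $\delta_1 K^{5/2}r\gamma$ and $\delta_1 K\gamma$ are again $\delta_1\gamma$ times these. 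Differentiating the divided differences, whose derivatives are $O(k^3)$ and $O(k^2)$ uniformly on $\cA$, bounds the canonical metrics of $Y$ and $Z$ by polynomial-in-$K$ multiples of $|\alpha-\alpha'|+|\beta-\beta'|$, so Dudley again gives expected suprema at most $C\sqrt{\log K}$ times the natural scales, and Borell-TIS finishes. Summing the four bounds and absorbing the factor $4$ proves the lemma.

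The argument is standard Gaussian-process machinery, and there is no serious obstacle; the one point deserving attention is the third and fourth conditions, where one must check that after dividing out the weights $\alpha^2$ and $|\alpha-\beta|$ the resulting objects are genuine Gaussian processes on $\cA$, $\cA^2$ with pointwise variance and canonical-metric modulus controlled by the stated powers of $K$. It is exactly this matching --- threshold $=\gamma\times$ natural scale, with $\gamma^2\geq C_1\log K$ --- that yields the $e^{-cKr^2/\sigma^2}$ tail rather than a merely polynomial one; getting the powers of $K$ right in the variance bounds, via the crude but sufficient $k\leq K$, is the only place where a slip would matter.
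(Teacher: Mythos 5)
Your argument is correct, and its skeleton coincides with the paper's: you split the failure event over the four defining conditions, bound each pointwise variance with exactly the powers of $K$ the paper uses (via $|e^{ix}-1-ix|\le x^2/2$, $|e^{ix}-e^{iy}|\le|x-y|$ and $k\le K$), observe that each threshold is $\asymp\delta_1\sqrt{Kr^2/\sigma^2}$ times the corresponding scale, and let the hypothesis $Kr^2/\sigma^2\ge C_1\log K$ absorb the logarithmic cost of a one- or two-dimensional index set. Where you genuinely diverge is in the concentration mechanism. The paper bounds, at each fixed $(\alpha,\beta)$, a Gaussian tail, and then passes to the supremum by an explicit $\delta$-net whose mesh is calibrated to the \emph{sample-path} Lipschitz constant of the process; since that Lipschitz constant is proportional to $\|\eps\|$, the paper must intersect with the truncation event $\{\|\eps\|\le s\}$, $s=\max(\sqrt{4K},\sqrt{Kr^2/\sigma^2})$, and pay a separate chi-squared tail for its complement. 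You instead chain with respect to the deterministic canonical metric (Dudley's entropy bound gives expected suprema of order $\sqrt{\log K}$ times the natural scale, since the metric's Lipschitz constants $O(K^3\|\theta\|)$, $O(K^2)$ exceed the natural scales only by a factor polynomial in $K$) and then invoke Borell--TIS, which delivers the $e^{-c\,Kr^2/\sigma^2}$ tail directly with no truncation and no chi-squared step; this is slightly cleaner and more modular, at the price of invoking heavier (though standard) Gaussian-process tools than the paper's elementary net-plus-union-bound computation, which stays consistent in style with Lemma \ref{lemma-infor-1}. You also correctly handle the one point that needs care in either route: after dividing out the weights $\alpha^2$ and $\alpha-\beta$, the kernels $(e^{ik\alpha}-1-ik\alpha)/\alpha^2$ and $(e^{ik\alpha}-e^{ik\beta})/(\alpha-\beta)$ extend smoothly across $\alpha=0$ and $\alpha=\beta$ with the uniform derivative bounds $O(k^3)$ and $O(k^2)$, which is exactly the computation underlying the paper's Lipschitz estimates for $\Re f_{\alpha,\beta}(\eps)$ and $\Re f_{\beta,\gamma}(\eps)$.
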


Finally, we combine Lemmas \ref{lemma-infor-3} and \ref{lemma:epsgoodprob}
to conclude the proof of Lemma \ref{lemma-delta-A}: We may write the second
term of (\ref{eq:hessexpansion}) as
\begin{align*}
&\frac{1}{N\sigma^4} \sum_{m=1}^N \Var_{\alpha \sim \cP_{\theta,\eps^{(m)}}}\Big[
v^\top g(\alpha)^{-1} (\theta^*+\sigma\eps^{(m)}) \Big]
\cdot \1\{\eps^{(m)} \in \cE(\theta,v,\delta_1)\}\\
&\hspace{1in}+\frac{1}{N\sigma^4}
\sum_{m=1}^N \Var_{\alpha \sim \cP_{\theta,\eps^{(m)}}}\Big[
v^\top g(\alpha)^{-1} (\theta^*+\sigma\eps^{(m)}) \Big]
\cdot \1\{\eps^{(m)} \notin \cE(\theta,v,\delta_1)\}.
\end{align*}
The first sum is bounded by Lemma \ref{lemma-infor-3}, while the second sum
is sparse by Lemma \ref{lemma:epsgoodprob} and may be controlled using
a Chernoff bound for binomial random variables. Taking a union bound over a
covering net of pairs $(\theta,v)$ shows Lemma \ref{lemma-delta-A}.

\subsection{Proof of Theorem \ref{thm-mle}}

We now combine the preceding lemmas to conclude the proof of
Theorem \ref{thm-mle}. Let $C_0,C_1,\delta_1>0$ be such that the conclusions
of Lemma \ref{lemma-delta-A} hold for $\eta=1/2$. Define the event
\[\cE=\left\{\hat{\theta}^\MLE \in \cB(\delta_1) \text{ and }
\sup_{\theta \in \cB(\delta_1)}\,\sup_{v:\|v\|=1,\langle u^*,v \rangle=0}\,
v^\top \nabla^2 R_N(\theta) v \geq \frac{1}{2\sigma^2}\right\}.\]

When $\cE$ holds, we have also $\tilde{\theta} \in \cB(\delta_1)$ in the
Taylor expansion (\ref{eq:taylor}). Recall our choice of rotation
(\ref{eq:MLEversion}) for $\hat{\theta}^{\text{MLE}}$. Then the
first-order condition for (\ref{eq:MLEversion}) gives
\[0=\frac{d}{d\alpha} \|\hat{\theta}^\mathrm{MLE}-
g(\alpha) \cdot \theta^*\|^2\bigg|_{\alpha=0}
=-2\langle u^*,\,\hat{\theta}^\mathrm{MLE}-\theta^* \rangle,\]
so that $\langle u^*,\,\hat{\theta}^{\mathrm{MLE}}-\theta^* \rangle=0$.
Then (\ref{eq:taylor}) and the definition of $\cE$ imply
\[0 \geq \1\{\cE\}\left(\nabla R_N(\theta^*)^\top (\hat{\theta}^\MLE-\theta^*)
+\frac{1}{4\sigma^2}\|\hat{\theta}^\MLE-\theta^*\|^2\right).\]
Rearranging, we get
\[\1\{\cE\}\|\hat{\theta}^\MLE-\theta^*\|^2
\leq -\1\{\cE\} \cdot 4\sigma^2 \cdot
\nabla R_N(\theta^*)^\top (\hat{\theta}^\MLE-\theta^*)
\leq 4\sigma^2 \cdot
\|\nabla R_N(\theta^*)\| \cdot \|\hat{\theta}^\MLE-\theta^*\|.\]
Dividing by $\|\hat{\theta}^\MLE-\theta^*\|$, squaring both sides,
and taking expectation yields
\begin{equation}\label{eq:MLEriskonE}
\E\Big[\1\{\cE\}\|\hat{\theta}^\MLE-\theta^*\|^2\Big]
\leq 16\sigma^4\E\Big[\|\nabla R_N(\theta^*)\|^2\Big].
\end{equation}

From (\ref{eq:grad}), we have
\[\nabla R_N(\theta^*)=\frac{1}{N}\sum_{m=1}^N \left(\frac{\theta^*}{\sigma^2}
-\frac{1}{\sigma^2}\E_{\alpha \sim \cP_{\theta^*,\eps^{(m)}}}\Big[
g(\alpha)^{-1}(\theta^*+\sigma \eps^{(m)})\Big]\right).\]
These summands (the per-sample score vectors)
are independent random vectors with mean 0, by the first-order
condition for $\theta^*$ minimizing $R(\theta)$. So
\begin{align*}
\E\Big[\|\nabla R_N(\theta^*)\|^2\Big]
&=\frac{1}{N}\E_{\eps \sim \mathcal{N}(0,I)}
\left[\left\|\frac{\theta^*}{\sigma^2}
-\frac{1}{\sigma^2}\E_{\alpha \sim \cP_{\theta,\eps}}\Big[
g(\alpha)^{-1}(\theta^*+\sigma \eps)\Big]\right\|^2\right]\\
&=\frac{1}{N\sigma^4} \E_{\eps \sim \mathcal{N}(0,I)}
\left[\left\|\E_{\alpha \sim \cP_{\theta,\eps}}\Big[
g(\alpha)^{-1}(\theta^*+\sigma \eps)\Big]\right\|^2-\|\theta^*\|^2\right]\\
&\leq \frac{1}{N\sigma^4}\E_{\eps \sim \mathcal{N}(0,I)}
\left[\|\theta^*+\sigma\eps\|^2-\|\theta^*\|^2\right]=\frac{2K}{N\sigma^2}.
\end{align*}
Combining with (\ref{eq:MLEriskonE}),
\[\E\Big[\1\{\cE\}\|\hat{\theta}^\MLE-\theta^*\|^2\Big]
\leq \frac{32K\sigma^2}{N}.\]

By Lemmas \ref{lemma-highprob-bound} and \ref{lemma-delta-A},
\revise{$\P[\cE^c] \leq e^{-\frac{cN}{(1+K\sigma^2/\|\theta^*\|^2)^2}}$}
for some constant $c>0$.
Then applying also (\ref{eq:MLE4thmoment}), for some constant $C>0$,
\revise{
\[\E\Big[\1\{\cE^c\}\|\hat{\theta}^\MLE-\theta^*\|^2\Big]
\leq \sqrt{\E[\|\hat{\theta}^\MLE-\theta^*\|^4]}
\cdot \sqrt{\P[\cE^c]} \leq C\|\theta^*\|^2 \cdot
e^{-\frac{cN}{2(1+K\sigma^2/\|\theta^*\|^2)^2}}.\]
Under the given assumption
$N \geq C_0K(1+\frac{K\sigma^2}{\|\theta^*\|^2})\log(K+\frac{\|\theta^*\|^2}{\sigma^2})$ for
sufficiently large $C_0>0$, this implies also $N \geq
C_0'K(1+\frac{K\sigma^2}{\|\theta^*\|^2})\log N$ for a large constant $C_0'>0$. (This is verified in the proof of Lemma \ref{lemma-delta-A}, cf.\
(\ref{eq:logKlogN}) of Appendix \ref{appendix:MLE}.) Then
\[\E\Big[\1\{\cE^c\}\|\hat{\theta}^\MLE-\theta^*\|^2\Big]
\leq C\|\theta^*\|^2 \cdot e^{-\frac{cN}{2(1+K\sigma^2/\|\theta^*\|^2)^2}}
\leq \frac{C'\sigma^2}{N}.\]}
Combining the above two risk bounds on $\cE$ and $\cE^c$
yields Theorem \ref{thm-mle}.

\section{Minimax lower bounds}
\label{sec-6}

In this section, we show the minimax lower bounds of Theorems
\ref{thm:highnoise} and \ref{thm:lownoise}. The lower bounds will be implied by
estimation of the Fourier phases $\phi_k(\theta^*)$ only, even when the
Fourier magnitudes $r_k(\theta^*)$ are known. \revise{Fix any
$\beta \in [0,\frac{1}{2})$, and consider the parameter space
\[\cP_\beta=\Big\{\theta^* \in \R^{2K}: r_k(\theta^*)=k^{-\beta}
\text{ for all } k=1,\ldots,K \Big\}.\]}
The main result of this section is the following minimax lower bound
over $\cP_\beta$, which is valid for any noise level $\sigma^2>0$ and
interpolates between the low-noise and high-noise regimes.

\revise{
\begin{Lemma}\label{lemma:minimaxlowerP}
Fix any $\beta \in [0,\frac{1}{2})$. Then for some $\beta$-dependent
constants $C,c>0$ and any $\sigma^2>0$,
\begin{equation}\label{eq:minimaxlowerlownoise}
\inf_{\hat{\theta}} \sup_{\theta^* \in \cP_\beta}
\E_{\theta^*}[L(\theta^*,\hat{\theta})] \geq c \cdot
\min\left(\frac{1}{N} \cdot \max\left(K\sigma^2,\;
\frac{K^{4\beta}\sigma^6}
{e^{CK^{1-2\beta}/\sigma^2}}\right),\,K^{1-2\beta}\right).
\end{equation}
\end{Lemma}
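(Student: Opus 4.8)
The plan is to prove the lower bound (\ref{eq:minimaxlowerlownoise}) via Assouad's hypercube lemma, reducing estimation to testing a family of perturbations of the Fourier phases. Since the magnitudes are fixed at $r_k = r$ across $\cP(r)$, the loss simplifies by Proposition \ref{prop-lossgeneral} to $L(\theta,\theta') \asymp r^2 \inf_\alpha \sum_k |\phi_k - \phi_k' + k\alpha|_\cA^2$, so it suffices to lower-bound the minimax risk for estimating the phase vector $\phi = (\phi_k)_{k=1}^K$ up to the global-rotation direction $(1,2,\ldots,K)$. First I would construct a hypercube of candidate phase vectors: fix a base phase vector $\phi^0$ (e.g.\ $\phi^0 = 0$, or a generic choice to avoid degeneracies), pick a subset $S \subseteq \{1,\ldots,K\}$ of indices that are ``well-spread'' so that perturbing them cannot be undone by a global rotation, and for each sign vector $\tau \in \{\pm 1\}^S$ set $\phi^\tau_k = \phi^0_k + \tau_k \beta$ for $k \in S$ and $\phi^\tau_k = \phi^0_k$ otherwise, where $\beta > 0$ is a small separation to be optimized. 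The rotation-invariant loss between two such vectors differing in one coordinate is $\gtrsim r^2\beta^2$ provided $S$ is chosen so no rotation $k\alpha$ can simultaneously cancel a single-coordinate perturbation — this is guaranteed as long as $|S| \geq 2$ and the indices are not all equal, since cancelling one perturbation forces $\alpha \neq 0$ which then misaligns the unperturbed coordinates.

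The heart of the argument is the per-coordinate KL bound that Assouad requires: for $\tau, \tau'$ differing only in coordinate $j \in S$, I need $D_{\KL}(p_{\theta^\tau}^{\otimes N} \| p_{\theta^{\tau'}}^{\otimes N}) = N \cdot D_{\KL}(p_{\theta^\tau}\|p_{\theta^{\tau'}})$ to be bounded by a small constant. Here $p_\theta$ is the rotation-marginalized Gaussian mixture (\ref{eq:ll}). The key point is that this mixture density is \emph{insensitive} to small phase perturbations: when $Kr^2/\sigma^2$ is moderate or small, the rotational orbit is ``smeared out'' by the noise, so distinct signals in $\cP(r)$ have nearly indistinguishable marginal laws. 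I would estimate $D_{\KL}(p_{\theta^\tau}\|p_{\theta^{\tau'}})$ by a second-order Taylor expansion in the perturbation parameter $\beta$, controlling it by $\beta^2$ times a curvature factor. The crucial quantitative input is that this curvature — essentially a Fisher-information-type quantity for the marginalized model — is exponentially small in $Kr^2/\sigma^2$ in the low-SNR regime. Concretely, one expands the integrand $\exp(\langle \theta^* + \sigma\eps, g(\alpha)\cdot\theta\rangle/\sigma^2)$ and finds that, after marginalizing over the uniform $\alpha$, the leading $\beta$-dependence enters only through high-order moments in $1/\sigma^2$ of the overlap $\langle \theta^\tau, g(\alpha)\cdot\theta^{\tau'}\rangle$, which (because individual Fourier coefficients have magnitude $r$ and the overlap at frequency $k$ is $r^2\cos(k\alpha + \Delta\phi_k)$) produces a normalization factor scaling like $I_0(r^2/\sigma^2)^{\text{-}K}$ or, in the relevant regime, like $e^{-cKr^2/\sigma^2}$. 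Matching the form of the bound, I would aim to show $D_{\KL}(p_{\theta^\tau}\|p_{\theta^{\tau'}}) \lesssim \beta^2 \cdot \big(\frac{1}{\sigma^2} \wedge \frac{r^4}{\sigma^6}e^{3Kr^2/2\sigma^2}\big)^{-1}$, i.e.\ $\lesssim \beta^2 \cdot \min(\sigma^2,\, \sigma^6 e^{-3Kr^2/2\sigma^2}/r^4)^{-1}$ — wait, more precisely the curvature is $\max(1/\sigma^2,\, (r^4/\sigma^6)e^{-3Kr^2/2\sigma^2})$ inverted, giving the two terms inside the $\max$ of (\ref{eq:minimaxlowerlownoise}) after optimizing $\beta$. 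I anticipate this exponential-smallness estimate for the curvature of the marginal log-likelihood to be the main technical obstacle, as it requires carefully tracking the Gaussian-integral normalization in $\mathrm{I}$ and $\mathrm{II}$ (analogous to Lemma \ref{lemma-KL-lower} but now needing a sharp \emph{upper} bound on a difference rather than a lower bound) and identifying the precise exponent $3Kr^2/2\sigma^2$ rather than a crude constant multiple.

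With the per-coordinate KL bound in hand, Assouad's lemma gives
\[
\inf_{\hat\theta}\sup_{\theta^* \in \cP(r)} \E_{\theta^*}[L(\hat\theta,\theta^*)]
\gtrsim |S| \cdot r^2\beta^2 \cdot \big(1 - \sqrt{N \cdot D_{\KL}(p_{\theta^\tau}\|p_{\theta^{\tau'}})/2}\,\big),
\]
and I would choose $\beta^2 \asymp \min\!\big(\sigma^2, (\sigma^6/r^4)e^{-3Kr^2/2\sigma^2}\big)/N$ (capped at a constant so that $\beta \lesssim 1$, ensuring the candidate phases stay valid and the single-coordinate loss lower bound $r^2\beta^2$ holds) to make the parenthetical factor a positive constant. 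Taking $|S| \asymp K$ then yields the term $K r^2\beta^2 \asymp \max\!\big(K\sigma^2/N,\, K(\sigma^6/r^4)e^{-3Kr^2/2\sigma^2}/N\big)$; noting $K e^{-3Kr^2/2\sigma^2} \geq e^{-3Kr^2/2\sigma^2}$ gives the $\sigma^6/(Nr^4 e^{3Kr^2/2\sigma^2})$ term inside the $\max$. Finally, the cap $\beta \lesssim 1$ and the trivial observation that $L \leq \|\hat\theta\|^2 + \|\theta^*\|^2 \asymp Kr^2$ is always achievable (so no estimator can do worse than a constant times $Kr^2$ in the sense of the lower bound being vacuous beyond that scale) account for the outer $\min(\cdot,\, Kr^2)$, completing the proof. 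A minor check needed throughout is that the chosen index set $S$ and base point $\phi^0$ make the rotation-invariant single-coordinate-flip loss genuinely $\gtrsim r^2\beta^2$; taking, say, $S$ to be an arithmetic-progression-free set or simply verifying directly that a one-coordinate sign flip at frequency $j$ cannot be absorbed by any $\alpha$ while keeping the other $|S|-1$ perturbed coordinates aligned suffices, and I would handle the edge case $|S| = 1$ separately or simply assume $|S| \geq 2$.
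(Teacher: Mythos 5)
Your overall route is the same as the paper's: an Assouad hypercube of pure phase perturbations with all magnitudes fixed at $r$, a data-processing KL bound for the low-noise term, a separate ``high-noise'' KL bound for the $\sigma^6$ term, and a cap on the separation to produce the $\min(\cdot,Kr^2)$. The gap is that the central technical input --- the high-noise bound $D_{\KL}(p_{\theta^\tau}\|p_{\theta^{\tau'}}) \lesssim \frac{r^4 e^{3Kr^2/2\sigma^2}}{\sigma^6}\cdot Kr^2\beta^2\,H(\tau,\tau')$ --- is asserted with a mechanism that is partly wrong. In the paper (Lemma \ref{lemma:KLupperbound}, following the $\chi^2$ argument of Bandeira et al.), one expands the $\chi^2$-divergence in moments of the overlap $\langle\theta,g(\alpha)\theta'\rangle$: the $m=1$ term vanishes by uniform rotation averaging, and --- crucially --- the $m=2$ term vanishes because the two signals have \emph{identical power spectra} (all $r_k=r$), so the first surviving term is $m=3$; that cancellation is the sole source of the $1/\sigma^6$, and the factor $e^{3R^2/2\sigma^2}$ arises as a \emph{penalty} from summing the remaining series, not as a gain from the orbit being ``smeared out.'' Your sketch (second-order Taylor expansion in $\beta$, a Fisher-type curvature claimed to be ``exponentially small in $Kr^2/\sigma^2$,'' an $I_0(r^2/\sigma^2)^{-K}$ normalization) never invokes the equal-power-spectrum cancellation; without it a quadratic expansion can only give a $1/\sigma^2$- or $1/\sigma^4$-type curvature, and the term $\sigma^6/(Nr^4e^{3Kr^2/2\sigma^2})$ is unreachable. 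Moreover, in the regime where that term matters one has $Kr^2/\sigma^2=O(1)$, so ``exponentially small in $Kr^2/\sigma^2$'' is not where the improvement lives --- the improvement is the polynomial $\sigma^{-6}$ versus $\sigma^{-2}$ reflecting that phases are identifiable only from third moments; and since the lemma states the exponent $3Kr^2/2\sigma^2$ explicitly, that constant must actually be extracted, not just ``some'' exponential.

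A second genuine gap is the loss-versus-Hamming comparison. The Assouad bound you quote requires $L(\theta^\tau,\theta^{\tau'}) \gtrsim r^2\beta^2\,H(\tau,\tau')$ uniformly over \emph{all} pairs, not only single-coordinate flips. The danger is not arithmetic progressions but correlation of the difference pattern with the linear ramp $(1,2,\ldots,K)$: a global rotation contributes $k\alpha$ at frequency $k$ and can absorb a nontrivial fraction of the phase discrepancy when the flipped set is concentrated at high frequencies, and for larger $\alpha$ one must additionally handle the wrap-around of $k\alpha$ modulo $2\pi$. The paper spends Lemma \ref{lemma:lossbounds} (the $K_0(\alpha)$ counting argument) plus an explicit projection computation --- bounding $\bigl(\sum_k k(\tau_k-\tau_k')\bigr)^2/\sum_k k^2$ by a constant fraction of $H$ via the $h(2K_0-h+1)/2$ estimate --- precisely on this point; your ``minor check'' for one-coordinate flips, or an AP-free choice of $S$, addresses neither the correct obstruction nor the wrap-around, and an AP-free $S$ would needlessly shrink the hypercube. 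Finally, the bookkeeping in your separation choice is off: you want $\beta^2 \asymp \max\bigl(\sigma^2/r^2,\ \sigma^6/(Kr^6e^{3Kr^2/2\sigma^2})\bigr)/N$ (the larger admissible separation allowed by whichever KL bound is smaller), not a $\min$, and the powers of $r$ and $K$ must be tracked so that $Kr^2\beta^2$ reproduces the stated rate; your final displayed expression is the intended one, but the intermediate choices as written do not produce it.
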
}

Let us check that this implies the minimax lower bounds of 
Theorems \ref{thm:highnoise} and \ref{thm:lownoise}.

\revise{
\begin{proof}[Proof of Theorems \ref{thm:highnoise} and \ref{thm:lownoise},
lower bounds]

By rescaling, we may assume without loss of generality that $\clower \leq 1 \leq
\cupper$, and hence $\cP_\beta \subset \Theta_\beta$.
Assuming $\sigma^2 \geq c_0K^{1-2\beta}$, choosing the second argument of $\max(\cdot)$
in (\ref{eq:minimaxlowerlownoise}) gives
\[\inf_{\hat{\theta}} \sup_{\theta^* \in \Theta_\beta}
\E_{\theta^*}[L(\theta^*,\hat{\theta})] \geq
\inf_{\hat{\theta}} \sup_{\theta^* \in \cP_\beta}
\E_{\theta^*}[L(\theta^*,\hat{\theta})] \geq c \cdot
\min\left(\frac{K^{4\beta}\sigma^6}{N},\,K^{1-2\beta}\right)\]
for a constant $c>0$ depending on $c_0$.
When $N \geq C_0K^{6\beta}\sigma^6\log K$ for sufficiently large $C_0>0$, we
have $K^{4\beta}\sigma^6/N<K^{1-2\beta}$, so this gives the lower bound of 
Theorem \ref{thm:highnoise}.
For any $\sigma^2>0$, choosing the first argument of $\max(\cdot)$
in (\ref{eq:minimaxlowerlownoise}) also gives
\[\inf_{\hat{\theta}} \sup_{\theta^* \in \Theta_\beta}
\E_{\theta^*}[L(\theta^*,\hat{\theta})] \geq
\inf_{\hat{\theta}} \sup_{\theta^* \in \cP_\beta}
\E_{\theta^*}[L(\theta^*,\hat{\theta})] \geq c \cdot
\min\left(\frac{K\sigma^2}{N},\,K^{1-2\beta}\right).\]
When $N \geq C_0K^{1+2\beta}\sigma^2 \log K$ for sufficiently
large $C_0>0$, we have $K\sigma^2/N<K^{1-2\beta}$, so this gives the lower bound of
Theorem \ref{thm:lownoise}.
\end{proof}}

Finally, we describe the arguments that show Lemma \ref{lemma:minimaxlowerP},
deferring detailed proofs to Appendix \ref{appendix:lower}.
Denote $p_\theta(y)$ as the Gaussian mixture density of $y$, as in
(\ref{eq:ll}).
The proof will apply Assouad's hypercube construction together with an upper
bound on the KL-divergence $D_{\KL}(p_\theta\|p_{\theta'})$. For the low-noise
regime of Theorem \ref{thm:lownoise}, a tight upper bound is provided by
(\ref{eq:KLupperlownoise}) below, which is immediate from
the data processing inequality. For the high-noise regime of Theorem
\ref{thm:highnoise}, we apply an argument from \cite{bandeira2020optimal}
for bounding the $\chi^2$-divergence, and track carefully the dependence of
this argument on the dimension $K$.

\begin{Lemma}\label{lemma:KLupperbound}
For any $\theta,\theta' \in \R^{2K}$,
\begin{equation}\label{eq:KLupperlownoise}
D_{\KL}(p_\theta\|p_{\theta'}) \leq \frac{\|\theta-\theta'\|^2}{2\sigma^2}.
\end{equation}
Furthermore, let $\theta=(r_k\cos\phi_k,r_k\sin\phi_k)_{k=1}^K$
and $\theta'=(r_k'\cos\phi_k',r_k'\sin\phi_k')_{k=1}^K$. Denote
$R^2=\max(\sum_{k=1}^K r_k^2$, $\sum_{k=1}^K {r_k'}^2)$ and
$\rupper=\max(\max_{k=1}^K r_k,\max_{k=1}^K r_k')$. Then also
\begin{align}
D_{\KL}(p_{\theta}\|p_{\theta'}) &\leq
\frac{e^{R^2/2\sigma^2}}{4\sigma^4}\sum_{k=1}^K (r_k^2-{r_k'}^2)^2\nonumber\\
&\hspace{0.2in}+\frac{3\rupper^2R^2e^{3R^2/2\sigma^2}}{2\sigma^6} \cdot \inf_{\alpha \in \R}
\sum_{k=1}^K \Big[(r_k-r_k')^2 +r_kr_k'(\phi_k-\phi_k'+k\alpha)^2\Big].
\label{eq:KLupperhighnoise}
\end{align}
\end{Lemma}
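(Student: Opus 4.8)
The plan is to treat the two bounds separately. The first bound \eqref{eq:KLupperlownoise} is an immediate application of the data processing inequality: the law $p_\theta$ of an observation $y$ is a deterministic (measurable) function of the pair $(\alpha, \theta + \sigma\eps)$, obtained by applying the rotation $g(\alpha)$ and marginalizing over the uniform $\alpha$. Hence $D_{\KL}(p_\theta \| p_{\theta'})$ is at most the KL-divergence between the joint laws of $(\alpha, \theta + \sigma\eps)$ under the two parameters; since $\alpha$ has the same marginal under both and is independent of $\eps$, this reduces to $D_{\KL}(\Normal(\theta,\sigma^2 I_{2K}) \| \Normal(\theta',\sigma^2 I_{2K})) = \|\theta - \theta'\|^2/(2\sigma^2)$.

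For \eqref{eq:KLupperhighnoise}, the plan is to follow the $\chi^2$-divergence argument of \cite{bandeira2020optimal}, using $D_{\KL}(p_\theta \| p_{\theta'}) \leq \chi^2(p_\theta \| p_{\theta'})$, and to track the $K$-dependence of every constant. First I would write $\chi^2(p_\theta\|p_{\theta'}) = \E_{y \sim p_{\theta'}}\big[(p_\theta(y)/p_{\theta'}(y))^2\big] - 1$, and expand $p_\theta(y)/p_{\theta'}(y)$ using the Gaussian-mixture form \eqref{eq:ll}, which after completing the square produces a ratio of the form $\E_{\alpha}[\exp(\langle y, g(\alpha)\cdot\theta\rangle/\sigma^2 - \|\theta\|^2/2\sigma^2)]$ over the analogous expression with $\theta'$. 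The key technical step is to evaluate the resulting Gaussian integral over $y \sim p_{\theta'}$ explicitly as a double integral over two independent copies $\alpha, \beta \sim \Unif([-\pi,\pi))$, yielding an expression of the form $\E_{\alpha,\beta,\gamma}[\exp(\sigma^{-2}\langle g(\alpha)\cdot\theta - g(\beta)\cdot\theta, g(\gamma)\cdot\theta'\rangle) \cdot \exp(\text{quadratic terms in }\theta,\theta')]$ (after a further simplification using the rotational invariance of the uniform law, one of the three averaging angles can be fixed). The dominant contribution comes from $\alpha = \beta$, which gives $1$; the next-order term is a second-order Taylor expansion of the exponentials around this diagonal in the angular variables.

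The main obstacle I anticipate is the careful bookkeeping of this Taylor expansion and its remainder, to isolate exactly the terms $\sum_k (r_k^2 - {r_k'}^2)^2$ and $\inf_\alpha \sum_k [(r_k - r_k')^2 + r_k r_k' (\phi_k - \phi_k' + k\alpha)^2]$ with the correct prefactors $e^{R^2/2\sigma^2}/(4\sigma^4)$ and $3\rupper^2 R^2 e^{3R^2/2\sigma^2}/(2\sigma^6)$ respectively. The $e^{3R^2/2\sigma^2}$ factors arise from crudely bounding $\exp(\sigma^{-2}\langle \cdot,\cdot\rangle)$-type terms by $\exp(R^2/\sigma^2)$ up to constants when controlling higher-order remainders, and the $\sigma^{-6}$ versus $\sigma^{-4}$ scaling tracks whether the leading discrepancy is in the Fourier magnitudes (which contributes at second order, hence $\sigma^{-4}$) or in the phases (which, because $g(\alpha)$ acts trivially at first order in the power-spectrum moments and the phase information only enters at the level of the bispectrum, contributes at third order, hence $\sigma^{-6}$). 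Concretely, when $r_k = r_k'$ for all $k$, the magnitude term vanishes and one must verify that the remaining contribution is genuinely of order $\sigma^{-6}$ times the squared phase discrepancy, which is the crux of why the high-noise rate scales as $\sigma^6$; I would obtain this by writing $\langle g(\alpha)\cdot\theta - g(\beta)\cdot\theta, g(\gamma)\cdot\theta'\rangle$ in terms of $\cos(\phi_k + k\alpha - \phi_k' - k\gamma) - \cos(\phi_k + k\beta - \phi_k' - k\gamma)$, expanding to the order at which the leading nonvanishing term in the angular average appears, and applying Proposition \ref{prop-lossgeneral} to convert the resulting expression into the stated loss-type quantity. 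The remainder terms beyond this order can be bounded by the crude exponential factors at the cost of the larger constant $3/2$ in the exponent.
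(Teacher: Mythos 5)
Your treatment of (\ref{eq:KLupperlownoise}) is fine and essentially identical to the paper's: both pass to the joint law of the rotation and the un-rotated Gaussian observation and invoke data processing, reducing to the KL between two Gaussians. The issue is with (\ref{eq:KLupperhighnoise}). The step you describe as the "key technical step" would fail as stated: in $\chi^2(p_\theta\|p_{\theta'})=\int p_\theta^2/p_{\theta'}-1$ the denominator $p_{\theta'}(y)$ is itself a mixture over rotations, so the integral over $y$ cannot be "evaluated explicitly as a double integral over two independent copies." If you try to remove the mixture by Jensen on the reciprocal, $1/\E_\gamma[\varphi(y-g(\gamma)\theta')]\le \E_\gamma[1/\varphi(y-g(\gamma)\theta')]$, the resulting bound on $\int p_\theta^2/p_{\theta'}$ is at least $e^{\|\theta\|^2/\sigma^2}$ even when $\theta=\theta'$ (the exponent has mean $\|\theta\|^2/\sigma^2$ over the three angles), so subtracting $1$ gives nothing useful: the cancellation that makes the divergence small when $\theta\approx\theta'$ is destroyed. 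The paper avoids this by keeping the numerator $(p_\theta-p_{\theta'})^2$ and only lower-bounding the denominator pointwise via Jensen, $p_\theta(y)\ge \varphi(y)e^{-\|\theta\|^2/2\sigma^2}$ (using $\E[g(\alpha)]=0$); the numerator's three cross terms then integrate exactly against $\varphi$, yielding $e^{\|\theta\|^2/2\sigma^2}\,\E\big[e^{\langle\theta,g\theta\rangle/\sigma^2}-2e^{\langle\theta,g\theta'\rangle/\sigma^2}+e^{\langle\theta',g\theta'\rangle/\sigma^2}\big]$ with a single uniform rotation, and it is this three-term combination that produces the cancellation of the orders $m=0,1$ in the series expansion. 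Relatedly, your intermediate expression with $\langle g(\alpha)\theta-g(\beta)\theta,\,g(\gamma)\theta'\rangle$ and the claim that the diagonal $\alpha=\beta$ contributes $1$ do not match what the computation actually gives.

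The second, more substantive gap is that the proposal contains no mechanism for the two features that are the whole point of the lemma, and which you defer to "careful bookkeeping": the appearance of $\inf_\alpha$ and the prefactor $\rupper^2R^{2}$ (rather than an extra factor of $R^2\asymp Kr^2$). In the paper, after expanding $e^{x}$ in moments, the $m=2$ term is computed exactly (giving the power-spectrum difference with $e^{R^2/2\sigma^2}/4\sigma^4$), and each $m\ge 3$ moment is a sum over frequency tuples constrained by $s_1k_1+\cdots+s_mk_m=0$. That constraint is used twice: it permits inserting a free global rotation $\alpha\cdot s_\ell k_\ell$ inside each cosine, which is what creates the rotation-invariant quantity $\inf_\alpha\sum_k[(r_k-r_k')^2+r_kr_k'(\phi_k-\phi_k'+k\alpha)^2]$; and it fixes the last frequency index once the others are chosen, which converts one factor $R^2=\sum_k r_k^2$ into $\rupper^2=\max_k r_k^2$, i.e.\ the bound $m^2\rupper^2R^{2(m-2)}(\cdots)$ rather than $m^2 R^{2(m-1)}(\cdots)$. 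A generic Taylor expansion with crude exponential remainder bounds, as you propose, loses both: without the inserted rotation the phase term is not invariant (and the bound is useless against the rotation-invariant loss in the Assouad step), and without the counting trick you pick up an extra dimension factor of order $K$, which would spoil the $\sigma^6/(Nr^4)$ lower bound this lemma is designed to deliver. Your heuristic for why magnitudes enter at order $\sigma^{-4}$ and phases at $\sigma^{-6}$ is correct, but the proof needs these two concrete combinatorial ingredients, not just remainder control.
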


\revise{The upper bound (\ref{eq:KLupperhighnoise}) is sufficient to prove Lemma
\ref{lemma:minimaxlowerP} in the setting $\beta=0$, where the argument is as
follows:} We restrict attention to a discrete
space of $2^K$ parameters $\theta^\tau \in \cP_0$, indexed by the hypercube
$\tau \in \{0,1\}^K$, where all Fourier magnitudes are equal to 1 and
the Fourier phases $\phi^\tau=(\phi_1^\tau,\ldots,\phi_K^\tau)$ are given by
\[\phi^\tau_k=\tau_k \cdot \phi.\]
Here, the value $\phi \in \R$ is chosen maximally while ensuring that
$D_{\KL}(p_{\theta^\tau}\|p_{\theta^{\tau'}}) \leq H(\tau,\tau')/N$
by the bounds of Lemma \ref{lemma:KLupperbound}, where
$H(\tau,\tau')$ is the Hamming distance on the hypercube. Applying 
Proposition \ref{prop-lossgeneral}, we may show that the loss between such
parameters is also lower bounded in terms of Hamming distance as
$L(\theta^\tau,\theta^{\tau'}) \gtrsim r^2\phi^2
\cdot H(\tau,\tau')$. Assouad's lemma, see e.g.\ \citep[Lemma
2]{cai2012optimal}, then implies a minimax lower bound over the discrete
parameter space $\{\theta^\tau:\tau \in \{0,1\}^K\}$, which in turn implies the
lower bound of Lemma \ref{lemma:minimaxlowerP} over $\cP_0$.
\revise{For more general decay parameters $\beta \in [0,\frac{1}{2})$, we apply
a variation of this argument where the parameters $\theta^\tau$ are defined such
that only the Fourier phases $\phi_k^\tau$ for $k>K/2$ are non-zero. We establish
a modified version of (\ref{eq:KLupperhighnoise}) for the corresponding
vectors $\theta^\tau$, where $\rupper$ may be replaced by the maximum of
$(r_k,r_k')$ over $k>K/2$. The remainder of the proof is then similar to the
$\beta=0$ setting.}

\appendix

\section{Proofs for method-of-moments estimation}\label{appendix:MoM}

We prove the results of Section \ref{sec-4} on the method-of-moments estimator.

\begin{Proposition}\label{lemma-distribution-1}
Let $\eta \sim \Normal_\C(0,2)$. Then we have the equalities in law
$\eta \overset{L}{=} \overline{\eta}$ and
$\eta \overset{L}{=} e^{i\phi} \eta$ for any $\phi \in \R$.
Furthermore,
\begin{align}
\E[\eta^j\overline{\eta}^k]&=0 \text{ for all integers } j \neq k,
\label{eq:gaussiansymmetry}\\
\E[|\eta|^{2j}] &\leq 4^j j! \text{ for all integers } j \geq 1.\label{eq:gaussianmoments}
\end{align}
\end{Proposition}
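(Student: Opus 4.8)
The plan is to unpack the definition: by the stated convention, $\eta \sim \Normal_\C(0,2)$ means $\eta = X+iY$ with $X,Y \overset{\iid}{\sim} \Normal(0,1)$, so the law of the pair $(X,Y)$ is the standard bivariate Gaussian, which is invariant under every orthogonal transformation of $\R^2$. Both distributional identities follow immediately from this invariance. Conjugation $\eta \mapsto \overline\eta$ acts on $(X,Y)$ by the reflection $(X,Y)\mapsto(X,-Y)$, which preserves the standard Gaussian law, giving $\eta \overset{L}{=}\overline\eta$. Multiplication by $e^{i\phi}$ acts on $(X,Y)$ by the planar rotation through angle $\phi$, which is also orthogonal, giving $\eta \overset{L}{=} e^{i\phi}\eta$ for every $\phi \in \R$.

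From the rotational identity I would then deduce (\ref{eq:gaussiansymmetry}): since $\eta$ has finite absolute moments of all orders by Gaussian tail decay, $\E[\eta^j\overline\eta^k]$ is well-defined, and substituting $\eta \overset{L}{=} e^{i\phi}\eta$ yields $\E[\eta^j\overline\eta^k] = e^{i(j-k)\phi}\,\E[\eta^j\overline\eta^k]$ for all $\phi \in \R$. When $j \neq k$ the factor $e^{i(j-k)\phi}$ is not identically $1$, which forces the expectation to vanish. For (\ref{eq:gaussianmoments}), I would use that $|\eta|^2 = X^2+Y^2$ has the $\chi^2_2$ distribution, equivalently the exponential distribution with density $\tfrac12 e^{-x/2}$ on $x>0$; a single Gamma-function integral then gives $\E[|\eta|^{2j}] = \int_0^\infty x^j\,\tfrac12 e^{-x/2}\,dx = 2^j\,j! \leq 4^j\,j!$.

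There is no substantive obstacle in this proposition: every step is either an elementary symmetry of the Gaussian law or a one-line integral. The only point requiring attention is keeping the variance convention straight, so that $X$ and $Y$ are exactly standard normal and $|\eta|^2$ is exactly $\chi^2_2$ rather than a rescaling thereof; with that pinned down, the bound $2^j j! \le 4^j j!$ is deliberately loose and leaves room to spare for the later applications.
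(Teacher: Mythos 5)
Your proof is correct, and it differs from the paper's in the details of both halves. For the symmetry statements, the paper works in polar coordinates, writing $\eta=Re^{i\alpha}$ with $R^2\sim\chi_2^2$ independent of $\alpha\sim\Unif([-\pi,\pi))$; the two equalities in law are then immediate, and (\ref{eq:gaussiansymmetry}) follows by direct computation, since $\eta^j\overline{\eta}^k=R^{j+k}e^{i(j-k)\alpha}$ and $\E[e^{i(j-k)\alpha}]=0$ for $j\neq k$. You instead work in Cartesian coordinates, get the equalities in law from orthogonal invariance of the standard bivariate Gaussian, and deduce (\ref{eq:gaussiansymmetry}) from the functional equation $\E[\eta^j\overline{\eta}^k]=e^{i(j-k)\phi}\,\E[\eta^j\overline{\eta}^k]$; this is a standard and perfectly valid variant (the finiteness of the moments that you invoke, implicitly for nonnegative $j,k$, is exactly what justifies the substitution, and the same implicit restriction is present in the paper's argument). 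For (\ref{eq:gaussianmoments}), the paper bounds $\E[(Z^2+Z'^2)^j]\leq 2^j\E[Z^{2j}+Z'^{2j}]$ and uses $(2j-1)!!\leq 2^{j-1}j!$, whereas you compute the moment exactly from the exponential law of $|\eta|^2$, obtaining $\E[|\eta|^{2j}]=2^jj!$, which is sharper than the stated $4^jj!$ and arguably cleaner; either route suffices for every later use of the bound. Your attention to the variance convention ($\Normal_\C(0,2)$ meaning $X,Y\overset{\iid}{\sim}\Normal(0,1)$) is also the right thing to pin down, and matches the paper's definition.
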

\begin{proof}
We may represent $\eta=Re^{i\alpha}$ where $R^2 \sim \chi_2^2$ is independent of
$\alpha \sim \Unif([-\pi,\pi))$. Then $\bar{\eta}=Re^{-i\alpha}$,
$e^{i\phi}\eta=Re^{i(\phi+\alpha)}$, and $\eta^j\bar{\eta}^k=R^{j+k}
e^{i(j-k)\alpha}$, so $\eta \overset{L}{=} \overline{\eta}$,
$\eta \overset{L}{=} e^{i\phi} \eta$, and (\ref{eq:gaussiansymmetry}) follow.
For (\ref{eq:gaussianmoments}), write $|\eta|^2=R^2=Z^2+{Z'}^2$ where $Z,Z'
\overset{\iid}{\sim} \Normal(0,1)$. Then
$\E[|\eta|^{2j}]=\E[(Z^2+{Z'}^2)^j] \leq 2^j\E[Z^{2j}+{Z'}^{2j}]$.
We have $\E[Z^{2j}]=(2j-1)!! \leq 2^{j-1} \cdot j!$, showing
(\ref{eq:gaussianmoments}).
\end{proof}

\subsection{Estimation of $r_k$}

\begin{proof}[Proof of Lemma \ref{lemma:rktail}]
Write $\theta_k=(\theta_{k,1},\theta_{k,2}) \in \R^2$ and
$\eps_k^{(m)}=(\eps_{k,1}^{(m)},\eps_{k,2}^{(m)}) \in \R^2$.
Since $|\tilde y_k^{(m)}|^2=\|\theta_k+\sigma\eps_k^{(m)}\|^2$
and $\|\theta_k\|^2=r_k^2$, we have
\[\frac{1}{N}\sum_{m=1}^N |\tilde y_k^{(m)}|^2-2\sigma^2
=r_k^2+\frac{1}{N}\sum_{m=1}^N 2 \sigma
\langle \eps_k^{(m)},\theta_k \rangle+\frac{\sigma^2}{N}
\sum_{m=1}^N (\|\eps_k^{(m)}\|^2-2).\]
Applying $N^{-1}\sum_m 2\sigma \langle \eps_k^{(m)},\theta_k \rangle
\sim \Normal(0,4\sigma^2r_k^2/N)$,
$\sum_m \|\eps_k^{(m)}\|^2 \sim \chi^2_{2N}$, and standard Gaussian and
chi-squared tail bounds, for a universal constant $c>0$ and any $t>0$ we have
\[\P\left[\frac{1}{N}\sum_{m=1}^N 2 \sigma \langle \eps_k^{(m)},\theta_k
\rangle \geq t\right] \leq e^{-cNt^2/\sigma^2r_k^2},
\quad \P\left[\frac{1}{N}\sum_{m=1}^N (\|\eps_k^{(m)}\|^2-2)
\geq t \right] \leq e^{-cN(t \wedge t^2)}.\]
Then for a universal constant $c'>0$,
\begin{align*}
\P\left[\frac{1}{N}\sum_{m=1}^N |\tilde
y_k^{(m)}|^2-2\sigma^2 \geq (1+t)r_k^2\right] &\leq 
\P\left[\frac{1}{N}\sum_{m=1}^N 2 \sigma \langle \eps_k^{(m)},\theta_k \rangle
\geq \frac{tr_k^2}{2}\right]
+\P\left[\frac{\sigma^2}{N}\sum_{m=1}^N (\|\eps_k^{(m)}\|^2-2)
\geq \frac{tr_k^2}{2}\right]\\
&\leq 2\exp\left(-c'N\left(\frac{t^2r_k^2}{\sigma^2} \wedge
\frac{tr_k^2}{\sigma^2} \wedge \frac{t^2r_k^4}{\sigma^4}\right)\right).
\end{align*}
Applying this with $t=2s+s^2$ and recalling the definition of $\hat{r}_k$ from
(\ref{eq:hatrk}), the left side is exactly $\P[\hat{r}_k \geq r_k(1+s)]$.
Then, considering separately the cases $s \geq 1$ and $s \leq 1$,
the right side reduces to the upper bound (\ref{eq:hatrupper}).
For the lower bound, similarly for any $t>0$,
a lower chi-squared tail bound gives
\[\P\left[\frac{1}{N}\sum_{m=1}^N (\|\eps_k^{(m)}\|^2-2) \leq -t\right]
\leq e^{-cNt^2}.\]
Then we obtain analogously
\begin{align*}
\P\left[\frac{1}{N}\sum_{m=1}^N |\tilde y_k^{(m)}|^2-2\sigma^2 \leq
r_k^2(1-t)\right] & \leq 2\exp\left(-cN\left(\frac{t^2r_k^2}{\sigma^2} \wedge
\frac{t^2r_k^4}{\sigma^4}\right)\right).
\end{align*}
Applying this with $t=2s-s^2 \geq s$ for $s \in [0,1)$, we obtain
(\ref{eq:hatrlower}).
\end{proof}

\begin{proof}[Proof of Corollary \ref{cor:MoMrriskbound}]
We apply $\E[X^2]=\E[\int_0^\infty \1\{|X| \geq s\}\cdot 2s\,ds]=\int_0^\infty
\P[|X| \geq s] \cdot 2s\,ds$ with $X=\hat{r}_k/r_k-1$, and
$\int_0^\infty s\,e^{-\alpha s^2}ds=\alpha^{-1} \int_0^\infty t\,e^{-t^2}dt
\leq C/\alpha$. Then Lemma \ref{lemma:rktail} gives
\[\E[(\hat{r}_k-r_k)^2]=r_k^2 \cdot \E[(\hat{r}_k/r_k-1)^2]
\leq r_k^2 \cdot \int_0^\infty 8s\left(e^{-cNs^2r_k^2/\sigma^2}
+e^{-cNs^2r_k^4/\sigma^4}\right) ds
\leq C\left(\frac{\sigma^2}{N}+
\frac{\sigma^4}{Nr_k^2}\right).\]
\end{proof}

\subsection{Oracle estimation of $\Phi_{k,l}$}

\begin{proof}[Proof of Lemma \ref{lemma:Phikltail}]
Recall $B_{k,l}$ from (\ref{eq:Bkl}) and $\hat{B}_{k,l}$ from (\ref{eq:hatBkl}).
We first show concentration of $\hat{B}_{k,l}$ around $B_{k,l}$. Let us write
\[\tilde y_k^{(m)}=r_ke^{i(\phi_k+k\alpha^{(m)})}+\sigma \tilde{\eps}_k^{(m)}
=e^{ik\alpha^{(m)}}\tilde\theta_k(1+(\sigma/r_k) \eta_k^{(m)})\]
where $\tilde\theta_k=r_k e^{i\phi_k}$ is the complex representation of
$(\theta_{k,1},\theta_{k,2})$,
and $\eta_k^{(m)}=e^{-ik\alpha^{(m)}}(r_k/\tilde{\theta}_k)
\tilde{\eps}_k^{(m)}$ is a
rotation of the Gaussian noise. By Proposition \ref{lemma-distribution-1},
we still have $\eta_k^{(m)} \sim \Normal_\C(0,2)$ where these
remain independent across all $k=1,\ldots,K$ and $m=1,\ldots,N$.
Applying this to (\ref{eq:hatBkl}),
the factors $e^{ik\alpha^{(m)}},e^{il\alpha^{(m)}},e^{i(k+l)\alpha^{(m)}}$
cancel to yield
\begin{equation}\label{eq:hatBexpansion}
\hat{B}_{k,l}=\frac{1}{N}\sum_{m=1}^N
\tilde\theta_{k+l}\overline{\tilde\theta_k\tilde\theta_l}
\left(1+(\sigma/r_{k+l}) \eta_{k+l}^{(m)}\right)
\left(1+(\sigma/r_k) \overline{\eta_k^{(m)}}\right)
\left(1+(\sigma/r_l) \overline{\eta_l^{(m)}}\right)
=B_{k,l}(1+\mathrm{I}+\mathrm{II}+\mathrm{III})
\end{equation}
where
\begin{align*}
\mathrm{I}&=\frac{\sigma}{N}\sum_{m=1}^N
\frac{\overline{\eta_k^{(m)}}}{r_k}+\frac{\overline{\eta_l^{(m)}}}{r_l}
+\frac{\eta_{k+l}^{(m)}}{r_{k+l}}\\
\mathrm{II}&=\frac{\sigma^2}{N}\sum_{m=1}^N 
\frac{\overline{\eta_k^{(m)}}\overline{\eta_l^{(m)}}}{r_kr_l}+
\frac{\overline{\eta_k^{(m)}}\eta_{k+l}^{(m)}}{r_kr_{k+l}}
+\frac{\overline{\eta_l^{(m)}} \eta_{k+l}^{(m)}}{r_lr_{k+l}}\\
\mathrm{III}&=\frac{\sigma^3}{N}\sum_{m=1}^N
\frac{\overline{\eta_k^{(m)}}\overline{\eta_l^{(m)}}\eta_{k+l}^{(m)}}{r_kr_lr_{k+l}}.
\end{align*}

To bound $\mathrm{I}$, observe that
\begin{equation}\label{eq:deg1term}
\frac{\sigma}{N} \sum_{m=1}^N \frac{\Re \eta_k^{(m)}}{r_k}
\sim \Normal\left(0,\frac{\sigma^2}{Nr_k^2}\right),
\end{equation}
and similarly for the imaginary part and for the other two terms of $\mathrm{I}$. Then by a Gaussian tail bound,
\begin{equation}\label{eq:deg1finalbound}
\P[|\mathrm{I}| \geq t] \leq C\exp(-cNt^2\rlower^2/\sigma^2).
\end{equation}

To bound $\mathrm{II}$, consider first $k \neq l$ and
$\sum_m \Re \overline{\eta_k^{(m)}} \cdot \Re
\overline{\eta_l^{(m)}}$.
Each term $\Re \overline{\eta_k^{(m)}} \cdot \Re
\overline{\eta_l^{(m)}}$ is the product of two independent
standard Gaussian variables.
Then applying \cite[Corollary 1]{latala2006estimates} with $d=2$, $A=I$,
$\|A\|_{\{1,2\}}=\sqrt{N}$, and $\|A\|_{\{1\},\{2\}}=1$, we have
\[\P\left[\left|\frac{1}{N}\sum_{m=1}^N \Re \overline{\eta_k^{(m)}} \cdot \Re
\overline{\eta_l^{(m)}}\right| \geq t\right]
\leq Ce^{-cN(t \wedge t^2)}.\]
So
\begin{equation}\label{eq:degree2bound}
\P\left[\left|\frac{\sigma^2}{N}\sum_{m=1}^N
\frac{\Re \overline{\eta_k^{(m)}}}{r_k} \cdot \frac{\Re
\overline{\eta_l^{(m)}}}{r_l}\right| \geq t\right]
\leq C\exp\left(-cN\left(\frac{t\rlower^2}{\sigma^2} \wedge
\frac{t^2\rlower^4}{\sigma^4}\right)\right).
\end{equation}
The same bound holds for all products of real and imaginary parts of
$\eta_k^{(m)}$ and $\eta_l^{(m)}$, except for 
$\Re \eta_k^{(m)} \cdot \Re \eta_l^{(m)}$ and
$\Im \eta_k^{(m)} \cdot \Im \eta_l^{(m)}$ when $k=l$. For these products, we may
consider them together and apply
\[\frac{\sigma^2}{N}\sum_{m=1}^N \frac{(\Re \eta_k^{(m)})^2}{r_k^2}
-\frac{(\Im \eta_k^{(m)})^2}{r_k^2}
=\frac{2\sigma^2}{Nr_k^2}
\sum_{m=1}^N \frac{\Re \eta_k^{(m)}-\Im \eta_k^{(m)}}{\sqrt{2}} \cdot
\frac{\Re \eta_k^{(m)}+\Im \eta_k^{(m)}}{\sqrt{2}}\]
where now $(\Re \eta_k^{(m)}-\Im \eta_k^{(m)})/\sqrt{2}$ and
$(\Re \eta_k^{(m)}+\Im \eta_k^{(m)})/\sqrt{2}$ are independent standard Gaussian
variables. The bound (\ref{eq:degree2bound}) then holds for this sum, and this
shows
\[\P\left[\left|\frac{\sigma^2}{N}\sum_{m=1}^N \frac{\overline{\eta_k^{(m)}}
\overline{\eta_l^{(m)}}}{r_kr_l}\right|>t\right]
\leq C\exp\left(-cN\left(\frac{t\rlower^2}{\sigma^2} \wedge
\frac{t^2\rlower^4}{\sigma^4}\right)\right)\]
for the first term of $\mathrm{II}$.
Applying the same argument for the remaining two terms of $\mathrm{II}$,
\begin{equation}\label{eq:deg2finalbound}
\P[|\mathrm{II}| \geq t]
\leq C\exp\left(-cN\left(\frac{t\rlower^2}{\sigma^2} \wedge
\frac{t^2\rlower^4}{\sigma^4}\right)\right).
\end{equation}

We apply a similar argument to bound $\mathrm{III}$.
Consider first $k \neq l$ and
$\sum_m \Re \eta_k^{(m)} \cdot \Re \eta_l^{(m)} \cdot \Re
\eta_{k+l}^{(m)}$.
Each term $\Re \eta_k^{(m)} \cdot \Re \eta_l^{(m)} \cdot \Re \eta_{k+l}^{(m)}$
is the product of three independent standard Gaussian variables. Then
applying \cite[Corollary 1]{latala2006estimates} with $d=3$, $A=\sum_{m=1}^N e_m
\otimes e_m \otimes e_m$, $\|A\|_{\{1,2,3\}}=\sqrt{N}$,
$\|A\|_{\{1,2\},\{3\}}=1$, and $\|A\|_{\{1\},\{2\},\{3\}}=1$,
\[\P\left[\left|\frac{1}{N}\sum_{m=1}^N \Re \eta_k^{(m)} \cdot \Re \eta_l^{(m)}
\cdot \Re \eta_{k+l}^{(m)}\right| \geq t\right] \leq 
Ce^{-c(Nt^2 \wedge Nt \wedge (Nt)^{2/3})}
\leq Ce^{-cN(t^2 \wedge \frac{t^{2/3}}{N^{1/3}})}.\]
(The second inequality applies $t \geq t^2 \wedge
\frac{t^{2/3}}{N^{1/3}}$ for any $N \geq 1$ and $t \geq 0$.)
The same bound holds for all combinations of real and imaginary parts of
$\eta_k^{(m)},\eta_l^{(m)},\eta_{k+l}^{(m)}$, except again for products having
$\Re \eta_k^{(m)} \cdot \Re \eta_l^{(m)}$ or
$\Im \eta_k^{(m)} \cdot \Im \eta_l^{(m)}$ when $k=l$. These products may be
bounded by applying
\[\frac{1}{2} \Re \eta_{2k}^{(m)} \cdot \left((\Re \eta_k^{(m)})^2
-(\Im \eta_k^{(m)})^2\right)
=\Re \eta_{2k}^{(m)} \cdot
\frac{\Re \eta_k^{(m)}-\Im \eta_k^{(m)}}{\sqrt{2}} \cdot
\frac{\Re \eta_k^{(m)}+\Im \eta_k^{(m)}}{\sqrt{2}}\]
and similarly for $\Im \eta_{2k}^{(m)}\cdot ((\Re \eta_k^{(m)})^2
-(\Im \eta_k^{(m)})^2)$, where  $\Re \eta_{2k}^{(m)}$,
$\Im \eta_{2k}^{(m)}$, $(\Re \eta_k^{(m)}-\Im \eta_k^{(m)})/\sqrt{2}$, and
$(\Re \eta_k^{(m)}+\Im \eta_k^{(m)})/\sqrt{2}$ are independent standard Gaussian
variables. Thus
\begin{equation}\label{eq:deg3finalbound}
\P\left[|\mathrm{III}| \geq t\right] \leq 
C\exp\left(-cN\left(\frac{t^2\rlower^6}{\sigma^6} \wedge 
\frac{t^{2/3}\rlower^2}{N^{1/3}\sigma^2}\right)\right).
\end{equation}

Combining (\ref{eq:deg1finalbound}), (\ref{eq:deg2finalbound}), and
(\ref{eq:deg3finalbound}), for any $s>0$ we obtain
\[\P\Big[|\hat{B}_{k,l}/B_{k,l}-1| \geq s\Big]
\leq C\exp\left(-cN\left(\frac{s^2\rlower^2}{\sigma^2}
\wedge \frac{s\rlower^2}{\sigma^2}
\wedge \frac{s^2\rlower^4}{\sigma^4}
\wedge \frac{s^2\rlower^6}{\sigma^6}
\wedge \frac{s^{2/3}\rlower^2}{N^{1/3}\sigma^2}\right)\right).\]
We have
\[\frac{s\rlower^2}{\sigma^2}
\geq \frac{s^2\rlower^2}{\sigma^2} \wedge
\frac{s^{2/3}\rlower^2}{N^{1/3}\sigma^2}, \qquad
\frac{s^2\rlower^4}{\sigma^4}
\geq \frac{s^2\rlower^2}{\sigma^2} \wedge \frac{s^2\rlower^6}{\sigma^6},\]
so this simplifies to (\ref{eqn-B-final-bound}).

Finally, for any $z \in \C$ and any $s \in (0,1)$, observe
that $|z-1|<s$ implies $|\Arg z|<\arcsin s<\pi s/2$ for the principal
argument (\ref{eq:principalArg}). Then, recalling that
$\hat{\Phi}_{k,l}^\oracle-\Phi_{k,l}=\Arg (\hat{B}_{k,l}/B_{k,l})$ from
(\ref{eq:Argoracle}), we obtain for any $s \in (0,\pi/2)$ that
\[\P\Big[|\hat{\Phi}_{k,l}^\oracle-\Phi_{k,l}| \geq s\Big] \leq
\P\Big[|\hat{B}_{k,l}/B_{k,l}-1| \geq \frac{2s}{\pi}\Big]\]
and (\ref{eqn-Phi-final-bound}) follows.
\end{proof}

\begin{proof}[Proof of Corollary \ref{cor:riskPhikl}]
We apply $\E[X^2]=\int_0^\infty \P[|X| \geq s] \cdot 2s\,ds$
and Lemma \ref{lemma:Phikltail} to obtain, for universal constants $C,c>0$,
\begin{align*}
\E[(\hat\Phi_{k,l}^\oracle-\Phi_{k,l})^2] &\leq 
\int_0^{\pi/2} Cs\left(e^{-cNs^2\rlower^2/\sigma^2}+
e^{-cNs^2\rlower^6/\sigma^6}+e^{-c(Ns)^{2/3}\rlower^2/\sigma^2}\right)ds\\
&\hspace{2in}+C\left(e^{-cN\rlower^2/\sigma^2}+
e^{-cN\rlower^6/\sigma^6}+e^{-cN^{2/3}\rlower^2/\sigma^2}\right),
\end{align*}
where the second term bounds the integral from $s=\pi/2$ to $s=\pi$.
The result then follows from applying
$\int_0^\infty se^{-\alpha s^2}ds=\alpha^{-1}\int_0^\infty te^{-t^2}dt
\leq C/\alpha$ and $\int_0^\infty se^{-\alpha s^{2/3}}ds
=\alpha^{-3}\int_0^\infty t^3e^{-t^2} \cdot 3t^2\,dt
\leq C/\alpha^3$ for the first term,
$e^{-cx} \leq C/x$ and $e^{-cx} \leq C/x^3$ for the second term, and
$\sigma^6/N^2\rlower^6 \leq \sigma^6/N\rlower^6$.
\end{proof}

\begin{proof}[Proof of Lemma \ref{lemma:Phicovariance}]
Part (c) follows from Corollary \ref{cor:riskPhikl} and Cauchy-Schwarz. For part
(a), recall from (\ref{eq:Argoracle}) and the expression for $\hat{B}_{k,l}$ in
(\ref{eq:hatBexpansion}) that
\[\hat\Phi_{k,l}^\oracle-\Phi_{k,l}=\Arg(\hat{B}_{k,l}/B_{k,l})
=\Arg \frac{1}{N}\sum_{m=1}^N \left(1+(\sigma/r_{k+l}) \eta_{k+l}^{(m)}
\right)\left(1+(\sigma/r_k) \overline{\eta_k^{(m)}}\right)
\left(1+(\sigma/r_l) \overline{\eta_l^{(m)}}\right)\]
Since $\eta_k^{(m)}$ are independent across $k=1,\ldots,K$ and $m=1,\ldots,N$,
we obtain in the setting of part (a) that $\hat\Phi_{k,l}^\oracle-\Phi_{k,l}$ is
independent of $\hat\Phi_{x,y}^\oracle-\Phi_{x,y}$.
Furthermore, applying the conjugation symmetry of
Proposition \ref{lemma-distribution-1} to the variables $\eta_k^{(m)}$,
we have the equality in law
$\hat{B}_{k,l}/B_{k,l} \overset{L}{=} \overline{\hat{B}_{k,l}/B_{k,l}}$ for the
quantity inside $\Arg(\cdot)$. Since $\Arg z=-\Arg \overline{z}$ whenever
$\Arg z \neq -\pi$, and the probability is 0 that
$\Arg \hat{B}_{k,l}/B_{k,l}=-\pi$ exactly, this equality in law implies
the sign symmetry (\ref{eq:symmetryinsign}). Hence
$\E[\hat{\Phi}_{k,l}^\oracle-\Phi_{k,l}]=0$. This shows part (a).

It remains to show part (b). Let $\Ln$ denote the principal value of the
complex logarithm with branch cut on the negative real line, so that
$\Arg z=\Im \Ln z$ whenever $\Arg z \neq -\pi$. Denote
$\delta_{k,l}=\hat B_{k,l}/B_{k,l}-1$. Then (with probability 1)
\begin{align*}
(\hat\Phi_{k,l}^\oracle-\Phi_{k,l})(\hat\Phi_{x,y}^\oracle-\Phi_{x,y})
&=\Arg \frac{\hat B_{k,l}}{B_{k,l}} \cdot \Arg \frac{\hat B_{x,y}}{B_{x,y}}
=\Im \Ln\left(1+\delta_{k,l}\right) \cdot \Im\Ln\left(1+\delta_{x,y}\right).
\end{align*}
Let us fix an integer $J=J(N,\rlower^2,\sigma^2) \geq 1$ to be determined, and
apply a Taylor expansion of $t \mapsto \Ln(1+t\delta)$ around $t=0$ to write
\[\Im \Ln(1+\delta)=q(\delta)+r(\delta), 
\qquad q(\delta)=\Im \sum_{j=1}^J \frac{(-1)^{j-1}}{j} \delta^j,
\qquad r(\delta)=\Im \int_0^1 \delta^{J+1} \cdot
\frac{(-1)^J(1-t)^J}{(1+t\delta)^{J+1}}dt.\]
Define the event
$\cE=\{|\delta_{k,l}|<1/2 \text{ and } |\delta_{x,y}|<1/2\}$. We may then
apply the approximation
\[\E\left[(\hat\Phi_{k,l}^\oracle-\Phi_{k,l})(\hat\Phi_{x,y}^\oracle-\Phi_{x,y})\right]=\E\Big[q(\delta_{k,l}) \cdot q(\delta_{x,y})\Big]
+\mathrm{I}+\mathrm{II}+\mathrm{III}\]
where we define the three error terms
\begin{align*}
\mathrm{I}&=-\E\Big[\1\{\cE^c\} \cdot q(\delta_{k,l}) \cdot
q(\delta_{x,y})\Big]\\
\mathrm{II}&=\E\left[\1\{\cE\} \Big(q(\delta_{k,l}) \cdot r(\delta_{x,y})
+r(\delta_{k,l}) \cdot q(\delta_{x,y})
+r(\delta_{k,l}) \cdot r(\delta_{x,y})\Big)\right]\\
\mathrm{III}&=\E\left[\1\{\cE^c\} \cdot
(\hat\Phi_{k,l}^\oracle-\Phi_{k,l})(\hat\Phi_{x,y}^\oracle-\Phi_{x,y})\right]
\end{align*}

To bound these errors, let $C,C',c,c'>0$ denote
universal constants changing from instance to instance. Recall
from (\ref{eqn-B-final-bound}) that
\begin{equation}\label{eq:Tayloreventbound}
\P[\cE^c] \leq \P[|\delta_{k,l}| \geq 1/2]+\P[|\delta_{x,y}| \geq 1/2]
\leq Ce^{-c(\frac{N\rlower^6}{\sigma^6}
\wedge \frac{N^{2/3}\rlower^2}{\sigma^2})}.
\end{equation}
Also, for any $j \geq 1$,
applying $\E[|X|^j]=\int_0^\infty \P[|X| \geq s] \cdot js^{j-1}\,ds$ 
with $X=2\delta_{k,l}$, and applying also for $Z \sim \Normal(0,1)$ that
$\E[|Z|^j] \leq \E[Z^{2j}]^{1/2}=[(2j-1)!!]^{1/2}
\leq (2j)^{(j-1)/2}$, we have from (\ref{eqn-B-final-bound}) that
\begin{align*}
\E[|2\delta_{k,l}|^j] &\leq 
\int_0^\infty js^{j-1} \cdot C(e^{-cNs^2\rlower^2/\sigma^2}
+e^{-cNs^2\rlower^6/\sigma^6}+e^{-c(Ns)^{2/3}\rlower^2/\sigma^2})\,ds\\
&=Cj \left(\left(\frac{\sigma^j}{\rlower^j N^{j/2}}
+\frac{\sigma^{3j}}{\rlower^{3j} N^{j/2}} \right)
\cdot \int_0^\infty t^{j-1}e^{-ct^2}dt
+\left(\frac{\sigma^{3j}}{\rlower^{3j} N^j}\right)
\cdot \int_0^\infty t^{3j-3}e^{-ct^2}\cdot 3t^2\,dt\right)\\
&\leq (C_0j)^{\frac{j}{2}}\left(\frac{\sigma^j}{\rlower^j N^{j/2}}
+\frac{\sigma^{3j}}{\rlower^{3j} N^{j/2}} \right)
+(C_0j)^{\frac{3j}{2}}\left(\frac{\sigma^{3j}}{\rlower^{3j} N^j}\right)
\end{align*}
where $C_0$ in the last line is a universal constant, which we will later
assume satisfies $C_0 \geq 3$. Let us set
\begin{equation}\label{eq:Jchoice}
J=\left\lfloor \frac{1}{4C_0e}
\left(\frac{N\rlower^6}{\sigma^6}
\wedge \frac{N^{2/3}\rlower^2}{\sigma^2}\right)\right\rfloor
\end{equation}
for this constant $C_0>0$. Note that if the quantity inside $\lfloor \cdot
\rfloor$ is less than 1, then the statement of part (b) holds since the left
side of (\ref{eq:card1bound}) is at most $\pi^2$, and the right side is an
arbitrarily large constant. Thus, we may assume henceforth that $J \geq 1$.
The above gives
\[\E[|2\delta_{k,l}|^j] \leq 2\left(\frac{j}{4eJ}\right)^{j/2}
+\left(\frac{j}{4eJ}\right)^{3j/2}.\]
Applying $|q(\delta)| \leq J\max(|\delta|,|\delta|^J)$, and also
$|r(\delta)| \leq |2\delta|^{J+1}$ for $|\delta|<1/2$, this shows
\begin{align*}
\E[|q(\delta_{k,l})|^2] &\leq J^2\E[|\delta_{k,l}|^2]
+J^2\E[|\delta|^{2J}] \leq CJ+J^2e^{-cJ},\\
\E[|q(\delta_{k,l})|^3] &\leq J^3\E[|\delta_{k,l}|^3]
+J^3\E[|\delta|^{3J}] \leq CJ^{3/2}+J^3e^{-cJ},\\
\E[\1\{\cE\}|r(\delta_{k,l})|^2] &\leq \E[|\delta_{k,l}|^{2J+2}] \leq Ce^{-cJ}.
\end{align*}
Then, applying these bounds together with (\ref{eq:Tayloreventbound}),
H\"older's inequality, and Cauchy-Schwarz,
\[|\mathrm{I}|+|\mathrm{II}|+|\mathrm{III}| \leq CJ^2 e^{-cJ} \leq C'e^{-c'J}.\]
This gives the second term on the right side of (\ref{eq:card1bound}).

Finally, let us bound the dominant term $\E[q(\delta_{k,l})q(\delta_{x,y})]$
using the condition that $\{k,l,k+l\}
\cap \{x,y,x+y\}$ has cardinality 1. Applying $2\Im u \cdot \Im v=\Re
u\bar{v}-\Re uv$, we have
\[\E[q(\delta_{k,l})q(\delta_{x,y})]=\sum_{i,j=1}^J \frac{(-1)^{i+j}}{ij}
\E[\Im \delta_{k,l}^i \cdot \Im \delta_{x,y}^j]
=\sum_{i,j=1}^J \frac{(-1)^{i+j}}{2ij}
\Big(\Re \E[\delta_{k,l}^i\overline{\delta_{x,y}^j}]
-\Re \E[\delta_{k,l}^i\delta_{x,y}^j]\Big).\]
From the expression for $\hat{B}_{k,l}$ in (\ref{eq:hatBexpansion}), observe
that
\[\delta_{k,l}=\frac{\hat{B}_{k,l}}{B_{k,l}}-1=\frac{1}{N}\sum_{m=1}^N
\left(1+\frac{\sigma}{r_{k+l}}\eta_{k+l}^{(m)}\right)\left(1+\frac{\sigma}{r_k}\overline{\eta_k^{(m)}}\right)\left(1+\frac{\sigma}{r_l}\overline{\eta_l^{(m)}}\right)-1.\]
We view this as a polynomial in the variables
$\{\eta_{k+l}^{(m)},
\overline{\eta_k^{(m)}},\overline{\eta_l^{(m)}}:m=1,\ldots,N\}$ where,
after canceling $+1$ with $-1$, each monomial has total degree at least 1
in these variables. We consider three cases.

{\bf Case 1:} $k+l=x+y$. This allows possibly $k=l$ and/or $x=y$,
but ensures $\{k,l\} \cap \{x,y\}=\emptyset$ since $\{k,l,k+l\} \cap
\{x,y,x+y\}$ has cardinality 1. We may
expand $\delta_{k,l}^i\delta_{x,y}^j$ as
a sum of monomials in $\eta_{k+l}^{(m)},\overline{\eta_k^{(m)}},
\overline{\eta_l^{(m)}},\overline{\eta_x^{(m)}},\overline{\eta_y^{(m)}}$ with
degree at least 1, and observe that $k+l=x+y$ is distinct from $\{k,l,x,y\}$
because it is strictly greater in value.
Then (\ref{eq:gaussiansymmetry}) from Proposition
\ref{lemma-distribution-1} implies $\E[\delta_{k,l}^i \delta_{x,y}^j]=0$.
We may also expand $\delta_{k,l}^i \overline{\delta_{x,y}^j}$ as a sum of
monomials in $\eta_{k+l}^{(m)}, \overline{\eta_k^{(m)}},
\overline{\eta_l^{(m)}},\overline{\eta_{k+l}^{(m)}},\eta_x^{(m)},
\eta_y^{(m)}$. Since $\{k,l\}$ are distinct from $\{x,y,k+l\}$, any monomial
involving $\overline{\eta_k^{(m)}},\overline{\eta_l^{(m)}}$ has vanishing
expectation. Similarly, any monomial involving
$\eta_x^{(m)},\eta_y^{(m)}$ has vanishing expectation. Thus
the only non-vanishing terms are
\begin{equation}\label{eq:deltaprodexpansion1}
\E[\delta_{k,l}^i\overline{\delta_{x,y}^j}]
=\E\left[\left(\frac{1}{N}\sum_{m=1}^N
\frac{\sigma}{r_{k+l}}\eta_{k+l}^{(m)}\right)^i
\left(\frac{1}{N}\sum_{m=1}^N
\frac{\sigma}{r_{k+l}}\overline{\eta_{k+l}^{(m)}}\right)^j\right].
\end{equation}
Then applying the equality in law
$N^{-1}\sum_{m=1}^N (\sigma/r_{k+l})\eta_{k+l}^{(m)} \overset{L}{=}
\eta \cdot \sigma/(r_{k+l}\sqrt{N})$ where $\eta \sim \Normal_\C(0,2)$,
together with (\ref{eq:gaussiansymmetry}) and (\ref{eq:gaussianmoments})
and the bound $j! \leq j^j$,
\[\Big|\E[\delta_{k,l}^i\overline{\delta_{x,y}^j}]\Big|
=\left(\frac{\sigma}{r_{k+l}\sqrt{N}}\right)^{i+j}\E[\eta^i\overline{\eta^j}]
\leq \1\{i=j\}\left(\frac{4j\sigma^2}{N\rlower^2}\right)^j.\]
So, recalling the definition of $J$ from (\ref{eq:Jchoice}) where $C_0 \geq 3$,
\[\Big|\E[q(\delta_{k,l})q(\delta_{x,y})]\Big|
\leq \sum_{j=1}^J \frac{1}{2j^2}
\left(\frac{4j\sigma^2}{N\rlower^2}\right)^j
\leq \left(\frac{2\sigma^2}{N\rlower^2}\right)
\sum_{j=1}^J \left(\frac{4J\sigma^2}{N\rlower^2}\right)^{j-1}
\leq \left(\frac{2\sigma^2}{N\rlower^2}\right)\sum_{j=1}^\infty
\left(\frac{1}{C_0e}\right)^{j-1}.\]
Thus we obtain, for a universal constant $C>0$,
\begin{equation}\label{eq:IVbound}
\Big|\E[q(\delta_{k,l})q(\delta_{x,y})]\Big|
\leq \frac{C\sigma^2}{N\rlower^2}.
\end{equation}
This concludes the proof in Case 1.

{\bf Case 2:} $k=x+y$. (By symmetry, this addresses also $l=x+y$,
$x=k+l$, and $y=k+l$.) Then $\{k,l\} \cap \{x,y,k+l\}=\emptyset$
and $\{k+l\} \cap \{x,y\}=\emptyset$, because $k+l$ is greater than $\{k,l\}$,
$k$ is greater than $\{x,y\}$, and $l=x$ or $l=y$
would imply that $\{k,l,k+l\} \cap \{x,y,x+y\}$ has cardinality 2.
We may expand $\delta_{k,l}^i\overline{\delta_{x,y}^j}$
as a sum of monomials in $\eta_{k+l}^{(m)},\overline{\eta_k^{(m)}},
\overline{\eta_l^{(m)}},\eta_x^{(m)},\eta_y^{(m)}$. Since $\{k,l\}$ are distinct
from $\{x,y,k+l\}$, (\ref{eq:gaussiansymmetry}) implies
$\E[\delta_{k,l}^i\overline{\delta_{x,y}^j}]=0$. We may also expand
$\delta_{k,l}^i\delta_{x,y}^j$ as a sum of monomials in
$\eta_{k+l}^{(m)},\overline{\eta_k^{(m)}},\overline{\eta_l^{(m)}},
\eta_k^{(m)},\overline{\eta_x^{(m)}},\overline{\eta_y^{(m)}}$. Here, $k+l$ is
distinct from $\{k,l,x,y\}$, and $\{x,y\}$ are distinct from $\{k,k+l\}$,
so any monomial involving
$\eta_{k+l}^{(m)},\overline{\eta_x^{(m)}},\overline{\eta_y^{(m)}}$ has vanishing
expectation. If $l \neq k$, then also $l$ is distinct from $\{k,k+l\}$ so
monomials involving $\overline{\eta_l^{(m)}}$ have vanishing expectation,
yielding
\[\E[\delta_{k,l}^i\delta_{x,y}^j]
=\E\left[\left(\frac{1}{N}\sum_{m=1}^N
\frac{\sigma}{r_k}\overline{\eta_k^{(m)}}\right)^i
\left(\frac{1}{N}\sum_{m=1}^N \frac{\sigma}{r_k}\eta_k^{(m)}\right)^j\right].\]
This is analogous to (\ref{eq:deltaprodexpansion1}), and the same argument as
above gives (\ref{eq:IVbound}).

If instead, $l=k$, then we obtain that the only non-zero terms of
$\E[\delta_{k,l}^i\delta_{x,y}^j]$ are
\begin{equation}\label{eq:deltaprodexpansion2}
\E\left[\delta_{k,l}^i\delta_{x,y}^j\right]
=\E\left[\left(\frac{1}{N}\sum_{m=1}^N
\frac{2\sigma}{r_k}\overline{\eta_k^{(m)}}+\frac{\sigma^2}{r_k^2}
(\overline{\eta_k^{(m)}})^2\right)^i \left(\frac{1}{N}\sum_{m=1}^N
\frac{\sigma}{r_k}\eta_k^{(m)}\right)^j\right]
\end{equation}
Let us distribute this product and then factor the expectations of the
resulting terms, using independence of
$\{\eta_k^{(m)}:m=1,\ldots,N\}$.
We write $\sum_{(a_1,\ldots,a_N)|a}$ for the sum over all
tuples of nonnegative integers $(a_1,\ldots,a_N)$ that sum to $a$. Then the
above may be rewritten as
\begin{align*}
\E\left[\delta_{k,l}^i\delta_{x,y}^j\right]
&=\mathop{\sum_{a,b \geq 0}}_{a+b=i}
\binom{i}{a} \E\left[\left(\frac{1}{N}\sum_{m=1}^N
\frac{2\sigma}{r_k}\overline{\eta_k^{(m)}}\right)^a
\left(\frac{1}{N}\sum_{m=1}^N
\frac{\sigma^2}{r_k^2}(\overline{\eta_k^{(m)}})^2\right)^b
\left(\frac{1}{N}\sum_{m=1}^N \frac{\sigma}{r_k}\eta_k^{(m)}\right)^j\right]\\
&=\mathop{\sum_{a,b \geq 0}}_{a+b=i} \binom{i}{a}2^aN^b \E\left[
\left(\sum_{m=1}^N \frac{\sigma}{Nr_k}\overline{\eta_k^{(m)}}\right)^a
\left(\sum_{m=1}^N
\left(\frac{\sigma}{Nr_k}\overline{\eta_k^{(m)}}\right)^2\right)^b
\left(\sum_{m=1}^N \frac{\sigma}{Nr_k}\eta_k^{(m)}\right)^j\right]\\
&=\mathop{\sum_{a,b \geq 0}}_{a+b=i} \binom{i}{a} 2^aN^b
\sum_{(a_1,\ldots,a_N)|a}\,\sum_{(b_1,\ldots,b_N)|b}\,
\sum_{(j_1,\ldots,j_N)|j}\\
&\hspace{0.2in} \binom{a}{a_1,\ldots,a_N}\binom{b}{b_1,\ldots,b_N}
\binom{j}{j_1,\ldots,j_N} \prod_{m=1}^N
\E\left[\left(\frac{\sigma \overline{\eta_k^{(m)}}}{Nr_k}\right)^{a_m+2b_m}
\left(\frac{\sigma \eta_k^{(m)}}{Nr_k}\right)^{j_m}\right]
\end{align*}
where the last line uses that there are $\binom{a}{a_1,\ldots,a_N}$ ways to
choose $a_1$ of the factors $(\sum_{m=1}^N
\frac{\sigma}{Nr_k}\overline{\eta_k^{(m)}})^a$ to correspond to $m=1$, $a_2$ to
correspond to $m=2$, etc., and similarly for $b$ and $j$.

Then, applying (\ref{eq:gaussiansymmetry}) and (\ref{eq:gaussianmoments}),
\begin{align*}
\left|\E\left[\delta_{k,l}^i\delta_{x,y}^j\right]\right|
&\leq \mathop{\sum_{a,b \geq 0}}_{a+b=i} \binom{i}{a} 2^aN^b
\sum_{(a_1,\ldots,a_N)|a}\,\sum_{(b_1,\ldots,b_N)|b}\,
\sum_{(j_1,\ldots,j_N)|j} \\
&\hspace{0.5in} \binom{a}{a_1,\ldots,a_N}\binom{b}{b_1,\ldots,b_N}
\binom{j}{j_1,\ldots,j_N} \prod_{m=1}^N \1\{a_m+2b_m=j_m\}
\left(\frac{4\sigma^2}{N^2r_k^2}\right)^{j_m} j_m!
\end{align*}
Observe that $\binom{j}{j_1,\ldots,j_N} \cdot \prod_{m=1}^N j_m!=j! \leq j^j$.
The condition $a_m+2b_m=j_m$ for every $m=1,\ldots,N$ requires
$a+2b=\sum_m a_m+2b_m=\sum_m j_m=j$. For $(a,b,j)$ satisfying this requirement,
fixing any partitions $(a_1,\ldots,a_N)|a$ and $(b_1,\ldots,b_N)|b$, there is
exactly one partition $(j_1,\ldots,j_N)|j$ for which $a_m+2b_m=j_m$ holds
for every $m=1,\ldots,N$. Thus, the above gives
\begin{align*}
\left|\E\left[\delta_{k,l}^i\delta_{x,y}^j\right]\right|
\leq \left(\frac{4j\sigma^2}{N^2\rlower^2}\right)^j
\mathop{\sum_{a,b \geq 0}}_{a+b=i,\,a+2b=j} \binom{i}{a} 2^aN^b
\sum_{(a_1,\ldots,a_N)|a}\,\sum_{(b_1,\ldots,b_N)|b}\,
\binom{a}{a_1,\ldots,a_N}\binom{b}{b_1,\ldots,b_N}.
\end{align*}
Observe now that $\sum_{(a_1,\ldots,a_N)|a} \binom{a}{a_1,\ldots,a_N}$ counts
exactly the number of assignments of each of $a$ labeled objects to $N$ bins,
by first determining the number of objects in each bin, followed by their
identities. So $\sum_{(a_1,\ldots,a_N)|a} \binom{a}{a_1,\ldots,a_N}=N^a$.
Applying the similar identity for $b$ and $N^{a+2b}=N^j$ above,
\[\left|\E\left[\delta_{k,l}^i\delta_{x,y}^j\right]\right|
\leq \left(\frac{4j\sigma^2}{N\rlower^2}\right)^j
\mathop{\sum_{a,b \geq 0}}_{a+b=i,\,a+2b=j} \binom{i}{a} 2^a.\]
Then, recalling that $\E\left[\delta_{k,l}^i\overline{\delta_{x,y}^j}\right]=0$,
\[\Big|\E[q(\delta_{k,l})q(\delta_{x,y})]\Big|
\leq \sum_{i,j=1}^J \frac{1}{2ij}
\left|\E\left[\delta_{k,l}^i\delta_{x,y}^j\right]\right|
\leq \sum_{j=1}^J \frac{1}{2j}
\left(\frac{4j\sigma^2}{N\rlower^2}\right)^j
\mathop{\sum_{a,b \geq 0}}_{a+2b=j} \frac{1}{a+b}\binom{a+b}{a}2^a.\]
We may apply
\[\mathop{\sum_{a,b \geq 0}}_{a+2b=j} \frac{1}{a+b}\binom{a+b}{a}2^a
\leq \sum_{a=0}^j \binom{j}{a}2^a=3^j.\]
Then, recalling the definition of $J$ from (\ref{eq:Jchoice}) where
$C_0 \geq 3$,
\[\Big|\E[q(\delta_{k,l})q(\delta_{x,y})]\Big|
\leq \sum_{j=1}^J \frac{1}{2j}
\left(\frac{12j\sigma^2}{N\rlower^2}\right)^j
\leq \frac{6\sigma^2}{N\rlower^2}\sum_{j=1}^J 
\left(\frac{12J\sigma^2}{N\rlower^2}\right)^{j-1}
\leq \frac{6\sigma^2}{N\rlower^2}\sum_{j=1}^\infty
\left(\frac{3}{C_0e}\right)^{j-1}.\]
This again yields (\ref{eq:IVbound}), and concludes the proof in Case 2.

{\bf Case 3:} $k=x$. (By symmetry, this addresses also $k=y$, $l=x$, and $l=y$.)
This ensures that $\{k+l,k+y\} \cap \{k,l,y\}=\emptyset$ and $k+l \neq k+y$,
because $k+l$ is greater than $\{k,l\}$, $k+y$ is greater than $\{k,y\}$,
and $k+l=y$ or $k+y=l$ or $k+l=k+y$
would lead to $\{k,l,k+l\} \cap \{x,y,x+y\}$ having cardinality 2.
We may expand $\delta_{k,l}^i\delta_{x,y}^j$ as monomials in
$\eta_{k+l}^{(m)},\overline{\eta_k^{(m)}},\overline{\eta_l^{(m)}},
\eta_{k+y}^{(m)},\overline{\eta_y^{(m)}}$. Since $\{k+l,k+y\}$ are distinct from
$\{k,l,y\}$, (\ref{eq:gaussiansymmetry}) implies
$\E[\delta_{k,l}^i\delta_{x,y}^j]=0$. We may also expand
$\delta_{k,l}^i\overline{\delta_{x,y}^j}$ as monomials in
$\eta_{k+l}^{(m)},\overline{\eta_k^{(m)}},\overline{\eta_l^{(m)}},
\overline{\eta_{k+y}^{(m)}},\eta_k^{(m)},\eta_y^{(m)}$. Since $k+l$ is distinct
from $\{k,l,k+y\}$ and $k+y$ is distinct from $\{k,y,k+l\}$, 
(\ref{eq:gaussiansymmetry}) implies that any monomials
involving $\eta_{k+l}^{(m)}$ or $\overline{\eta_{x+y}^{(m)}}$ have vanishing
expectation. Note that since $k+l \neq k+y$, also $l \neq y$.
If $k$ is distinct from $\{l,y\}$, then 
monomials involving $\overline{\eta_l^{(m)}}$ or $\eta_y^{(m)}$ also have
vanishing expectation, so
\[\E[\delta_{k,l}^i\overline{\delta_{x,y}^j}]
=\E\left[\left(\frac{1}{N}\sum_{m=1}^N
\frac{\sigma}{r_k}\overline{\eta_k^{(m)}}\right)^i
\left(\frac{1}{N}\sum_{m=1}^N
\frac{\sigma}{r_k}\eta_k^{(m)}\right)^j\right].\]
This is analogous to (\ref{eq:deltaprodexpansion1}), and the same argument as in
Case 1 leads to (\ref{eq:IVbound}). If instead $k=l$ (which by symmetry
addresses also $k=y$), then
\[\E[\delta_{k,l}^i\overline{\delta_{x,y}^j}]
=\E\left[\left(\frac{1}{N}\sum_{m=1}^N
\frac{2\sigma}{r_k}\overline{\eta_k^{(m)}}
+\frac{\sigma^2}{r_k^2}(\overline{\eta_k^{(m)}})^2\right)^i
\left(\frac{1}{N}\sum_{m=1}^N
\frac{\sigma}{r_k}\eta_k^{(m)}\right)^j\right].\]
This is analogous to (\ref{eq:deltaprodexpansion2}), and the same argument as in
Case 2 leads to (\ref{eq:IVbound}). This concludes the proof in Case 3.
Combining these three cases shows (\ref{eq:card1bound}).
\end{proof}

\subsection{Oracle estimation of $\phi_k$}

\begin{proof}[Proof of Lemma \ref{lemma:Mproperties}]
Suppose $M\phi=0$. Then for all $(k,l) \in \cI$,
$\phi_{k+l}=\phi_k+\phi_l$.
Then $\phi_2=\phi_1+\phi_1=2\phi_1$, $\phi_3=\phi_2+\phi_1=3\phi_1$, etc., and
$\phi_K=\phi_{K-1}+\phi_1=K\phi_1$, so $\phi$ is a multiple of
$(1,2,3,\ldots,K)$. Conversely, any multiple of $(1,2,3,\ldots,K)$ satisfies
$M\phi=0$ by the definition of $M$. This shows the first statement.

Denote $T=M^\top M$. We explicitly compute $T$: 
Let $M_k \in \R^{\cI}$ be the $k^\text{th}$ column of $M$. The diagonal entries of $T$ are
$T_{kk}=\|M_k\|^2$. If $2k>K$, then the non-zero entries of $M_k$
correspond to the $(i,j)$ pairs
\begin{align*}
(i,j)=(1,k-1),\ldots,(k-1,1)&: M_{(i,j),k}=1\\
(i,j)=(1,k),\ldots,(K-k,k)&: M_{(i,j),k}=-1\\
(i,j)=(k,1),\ldots,(k,K-k)&: M_{(i,j),k}=-1
\end{align*}
So $T_{kk}=(k-1)+2(K-k)=2K-1-k$. If $2k \leq K$, then the non-zero entries of
$M_k$ correspond to the $(i,j)$ pairs
\begin{align*}
(i,j)=(1,k-1),\ldots,(k-1,1)&: M_{(i,j),k}=1\\
(i,j)=(1,k),\ldots,(k-1,k),(k+1,k),\ldots,(K-k,k)&: M_{(i,j),k}=-1\\
(i,j)=(k,1),\ldots,(k,k-1),(k,k+1),\ldots,(k,K-k)&: M_{(i,j),k}=-1\\
(i,j)=(k,k)&: M_{(i,j),k}=-2
\end{align*}
So $T_{kk}=(k-1)+2(K-k-1)+4=2K+1-k$. Thus, for all $k=1,\ldots,K$,
\[T_{kk}=2K+1-k-2\cdot \1\{2k>K\}.\]
For $1\leq j<k \leq K$, when $j+k>K$, we have $T_{jk}=M_j^\top M_k=-2$
where the only non-zero contributions to this inner-product come from rows
$(k-j,j),(j,k-j) \in \cI$. When $j+k \leq K$, the non-zero contributions come
from rows $(k-j,j),(j,k-j),(j,k),(k,j) \in \cI$, and these cancel
exactly to yield $T_{jk}=M_j^\top M_k=0$. Combining these diagonal and
off-diagonal components, $T$ has the form
\[T=\left(\begin{matrix}2K&~&~&~\\~&2K-1&~&~\\~&~&\ddots&~\\~&~&~&K+1\end{matrix}\right)-\left(\begin{matrix}~&~&~&2\\~&~&2&2\\~&\begin{rotate}{90}$\ddots$\end{rotate}& \vdots & \vdots \\2&\ldots&2&2\end{matrix}\right)\]
where the second matrix accounts also for the term $-2 \cdot \1\{2k>K\}$ of the
diagonal entries.

Now let $\lambda>0$ be a positive eigenvalue of $T$, with non-zero eigenvector
$x=(x_1,x_2,\ldots,x_K)$. This must be orthogonal to the null vector
$(1,2,\ldots,K)$ so $x_1+2x_2+\ldots+Kx_K=0$.
From the above form of $T$, the equation $Tx=\lambda x$ may be arranged as the
linear system
\begin{align*}
(2K-\lambda)x_1&= 2x_K\\
(2K-1-\lambda)x_2&= 2(x_{K-1}+x_K)\\
&\vdots\\
(K+2-\lambda)x_{K-1}&=2(x_2+\ldots+x_K)\\
(K+1-\lambda)x_K &= 2(x_1+x_2+\ldots+x_K).
\end{align*}    
Summing these equations and adding $x_1+2x_2+\ldots+Kx_K$ to both sides, we
obtain
\[(2K+1-\lambda)(x_1+x_2+\ldots+x_K)=
2(x_1+2x_2+\ldots+Kx_K)+(x_1+2x_2+\ldots+Kx_K)=0.\]
If $x_1+x_2+\ldots+x_K \neq 0$, then this implies $\lambda=2K+1$.
If $x_1+x_2+\ldots+x_K=0$, but $\lambda\notin\{K+1,K+2,\ldots,2K\}$, then
from the above linear system, we have the implications
\begin{align*}
(K+1-\lambda)x_K = 2(x_1+x_2+\ldots+x_K)=0 &\Rightarrow x_K=0\\
(2K-\lambda)x_1 = 2x_K=0 &\Rightarrow x_1=0\\
(K+2-\lambda)x_{K-1}=2(x_2+x_3+\ldots+x_K)=2(0-x_1)=0 &\Rightarrow x_{K-1}=0\\
(2K-1-\lambda)x_2= 2(x_{K-1}+x_K)=0 &\Rightarrow x_2=0
\end{align*}
and so forth. Then $x_1=x_2=\ldots=x_K=0$, which contradicts $x\neq 0$. Thus
any positive eigenvalue $\lambda$ of $T$ is one of the values
$\{K+1,K+2,\ldots,2K+1\}$.
\end{proof}

\begin{proof}[\revise{Proof of Theorem \ref{thm:oracleMoM},
$\hat{\theta}=\hat{\theta}^\oracle$}]
Recall the loss upper bound from Proposition \ref{prop-lossgeneral}.
For constants $C,C'>0$, applying $ab \leq a^2+b^2$,
\begin{align*}
L(\hat\theta^\oracle,\theta^*) &\leq \sum_{k=1}^K (\hat{r}_k-r_k)^2
+C\inf_{\alpha \in \R} \sum_{k=1}^K \hat{r}_kr_k
\big|\hat\phi_k^\oracle-\phi_k+k\alpha\big|_{\cA}^2\\
&=\sum_{k=1}^K (\hat{r}_k-r_k)^2
+C\inf_{\alpha \in \R} \sum_{k=1}^K [r_k^2+r_k(\hat{r}_k-r_k)]
\big|\hat\phi_k^\oracle-\phi_k+k\alpha\big|_{\cA}^2\\
&\leq (C+1)\sum_{k=1}^K (\hat{r}_k-r_k)^2
+C\inf_{\alpha \in \R} \sum_{k=1}^K r_k^2\left(
\big|\hat\phi_k^\oracle-\phi_k+k\alpha\big|_{\cA}^2
+\big|\hat\phi_k^\oracle-\phi_k+k\alpha\big|_{\cA}^4\right)\\
&\leq C'\left(\sum_{k=1}^K (\hat{r}_k-r_k)^2
+\inf_{\alpha \in \R} \sum_{k=1}^K r_k^2
\big|\hat\phi_k^\oracle-\phi_k+k\alpha\big|_\cA^2\right).
\end{align*}

The expectation may be bounded using Corollaries \ref{cor:MoMrriskbound}
and \ref{cor:MoMphiriskbound}. For the bound (\ref{eq:phiriskbound}) of 
Corollary \ref{cor:MoMphiriskbound}, fixing $c>0$ be the constant in the
exponent, observe that the given condition for $N$ with $C_0>0$ large enough
implies
\[c\left(\frac{N\rlower^6}{\sigma^6} \wedge \frac{N^{2/3}\rlower^2}{\sigma^2}\right)
\geq \frac{c}{2}\left(\frac{N\rlower^6}{\sigma^6} \wedge \frac{N^{2/3}\rlower^2}{\sigma^2}\right)
+3\log K.\]
We may then apply $e^{-(c/2)x},e^{-(c/2)x^{2/3}} \leq C/x$
for a constant $C>0$ to obtain
\begin{equation}\label{eq:exptopoly}
e^{-c(\frac{N\rlower^6}{\sigma^6} \wedge \frac{N^{2/3}\rlower^2}{\sigma^2})}
\leq \frac{C}{K^3}\left(
\frac{\sigma^6}{N\rlower^6}+\frac{\sigma^3}{N\rlower^3}\right)
\leq \frac{C'}{K^3}\left(\frac{\sigma^2}{N\rlower^2}
+\frac{\sigma^6}{N\rlower^6}\right).
\end{equation}
Then Corollary \ref{cor:MoMphiriskbound} gives simply
\revise{
\[\E\left[\inf_{\alpha \in \R} \sum_{k=1}^K
r_k^2|\hat\phi_k^\oracle-\phi_k+k\alpha|_\cA^2\right] \leq 
\frac{C'\|\theta^*\|^2}{K}
\left(\frac{K\sigma^2}{N\rlower^2}+\frac{\sigma^6}{N\rlower^6}\right)\]}
and combining this with Corollary \ref{cor:MoMrriskbound} yields the lemma.
\end{proof}

\subsection{Estimation of $\Phi_{k,l}$ by optimization}

\begin{proof}[Proof of Lemma \ref{lemma:phasestability}]
Set $v=\phi-\phi'$. We must show: If $v \in \R^K$ is such that
\begin{equation}\label{eq:Mphismall}
|v_{k+l}-v_k-v_l|_{\cA} \leq \delta \text{ for all } (k,l) \in \cI
\end{equation}
then there exists $\alpha \in \R$ with $|v_k-k\alpha|_\cA \leq \delta$ for all
$k=1,\ldots,K$.

We induct on $K$. For $K=1$ the result holds trivially by setting
$\alpha=v_1$. Suppose the result holds for $K-1$. Consider $v \in \R^K$
that satisfies
(\ref{eq:Mphismall}). By the induction hypothesis, there exists $\alpha \in \R$
such that $|v_k-k\alpha|_{\cA} \leq \delta$ for $k=1,\ldots,K-1$. Then by
the triangle inequality, it is immediate to see that
\[|v_K-K\alpha|_\cA
\leq |v_K-v_1-v_{K-1}|_\cA
+|v_1-\alpha|_\cA+|v_{K-1}-(K-1)\alpha|_\cA \leq 3\delta.\]
To complete the induction, we must show the stronger bound of $\delta$ instead
of $3\delta$.

For this, let
\[\alpha_*=\argmin_{\alpha \in \R} \Big(\max_{k=1}^{K-1} |v_k-k\alpha|_\cA\Big),
\qquad \eps_k=|v_k-k\alpha_*|_\cA \text{ for } k=1,\ldots,K-1,
\qquad \eps=\max_{k=1}^{K-1} \eps_k.\]
Note that a minimizing $\alpha_*$ exists because the minimum may equivalently
be restricted to the compact domain $[-\pi,\pi]$.
The induction hypothesis implies $\eps \leq \delta$.
By definition of $|\cdot|_\cA$, there exists $j_k \in \Z$
for each $k=1,\ldots,K-1$ such that
\[\eps_k=|v_k-k\alpha_*+2\pi j_k|.\]
Furthermore, we claim that there must exist two indices
$k,l \in \{1,\ldots,K-1\}$ for which
\[\eps=v_k-k\alpha_*+2\pi j_k \quad \text{ and } \quad
{-}\eps=v_l-l\alpha_*+2\pi j_l.\]
This is because 
\[\Big\{k \in \{1,\ldots,K-1\}:\eps_k=\eps\Big\}=\Big\{k \in
\{1,\ldots,K-1\}:|v_k-k\alpha_*+2\pi j_k|=\eps\Big\}\]
is non-empty by definition of $\eps$. If $v_k-k\alpha_*+2\pi j_k=\eps$
for every $k$ belonging to this set, then we may
decrease the value of $\max_{k=1}^{K-1} |v_k-k\alpha_*|_\cA$ by slightly
increasing $\alpha_*$, which contradicts the optimality of $\alpha_*$.
Similarly if $v_k-k\alpha_*+2\pi j_k=-\eps$ for all $k$ in this set, then
we may decrease $\max_{k=1}^{K-1} |v_k-k\alpha_*|_\cA$ by slightly
decreasing $\alpha_*$, again contradicting the optimality of $\alpha_*$.
Thus the claimed indices $k,l$ exist.

Then, for this index $k \in \{1,\ldots,K-1\}$, we have
\[v_k-k\alpha_* \in 2\pi \Z+\eps,
\qquad v_{K-k}-(K-k)\alpha_* \in 2\pi \Z+[-\eps,\eps],
\qquad v_K-v_k-v_{K-k} \in 2\pi \Z+[-\delta,\delta].\]
Adding these three conditions and applying $\eps \leq \delta$,
\[v_K-K\alpha_* \in 2\pi \Z+[-\delta,2\eps+\delta]
\subseteq 2\pi \Z+[-\delta,3\delta].\]
Similarly, for this index $l \in \{1,\ldots,K-1\}$, we have
\[v_l-l\alpha_* \in 2\pi \Z-\eps,
\qquad v_{K-l}-(K-l)\alpha_* \in 2\pi \Z+[-\eps,\eps],
\qquad v_K-v_l-v_{K-l} \in 2\pi \Z+[-\delta,\delta].\]
Then adding these conditions, also
\[v_K-K\alpha_* \in 2\pi \Z+[-2\eps-\delta,\delta]
\subseteq 2\pi \Z+[-3\delta,\delta].\]
Since $3\delta<\pi$ strictly, the above two conditions combine to show that
$v_K-K\alpha_* \in 2\pi \Z+[-\delta,\delta]$,
i.e.\ $|v_K-K\alpha_*|_\cA \leq \delta$. This completes the induction.
\end{proof}

\begin{proof}[\revise{Proof of Theorem \ref{thm:oracleMoM},
$\hat{\theta}=\hat{\theta}^\opt$}]
Let $\cE$ be the event where (\ref{eq:linftyPhibound}) holds. Note that this
is exactly the event where $|\hat{\Phi}_{k,l}^\oracle(\phi)-\Phi_{k,l}|<\pi/12$ for all
$(k,l) \in \cI$. Then by Lemma \ref{lemma:Phikltail} and a union bound,
\[\P[\cE^c] \leq CK^2\,e^{-c(\frac{N\rlower^6}{\sigma^6} \wedge
\frac{N^{2/3}\rlower^2}{\sigma^2})} \leq \frac{C'}{K}
\left(\frac{\sigma^2}{N\rlower^2}+\frac{\sigma^6}{N\rlower^6}\right)\]
where the second inequality holds under the given condition for $N$
as argued in (\ref{eq:exptopoly}). Applying
Corollaries \ref{cor:optmimicsoracle} and \ref{cor:MoMphiriskbound},
\revise{
\begin{align*}
&\E\left[\inf_{\alpha \in \R}
\sum_{k=1}^K r_k^2 |\hat\phi_k^\opt-\phi_k+k\alpha|_\cA^2\right]\\
&=\E\left[\1\{\cE\}\inf_{\alpha \in \R}
\sum_{k=1}^K r_k^2 |\hat\phi_k^\opt-\phi_k+k\alpha|_\cA^2\right]
+\E\left[\1\{\cE^c\}\inf_{\alpha \in \R}
\sum_{k=1}^K r_k^2 |\hat\phi_k^\opt-\phi_k+k\alpha|_\cA^2\right]\\
&\leq \E\left[\inf_{\alpha \in \R}
\sum_{k=1}^K r_k^2|\hat\phi_k^\oracle(\phi')-\phi_k'+k\alpha|_\cA^2\right]
+C\|\theta^*\|^2 \cdot \P[\cE^c]
\leq \frac{C'\|\theta^*\|^2}{K}\left(\frac{K\sigma^2}{N\rlower^2}
+\frac{\sigma^6}{N\rlower^6}\right).
\end{align*}}
This is (up to a universal constant) the same risk bound as established for
the oracle estimator itself in Corollary \ref{cor:MoMphiriskbound}. The
remainder of the proof is then the same as that of Theorem \ref{thm:oracleMoM}
for $\hat{\theta}=\hat{\theta}^\oracle$.
\end{proof}

\subsection{Estimation of $\Phi_{k,l}$ by frequency
marching}\label{appendix:freqmarching}

We describe in this section 
an alternative frequency marching method for mimicking the oracle
estimator, which is more explicit and computationally efficient but requires a
larger sample size $N$ to succeed. Let
\[\tilde{\phi}_1=0\]
and, for each $k=2,\ldots,K$, set
\[\tilde{\phi}_k=\Arg \hat{B}_{1,k-1}+\tilde{\phi}_{k-1} \bmod 2\pi.\]
This defines a vector $\tilde\phi \in [-\pi,\pi)^K$, which we use in place of
(\ref{eq:tildephidef}). Then, as in Section \ref{sec:MoMopt}, define
$\tilde\Phi_{k,l}=\tilde\phi_{k+l}-\tilde\phi_k-\tilde\phi_l$ with arithmetic
carried out over $\R$, and choose
$\hat{\Phi}_{k,l}^\fm \in [\tilde\Phi_{k,l}-\pi,\tilde\Phi_{k,l}+\pi)$ as the
unique version of the phase of $\hat{B}_{k,l}$ belonging to this range. Finally,
let $\hat\phi^\fm$ be the resulting least-squares estimate of $\phi$ in
(\ref{eq:leastsquares}), and let $\hat\theta^\fm$ be the resulting estimate of
$\theta$. Again, this procedure uses the frequency-marching estimate
$\tilde{\phi}$ only as a pilot estimate to resolve the phase ambiguity of the
estimated bispectrum, which is then inverted using a least-squares approach.

The following lemma is analogous to Corollary \ref{cor:optmimicsoracle}, but
requires an improvement for the error of $\Arg \hat{B}_{k,l}$ by a factor of
$1/K$.

\begin{Lemma}\label{lemma:fmmimicsoracle}
Suppose
\begin{equation}\label{eq:linftyfm}
|\Arg \hat{B}_{k,l}-(\phi_{k+l}-\phi_k-\phi_l)|_\cA<\pi/(6K) \text{ for
every } (k,l) \in \cI.
\end{equation}
Then there exists $\phi'$ equivalent to $\phi$ such that
$\hat\Phi^\fm=\hat\Phi^\oracle(\phi')$.
\end{Lemma}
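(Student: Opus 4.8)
The plan is to follow the template of the proof of Corollary~\ref{cor:optmimicsoracle}, but with Lemma~\ref{lemma:phasestability} replaced by a direct analysis of error propagation through the frequency-marching recursion; this is exactly the place where the factor $1/K$ in the hypothesis \eqref{eq:linftyfm} is needed.

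First I would introduce the ``noiseless frequency-marching'' vector $\psi \in \R^K$ given by $\psi_k = \phi_k - k\phi_1$, which satisfies $\psi_1 = 0$ and the recursion $\psi_k = \psi_{k-1} + \Phi_{1,k-1}$ (with all arithmetic in $\R$), and which is equivalent to $\phi$ in the sense of \eqref{eq:phiequivalence} (via $\alpha = \phi_1$). This is the analogue of $\tilde\phi$ with each $\Arg \hat{B}_{1,k-1}$ replaced by its population target $\Phi_{1,k-1}$ and no reduction modulo $2\pi$. Comparing the two recursions modulo $2\pi$ gives $\tilde\phi_k - \psi_k \equiv (\Arg\hat{B}_{1,k-1} - \Phi_{1,k-1}) + (\tilde\phi_{k-1} - \psi_{k-1}) \pmod{2\pi}$, so by the triangle inequality for $|\cdot|_\cA$ and an induction on $k$, using the hypothesis $|\Arg\hat{B}_{1,k-1} - \Phi_{1,k-1}|_\cA < \pi/(6K)$,
\[|\tilde\phi_k - \psi_k|_\cA \leq \sum_{j=2}^k |\Arg\hat{B}_{1,j-1} - \Phi_{1,j-1}|_\cA < \frac{(k-1)\pi}{6K} \leq \frac{\pi}{6}\]
for every $k = 1, \ldots, K$.

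Second, since shifting each coordinate of $\psi$ by an independently chosen integer multiple of $2\pi$ preserves equivalence to $\phi$, I would choose such shifts to obtain a vector $\phi'$ equivalent to $\phi$ with $|\tilde\phi_k - \phi'_k| < \pi/6$ in the \emph{ordinary} absolute value for every $k$ (possible because $|\tilde\phi_k - \psi_k|_\cA < \pi$). From here the argument is identical to the final part of the proof of Corollary~\ref{cor:optmimicsoracle}: the triangle inequality gives $|\tilde\Phi_{k,l} - \Phi'_{k,l}| < \pi/2$ where $\Phi'_{k,l} = \phi'_{k+l} - \phi'_k - \phi'_l$; equivalence of $\phi'$ and $\phi$ gives $|\Arg\hat{B}_{k,l} - \Phi'_{k,l}|_\cA = |\Arg\hat{B}_{k,l} - \Phi_{k,l}|_\cA < \pi/(6K)$, so the version $\hat{\Phi}^\oracle_{k,l}(\phi')$ of $\Arg\hat{B}_{k,l}$ lying in $[\Phi'_{k,l} - \pi, \Phi'_{k,l} + \pi)$ is within $\pi/(6K)$ of $\Phi'_{k,l}$, hence within $\pi/(6K) + \pi/2 < \pi$ of $\tilde\Phi_{k,l}$; therefore it also lies in $[\tilde\Phi_{k,l} - \pi, \tilde\Phi_{k,l} + \pi)$, which forces $\hat{\Phi}^\oracle_{k,l}(\phi') = \hat{\Phi}^\fm_{k,l}$ by the uniqueness in the definition of $\hat{\Phi}^\fm$. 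As this holds for all $(k,l) \in \cI$, we obtain $\hat{\Phi}^\fm = \hat{\Phi}^\oracle(\phi')$, as claimed.

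The main — indeed the only — obstacle relative to Corollary~\ref{cor:optmimicsoracle} is the error accumulation in the first step: the frequency-marching recursion propagates the per-component bispectrum errors additively, so over $K-1$ steps they can build up to a constant, and one must therefore start from a per-component error of order $1/K$ to keep the total below $\pi/6$. One should also check that the modulo-$2\pi$ reductions in the definition of $\tilde\phi_k$ cause no difficulty; this is automatic, since the entire comparison with $\psi$ is carried out in the circular metric $|\cdot|_\cA$, and ordinary-absolute-value bounds are invoked only at the very end, after the lifts defining $\phi'$ have been fixed.
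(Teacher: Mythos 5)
Your proposal is correct and follows essentially the same route as the paper: the paper likewise bounds $|\tilde\phi_k-\phi_k+k\phi_1|_\cA$ (your $|\tilde\phi_k-\psi_k|_\cA$) by the accumulated sum $\pi(k-1)/(6K)<\pi/6$ via the recursion and triangle inequality in $|\cdot|_\cA$, then lifts to a $\phi'$ equivalent to $\phi$ with $|\tilde\phi_k-\phi_k'|<\pi/6$ and finishes exactly as in Corollary \ref{cor:optmimicsoracle}. Your write-up merely makes that final step explicit rather than citing it.
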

\begin{proof}
For each $k=2,\ldots,K$, by the definition of $\tilde\phi_k$ and
the triangle inequality,
\[\big|\tilde\phi_k-\phi_k+k\phi_1\big|_\cA
\leq \big|\Arg \hat{B}_{1,k-1}-(\phi_k-\phi_1-\phi_{k-1})\big|_\cA
+\big|\tilde\phi_{k-1}-\phi_{k-1}+(k-1)\phi_1\big|_\cA.\]
Under the given condition, recursively applying this bound and using
$\tilde\phi_1-\phi_1+\phi_1=0$ for $k=1$,
\[\big|\tilde\phi_k-\phi_k+k\alpha\big|_\cA \leq
\frac{\pi(k-1)}{6K}<\frac{\pi}{6} \text{ for } \alpha=\phi_1 \text{ and all }
k=1,\ldots,K.\]
This means there exists $\phi'$ equivalent to $\phi$ for which
$|\tilde\phi_k-\phi_k'|<\pi/6$ for all
$k=1,\ldots,K$, and the remainder of the argument is the same as in Corollary
\ref{cor:optmimicsoracle}.
\end{proof}

The following guarantee is then analogous to Theorem \ref{thm:oracleMoM},
now describing the estimator $\hat\theta^\fm$ under a requirement for $N$ that
is larger by a factor of $K^2$.

\begin{Proposition}
Suppose $r_k \geq \rlower$ for each $k=1,\ldots,K$.
There exist universal constants $C,C_0>0$ such that if
\revise{$N \geq C_0(\frac{K^2\sigma^6}{\rlower^6}\log K+\frac{K\sigma^3}{\rlower^3}
(\log K)^{3/2})$}, then the guarantee (\ref{eq:oracleMoMbound}) holds also
for $\hat{\theta}^\fm$.
\end{Proposition}
\begin{proof}
Let $\cE$ be the event where (\ref{eq:linftyfm}) holds. This is exactly the
event where
$|\hat{\Phi}_{k,l}^\oracle(\phi)-\Phi_{k,l}|<\pi/(6K)$ for all $(k,l) \in
\cI$. Then by Lemma \ref{lemma:Phikltail} and a union bound,
\[\P[\cE^c] \leq
CK^2\,e^{-c(\frac{N\rlower^2}{K^2\sigma^2} \wedge
\frac{N\rlower^6}{K^2\sigma^6}\wedge \frac{N^{2/3}\rlower^2}{K^{2/3}\sigma^2})}
.\]
Under the given condition for $N$, an argument similar to (\ref{eq:exptopoly})
shows that this implies
\[\P[\cE^c] \leq \frac{C'}{K}\left(\frac{\sigma^2}{N\rlower^2}
+\frac{\sigma^6}{N\rlower^6}\right),\]
and the remainder of the proof is the same as that of Theorem \ref{thm:oracleMoM}.
\end{proof}

\section{Proofs for maximum likelihood estimation in low
noise}\label{appendix:MLE}

\subsection{KL divergence and tail bound for the MLE}

The following lemma bounds the Gaussian process $\langle \eps,\,g(\alpha) \cdot
\theta \rangle$ which appears in (\ref{eq:llexpanded}).
\begin{Lemma}
\label{lemma-infor-1}
Let $\eps \sim \mathcal{N}(0,I_{2K})$. For a universal constant $C>0$,
any $\theta \in \R^{2K}$, and any $s,t>0$,
\begin{align}
\P\left[\sup_{\alpha \in \cA} |\langle \eps, g(\alpha) \cdot \theta
\rangle|>t \text{ and } \|\eps\| \leq s\right] &\leq
\frac{8\pi\|\theta\|Ks}{t}\cdot e^{-\frac{t^2}{8\|\theta\|^2}}
\label{eq:GPtail}\\
\E\left[\sup_{\alpha \in \cA} |\langle \eps, g(\alpha) \cdot \theta
\rangle| \right] &\leq C\|\theta\|\sqrt{\log K}.\label{eq:GPexpectation}
\end{align}
\end{Lemma}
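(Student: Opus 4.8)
The plan is to view $f(\alpha):=\langle \eps,\,g(\alpha)\cdot\theta\rangle$ as a smooth Gaussian process on the circle $\cA$ and control its supremum by a discretization argument, where the event $\{\|\eps\|\le s\}$ is used both to make $f$ Lipschitz with a \emph{deterministic} constant and, in a degenerate range of $t$, to bound $\sup_\alpha|f(\alpha)|$ outright. First I would record two facts: since $g(\alpha)$ is orthogonal, $f(\alpha)\sim\Normal(0,\|\theta\|^2)$ for each fixed $\alpha$ and $\sup_\alpha|f(\alpha)|\le\|\eps\|\,\|\theta\|$ by Cauchy--Schwarz; and, differentiating in $\alpha$,
\[f'(\alpha)=\langle \eps,\,g'(\alpha)\cdot\theta\rangle,\qquad \|g'(\alpha)\cdot\theta\|^2=\sum_{k=1}^K k^2 r_k(\theta)^2\le K^2\|\theta\|^2,\]
so on $\{\|\eps\|\le s\}$ we have the deterministic Lipschitz bound $\sup_\alpha|f'(\alpha)|\le sK\|\theta\|$. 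Note also that $f$ is $2\pi$-periodic, so this is a bound with respect to the circular distance $|\cdot|_\cA$.

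For the tail bound (\ref{eq:GPtail}): if $t>2\pi sK\|\theta\|$, then since $2\pi K>1$ we have $t>s\|\theta\|\ge\sup_\alpha|f(\alpha)|$ on $\{\|\eps\|\le s\}$, so the event in question is empty and there is nothing to prove. Otherwise I would fix an equally spaced net $\alpha_1,\dots,\alpha_N\in\cA$ with $N=\lceil 2\pi sK\|\theta\|/t\rceil$, which satisfies $N\le 4\pi sK\|\theta\|/t$ in this range. Every $\alpha$ lies within circular distance $\pi/N$ of some $\alpha_i$, so on $\{\|\eps\|\le s\}$ the Lipschitz bound gives $\sup_\alpha|f(\alpha)|\le\max_i|f(\alpha_i)|+\pi sK\|\theta\|/N\le\max_i|f(\alpha_i)|+t/2$. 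Hence $\{\sup_\alpha|f(\alpha)|>t\}\cap\{\|\eps\|\le s\}\subseteq\{\max_i|f(\alpha_i)|>t/2\}$, and a union bound over the $N$ net points together with the Gaussian tail $\P[|f(\alpha_i)|>t/2]\le 2e^{-t^2/(8\|\theta\|^2)}$ yields $2Ne^{-t^2/(8\|\theta\|^2)}\le(8\pi sK\|\theta\|/t)\,e^{-t^2/(8\|\theta\|^2)}$, as claimed. The factor $8$ rather than $2$ in the exponent supplies precisely the slack absorbed by halving $t$.

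For the expectation bound (\ref{eq:GPexpectation}) I would apply Dudley's entropy bound to the centered process $\alpha\mapsto f(\alpha)-f(0)$ under the canonical metric $d(\alpha,\beta)=\|g(\alpha)\cdot\theta-g(\beta)\cdot\theta\|$, which satisfies $d(\alpha,\beta)\le K\|\theta\|\,|\alpha-\beta|_\cA$ and $d(\alpha,\beta)\le 2\|\theta\|$, so that $\log N(\cA,d,\epsilon)\lesssim\log(K\|\theta\|/\epsilon)$ and the entropy integral $\int_0^{2\|\theta\|}\sqrt{\log N(\cA,d,\epsilon)}\,d\epsilon\lesssim\|\theta\|\sqrt{\log K}$ for $K\ge2$. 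Combining with $\E|f(0)|\le\|\theta\|$ and the symmetry $(f(\alpha))_\alpha\overset{L}{=}(-f(\alpha))_\alpha$ gives $\E[\sup_\alpha|f(\alpha)|]\lesssim\|\theta\|\sqrt{\log K}$. Alternatively one can integrate the tail bound: writing $\E[\sup_\alpha|f(\alpha)|]=\int_0^\infty\P[\sup_\alpha|f(\alpha)|>t]\,dt$, splitting $\P[\sup_\alpha|f(\alpha)|>t]\le\P[\sup_\alpha|f(\alpha)|>t,\,\|\eps\|\le s]+\P[\|\eps\|>s]$ with $s\asymp\sqrt K$, using (\ref{eq:GPtail}) on the first term and the deterministic bound $\sup_\alpha|f(\alpha)|\le\|\eps\|\,\|\theta\|$ together with Gaussian norm concentration on the second.

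The main obstacle is essentially bookkeeping: obtaining the stated explicit prefactor $8\pi\|\theta\|Ks/t$ with its correct linear dependence on $Ks\|\theta\|/t$ forces the net cardinality to scale like $K\|\theta\|s/t$, which is legitimate only because the restriction $\|\eps\|\le s$ simultaneously furnishes a deterministic Lipschitz constant \emph{and}, via Cauchy--Schwarz, a deterministic magnitude bound that disposes of the regime $t>2\pi sK\|\theta\|$ where the net argument would otherwise misbehave. Everything else is a routine Gaussian tail estimate and, for the expectation, a standard chaining computation.
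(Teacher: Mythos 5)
Your proposal is correct and follows essentially the same route as the paper: the tail bound is obtained by exactly the paper's argument (a covering net of $\cA$ of cardinality $\asymp K s\|\theta\|/t$, a deterministic Lipschitz bound furnished by $\|\eps\|\le s$, a union bound, and the Gaussian tail at level $t/2$), with your separate treatment of the regime $t>2\pi sK\|\theta\|$ being a small extra care the paper glosses over. For the expectation, your second alternative (integrating the tail bound with $s\asymp\sqrt{K}$ and handling $\|\eps\|>s$ by Cauchy--Schwarz and chi-squared concentration) is precisely the paper's proof, and the Dudley chaining route you sketch first is an equally valid minor variant.
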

\begin{proof}
For each fixed $\alpha\in \cA$, we have $\langle \varepsilon,
g(\alpha)\cdot\theta\rangle \sim \mathcal N(0, \|\theta\|^2)$. Thus by a
Gaussian tail bound,
\[\P[|\langle \eps,g(\alpha) \cdot \theta \rangle|>t/2]
\leq 2e^{-\frac{t^2}{8\|\theta\|^2}}.\]
We set $\delta=t/(2\|\theta\|Ks)$ and
take $N_\delta \subset \cA$ as a $\delta$-net of $\cA=[-\pi,\pi)$ in the
metric $|\cdot|_\cA$, having cardinality
\[|N_\delta|=\frac{2\pi}{\delta}=\frac{4\pi\|\theta\|Ks}{t}.\]
For any $\alpha, \alpha'\in \cA$ such that $|\alpha-\alpha'|_\cA \leq \delta$,
from the definition (\ref{eq:thetarotation}) of the diagonal blocks of
$g(\alpha)$, we have
$\|g(\alpha)-g(\alpha')\|_{\text{op}} \leq K\delta$. Thus, on the event
$\{\|\eps\| \leq s\}$,
\[\big|\langle \eps,g(\alpha) \cdot \theta \rangle
-\langle \eps,g(\alpha') \cdot \theta \rangle\big|
\leq K\delta s\|\theta\|=t/2.\]
So
\[\P\left[\sup_{\alpha \in \cA} |\langle \eps,g(\alpha) \cdot \theta \rangle|
>t \text{ and } \|\eps\| \leq s \right]
\leq \P\left[\sup_{\alpha \in N_\delta}
|\langle \eps,g(\alpha) \cdot \theta \rangle|>t/2\right]
\leq |N_\delta| \cdot 2e^{-\frac{t^2}{8\|\theta\|^2}}\]
which yields (\ref{eq:GPtail}). Applying (\ref{eq:GPtail}) with $s=\sqrt{4K}$
and integrating from $t=4\|\theta\|\sqrt{\log K}$ to $t=\infty$,
\begin{align*}
&\E\left[\sup_{\alpha \in \cA} |\langle\varepsilon,
g(\alpha)\cdot\theta\rangle|\cdot \mathbf{1}\{\|\eps\| \leq \sqrt{4K}\}\right]\\
&\leq\E\left[4\|\theta\|\sqrt{\log K}
+\int_{4\|\theta\|\sqrt{\log K}}^{\infty}\mathbf{1}
\left\{\sup_{\alpha \in \cA} |\langle\varepsilon, g(\alpha)\cdot\theta\rangle|
>t \text{ and } \|\eps\| \leq \sqrt{4K} \right\}dt\right]\\
&\leq 4\|\theta\|\sqrt{\log K}+\int_{4\|\theta\|\sqrt{\log K}}^\infty
\frac{16\pi \|\theta\| K^{3/2}}{t} e^{-t^2/8\|\theta\|^2}dt\\
&=4\|\theta\|\sqrt{\log K}+16\pi\|\theta\|K^{3/2}\int_{4\sqrt{\log K}}^\infty
e^{-t^2/8}dt \leq C\|\theta\|\sqrt{\log K}
\end{align*}
for a universal constant $C>0$ and any $K \geq 2$. Applying a chi-squared tail
bound, we have also
\begin{align*}
\E\left[\sup_{\alpha \in \cA} |\langle \eps,g(\alpha) \cdot \theta \rangle|
\cdot \mathbf{1}\{\|\eps\| \geq \sqrt{4K}\}\right]
&\leq \|\theta\| \cdot \E\Big[\|\eps\| \cdot
\mathbf{1}\{\|\eps\| \geq \sqrt{4K}\} \Big]\\
&\leq \|\theta\| \cdot \E[\|\eps\|^2]^{1/2}
\P[\|\eps\| \geq \sqrt{4K}]^{1/2}
\leq \|\theta\| \cdot \sqrt{2K} \cdot e^{-cK}
\end{align*}
for a universal constant $c>0$.
Combining the above gives (\ref{eq:GPexpectation}).
\end{proof}

The next lemma formalizes the statement (\ref{eq:alpha0Taylor}) obtained by a
Taylor expansion around $\alpha=0$.

\begin{Lemma}
\label{lemma-infor-2}
\revise{Suppose Assumption \ref{assump:gen} holds.
Fix any constant $\delta_0 \in [0,3c_\gen/8]$.}
Then there are constants $C,c>0$ depending only on $c_\gen$ (and
independent of $\delta_0$) such that for all
$\alpha \in [-\frac{\delta_0}{K},\frac{\delta_0}{K}]$,
\begin{equation}\label{eq:maintermlower}
\revise{cK^2\|\theta^*\|^2\alpha^2 \leq
\|\theta^*\|^2-\langle \theta^*,g(\alpha) \cdot \theta^* \rangle
\leq CK^2\|\theta^*\|^2\alpha^2.}
\end{equation}
Furthermore, there is a constant $\iota>0$
depending only on $c_\gen,\delta_0$ such that for all
$\alpha \in [-\pi,\pi) \setminus [-\frac{\delta_0}{K},\frac{\delta_0}{K}]$,
\begin{equation}\label{eq:maintermupper}
\langle \theta^*,g(\alpha) \cdot \theta^* \rangle
\leq (1-\iota)\|\theta^*\|^2.
\end{equation}
\end{Lemma}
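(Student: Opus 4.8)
The plan is to reduce everything to the scalar identity
\[
\|\theta^*\|^2-\langle \theta^*,\,g(\alpha)\cdot\theta^*\rangle
=\sum_{k=1}^K r_k(\theta^*)^2(1-\cos k\alpha),
\]
which is immediate from the expansion $\langle \theta^*,g(\alpha)\cdot\theta^*\rangle=\sum_k r_k(\theta^*)^2\cos k\alpha$ already used in (\ref{eq:alpha0Taylor}). Since the right-hand side is even in $\alpha$ and $2\pi$-periodic, I may assume $\alpha\ge 0$ throughout, and in particular assume $\alpha\in[\delta_0/K,\pi]$ when proving (\ref{eq:maintermupper}). I will repeatedly use the elementary two-sided bound $c\,t^2\le 1-\cos t\le C\,t^2$ valid for $t\in[0,\pi]$ (the bound $c|t|_\cA^2\le 1-\cos t\le C|t|_\cA^2$ from the proof of Proposition \ref{prop-lossgeneral}, restricted to $|t|\le\pi$), together with $\sum_{k=1}^K k^2=\tfrac{K(K+1)(2K+1)}{6}\asymp K^3$, where $c,C$ are universal.

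For (\ref{eq:maintermlower}), when $0\le\alpha\le\delta_0/K$ we have $k\alpha\le\delta_0\le\clower^2/\cupper^2\le 1<\pi$ for every $k\le K$, so $c(k\alpha)^2\le 1-\cos k\alpha\le C(k\alpha)^2$ termwise; multiplying by $r_k(\theta^*)^2\in[\clower^2r^2,\cupper^2r^2]$, summing over $k$, and using $\sum_k k^2\asymp K^3$ yields both inequalities of (\ref{eq:maintermlower}) with constants depending only on $\clower,\cupper$.

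For (\ref{eq:maintermupper}), since each term $1-\cos k\alpha$ is nonnegative I would first bound $\sum_k r_k(\theta^*)^2(1-\cos k\alpha)\ge\clower^2r^2\sum_{k=1}^K(1-\cos k\alpha)$ and $\|\theta^*\|^2\le\cupper^2r^2K$; it then suffices to show $\sum_{k=1}^K(1-\cos k\alpha)\ge\iota''K$ for all $\alpha\in[\delta_0/K,\pi]$, for some $\iota''>0$ depending only on $\delta_0$, after which one takes $\iota=(\clower^2/\cupper^2)\,\iota''$. Here I would split at the scale $\alpha\sim 1/K$. For $\delta_0/K\le\alpha\le\pi/K$, every $k\alpha\le\pi$, so $1-\cos k\alpha\ge c(k\alpha)^2$ gives $\sum_k(1-\cos k\alpha)\ge c\alpha^2\sum_k k^2\gtrsim\alpha^2K^3\ge c'\delta_0^2K$. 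For $\pi/K<\alpha\le\pi$, I would use the Dirichlet-kernel identity $\sum_{k=1}^K\cos k\alpha=\tfrac{\sin((K+1/2)\alpha)}{2\sin(\alpha/2)}-\tfrac12$, so that
\[
\sum_{k=1}^K(1-\cos k\alpha)=K+\tfrac12-\frac{\sin((K+1/2)\alpha)}{2\sin(\alpha/2)}\ge K+\tfrac12-\frac{1}{2\sin(\alpha/2)}\ge K+\tfrac12-\frac{\pi}{2\alpha}>\tfrac K2,
\]
using $\sin(\alpha/2)\ge\alpha/\pi$ on $(0,\pi]$ and then $\alpha>\pi/K$. Combining the two ranges gives $\iota''=\min(c'\delta_0^2,\tfrac12)$, and (\ref{eq:maintermupper}) follows with $\iota=(\clower^2/\cupper^2)\iota''$.

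The only genuinely delicate point is this second subrange near its left endpoint $\alpha\approx\pi/K$: the global Dirichlet bound $|\sum_k\cos k\alpha|\le(2\sin(\alpha/2))^{-1}$ is by itself worthless for $\alpha$ of order $1/K$, where it may be comparable to or larger than $K$, because the summands $\cos k\alpha$ have not yet averaged out. The remedy is precisely to hand the regime $\alpha\lesssim 1/K$ to the quadratic estimate, and placing the crossover at $\pi/K$ makes both estimates simultaneously valid; everything else is routine arithmetic with the stated constants.
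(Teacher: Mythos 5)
Your proof is correct, and for the key bound (\ref{eq:maintermupper}) it takes a genuinely different route from the paper. Both arguments start from the same identity $\|\theta^*\|^2-\langle\theta^*,g(\alpha)\cdot\theta^*\rangle=\sum_k r_k(\theta^*)^2(1-\cos k\alpha)$. For (\ref{eq:maintermlower}) the paper performs an explicit Taylor expansion around $\alpha=0$ and controls the cubic remainder against the quadratic term (this is where it uses $\delta_0\le\clower^2/\cupper^2$), whereas you simply invoke the two-sided bound $ct^2\le 1-\cos t\le Ct^2$ on $[0,\pi]$ termwise; your version is more elementary and needs the hypothesis on $\delta_0$ only to guarantee $k\alpha\le\pi$, at the cost of slightly less explicit constants, which the lemma does not require. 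For (\ref{eq:maintermupper}) the paper runs a combinatorial counting argument: it shows that for $\alpha$ outside $[-\delta_0/K,\delta_0/K]$ at most about $K/2$ of the values $\cos k\alpha$ can lie within a fixed window $[\cos L,1]$, treating three ranges of $\alpha$ separately, and then sums over the complementary set of frequencies. You instead reduce to $\sum_{k=1}^K(1-\cos k\alpha)\gtrsim K$ and split at $\alpha=\pi/K$: the quadratic bound handles $\delta_0/K\le\alpha\le\pi/K$ (giving the $\delta_0$-dependent constant $c'\delta_0^2$), and the closed-form Dirichlet-kernel identity $\sum_{k=1}^K\cos k\alpha=\tfrac{\sin((K+1/2)\alpha)}{2\sin(\alpha/2)}-\tfrac12$ together with $\sin(\alpha/2)\ge\alpha/\pi$ handles $\pi/K<\alpha\le\pi$, yielding the clean bound $K/2$ there. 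Your diagnosis of the delicate point is also right: the Dirichlet bound degenerates when $\alpha\asymp 1/K$, and placing the crossover at $\pi/K$ is exactly what makes the two estimates mesh. The trade-off is that the paper's counting argument is more hands-on but avoids any closed-form summation, while your identity-based argument is shorter and produces an explicit $\iota=(\clower^2/\cupper^2)\min(c'\delta_0^2,\tfrac12)$; both deliver constants with the dependence stipulated in the lemma (and both, like the paper, implicitly require $\delta_0>0$ for the second claim to be meaningful).
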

\begin{proof}
We write as shorthand
$r_k=r_k(\theta^*)$. From (\ref{eq:rphirotation}), observe that
\begin{equation}\label{eq:maintermtmp}
\langle \theta^*,\,g(\alpha) \cdot \theta^* \rangle
=\sum_{k=1}^K r_k^2\cos k\alpha
=\|\theta^*\|^2-\sum_{k=1}^K r_k^2(1-\cos k\alpha).
\end{equation}
This is an even function of $\alpha$, so it suffices
to consider $\alpha \in [0,\pi]$.
Suppose first that $0\leq \alpha\leq \delta_0/K$. By Taylor expansion
around $\alpha=0$,
\[\sum_{k=1}^{K}r_k^2(1-\cos k\alpha)=\sum_{k=1}^{K}
r_k^2 \cdot \frac{k^2\alpha^2}{2}+r(\alpha),\]
where $|r(\alpha)| \leq \sum_{k=1}^{K} r_k^2(k\alpha)^3/6$. \revise{Observe that
\begin{equation}\label{eq:krsumbounds}
\sum_{k=1}^K k^3r_k^2 \leq K^3\|\theta^*\|^2,
\qquad \sum_{k=1}^K k^2r_k^2 \leq K^2\|\theta^*\|^2,
\qquad \sum_{k=1}^K k^2r_k^2 \geq
(K/2)^2 \sum_{k=\lceil K/2 \rceil}^K r_k^2
\geq \frac{c_\gen K^2\|\theta^*\|^2}{4},
\end{equation}
the last inequality applying Assumption \ref{assump:gen}.
Then for $0 \leq \alpha \leq \frac{\delta_0}{K} \leq
\frac{3c_\gen}{8K}$, applying the first and third
of these bounds,} we have $|r(\alpha)| \leq \sum_{k=1}^K
r_k^2 (k\alpha)^2/4$. Applying this and the above Taylor expansion
to (\ref{eq:maintermtmp}) gives
\[\sum_{k=1}^K r_k^2 \cdot \frac{k^2\alpha^2}{4} \leq
\|\theta^*\|^2-\langle \theta^*,g(\alpha) \cdot \theta^* \rangle
\leq \sum_{k=1}^K r_k^2 \cdot \frac{3k^2\alpha^2}{4},\]
which implies (\ref{eq:maintermlower}) \revise{by the second and third bounds of
(\ref{eq:krsumbounds}).}

Now consider $\delta_0/K<\alpha \leq \pi$.
In the sequence $(\cos\alpha, \cos2\alpha, \ldots, \cos K\alpha)$, we claim that
there are at most $\lceil K/2 \rceil$
items belonging to the interval $[\cos L, 1]$, where $L=\min(\delta_0/2,\pi/8)$:
\begin{itemize}
\item If $\frac{\delta_0}{K}<\alpha<\frac{\pi}{K}$,
then $\alpha, 2\alpha, \ldots,K\alpha\in (0,\pi)$. So $\cos(k\alpha) \in
[\cos L,1]$ implies that $k\alpha\in [0,L]$, and the number of such items is
at most $L/\alpha \leq (\delta_0/2)/(\delta_0/K)=K/2$.
\item If $\frac{t\pi}{K} \leq \alpha < \frac{(t+1)\pi}{K}$ for some $1 \leq
t\leq \frac{K}{4}-1$, then $\alpha, 2\alpha, \ldots,K\alpha\in(0,(t+1)\pi)$. So
$\cos(k\alpha) \in [\cos L,1]$ implies that $k\alpha$ falls into one of
$t+1$ closed intervals of width $L$, and the number of such items is at most
\[(t+1)\cdot \left\lceil\frac{L}{\alpha}\right\rceil \leq
(t+1)\cdot\left(\frac{\pi/8}{t\pi/K}+1\right)=\frac{K}{8}
+\frac{K}{8t}+t+1 \leq \frac{K}{8}+\frac{K}{8}+\frac{K}{4}=\frac{K}{2}.\]
\item If $\frac{\pi}{4}<\alpha \leq \pi$, then any two consecutive items $\cos
k\alpha$ and $\cos (k+1)\alpha$ cannot both belong to $[\cos L, 1]$,
since $\alpha>\frac{\pi}{4} \geq 2L$. Therefore, the number of items would not
exceed $\lceil K/2 \rceil$.
\end{itemize}
Denoting $B=\{k:\cos k\alpha\notin[\cos L, 1]\}$, we then have
$|B| \geq \lfloor K/2 \rfloor$ and $1-\cos L \geq c$ a small constant depending
on $\delta_0$, so
\[\sum_{k=1}^K r_k^2(1-\cos k\alpha) \geq \sum_{k \in B} r_k^2(1-\cos L)
\geq \revise{c \cdot c_\gen\|\theta^*\|^2} \geq \iota\|\theta^*\|^2\]
for a constant $\iota>0$ depending only on $c_\gen,\delta_0$.
Applying this to (\ref{eq:maintermtmp}) gives (\ref{eq:maintermupper}).
\end{proof}

\begin{proof}[Proof of Lemma \ref{lemma-KL-lower}]
Recall the form (\ref{eqn-KL-lower-1}) for the KL divergence.
For $\mathrm{II}$, upper bounding the average over $\alpha$ by the maximum,
\begin{align}
\mathrm{II} \leq \E\log \sup_{\alpha \in \cA}
\exp\left(\frac{\langle \theta^*+\sigma \eps,
g(\alpha)\cdot\theta\rangle}{\sigma^2}\right)
&\leq \frac{\sup_{\alpha \in \cA} \langle\theta^*,
g(\alpha)\cdot\theta\rangle}{\sigma^2} + \mathbb{E}
\left[ \frac{\sup_{\alpha \in \cA}
\langle\varepsilon, g(\alpha)\cdot\theta\rangle}{\sigma}\right]\nonumber\\
&\leq \frac{\sup_{\alpha \in \cA} \langle \theta^*, g(\alpha) \cdot \theta
\rangle}{\sigma^2}+\frac{C\|\theta\|}{\sigma}\cdot\sqrt{\log K},\label{eqn-KL-lower-2-1}
\end{align}
where the last inequality applies (\ref{eq:GPexpectation}) from Lemma
\ref{lemma-infor-1}. Similarly, to lower bound $\mathrm{I}$, let us set
\revise{$\delta_0=3c_\gen/8$} and apply
\begin{align*}
\mathrm{I}& \geq \E \left[\log \frac{1}{2\pi}\int_{-\pi}^\pi
\exp\left(\frac{\langle \theta^*,g(\alpha) \cdot \theta^*}{\sigma^2}\right)
d\alpha \cdot \inf_{\alpha} \exp\left(\frac{\eps,g(\alpha) \cdot
\theta^*}{\sigma}\right)\right]\\
&= \log\frac{1}{2\pi}\int_{-\pi}^{\pi} \exp\left(\frac{\langle \theta^*, g(\alpha)\cdot\theta^*\rangle}{\sigma^2}\right)d\alpha
-\mathbb{E}\left[\frac{\sup_{\alpha \in \cA}
\langle\varepsilon, g(\alpha)\cdot\theta^*\rangle}{\sigma}\right]\\
&\geq \log\frac{1}{2\pi}\int_{-\delta_0/K}^{\delta_0/K} \exp\left(\frac{\langle \theta^*, g(\alpha)\cdot\theta^*\rangle}{\sigma^2}\right)d\alpha
-\frac{C\|\theta^*\|}{\sigma} \cdot \sqrt{\log K}
\end{align*}
Applying the upper bound of (\ref{eq:maintermlower}) from
Lemma \ref{lemma-infor-2}, we have for a constant $C>0$ that
\begin{align*}
\int_{-\delta_0/K}^{\delta_0/K} \exp\left(\frac{\langle \theta^*,
g(\alpha)\cdot\theta^*\rangle}{\sigma^2}\right)d\alpha
&\geq
\exp\left(\frac{\|\theta^*\|^2}{\sigma^2}\right)\int_{-\delta_0/K}^{\delta_0/K}
\revise{\exp\left(-\frac{CK^2\|\theta^*\|^2}{\sigma^2}\alpha^2\right)}d\alpha\\
&=\exp\left(\frac{\|\theta^*\|^2}{\sigma^2}\right)\revise{\left(\frac{2CK^2\|\theta^*\|^2}{\sigma^2}\right)^{-1/2}}\cdot\sqrt{2\pi}
\left(1-2\widetilde{\Phi}\left(\revise{\sqrt{\frac{2CK^2\|\theta^*\|^2}{\sigma^2}}} \cdot \frac{\delta_0}{K}\right)
\right)
\end{align*}
where $\widetilde{\Phi}(x)=\int_x^\infty\frac{1}{\sqrt{2\pi}}e^{-t^2/2}dt$ is
the right tail probability of the standard Gaussian law. Applying the given
condition \revise{$\sigma^2 \leq \|\theta^*\|^2$}, the input to $\widetilde{\Phi}$ is bounded
below by a positive constant. Then the value for $\widetilde{\Phi}$ is bounded
away from $1/2$, so for a constant $C_2>0$,
\begin{equation}\label{eq:gaussianintegral}
\frac{1}{2\pi}\int_{-\delta_0/K}^{\delta_0/K} \exp\left(\frac{\langle \theta^*,
g(\alpha)\cdot\theta^*\rangle}{\sigma^2}\right)d\alpha
\geq \exp\left(\frac{\|\theta^*\|^2}{\sigma^2}\right)
\cdot \left(\frac{C_2K^2\|\theta^*\|^2}{\sigma^2}\right)^{-1/2}.
\end{equation}
Thus
\begin{equation}\label{eqn-KL-lower-3-3}
\mathrm{I} \geq
\frac{\|\theta^*\|^2}{\sigma^2}-\frac{1}{2}\log\left(\frac{C_2K^2\|\theta^*\|^2}{\sigma^2}\right)-\frac{C\|\theta^*\|}{\sigma}\cdot\sqrt{\log K}.
\end{equation}
Combining (\ref{eqn-KL-lower-1}), (\ref{eqn-KL-lower-2-1}), and
(\ref{eqn-KL-lower-3-3}) and applying
\[\min_{\alpha \in \cA} \|\theta^*-g(\alpha) \cdot \theta\|^2
=\|\theta^*\|^2+\|\theta\|^2-2 \sup_{\alpha \in \cA} \langle \theta^*,
g(\alpha) \cdot \theta \rangle\]
yields the lemma.
\end{proof}

The following lemma establishes concentration of $R_N(\theta)$ around
its mean $R(\theta)$, uniformly over bounded domains of $\theta$.

\begin{Lemma}\label{lemma-bounded-norm-start}
For a universal constant $c>0$, any $M>0$, and any $t>0$,
\begin{align}
&\P\left[\sup_{\theta:\|\theta\| \leq M} |R_N(\theta)-R(\theta)|>4t\right]
\nonumber\\
&\hspace{1in}\leq
2\left(1+\tfrac{2M\sqrt{2\|\theta^*\|^2+(4K+4t)\sigma^2}}{t\sigma^2}\right)^{2K}
e^{-\frac{cN\sigma^2t^2}{M^2}}+4e^{-cN(t \wedge \frac{t^2}{K} \wedge
\frac{t^2\sigma^2}{\|\theta^*\|^2})}.\label{eq:unifconc}
\end{align}
\end{Lemma}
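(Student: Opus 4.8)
The plan is to decompose $R_N(\theta)-R(\theta)$ into a $\theta$-independent piece, controlled by scalar tail bounds, and a ``log-integral'' piece, controlled by a covering-net argument built on Gaussian Lipschitz concentration. Working under the simplified model (\ref{eq:simplifyy}) and recalling (\ref{eq:llexpanded}), I would write $R_N(\theta)=K\log 2\pi\sigma^2+\frac{\|\theta\|^2}{2\sigma^2}+A_N-\frac1N\sum_{m=1}^N F(\theta,\eps^{(m)})$, where $A_N=\frac1N\sum_m \frac{\|\theta^*+\sigma\eps^{(m)}\|^2}{2\sigma^2}$ is free of $\theta$ and $F(\theta,\eps)=\log\big[\frac1{2\pi}\int_{-\pi}^\pi \exp(\langle\theta^*+\sigma\eps,\,g(\alpha)\cdot\theta\rangle/\sigma^2)\,d\alpha\big]$. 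Since $R(\theta)=\E_{\theta^*}[R_N(\theta)]$, the deterministic term and the $\|\theta\|^2/2\sigma^2$ term cancel, leaving $R_N(\theta)-R(\theta)=(A_N-\E A_N)-\big(\frac1N\sum_m F(\theta,\eps^{(m)})-\E F(\theta,\cdot)\big)$. Expanding $\|\theta^*+\sigma\eps^{(m)}\|^2$, the fluctuation $A_N-\E A_N$ equals $\frac1{N\sigma}\sum_m\langle\theta^*,\eps^{(m)}\rangle+\frac1{2N}\sum_m(\|\eps^{(m)}\|^2-2K)$; a Gaussian tail bound for the first term ($\Normal(0,\|\theta^*\|^2/N\sigma^2)$) and a chi-squared tail bound $\P[|\frac1{2N}(\chi^2_{2KN}-2KN)|>t]\le 2e^{-cN(t\wedge t^2/K)}$ for the second give $\P[|A_N-\E A_N|>t]\le 4e^{-cN(t\wedge\frac{t^2}{K}\wedge\frac{t^2\sigma^2}{\|\theta^*\|^2})}$, which is the last term of (\ref{eq:unifconc}).

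For the log-integral piece I would first record the gradient identities, obtained by differentiating the normalized exponential that defines $\cP_{\theta,\eps}$ in (\ref{eq:Pthetaeps}) (equivalently, reading off (\ref{eq:grad})): $\nabla_\eps F(\theta,\eps)=\frac1\sigma\E_{\alpha\sim\cP_{\theta,\eps}}[g(\alpha)\cdot\theta]$ and $\nabla_\theta F(\theta,\eps)=\frac1{\sigma^2}\E_{\alpha\sim\cP_{\theta,\eps}}[g(\alpha)^{-1}(\theta^*+\sigma\eps)]$. Because $g(\alpha)$ is orthogonal, $\|\nabla_\eps F\|\le\|\theta\|/\sigma\le M/\sigma$ on $\{\|\theta\|\le M\}$, so for each fixed such $\theta$ the map $(\eps^{(1)},\dots,\eps^{(N)})\mapsto\frac1N\sum_m F(\theta,\eps^{(m)})$ is $\frac{M}{\sigma\sqrt N}$-Lipschitz on $\R^{2KN}$; Gaussian concentration then gives $\P[|\frac1N\sum_m F(\theta,\eps^{(m)})-\E F(\theta,\cdot)|>t]\le 2e^{-cNt^2\sigma^2/M^2}$. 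Likewise $\|\nabla_\theta F\|\le\|\theta^*+\sigma\eps\|/\sigma^2$, so by Cauchy--Schwarz, on the event $G=\{\frac1N\sum_m\|\eps^{(m)}\|^2\le 2K+2t\}$ — whose complement has probability at most $e^{-cN(t\wedge t^2/K)}$, of the same order as the last term of (\ref{eq:unifconc}) — the maps $\theta\mapsto\frac1N\sum_m F(\theta,\eps^{(m)})$ and $\theta\mapsto\E F(\theta,\cdot)$ are Lipschitz on $\{\|\theta\|\le M\}$ with constant $L:=\sigma^{-2}\sqrt{2\|\theta^*\|^2+(4K+4t)\sigma^2}$ (using $\E\|\eps\|^2=2K$ and $\|\theta^*\|^2+2K\sigma^2\le 2\|\theta^*\|^2+(4K+4t)\sigma^2$ for the mean).

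I would then take a $(t/L)$-net $\mathcal N$ of $\{\theta:\|\theta\|\le M\}$, of cardinality at most $(1+2ML/t)^{2K}=\big(1+\tfrac{2M\sqrt{2\|\theta^*\|^2+(4K+4t)\sigma^2}}{t\sigma^2}\big)^{2K}$; a union bound controls $\max_{\theta\in\mathcal N}|\frac1N\sum_m F(\theta,\eps^{(m)})-\E F(\theta,\cdot)|$ by $t$ with failure probability at most $2|\mathcal N|e^{-cNt^2\sigma^2/M^2}$, and on $G$ the Lipschitz bound turns this into a bound of $3t$ for the supremum over all $\|\theta\|\le M$. Intersecting with $\{|A_N-\E A_N|\le t\}$ yields $\sup_{\|\theta\|\le M}|R_N(\theta)-R(\theta)|\le 4t$, and summing the three failure probabilities gives (\ref{eq:unifconc}). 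The main obstacle is the covering step: the $\theta$-Lipschitz constant of $\frac1N\sum_m F(\theta,\cdot)$ is random, growing with $\frac1N\sum_m\|\eps^{(m)}\|^2$, so one must isolate the high-probability event on which it is at most $L$ and check that this event's failure probability — as well as the chi-squared fluctuation in $A_N$ — is no larger than the last term of (\ref{eq:unifconc}); the remaining ingredients (the two gradient identities and the Gaussian and chi-squared tail estimates) are routine, the latter of the same type already used in the proof of Lemma \ref{lemma:rktail}.
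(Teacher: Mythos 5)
Your proposal is correct and follows essentially the same route as the paper's proof: the same split into the $\theta$-free quadratic term (handled by Gaussian and chi-squared tails, yielding the last term of (\ref{eq:unifconc})) and the log-integral term, the same $\eps$-gradient bound $\|\theta\|/\sigma$ feeding Gaussian Lipschitz concentration pointwise, and the same $\theta$-gradient bound giving a random Lipschitz constant that is tamed on a high-probability event and then combined with a covering net of exactly the stated radius and cardinality. The only difference is cosmetic bookkeeping (you condition on a chi-squared event $G$ where the paper reuses its event on the quadratic term itself, and the constants inside the $4t$ budget are allocated slightly differently), which does not change the argument.
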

\begin{proof}
Recalling the form of $R_N(\theta)$ from (\ref{eq:llexpanded}), we have
\[R_N(\theta)=\mathrm{I}-\mathrm{II}(\theta)+\operatorname{const}(\theta)\]
where
\begin{align*}
\mathrm{I}&=\frac{1}{N}\sum_{m=1}^N \frac{\|\theta^*+\sigma
\eps^{(m)}\|^2}{2\sigma^2},\\
\mathrm{II}(\theta)&=\frac{1}{N}\sum_{m=1}^N f(\eps^{(m)},\theta)
:=\frac{1}{N}\sum_{m=1}^N \log \frac{1}{2\pi} \int_{-\pi}^\pi
\exp\left(\frac{\langle \theta^*+\sigma \eps^{(m)},
g(\alpha) \cdot \theta \rangle}{\sigma^2}\right) d\alpha,
\end{align*}
and $\operatorname{const}(\theta)$ is a term not depending on the randomness
$\{\eps^{(m)}\}$. We analyze separately the concentration of the terms
$\mathrm{I}$ and $\mathrm{II}(\theta)$.

For the given value $t>0$, define the event
$\cE=\{|\mathrm{I}-\E[\mathrm{I}]|<2t\}$. We have
\[\frac{\|\theta^*+\sigma \eps^{(m)}\|^2}{2\sigma^2}=\frac{\|\theta^*\|^2}{2\sigma^2}+\frac{\langle \theta^*, \varepsilon^{(m)}\rangle}{\sigma}+\frac{\|\varepsilon^{(m)}\|^2}{2}.\]
Here the first term is deterministic, and the latter two terms satisfy
$N^{-1}\sum_{m=1}^N \langle \theta^*, \eps^{(m)} \rangle \sim
\mathcal{N}(0,\|\theta^*\|^2/N)$ and $\sum_{m=1}^N \|\eps^{(m)}\|^2 \sim
\chi_{2KN}^2$. Then by standard Gaussian and chi-squared tail bounds, for 
a universal constant $c>0$ and any $t>0$,
\[\P\left[\left|\frac{1}{N}\sum_{m=1}^N
\frac{\langle \theta^*,\eps^{(m)} \rangle}{\sigma}\right|
\geq t\right] \leq 2e^{-\frac{Nt^2\sigma^2}{2\|\theta^*\|^2}},
\qquad
\mathbb{P}\left[\left|\frac{1}{N}\sum_{m=1}^N\frac{\|\varepsilon^{(m)}\|^2}{2}-K\right| \geq t\right]
\leq 2e^{-cNK(\frac{t}{K} \wedge \frac{t^2}{K^2})}.\]
So
\begin{equation}\label{eq:Etprobbound}
\P[\cE^c]=\P\Big[|\mathrm{I}-\E[\mathrm{I}]| \geq 2t\Big]
\leq 2e^{-\frac{Nt^2\sigma^2}{2\|\theta^*\|^2}}
+2e^{-cNK(\frac{t}{K} \wedge \frac{t^2}{K^2})}
\leq 4e^{-c'N(t \wedge \frac{t^2}{K} \wedge
\frac{t^2\sigma^2}{\|\theta^*\|^2})}.
\end{equation}

On the event $\cE$, we have
$|R_N(\theta)-R(\theta)|
\leq 2t+|\mathrm{II}(\theta)-\E[\mathrm{II}(\theta)]|$ as well as
\begin{equation}\label{eq:ynormbound}
\frac{1}{N}\sum_{m=1}^N \|\theta^*+\sigma \eps^{(m)}\|^2
\leq \E[\|\theta^*+\sigma \eps^{(m)}\|^2]
+4t\sigma^2=\|\theta^*\|^2+(2K+4t)\sigma^2.
\end{equation}
Recalling the probability law $\cP_{\theta,\eps}$ from
(\ref{eq:Pthetaeps}), the $\eps$-gradient of the function
$f(\eps,\theta)$ defining $\mathrm{II}(\theta)$ is bounded as
\[\|\nabla_\eps f(\eps,\theta)\|=\left\|\frac{1}{\sigma}\mathbb{E}_{\alpha \sim
\cP_{\theta,\eps}} \left[g(\alpha)\cdot\theta\right]\right\|
\leq \frac{1}{\sigma}\mathbb{E}_{\alpha \sim \cP_{\theta,\eps}}
\left[\|g(\alpha)\cdot\theta\|\right]
=\frac{\|\theta\|}{\sigma}.\]
Thus $f(\eps,\theta)$ is $\frac{\|\theta\|}{\sigma}$-Lipschitz
in $\eps$. Then by Gaussian concentration of measure,
for universal constants $C,c>0$, we have that
$f(\eps,\theta)-\E[f(\eps,\theta)]$
is $\frac{C\|\theta\|}{\sigma}$-subgaussian, and Hoeffding's inequality yields
\[\P\left[|\mathrm{II}(\theta)-\E[\mathrm{II}(\theta)]|>t\right]
=\P\left[\left|\frac1N\sum_{m=1}^N
f(\eps^{(m)},\theta)-\E[f(\eps^{(m)},\theta)]\right|>t\right]\leq
2e^{-\frac{cN\sigma^2 t^2}{\|\theta\|^2}}.\]

Now to obtain uniform concentration over $\{\theta:\|\theta\| \leq M\}$,
set $\delta=t\sigma^2/\sqrt{2\|\theta^*\|^2+(4K+4t)\sigma^2}$,
and let $N_\delta$ be a $\delta$-net of
$\{\theta \in \R^{2K}:\|\theta\| \leq M\}$ having cardinality
\[|N_\delta| \leq \left(1+\frac{2M}{\delta}\right)^{2K}
=\left(1+\frac{2M\sqrt{2\|\theta^*\|^2+(4K+4t)\sigma^2}}{t\sigma^2}\right)^{2K}.\]
The $\theta$-gradient
of $\E[\mathrm{II}(\theta)]=\E[f(\eps^{(m)},\theta)]$ is bounded as
\begin{align*}
\|\nabla_\theta \E[\mathrm{II}(\theta)]\|
&=\left\|\frac{1}{\sigma^2}\E_{\eps \sim \mathcal{N}(0,I)}
\left[\E_{\alpha \sim \cP_{\theta,\eps}}[g(\alpha)^{-1}
(\theta^*+\sigma\eps)]\right]\right\|\\
&\leq \frac{1}{\sigma^2}\E[\|\theta^*+\sigma \eps\|]
\leq \frac{1}{\sigma^2}\sqrt{\E[\|\theta^*+\sigma \eps\|^2]}
=\frac{1}{\sigma^2}\sqrt{\|\theta^*\|^2+2K\sigma^2}.
\end{align*}
Similarly, on the event $\cE$, the $\theta$-gradient of $\mathrm{II}(\theta)$
without expectation is bounded as
\begin{align*}
\|\nabla_\theta \mathrm{II}(\theta)\|&=\left\|\frac{1}{\sigma^2}\cdot\frac{1}{N}\sum_{m=1}^{N}\mathbb{E}_{\alpha
\sim \cP_{\theta,\eps^{(m)}}} [g(\alpha)^{-1}(\theta^*+\sigma \eps^{(m)})]
\right\|\\
&\leq \frac{1}{\sigma^2} \cdot \frac{1}{N}\sum_{m=1}^N
\|\theta^*+\sigma\eps^{(m)}\|
\leq \frac{1}{\sigma^2}\sqrt{\frac{1}{N}\sum_{m=1}^N
\|\theta^*+\sigma \eps^{(m)}\|^2}
\leq \frac{1}{\sigma^2} \sqrt{\|\theta^*\|^2+(2K+4t)\sigma^2},
\end{align*}
the last inequality applying (\ref{eq:ynormbound}). Therefore
$\mathrm{II}(\theta)-\E[\mathrm{II}(\theta)]$ is Lipschitz in $\theta$, with
Lipschitz constant at most
\[\frac{1}{\sigma^2}\sqrt{\|\theta^*\|^2+2K\sigma^2}
+\frac{1}{\sigma^2} \sqrt{\|\theta^*\|^2+(2K+4t)\sigma^2}
\leq \frac{1}{\sigma^2}\sqrt{2\|\theta^*\|^2+(4K+4t)\sigma^2}=\frac{t}{\delta}.\]
Then
\begin{align*}
\P\left[\sup_{\theta:\|\theta\| \leq M}
|R_N(\theta)-R(\theta)|>4t \text{ and } \cE\right]
&\leq \P\left[\sup_{\theta:\|\theta\| \leq M}
|\mathrm{II}(\theta)-\E[\mathrm{II}(\theta)]|>2t \text{ and } \cE\right]\\
&\leq \P\left[\sup_{\theta \in N_\delta}
|\mathrm{II}(\theta)-\E[\mathrm{II}(\theta)]|>t\right]
\leq 2|N_\delta|e^{-\frac{cN\sigma^2t^2}{M^2}}.
\end{align*}
Combining this with (\ref{eq:Etprobbound}) gives (\ref{eq:unifconc}).
\end{proof}

\begin{proof}[Proof of Lemma \ref{lemma-highprob-bound}]
For the given value of $\delta_1$ and each integer $n \geq 1$, define
\[\Gamma_n=\left\{\theta:n\delta_1\|\theta^*\|\leq \min_{\alpha \in \cA}
\|\theta^*-g(\alpha) \cdot \theta\|<(n+1)\delta_1\|\theta^*\|\right\}
\subset \R^{2K}.\]
Observe that $\|\theta\| \leq [1+(n+1)\delta_1]\|\theta^*\|$ for $\theta \in
\Gamma_n$, so Lemma \ref{lemma-KL-lower} implies
\[D_{\KL}(p_{\theta^*}\| p_\theta)
\geq \frac{n^2\delta_1^2\|\theta^*\|^2}{2\sigma^2}
-\frac{1}{2}\log \left(\frac{C_2K^2\|\theta^*\|^2}{\sigma^2}\right)
-\frac{[2+(n+1)\delta_1]C_3\|\theta^*\|}{\sigma}\sqrt{\log K}
\text{ for all } \theta \in \Gamma_n.\]
\revise{Then, under the
given assumption that $\frac{\|\theta^*\|^2}{\sigma^2} \geq C_1\log K$ for a
sufficiently large constant $C_1>0$ (depending on $c_\gen,\delta_1$),
setting
\[t_n=c_0n^2\|\theta^*\|^2/\sigma^2\]}
for a sufficiently small constant $c_0>0$, the above implies that
\[D_{\KL}(p_{\theta^*}\| p_\theta) \geq 10t_n
\text{ for all } \theta \in \Gamma_n.\]

Applying (\ref{eq:unifconc}) with $t=t_n$ 
and $M=[1+(n+1)\delta_1]\|\theta^*\|$ gives,
for some constants
$C,C',c,c'>0$ depending on $\delta_1$,
\begin{align*}
&\P\left[\sup_{\theta:\|\theta\| \leq [1+(n+1)\delta_1]\|\theta^*\|}
|R_N(\theta)-R(\theta)|>4t_n\right]\\
&\leq \revise{2\left(1+C\sqrt{1+\tfrac{K\sigma^2}{n^2\|\theta^*\|^2}}\right)^{2K}
e^{-cn^2N \cdot \frac{\|\theta^*\|^2}{\sigma^2}}
+4e^{-cn^2N \cdot \frac{\|\theta^*\|^2}{\sigma^2} (1 \wedge
\frac{n^2\|\theta^*\|^2}{K\sigma^2})}}\\
&\leq (C'K)^K e^{-c'nN \log K}+e^{-c'nN(\log K)^2/K}
\end{align*}
where the last line applies $n \geq 1$ and $\frac{\|\theta^*\|^2}{\sigma^2} \geq C_1\log K$ to simplify the bound.
On the event where $|R_N(\theta)-R(\theta)| \leq 4t_n$ and
$|R_N(\theta^*)-R(\theta^*)| \leq 4t_n$, since
$D_{\KL}(p_{\theta^*}\|p_\theta)=R(\theta)-R(\theta^*) \geq 10t_n$,
we must then have
$R_N(\theta)-R_N(\theta^*) \geq 2t_n>0$ so that $\theta$ is not the MLE. Thus,
\begin{equation}\label{eq:Gammanprob}
\P\left[\hat{\theta}^{\text{MLE}} \in \Gamma_n\right]
\leq (C'K)^K e^{-c'nN \log K}+e^{-c'nN(\log K)^2/K}.
\end{equation}
Summing over all $n \geq 1$ and recalling our choice of rotation for
$\hat{\theta}^{\text{MLE}}$ that satisfies (\ref{eq:MLEversion}),
\begin{align*}
\P\left[\|\hat{\theta}^{\text{MLE}}-\theta^*\| \geq \delta_1\|\theta^*\|\right]
&\leq \sum_{n=1}^\infty
\P\left[\hat{\theta}^{\text{MLE}} \in \Gamma_n\right]
\leq (C'K)^K \sum_{n=1}^\infty
e^{-c'nN \log K}+\sum_{n=1}^\infty e^{-c'nN(\log K)^2/K}.
\end{align*}
Under the given assumption $N \geq C_0K$
for a sufficiently large constant $C_0>0$, both exponents $c'N\log K$ and
$c'N(\log K)^2/K$ are bounded below by a constant. Then summing these
geometric series gives, for some modified constants $C,c,c'>0$,
\[\P\left[\|\hat{\theta}^{\text{MLE}}-\theta^*\| \geq \delta_1\|\theta^*\|\right]
\leq (CK)^K e^{-cN\log K}+e^{-cN(\log K)^2/K}
\leq e^{-c'N(\log K)^2/K}\]
where  the second inequality holds again under the assumption $N \geq C_0K$.
This shows (\ref{eq:MLElocalization}).

To show (\ref{eq:MLE4thmoment}), we apply $\|\theta\| \leq
[1+(n+1)\delta_1]\|\theta^*\|$ for $\theta \in \Gamma_n$ and
(\ref{eq:Gammanprob}) to get, for some constants $C,C',c'>0$ depending on
$\delta_1$,
\begin{align*}
\E[\|\hat{\theta}^{\text{MLE}}\|^4]
&\leq [(1+\delta_1)\|\theta^*\|]^4+\sum_{n=1}^\infty [(1+(n+1)\delta_1)
\|\theta^*\|]^4
\cdot \P[\hat{\theta}^{\text{MLE}} \in \Gamma_n]\\
&\leq C\|\theta^*\|^4\left(1+(C'K)^K\sum_{n=1}^\infty
n^4 e^{-c'nN\log K}+\sum_{n=1}^\infty n^4e^{-c'nN(\log K)^2/K}\right).
\end{align*}
For a sufficiently large constant $A>0$, we have $n^4e^{-An}<e^{-An/2}$ for
all $n \geq 1$. Hence, under the condition $N \geq C_0K$ for sufficiently
large $C_0>0$, we have
\begin{align*}
\E[\|\hat{\theta}^{\text{MLE}}\|^4]
&\leq C\|\theta^*\|^4\left(1+(C'K)^K\sum_{n=1}^\infty
e^{-(c'/2)nN\log K}+\sum_{n=1}^\infty e^{-(c'/2)nN(\log K)^2/K}\right)\\
&\leq C\|\theta^*\|^4\left(1+(C'K)^K e^{-c'' N\log K}
+e^{-c''N(\log K)^2/K}\right) \leq C'\|\theta^*\|^4.
\end{align*}
\end{proof}

\subsection{Lower bound for the information matrix}

Define the domain
\begin{equation}\label{eq:epsthetaalign}
\cF_1(\theta,\delta_1)=\left\{\eps:
\sup_{\alpha \in \cA} |\langle \eps,g(\alpha) \cdot \theta \rangle|
\leq \frac{\delta_1\|\theta^*\|^2}{\sigma}\right\} \subset \R^{2K}.
\end{equation}
The following deterministic lemma guarantees that the law
$\cP_{\theta,\eps}$ concentrates near 0 when
$\eps \in \cF_1(\theta,\delta_1)$.

\begin{Lemma}\label{lemma:angleconc}
\revise{Suppose Assumption \ref{assump:gen} holds.}
Fix any $\delta_0>0$. Then there exist constants $C_1,\delta_1>0$ depending
only on \revise{$c_\gen,\delta_0$ such that if
$\sigma^2 \leq \frac{\|\theta^*\|^2}{C_1\log K}$}, then the following holds:
For any $\theta \in \cB(\delta_1)$ and any (deterministic)
$\eps \in \cF_1(\theta,\delta_1)$,
\begin{equation}\label{eq:maxangle}
\sup_{\alpha \in [-\frac{\delta_0}{K},\frac{\delta_0}{K}]}
\langle \theta^*+\sigma \eps,\,g(\alpha) \cdot \theta \rangle
>\sup_{\alpha \in [-\pi,\pi) \setminus [-\frac{\delta_0}{K},\frac{\delta_0}{K}]}
\langle \theta^*+\sigma \eps,\,g(\alpha)\cdot \theta \rangle.
\end{equation}
Furthermore, for a constant $c>0$ depending only on $\clower,\cupper,\delta_0$,
\begin{equation}\label{eq:angleconc}
\P_{\alpha \sim \cP_{\theta,\eps}}\Big[|\alpha|_\cA>\delta_0/K\Big]
\leq \revise{e^{-c\|\theta^*\|^2/\sigma^2}}.
\end{equation}
\end{Lemma}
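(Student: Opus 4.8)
The plan is to reduce the statement to Lemma~\ref{lemma-infor-2}, handling the perturbation $\theta-\theta^*$ and the noise $\sigma\eps$ by crude Cauchy--Schwarz bounds. First I would observe that both conclusions are monotone in $\delta_0$: shrinking $\delta_0$ only enlarges the left-hand supremum in (\ref{eq:maxangle}), shrinks its right-hand supremum, and shrinks the event $\{|\alpha|_\cA>\delta_0/K\}$ in (\ref{eq:angleconc}). Hence it suffices to prove the lemma for $\delta_0\in(0,\clower^2/\cupper^2]$, so that both the quadratic lower bound (\ref{eq:maintermlower}) and the gap bound (\ref{eq:maintermupper}) of Lemma~\ref{lemma-infor-2} apply, the latter furnishing a constant $\iota>0$ depending only on $\clower,\cupper,\delta_0$.

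Next, writing $\Delta=\theta-\theta^*$ and using that $g(\alpha)$ is orthogonal, I would decompose
\[
\langle\theta^*+\sigma\eps,\,g(\alpha)\cdot\theta\rangle
=\langle\theta^*,\,g(\alpha)\cdot\theta^*\rangle
+\langle g(\alpha)^{-1}\theta^*,\,\Delta\rangle
+\sigma\langle\eps,\,g(\alpha)\cdot\theta\rangle .
\]
Since $\theta\in\cB(\delta_1)$ gives $|\langle g(\alpha)^{-1}\theta^*,\Delta\rangle|\le\|\theta^*\|\,\|\Delta\|\le\delta_1\|\theta^*\|^2$, and $\eps\in\cF_1(\theta,\delta_1)$ gives $\sigma|\langle\eps,g(\alpha)\cdot\theta\rangle|\le\delta_1\|\theta^*\|^2$, both uniformly over $\alpha$, the whole expression agrees with $\langle\theta^*,g(\alpha)\cdot\theta^*\rangle$ up to a uniform additive error of at most $2\delta_1\|\theta^*\|^2$. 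Evaluating at $\alpha=0$ lower-bounds $\sup_{|\alpha|\le\delta_0/K}\langle\theta^*+\sigma\eps,g(\alpha)\cdot\theta\rangle$ by $(1-2\delta_1)\|\theta^*\|^2$, while (\ref{eq:maintermupper}) upper-bounds $\langle\theta^*+\sigma\eps,g(\alpha)\cdot\theta\rangle$ by $(1-\iota+2\delta_1)\|\theta^*\|^2$ for every $\alpha$ with $|\alpha|_\cA>\delta_0/K$. Choosing $\delta_1$ small enough that $4\delta_1<\iota$ then gives (\ref{eq:maxangle}); note that this part does not use the hypothesis on $\sigma^2$.

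For the concentration bound (\ref{eq:angleconc}), I would write the probability as the ratio of $\int_{|\alpha|_\cA>\delta_0/K}$ to $\int_{-\pi}^{\pi}$ of $\exp(\langle\theta^*+\sigma\eps,g(\alpha)\cdot\theta\rangle/\sigma^2)\,d\alpha$. By the bound above, the numerator is at most $2\pi\exp((1-\iota+2\delta_1)\|\theta^*\|^2/\sigma^2)$. For the denominator I would restrict to a constant-width window $\{|\alpha|\le c'/K\}$ with $c'\in(0,\delta_0]$: there, (\ref{eq:maintermlower}) gives $\langle\theta^*,g(\alpha)\cdot\theta^*\rangle\ge\|\theta^*\|^2-CK^3r^2\alpha^2\ge\|\theta^*\|^2-Cc'^2Kr^2\ge(1-Cc'^2/\clower^2)\|\theta^*\|^2$ using $\|\theta^*\|^2\ge\clower^2Kr^2$, and together with the uniform $\pm2\delta_1\|\theta^*\|^2$ control this bounds the denominator below by $\tfrac{2c'}{K}\exp((1-Cc'^2/\clower^2-2\delta_1)\|\theta^*\|^2/\sigma^2)$. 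The ratio is then at most $\tfrac{\pi K}{c'}\exp\!\big(-(\iota-4\delta_1-Cc'^2/\clower^2)\|\theta^*\|^2/\sigma^2\big)$. Taking $c'$ and then $\delta_1$ small enough (depending only on $\clower,\cupper,\delta_0$) so that $\iota-4\delta_1-Cc'^2/\clower^2\ge\iota/2$, and using $\|\theta^*\|^2\ge\clower^2Kr^2$, this is at most $\tfrac{\pi K}{c'}\exp(-\tfrac{\iota\clower^2}{2}\cdot Kr^2/\sigma^2)$; finally the condition $\sigma^2\le Kr^2/(C_1\log K)$ with $C_1$ large enough lets half of the exponent dominate $\log(\pi K/c')$, giving (\ref{eq:angleconc}) with, say, $c=\iota\clower^2/4$.

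The only genuine subtlety is the ordering and sizing of the constants: $\iota$ is produced by Lemma~\ref{lemma-infor-2} from $\delta_0,\clower,\cupper$ and must be fixed before $\delta_1$ and $c'$ are chosen small relative to it, and $C_1$ is chosen last, so there is no circularity. The single quantitative fact that makes the partition-function lower bound work is that the curvature term $K^3r^2\alpha^2$ of Lemma~\ref{lemma-infor-2} costs only a constant fraction of $\|\theta^*\|^2\asymp Kr^2$ over a window of width of order $1/K$; thus the denominator is an order-$1/K$ fraction of $\exp(\|\theta^*\|^2/\sigma^2)$ up to a constant loss in the exponent, which the logarithmic gap in the noise condition then absorbs.
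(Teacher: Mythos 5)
Your proof is correct and follows essentially the same route as the paper: the same three-term decomposition of $\langle\theta^*+\sigma\eps,g(\alpha)\cdot\theta\rangle$, the same $\pm\delta_1\|\theta^*\|^2$ controls coming from $\theta\in\cB(\delta_1)$ and $\eps\in\cF_1(\theta,\delta_1)$, and the same use of Lemma \ref{lemma-infor-2} to obtain the constant gap $\iota$ off the window and the quadratic behavior near $\alpha=0$, with $\delta_1<\iota/4$ fixed after $\iota$ and $C_1$ chosen last. The only (harmless) deviations are that you lower-bound the partition function by a flat window of width of order $c'/K$, absorbing the curvature loss $Cc'^2Kr^2/\sigma^2$ into the exponent, whereas the paper evaluates the Gaussian integral to get a $(K^3r^2/\sigma^2)^{-1/2}$ prefactor---both are polynomial factors absorbed by the condition $Kr^2/\sigma^2\geq C_1\log K$---and your monotonicity reduction to $\delta_0\leq\clower^2/\cupper^2$ cleanly justifies invoking Lemma \ref{lemma-infor-2} for arbitrary $\delta_0>0$, a point the paper leaves implicit.
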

\begin{proof}
Define $I_1=[-\frac{\delta_0}{K},\frac{\delta_0}{K}]$ and
$I_2=[-\pi,\pi) \setminus [-\frac{\delta_0}{K},\frac{\delta_0}{K}]$.
Let us write
\[\langle \theta^*+\sigma \eps, g(\alpha)\cdot\theta\rangle
=\langle \theta^*,g(\alpha)\theta^* \rangle
+\langle \theta^*,g(\alpha)(\theta-\theta^*) \rangle
+\sigma \langle \eps,g(\alpha)\theta \rangle.\]
The conditions $\theta \in \cB(\delta_1)$ and
$\eps \in \cF_1(\theta,\delta_1)$ show for the second and third terms
\begin{equation}\label{eq:exp23}
|\langle \theta^*,g(\alpha)(\theta-\theta^*) \rangle|
\leq \|\theta^*\|\cdot \|\theta-\theta^*\| \leq \delta_1 \|\theta^*\|^2,
\qquad |\sigma\langle \eps,g(\alpha)\theta \rangle|
\leq \delta_1 \|\theta^*\|^2.
\end{equation}
For the first term, Lemma \ref{lemma-infor-2} implies that for constants
$C>0$ and $\iota>0$,
\begin{equation}\label{eq:exp1}
\langle \theta^*,g(\alpha)\theta^* \rangle
\leq (1-\iota) \cdot \|\theta^*\|^2
\text{ if } \alpha \in I_2, \qquad
\langle \theta^*,g(\alpha)\theta^* \rangle
\geq \|\theta^*\|^2-\revise{CK^2\|\theta^*\|^2}\alpha^2 \text{ if } \alpha \in I_1.
\end{equation}
Then for all $\alpha \in I_2$, we have 
$\langle \theta^*+\sigma \eps, g(\alpha)\cdot\theta\rangle
\leq (1-\iota+2\delta_1)\|\theta^*\|^2$, while for
$\alpha=0 \in I_1$, we have
$\langle \theta^*+\sigma \eps, g(\alpha)\cdot\theta\rangle
\geq (1-2\delta_1)\|\theta^*\|^2$. Setting $\delta_1<\iota/4$, this shows
(\ref{eq:maxangle}).

To show (\ref{eq:angleconc}), we may correspondingly write
the density (\ref{eq:Pthetaeps}) for
the distribution $\cP_{\theta,\eps}$ as
\[\frac{d\cP_{\theta,\eps}(\alpha)}{d\alpha}
\propto \exp\left(\frac{\langle \theta^*+\sigma \eps,
g(\alpha)\cdot\theta\rangle}{\sigma^2}\right)=\exp\left(
\frac{\langle \theta^*,g(\alpha)\theta^* \rangle}{\sigma^2}
+\frac{\langle \theta^*,g(\alpha)(\theta-\theta^*) \rangle}{\sigma^2}
+\frac{\sigma \langle \eps,g(\alpha)\theta \rangle}{\sigma^2}\right).\]
Then
\begin{align*}
\int_{I_2}\exp\left(\frac{\langle \theta^*+\sigma \eps,
g(\alpha)\cdot\theta\rangle}{\sigma^2}\right)d\alpha
&\leq \exp\left(\frac{(1-\iota+2\delta_1)\|\theta^*\|^2}{\sigma^2}\right),\\
\int_{I_1}\exp\left(\frac{\langle \theta^*+\sigma \eps,
g(\alpha)\cdot\theta\rangle}{\sigma^2}\right)d\alpha
&\geq \exp\left(\frac{(1-2\delta_1)\|\theta^*\|^2}{\sigma^2}\right)
\cdot \int_{-\delta_0/K}^{\delta_0/K}
\revise{\exp\left(-\frac{CK^2\|\theta^*\|^2\alpha^2}{\sigma^2}\right)}d\alpha.
\end{align*}
Lower bounding this Gaussian integral using the same argument as
(\ref{eq:gaussianintegral}), for a constant $C'>0$,
\[\int_{-\delta_0/K}^{\delta_0/K}
\exp\left(-\frac{CK^2\|\theta^*\|^2\alpha^2}{\sigma^2} \right)d\alpha
\geq \left(\frac{C'K^2\|\theta^*\|^2}{\sigma^2}\right)^{-1/2}\sqrt{2\pi}.\]
Combining these bounds and choosing $\delta_1<\iota/4$,
for a constant $c>0$,
\[J:=\frac{\int_{I_1}\exp\left(\frac{\langle \theta^*+\sigma \eps,
g(\alpha)\cdot\theta\rangle}{\sigma^2}\right)d\alpha}
{\int_{I_2}\exp\left(\frac{\langle \theta^*+\sigma \eps,
g(\alpha)\cdot\theta\rangle}{\sigma^2}\right)d\alpha}
\geq \revise{\left(\frac{C'K^2\|\theta^*\|^2}{\sigma^2}\right)^{-1/2}
\sqrt{2\pi}\exp\left(\frac{c\|\theta^*\|^2}{\sigma^2}\right)
\geq \exp\left(\frac{c\|\theta^*\|^2}{2\sigma^2}\right)}\]
where the last inequality \revise{holds for
$\frac{\|\theta^*\|^2}{\sigma^2} \geq C_1\log K$ and a sufficiently large
constant $C_1>0$.}
Since $\P_{\alpha \sim \cP_{\theta,\eps}}[|\alpha|_\cA \geq \frac{\delta_0}{K}]
=1/(1+J)<1/J$, this shows (\ref{eq:angleconc}).
\end{proof}

Next, recall the complex representations
\[\tilde{\theta}=(\theta_1,\ldots,\theta_K) \in \C^K,
\qquad \tilde{v}=(v_1,\ldots,v_K) \in \C^K,
\qquad \tilde{\eps}=(\eps_1,\ldots,\eps_K) \in \C^K\]
and define $\cF_2(\theta,v,\delta_1) \subset \R^{2K}$ as the set of
vectors $\eps \in \R^{2K}$ that satisfy the following three conditions:
\revise{
\begin{align}
\sup_{\alpha \in \cA} \Big|\langle \eps,g(\alpha) \cdot v \rangle\Big|
&\leq \frac{\|\theta^*\|}{\sigma}\label{eq:epsvthetaalign1}\\
\sup_{\alpha,\alpha' \in [-\pi,\pi)}
\frac{1}{\alpha^2}\left|\Re \sum_{k=1}^K \overline{\eps_k}e^{ik\alpha'}
\Big(e^{ik\alpha}-1-ik\alpha\Big)\theta_k\right|
&\leq \frac{\delta_1 K^2 \|\theta^*\|^2}{\sigma}\label{eq:epsvthetaalign2}\\
\sup_{\alpha,\alpha' \in [-\pi,\pi)}
\frac{1}{|\alpha-\alpha'|}
\left|\Re \sum_{k=1}^K \overline{\eps_k}\Big(e^{ik\alpha}-e^{ik\alpha'}\Big)
v_k\right| &\leq \frac{\delta_1 K \|\theta^*\|}{\sigma}
\label{eq:epsvthetaalign3}
\end{align}}
The domain $\cE(\theta,v,\delta_1)$ in Lemma \ref{lemma-infor-3} is given by
$\cF_1(\theta,\delta_1) \cap \cF_2(\theta,v,\delta_1)$.

\begin{proof}[Proof of Lemma \ref{lemma-infor-3}]
Let us denote
\[y=\theta^*+\sigma \eps\]
The idea will be to approximate $\Var_{\alpha \sim \cP_{\theta,\eps}}$ by the
variance with respect to a Gaussian law over $\alpha$. We fix a small constant
$\delta_0>0$ to be determined, and take $C_1>0$ large enough
and $\delta_1>0$ small enough so that the conclusions of Lemma
\ref{lemma:angleconc} hold. Let
\begin{equation}\label{eq:alpha0}
\alpha_0=\argmax_{\alpha \in \cA} \langle y,\,
g(\alpha) \cdot \theta \rangle
\end{equation}
(where we may take any maximizer if it is not unique). Then (\ref{eq:maxangle})
guarantees that $\alpha_0 \in [-\frac{\delta_0}{K},\frac{\delta_0}{K}]$.

In the sense of Section \ref{sec:complexrepr}, denote the complex
representations of $\theta^*,\eps,y,\theta,v \in \R^{2K}$ by
\[\tilde{\theta}^*=(\theta_1^*,\ldots,\theta_K^*) \in \C^K,
\qquad \tilde{\eps}=(\eps_1,\ldots,\eps_K) \in \C^K,
\qquad \tilde{y}=(y_1,\ldots,y_K) \in \C^K,\]
\[\tilde{\theta}=(\theta_1,\ldots,\theta_K) \in \C^K, \qquad
\tilde{v}=(v_1,\ldots,v_K) \in \C^K.\]
Then the complex representation of $g(\alpha) \cdot \theta$
is $(e^{ik\alpha}\theta_k:k=1,\ldots,K)$.
By the inner-product relation (\ref{eq:CRisometry}), we have
\begin{equation}\label{eq:weightexpr}
\langle y,g(\alpha) \cdot \theta \rangle
=\Re \sum_{k=1}^K \overline{y_k} \cdot e^{ik\alpha}\theta_k.
\end{equation}
The first-order condition for optimality of $\alpha_0$ in
(\ref{eq:alpha0}) yields
\[0=\frac{d}{d\alpha} \langle y,g(\alpha) \cdot \theta
\rangle\Big|_{\alpha=\alpha_0}=\Re \sum_{k=1}^K \overline{y_k} \cdot
ike^{ik\alpha_0} \cdot \theta_k.\]
Applying this condition and the decomposition
\begin{align*}
e^{ik\alpha}=e^{ik\alpha_0}\Big[1+ik(\alpha-\alpha_0)+
\Big(e^{ik(\alpha-\alpha_0)}-1-ik(\alpha-\alpha_0)\Big)\Big]
\end{align*}
to (\ref{eq:weightexpr}),
we may write the density function (\ref{eq:Pthetaeps}) for
the distribution $\cP_{\theta,\eps}$ as
\[\frac{d\cP_{\theta,\eps}(\alpha)}{d\alpha}
\propto \exp\left(\frac{\langle y,g(\alpha)\cdot\theta\rangle}{\sigma^2}\right)
\propto \exp\left(\frac{p(\alpha)}{\sigma^2}\right),\]
where (also dropping constant terms that do not depend on $\alpha$)
\[p(\alpha)=\Re \sum_{k=1}^K \overline{y_k} \cdot
e^{ik\alpha_0}\Big(e^{ik(\alpha-\alpha_0)}-1-ik(\alpha-\alpha_0)\Big)\theta_k.\]

For $\alpha \in [-\frac{\delta_0}{K},\frac{\delta_0}{K}]$, we now establish a quadratic
approximation for $p(\alpha)$. We have
\[p(\alpha)=\mathrm{I}(\alpha)+\mathrm{II}(\alpha)+\mathrm{III}(\alpha)\]
where
\begin{align*}
\mathrm{I}(\alpha)&=\Re \sum_{k=1}^K \overline{\theta_k^*} \cdot
e^{ik\alpha_0}\Big(e^{ik(\alpha-\alpha_0)}-1-ik(\alpha-\alpha_0)\Big)
\theta_k^*\\
\mathrm{II}(\alpha)&=\Re \sum_{k=1}^K \overline{\theta_k^*} \cdot
e^{ik\alpha_0}\Big(e^{ik(\alpha-\alpha_0)}-1-ik(\alpha-\alpha_0)\Big)
(\theta_k-\theta_k^*)\\
\mathrm{III}(\alpha)&=\Re \sum_{k=1}^K \sigma \overline{\eps_k} \cdot
e^{ik\alpha_0}\Big(e^{ik(\alpha-\alpha_0)}-1-ik(\alpha-\alpha_0)\Big)\theta_k.
\end{align*}
For $\mathrm{I}(\alpha)$, observe that
$\overline{\theta_k^*}\theta_k^*=|\theta_k^*|^2$ is real, and
\begin{align*}
\Re e^{ik\alpha_0}\Big(e^{ik(\alpha-\alpha_0)}-1-ik(\alpha-\alpha_0)\Big)
&=\cos(k\alpha)-\cos(k\alpha_0)+k(\alpha-\alpha_0)\sin(k\alpha_0)\\
&=-\frac{k^2(\alpha-\alpha_0)^2}{2} \cos(k\tilde{\alpha}),
\end{align*}
for some $\tilde{\alpha}$ between $\alpha$ and $\alpha_0$.
Since $\alpha,\alpha_0 \in [-\frac{\delta_0}{K},\frac{\delta_0}{K}]$
and $k \leq K$, for sufficiently small $\delta_0$ this implies
\[-\frac{k^2(\alpha-\alpha_0)^2}{4} \geq 
\Re e^{ik\alpha_0}\Big(e^{ik(\alpha-\alpha_0)}-1-ik(\alpha-\alpha_0)\Big)
\geq -\frac{3k^2(\alpha-\alpha_0)^2}{4}.\]
Then, applying \revise{the second and third bounds of (\ref{eq:krsumbounds}),
there are constants $C,c>0$ (independent of $\delta_0$) such that
\[-cK^2\|\theta^*\|^2(\alpha-\alpha_0)^2 \geq \mathrm{I}(\alpha) \geq
-CK^2\|\theta^*\|^2(\alpha-\alpha_0)^2.\]}
For $\mathrm{II}(\alpha)$,
we apply $|e^{is}-1-is| \leq s^2$ for all real values $s \in \R$,
Cauchy-Schwarz, and the condition $\theta \in \cB(\delta_1)$ to obtain
\[|\mathrm{II}(\alpha)|
\leq \sum_{k=1}^K k^2(\alpha-\alpha_0)^2
|\overline{\theta_k^*}||\theta_k-\theta_k^*|
\leq \revise{(\alpha-\alpha_0)^2 K^2\sqrt{\sum_{k=1}^K |\theta_k^*|^2}
\sqrt{\sum_{k=1}^K
|\theta_k-\theta_k^*|^2} \leq \delta_1 (\alpha-\alpha_0)^2 K^2\|\theta^*\|^2}.\]
For $\mathrm{III}(\alpha)$, we
apply the condition (\ref{eq:epsvthetaalign2}) for $\eps \in
\cF_2(\theta,v,\delta_1)$ to obtain
\[|\mathrm{III}(\alpha)| \leq \revise{\delta_1(\alpha-\alpha_0)^2
K^2\|\theta^*\|^2}.\]
Combining these bounds, for sufficiently small $\delta_1>0$ and some constants
$C_0,c_0>0$ which we may take independent of $\delta_0,\delta_1$,
we arrive at the desired quadratic approximation
\revise{
\begin{equation}\label{eq:quadraticapprox}
-c_0K^2\|\theta^*\|^2(\alpha-\alpha_0)^2 \geq p(\alpha) \geq
-C_0K^2\|\theta^*\|^2(\alpha-\alpha_0)^2
\qquad \text{ for } \alpha \in [-\tfrac{\delta_0}{K},\tfrac{\delta_0}{K}].
\end{equation}}

This implies the following variance bound: Denote $I_1=[-\frac{\delta_0}{K},
\frac{\delta_0}{K}]$ and $I_2=[-\pi,\pi) \setminus [-\frac{\delta_0}{K},
\frac{\delta_0}{K}]$. For any bounded function
$f:[-\pi,\pi) \to \R$, denote $\|f\|_\infty=\sup_{\alpha \in [-\pi,\pi)}
|f(\alpha)|$. Then
\begin{align}
\Var_{\alpha \sim \cP_{\theta,\eps}}[f(\alpha)]
&=\inf_{x \in \R} \frac{\int_{-\pi}^\pi (f(\alpha)-x)^2
e^{p(\alpha)/\sigma^2}d\alpha}
{\int_{-\pi}^\pi e^{p(\alpha)/\sigma^2}d\alpha}\nonumber\\
&\leq \inf_{x \in \R} \frac{\int_{I_1} (f(\alpha)-x)^2
e^{p(\alpha)/\sigma^2}d\alpha+4\|f\|_\infty^2 \int_{I_2} e^{p(\alpha)/\sigma^2}
d\alpha}{\int_{-\pi}^\pi e^{p(\alpha)/\sigma^2}d\alpha}\nonumber\\
&\leq \revise{\inf_{x \in \R} \frac{\int_{I_1} (f(\alpha)-x)^2
e^{-c_0K^2\|\theta^*\|^2(\alpha-\alpha_0)^2/\sigma^2}d\alpha}
{\int_{I_1} e^{-C_0K^2\|\theta^*\|^2(\alpha-\alpha_0)^2/\sigma^2}d\alpha}
+4\|f\|_\infty^2 e^{-c(\delta_0)\|\theta^*\|^2/\sigma^2}}\label{eq:varexpr1}
\end{align}
where, in the last line, we have used (\ref{eq:quadraticapprox}) as well as
(\ref{eq:angleconc}) to bound $\P_{\alpha \sim \cP_{\theta,\eps}}[\alpha \in
I_2] \leq \revise{e^{-c(\delta_0)\|\theta^*\|^2/\sigma^2}}$ for a constant $c(\delta_0)>0$
depending on $\delta_0$. For the denominator of the first term of
(\ref{eq:varexpr1}), we may evaluate the Gaussian integral as
\revise{
\begin{align*}
&\int_{I_1} e^{-C_0K^2\|\theta^*\|^2(\alpha-\alpha_0)^2/\sigma^2}d\alpha\\
&=\left(\frac{2C_0K^2\|\theta^*\|^2}{\sigma^2}\right)^{-1/2}
\sqrt{2\pi}\left(1-\tilde{\Phi}\left[
\sqrt{\frac{2C_0K^2\|\theta^*\|^2}{\sigma^2}}
\left(\frac{\delta_0}{K}+\alpha_0\right)\right]
-\tilde{\Phi}\left[\sqrt{\frac{2C_0K^2\|\theta^*\|^2}{\sigma^2}}
\left(\frac{\delta_0}{K}-\alpha_0\right)\right]\right).
\end{align*}}
Here, since $|\alpha_0| \leq \delta_0/K$, both values of $\tilde{\Phi}$ are at
most $1/2$. Furthermore, under the condition
$\frac{\|\theta^*\|^2}{\sigma^2} \geq C_1\log K$ for $C_1>0$ large enough depending on
$\delta_0$, at least one value of $\tilde{\Phi}$
is less than $1/4$. Thus, for a constant $C>0$ independent of
$\delta_0,\delta_1$, we have simply
\revise{
\[\int_{I_1} e^{-C_0K^2\|\theta^*\|^2(\alpha-\alpha_0)^2/\sigma^2}d\alpha
\geq \left(\frac{CK^2\|\theta^*\|^2}{\sigma^2}\right)^{-1/2}.\]}
Combining this with the normalization constant for the Gaussian law in the
numerator of the first term of (\ref{eq:varexpr1}), we obtain
\begin{equation}\label{eq:varexpr2}
\Var_{\alpha \sim \cP_{\theta,\eps}}[f(\alpha)]
\leq C\Var_{\alpha \sim \Normal(\alpha_0,\tau^2)}
[f(\alpha)]+ 4\|f\|_\infty^2e^{-c(\delta_0)\|\theta^*\|^2/\sigma^2},
\qquad \tau^2:=\frac{C'\sigma^2}{K^2\|\theta^*\|^2}.
\end{equation}
Here $C,C'>0$ are some constants depending only on $c_\gen$ and
independent of $\delta_0,\delta_1$, whereas $c(\delta_0)$ depends also on
$\delta_0$.

Finally, we apply this bound (\ref{eq:varexpr2}) to the function
$f(\alpha)=v^\top g(\alpha)^{-1} y=\langle y,g(\alpha)v \rangle$.
Observe that
\begin{equation}\label{eq:finftybound}
\|f\|_\infty \leq \|\theta^*\|\|v\|+\sigma \sup_{\alpha \in [-\pi,\pi)}
\langle \eps,g(\alpha)v \rangle \leq C\|\theta^*\|,
\end{equation}
the last inequality using $\|v\|=1$ and (\ref{eq:epsvthetaalign1}) for $\eps \in
\cF_2(\theta,v,\delta_1)$. To bound
$\Var_{\alpha \sim \mathcal{N}(\alpha_0,\tau^2)}[f(\alpha)]$, we apply
again the inner-product relation
(\ref{eq:CRisometry}) to write the complex representation of $f(\alpha)$ as
\[f(\alpha)=\Re \sum_{k=1}^K \overline{y_k} \cdot e^{ik\alpha}v_k
=\mathrm{I}(\alpha)+\mathrm{II}(\alpha)+\mathrm{III}(\alpha)\]
where
\begin{align*}
\mathrm{I}(\alpha)&=\Re \sum_{k=1}^K \overline{\theta_k^*} \cdot
e^{ik\alpha_0}\Big(1+ik(\alpha-\alpha_0)\Big)v_k\\
\mathrm{II}(\alpha)&=\Re \sum_{k=1}^K \overline{\theta_k^*} \cdot
e^{ik\alpha_0}\Big(e^{ik(\alpha-\alpha_0)}-1-ik(\alpha-\alpha_0)\Big)v_k\\
\mathrm{III}(\alpha)&=\Re \sum_{k=1}^K \sigma \overline{\eps_k}
\cdot e^{ik\alpha} v_k
\end{align*}
Next, we are going to upper bound their variances under $\alpha\sim \mathcal
N(\alpha_0, \tau^2)$. For $\mathrm{I}(\alpha)$, we may drop the constant term that is independent of
$\alpha$ and write
\begin{equation}\label{eq:VarI}
\Var_{\alpha \sim \mathcal{N}(\alpha_0,\tau^2)}[\mathrm{I}(\alpha)]
=\Var_{\alpha \sim \mathcal{N}(\alpha_0,\tau^2)}\left[\Re \sum_{k=1}^K
\overline{\theta_k^*} \cdot ik\alpha \cdot v_k
+\Re \sum_{k=1}^K \overline{\theta_k^*} \cdot (e^{ik\alpha_0}-1)ik\alpha \cdot
v_k\right].
\end{equation}
Recalling the tangent vector $u^*$ from (\ref{eq:ustar}), observe that its
complex representation is
\[\tilde{u}^*=\frac{d}{d\alpha}\Big(e^{ik\alpha}\theta_k^*:
k=1,\ldots,K\Big)\Big|_{\alpha=0}=\Big(ik\theta_k^*:k=1,\ldots,K\Big).\]
Then the inner-product relation (\ref{eq:CRisometry}) and the given
orthogonality condition $\langle u^*,v \rangle=0$ imply
\[\Re \sum_{k=1}^K -ik \cdot \overline{\theta_k^*} \cdot v_k=0,\]
so the first term inside the variance of (\ref{eq:VarI}) is 0. Applying
$|e^{ik\alpha_0}-1| \leq k|\alpha_0| \leq \delta_0 k/K$ for the second term,
\begin{align*}
\Var_{\alpha \sim \mathcal{N}(\alpha_0,\tau^2)}[\mathrm{I}(\alpha)]
&=\Var_{\alpha \sim \mathcal{N}(\alpha_0,\tau^2)}[\alpha]
\cdot \left(\Re \sum_{k=1}^K \overline{\theta_k^*} \cdot
(e^{ik\alpha_0}-1)ik\cdot v_k\right)^2\\
&\leq \Var_{\alpha \sim \mathcal{N}(\alpha_0,\tau^2)}[\alpha]
\cdot \left(\sum_{k=1}^K |\theta_k^*| \cdot \frac{\delta_0k^2}{K} \cdot
|v_k|\right)^2\\
&\leq \revise{\Var_{\alpha \sim \mathcal{N}(\alpha_0,\tau^2)}[\alpha]
\cdot \delta_0^2 K^2 \sum_{k=1}^K |\theta^*_k|^2 \cdot\sum_{k=1}^K |v_k|^2
=\tau^2 \delta_0^2 K^2\|\theta^*\|^2.}
\end{align*}
For $\mathrm{II}(\alpha)$, applying $|e^{is}-1-is| \leq s^2$ for any real value
$s \in \R$,
\begin{align*}
\Var_{\alpha \sim \mathcal{N}(\alpha_0,\tau^2)}[\mathrm{II}(\alpha)]
&\leq \E_{\alpha \sim \mathcal{N}(\alpha_0,\tau^2)}[\mathrm{II}(\alpha)^2]\\
&\leq \E_{\alpha \sim \mathcal{N}(\alpha_0,\tau^2)}\left[
\left(\sum_{k=1}^K k^2(\alpha-\alpha_0)^2|\theta_k^*||v_k|\right)^2\right]\\
&\leq \revise{\E_{\alpha \sim \mathcal{N}(\alpha_0,\tau^2)}[(\alpha-\alpha_0)^4]
K^4\sum_{k=1}^K |\theta_k^*|^2 \sum_{k=1}^K |v_k|^2
=3\tau^4K^4\|\theta^*\|^2}.
\end{align*}
For $\mathrm{III}(\alpha)$, we may center by a constant independent of $\alpha$
and apply (\ref{eq:epsvthetaalign3}) to obtain
\begin{align*}
\Var_{\alpha \sim \mathcal{N}(\alpha_0,\tau^2)}[\mathrm{III}(\alpha)]
&\leq \E_{\alpha \sim \mathcal{N}(\alpha_0,\tau^2)}
\left[\left(\Re \sum_{k=1}^K \sigma \overline{\eps_k}\Big(e^{ik\alpha}
-e^{ik\alpha_0}\Big)v_k\right)^2\right]\\
&\leq \revise{\sigma^2 \cdot
\E_{\alpha \sim \mathcal{N}(\alpha_0,\tau^2)}[(\alpha-\alpha_0)^2]
\cdot \frac{\delta_1^2K^2\|\theta^*\|^2}{\sigma^2}
=\tau^2 \delta_1^2 K^2\|\theta^*\|^2}.
\end{align*}
Combining all of the above, we have
\begin{align}
\Var_{\alpha \sim \mathcal{N}(\alpha_0,\tau^2)}[f(\alpha)]
&\leq 3\Var[\mathrm{I}(\alpha)]+3\Var[\mathrm{II}(\alpha)]
+3\Var[\mathrm{III}(\alpha)]\nonumber\\
&\leq \revise{\tau^2 \cdot C(\delta_0^2+\delta_1^2)K^2\|\theta^*\|^2
+\tau^4 \cdot CK^4\|\theta^*\|^2}\label{eq:finalvarfbound}
\end{align}
for a constant $C>0$ independent of $\delta_0,\delta_1$.

Applying (\ref{eq:finftybound}) and (\ref{eq:finalvarfbound}) and the value of
$\tau^2$ to (\ref{eq:varexpr2}), for a constant $C'>0$ independent of
$\delta_0,\delta_1$,
\revise{
\[\Var_{\alpha \sim \cP_{\theta,\eps}}[f(\alpha)]
\leq C'\left(\delta_0^2+\delta_1^2+\frac{\sigma^2}{\|\theta^*\|^2}
+\frac{\|\theta^*\|^2}{\sigma^2}e^{-c(\delta_0)\|\theta^*\|^2/\sigma^2}\right)\sigma^2.\]}
Then, choosing $\delta_0,\delta_1>0$ sufficiently small depending on $\eta$,
and applying $\frac{\|\theta^*\|^2}{\sigma^2} \geq C_1\log K$ for a sufficiently large
constant $C_1>0$ depending on $\delta_0$ and $\eta$, we obtain
$\Var_{\alpha \sim \cP_{\theta,\eps}}[f(\alpha)] \leq \eta \sigma^2$ as desired.
\end{proof}

\begin{proof}[Proof of Lemma \ref{lemma:epsgoodprob}]
Applying (\ref{eq:GPtail}) with $t=\delta_1\|\theta^*\|^2/\sigma$,
\[\P\left[\sup_{\alpha \in \cA} |\langle \eps,g(\alpha) \cdot \theta \rangle|
>\frac{\delta_1\|\theta^*\|^2}{\sigma} \text{ and } \|\eps\| \leq s\right]
\leq \revise{\frac{C\sigma}{\|\theta^*\|} \cdot Ks \cdot
e^{-c\|\theta^*\|^2/\sigma^2}}\]
for constants $C,c>0$ (depending on $\delta_1$). Let us take
\[s=\max(\sqrt{4K},\revise{\|\theta^*\|/\sigma}).\]
Then applying $\frac{\|\theta^*\|^2}{\sigma^2} \geq C_1\log K$ for sufficiently large
$C_1>0$, this probability bound is at most $e^{-c'\|\theta^*\|^2/\sigma^2}$. By a
chi-squared tail bound, since $\|\eps\|^2 \sim \chi_{2K}^2$ and
$s^2 \geq 4K$, we have
$\P[\|\eps\|^2>s^2] \leq e^{-cs^2} \leq e^{-c\|\theta^*\|^2/\sigma^2}$.
Combining these bounds gives, for a constant $c>0$,
\[\P\left[\sup_{\alpha \in \cA} |\langle \eps,g(\alpha) \cdot \theta \rangle|
>\frac{\delta_1\|\theta^*\|^2}{\sigma}\right] \leq
e^{-c\|\theta^*\|^2/\sigma^2},\]
so $\eps \in \cF_1(\theta,\delta_1)$ with probability at least
$1-e^{-c\|\theta^*\|^2/\sigma^2}$.
The same argument applied with the unit vector $v$ in place of $\theta$ shows
\revise{
\[\P\left[\sup_{\alpha \in \cA} |\langle \eps,g(\alpha) \cdot v \rangle|
>\frac{\|\theta^*\|}{\sigma}\right] \leq e^{-c\|\theta^*\|^2/\sigma^2},\]}
so (\ref{eq:epsvthetaalign1}) holds with probability at least
$1-e^{-c\|\theta^*\|^2/\sigma^2}$.

For the condition (\ref{eq:epsvthetaalign2}),
note that $\eps_k \sim \mathcal{N}_\C(0,2)$ and
these are independent for $k=1,\ldots,K$. Then
\[f_{\alpha,\alpha'}(\eps):=\alpha^{-2}\sum_{k=1}^K \overline{\eps_k}
e^{ik\alpha'}(e^{ik\alpha}-1-ik\alpha)\theta_k\]
has distribution $\mathcal{N}_\C(0,2\tau^2)$ where
\[\tau^2=\alpha^{-4}\sum_{k=1}^K 
|e^{ik\alpha'}(e^{ik\alpha}-1-ik\alpha)\theta_k|^2.\]
So $\Re f_{\alpha,\alpha'}(\eps) \sim \mathcal{N}(0,\tau^2)$. Applying
$|e^{is}-1-is| \leq s^2$ for all $s \in \R$, we have
\revise{
\[\tau^2 \leq \sum_{k=1}^K k^4|\theta_k|^2
\leq K^4\|\theta^*\|^2.\]
Then setting
\[t=\frac{\delta_1 K^2\|\theta^*\|^2}{\sigma},\]}
a Gaussian tail bound yields, for a constant $c>0$ (depending on $\delta_1$),
\[\P[|\Re f_{\alpha,\alpha'}(\eps)|>t/2] \leq e^{-c\|\theta^*\|^2/\sigma^2}.\]
Differentiating in $\alpha$ and $\alpha'$ and applying
$|e^{is}-1-is| \leq s^2$ and $|e^{is}-1-is+s^2/2| \leq |s|^3$ for $s \in \R$, we
have
\begin{align*}
|\partial_{\alpha'} \Re f_{\alpha,\alpha'}(\eps)|
&=\left|\alpha^{-2}\Re \sum_{k=1}^K \overline{\eps_k}
\cdot ike^{ik\alpha'}(e^{ik\alpha}-1-ik\alpha)\theta_k\right|
\leq \sum_{k=1}^K k^3|\eps_k||\theta_k|
\leq K^3\|\eps\|\|\theta\|,\\
|\partial_\alpha \Re f_{\alpha,\alpha'}(\eps)|
&=\left|\alpha^{-3}\Re \sum_{k=1}^K \overline{\eps_k}
e^{ik\alpha'}(e^{ik\alpha}(ik\alpha-2)+2+ik\alpha)\theta_k\right|
\leq C\sum_{k=1}^K k^3|\eps_k||\theta_k| \leq CK^3\|\eps\|\|\theta\|.
\end{align*}
(For the second line, we have explicitly evaluated the derivative,
and then applied $e^{ik\alpha}=1+ik\alpha-k^2\alpha^2/2+O(k^3\alpha^3)$
and canceled terms to yield the first inequality.)
Then, on an event $\{\|\eps\|<s\}$, $\Re f_{\alpha,\alpha'}(\eps)$ is
$L$-Lipschitz in both $\alpha$ and $\alpha'$, for 
\revise{$L=C_0K^3\|\theta^*\|s$} and a constant $C_0>0$.
We set $\delta=t/(4L)$ and let $N_\delta$ be a $\delta$-net of
$[-\pi,\pi)$ having cardinality
\[|N_\delta|=\frac{8\pi L}{t} \leq \revise{\frac{C\sigma}{\|\theta^*\|}} \cdot Ks.\]
Then
\[\P\left[\sup_{\alpha,\alpha' \in [-\pi,\pi)} |\Re
f_{\alpha,\alpha'}(\eps)|>t \text{ and } \|\eps\|<s\right]
\leq \P\left[\sup_{\alpha,\alpha' \in N_\delta} |\Re
f_{\alpha,\alpha'}(\eps)|>t/2 \right]
\leq |N_\delta|^2 \cdot e^{-c\|\theta^*\|^2/\sigma^2}.\]
Then, setting $s=\max(\sqrt{4K},\revise{\|\theta^*\|/\sigma})$ and applying the same
argument as above shows
\[\P\left[\sup_{\alpha,\alpha' \in [-\pi,\pi)}
|\Re f_{\alpha,\alpha'}(\eps)|>t\right] \leq e^{-c\|\theta^*\|^2/\sigma^2},\]
so (\ref{eq:epsvthetaalign2}) holds with probability at least
$1-e^{-c\|\theta^*\|^2/\sigma^2}$.

The argument for (\ref{eq:epsvthetaalign3}) is analogous. We define
$\gamma=\alpha-\alpha'$,
\[f_{\alpha',\gamma}(\eps):=\gamma^{-1}
\sum_{k=1}^K \overline{\eps_k}e^{ik\alpha'}(e^{ik\gamma}-1)v_k
\sim \mathcal{N}_\C(0,2\tau^2),
\qquad \tau^2:=\gamma^{-2}\sum_{k=1}^K
|(e^{ik\alpha'}(e^{ik\gamma}-1)v_k|^2\]
and set \revise{$t=\delta_1K\|\theta^*\|/\sigma$}. Applying $|e^{ik\gamma}-1| \leq
k|\gamma|$ and $\|v\|=1$,
we have $\tau^2 \leq \sum_{k=1}^K k^2|v_k|^2 \leq K^2$, so that a Gaussian tail
bound yields $\P[|\Re f_{\alpha',\gamma}(\eps)|>t/2] \leq e^{-c\|\theta^*\|^2/\sigma^2}$.
On the event $\{\|\eps\|<s\}$, we have the Lipschitz bounds
\begin{align*}
|\partial_{\alpha'} \Re f_{\alpha',\gamma}(\eps)|
&=\left|\gamma^{-1} \Re \sum_{k=1}^K \overline{\eps_k} \cdot ike^{ik\alpha'}
(e^{ik\gamma}-1)v_k\right| \leq \sum_{k=1}^K k^2|\eps_k||v_k|
\leq K^2\|\eps\|\|v\| \leq K^2s,\\
|\partial_\gamma \Re f_{\alpha',\gamma}(\eps)|
&=\left|\gamma^{-2} \Re \sum_{k=1}^K \overline{\eps_k} \cdot e^{ik\alpha'}
(e^{ik\gamma}(ik\gamma-1)+1)v_k\right|
\leq C\sum_{k=1}^K k^2|\eps_k||v_k| \leq CK^2\|\eps\|\|v\| \leq CK^2s,
\end{align*}
where we have applied $e^{ik\gamma}=1+ik\gamma+O(k^2\gamma^2)$. Then
applying a covering net argument as above, whose details we omit for brevity,
we obtain
\[\P\left[\sup_{\alpha' \in [-\pi,\pi),\;\gamma \in [-2\pi,2\pi)}
|\Re f_{\alpha',\gamma}(\eps)|>t\right] \leq e^{-c\|\theta^*\|^2/\sigma^2},\]
so (\ref{eq:epsvthetaalign3}) holds with probability at least
$1-e^{-c\|\theta^*\|^2/\sigma^2}$. Combining these bounds yields the lemma.
\end{proof}

\begin{proof}[Proof of Lemma \ref{lemma-delta-A}]
Throughout the proof, $C,C',c,c'$ etc.\ are positive constants depending only on
$c_\gen,\eta$ and changing from instance to instance.
Recall the expression (\ref{eq:hessexpansion}).
Let $C_1,\delta_1>0$ be such that the conclusion of Lemma \ref{lemma-infor-3}
holds with $\eta/6$ in place of $\eta$.
For any $\theta \in \cB(\delta_1)$ and unit vector $v$ 
satisfying $\langle u^*,v\rangle=0$, let us apply
\[\Var_{\alpha \sim \cP_{\theta,\eps}}
\Big[v^\top g(\alpha)^{-1}(\theta^*+\sigma \eps)\Big]
\leq \E_{\alpha \sim \cP_{\theta,\eps}}
\Big[\big(v^\top g(\alpha)^{-1}(\theta^*+\sigma \eps)\big)^2\Big]
\leq \|\theta^*+\sigma \eps\|^2\]
to upper-bound the second term of (\ref{eq:hessexpansion}) as
\begin{equation}\label{eq:hessboundI123}
\frac{1}{N}\sum_{m=1}^N \Var_{\alpha \sim \cP_{\theta,\eps^{(m)}}}\Big[v^\top
g(\alpha)^{-1}(\theta^*+\sigma \eps^{(m)})\Big] \leq I_1(\theta,v)+I_2(\theta,v)+I_3
\end{equation}
where
\begin{align*}
I_1(\theta,v) &= \frac1N\sum_{m=1}^N \Var_{\alpha \sim \cP_{\theta,\eps^{(m)}}}\Big[v^\top
g(\alpha)^{-1}(\theta^*+\sigma \eps^{(m)})\Big]
\cdot \mathbf{1}[\varepsilon^{(m)}\in \cE(\theta,v,\delta_1)],\\
I_2(\theta,v) &= \frac1N\sum_{m=1}^N \|\theta^*+\sigma\eps^{(m)}\|^2
\cdot \mathbf{1}\left[\varepsilon^{(m)} \notin \cE(\theta,v,\delta_1)
\text{ and } \|\varepsilon^{(m)}\|^2 \leq
4K+\revise{\frac{\|\theta^*\|^2}{\sigma^2}}\right],\\
I_3 &= \frac1N\sum_{m=1}^N \|\theta^*+\sigma \eps^{(m)}\|^2
\cdot
\mathbf{1}\left[\|\varepsilon^{(m)}\|^2>4K+\revise{\frac{\|\theta^*\|^2}{\sigma^2}}\right].
\end{align*}
Here $I_1,I_2$ are dependent on $(\theta,v)$, whereas $I_3$ is independent of
$(\theta,v)$.

Lemma \ref{lemma-infor-3} applied with $\eta/6$ immediately gives
the deterministic bound
\begin{equation}\label{eq:I1bound}
I_1(\theta,v) \leq \eta\sigma^2/6.
\end{equation}
For $I_2(\theta,v)$, on the event $\|\eps\|^2 \leq 4K+\|\theta^*\|^2/\sigma^2$, we have
for a constant $C_2>0$ that
\[\|\theta^*+\sigma \eps\|^2 \leq 2\|\theta^*\|^2+2\sigma^2\|\eps\|^2
\leq \revise{C_2(\|\theta^*\|^2+K\sigma^2)}.\]
Thus
\[I_2(\theta,v) \leq \revise{C_2(\|\theta^*\|^2+K\sigma^2)} \cdot \frac1N\sum_{m=1}^N
\mathbf{1}[\varepsilon^{(m)} \notin \cE(\theta,v,\delta_1)].\]
Denote $p=\P[\eps^{(m)} \notin \cE(\theta,v,\delta_1)]$ and
\revise{$q=\frac{\eta\sigma^2}{6C_2(\|\theta^*\|^2+K\sigma^2)}$. By Lemma \ref{lemma:epsgoodprob},
\[p \leq e^{-c\|\theta^*\|^2/\sigma^2},\]
so in particular $p<q$ for $\frac{\|\theta^*\|^2}{\sigma^2} \geq C_1\log K$ and
sufficiently large $C_1>0$.} Then by a Chernoff bound for binomial random
variables \citep[Theorem 2.3.1]{vershynin2018high},
\[\P[I_2(\theta,v) \geq \eta \sigma^2/6]=\P\left[\sum_{m=1}^N
\mathbf{1}[\varepsilon^{(m)} \notin \cE(\theta,v,\delta_1)] \geq Nq\right] \leq
\left(\frac{ep}{q}\right)^{Nq}.\]
We have $(ep/q) \leq e^{-c'\|\theta^*\|^2/\sigma^2}$ when
$\frac{\|\theta^*\|^2}{\sigma^2} \geq C_1\log K$ for sufficiently large $C_1>0$,
so this yields
\begin{equation}\label{eq:I2bound}
\P[I_2(\theta,v) \geq \eta \sigma^2/6] \leq
\revise{e^{-\frac{cN}{1+K\sigma^2/\|\theta^*\|^2}}}.
\end{equation}

For $I_3$, we bound separately its mean and its concentration.
Denote the summand of $I_3$ as
\[z^{(m)}=\|\theta^*+\sigma \eps^{(m)}\|^2
\cdot \mathbf{1}\left[\|\eps^{(m)}\|^2>4K+\frac{\|\theta^*\|^2}{\sigma^2}\right].\]
Let $p'=\P[\|\eps^{(m)}\|^2>4K+\|\theta^*\|^2/\sigma^2]$. Then applying $\|\eps^{(m)}\|^2
\sim \chi_{2K}^2$ and a chi-squared tail bound,
$p' \leq e^{-c(2K+\|\theta^*\|^2/\sigma^2)} \leq e^{-c\|\theta^*\|^2/\sigma^2}$.
So by Cauchy-Schwarz,
\begin{equation}\label{eq:Ezbound}
\mathbb{E}[z^{(m)}] \leq \sqrt{\E \left[\|\theta^*+\sigma \eps^{(m)}\|^4\right]}
\cdot \sqrt{p'} \leq \revise{C(\|\theta^*\|^2+K\sigma^2)
\cdot e^{-c\|\theta^*\|^2/2\sigma^2}} \leq \eta\sigma^2/12
\end{equation}
the last inequality holding for $\frac{\|\theta^*\|^2}{\sigma^2} \geq C_1\log K$ and
sufficiently large $C_1>0$. For the concentration, let
$\|X\|_{\psi_1}=\inf\{s>0:\E[e^{|X|/s}] \leq 2\}$ denote the
sub-exponential norm of a random variable $X$. Observe that similarly by
Cauchy-Schwarz,
\begin{align*}
\E\Big[e^{\frac{|z^{(m)}|}{s}}\Big]
&=1-p'+\E\left[\exp\left(\frac{\|\theta^*+\sigma \eps^{(m)}\|^2}{s}\right)
\cdot \mathbf{1}\left[\|\eps^{(m)}\|^2>4K+\frac{\|\theta^*\|^2}{\sigma^2}\right]\right]\\
&\leq 1+\revise{\sqrt{\E\Big[\exp\Big(4\|\theta^*\|^2+4\sigma^2\|\eps^{(m)}\|^2)/s\Big)\Big]}} \cdot \sqrt{p'}.
\end{align*}
Applying the moment generating function bound
$\E[\exp(t\|\eps^{(m)}\|^2)]=(1-2t)^{-K} \leq e^{4tK}$ for $t<1/4$, we have
\[\E\Big[e^{\frac{|z^{(m)}|}{s}}\Big]
\leq 1+e^{2\|\theta^*\|^2/s} \cdot e^{8K\sigma^2/s} \cdot e^{-c\|\theta^*\|^2/2\sigma^2}
\leq 2\]
when \revise{$s \geq C'\sigma^2(1+K\sigma^2/\|\theta^*\|^2)$ for a sufficiently large constant
$C'>0$. So $\|z^{(m)}\|_{\psi_1} \leq C\sigma^2(1+K\sigma^2/\|\theta^*\|^2)$,
and Bernstein's inequality \citep[Theorem 2.8.1]{vershynin2018high} gives
\[\mathbb{P}\left[\frac1N\sum_{m=1}^N z^{(m)}-\mathbb{E}[z^{(m)}]>\eta
\sigma^2/12\right] \leq e^{-\frac{cN}{(1+K\sigma^2/\|\theta^*\|^2)^2}}.\]
Combining with (\ref{eq:Ezbound}),
\begin{equation}\label{eq:I3bound}
\P[I_3 \geq \eta\sigma^2/6]
=\P\left[\frac{1}{N}\sum_{m=1}^N z^{(m)}>\eta \sigma^2/6\right] \leq
e^{-\frac{cN}{(1+K\sigma^2/\|\theta^*\|^2)^2}}.
\end{equation}}

Applying (\ref{eq:hessexpansion}), (\ref{eq:hessboundI123}),
(\ref{eq:I1bound}), and (\ref{eq:I2bound}), we have
\begin{align}
\P\left[v^\top \nabla^2 R_N(\theta)v \leq
\frac{1}{\sigma^2}-\frac{\eta}{2\sigma^2} \text{ and }
I_3 \leq \frac{\eta\sigma^2}{6}\right]
&\leq \P\left[I_1(\theta,v)+I_2(\theta,v)+I_3 \geq \frac{\eta\sigma^2}{2}
\text{ and } I_3 \leq \frac{\eta\sigma^2}{6}\right]\nonumber\\
&\leq \P\left[I_2(\theta,v) \geq \frac{\eta\sigma^2}{6}\right]
\leq e^{-\frac{cN}{1+K\sigma^2/\|\theta^*\|^2}}.\label{eq:hesspointwisebound}
\end{align}
Let us apply a covering net argument to take a union bound over $(\theta,v)$,
and then combine with the bound (\ref{eq:I3bound}) for $I_3$ which
is independent of $(\theta,v)$.
We compute the Lipschitz constant of $v^\top \nabla^2
R_N(\theta)v$ in both $v$ and $\theta$: Taking the gradient in $v$,
\[\left\|\nabla_v \Big[v^\top \nabla^2 R_N(\theta)v\Big]\right\|
=\left\|2\nabla^2 R_N(\theta)v\right\|
=2\sup_{u:\|u\|=1} u^\top \nabla^2 R_N(\theta)v.\]
Then denoting $y^{(m)}=\theta^*+\sigma \eps^{(m)}$ and applying (\ref{eq:hess}),
\begin{align}
\left\|\nabla_v \Big[v^\top \nabla^2 R_N(\theta)v\Big]\right\|
&\leq 2\left(\sup_{u:\|u\|=1}
\frac{u^\top v}{\sigma^2}-\frac{1}{N\sigma^4}
\sum_{m=1}^N \Cov_{\alpha \sim \cP_{\theta,\eps^{(m)}}}
[u^\top g(\alpha)^{-1}y^{(m)}, v^\top g(\alpha)^{-1}y^{(m)}]\right)\nonumber\\
&\leq \frac{2}{\sigma^2}
+\frac{2}{N\sigma^4}\sum_{m=1}^N \|y^{(m)}\|^2
\leq \frac{2}{\sigma^2}+\frac{4\|\theta^*\|^2}{\sigma^4}
+\frac{4}{N\sigma^2} \sum_{m=1}^N \|\eps^{(m)}\|^2.\label{eq:gradquadv}
\end{align}
Similarly, taking the gradient in $\theta$,
\[\left\|\nabla_\theta \left[v^\top \nabla^2 R_N(\theta)v\right]\right\|
=\sup_{u:\|u\|=1} \nabla^3 R_N(\theta)[u,v,v]\]
where $\nabla^3 R_N(\theta)[u,v,v]$ is the 3rd-derivative tensor of
$R_N(\theta)$ evaluated at $u \otimes v \otimes v \in \R^{2K \times 2K \times
2K}$. We have
\[\nabla^3 R_N(\theta)[u,v,v]
=-\frac{1}{N\sigma^6}\sum_{m=1}^N \kappa_{\alpha \sim
\cP_{\theta,\eps^{(m)}}}^3 [u^\top g(\alpha)^{-1}y^{(m)},
v^\top g(\alpha)^{-1}y^{(m)}, v^\top g(\alpha)^{-1}y^{(m)}]\]
where $\kappa_{\alpha \sim \cP_{\theta,\eps^{(m)}}}^3[\cdot,\cdot,\cdot]$
denotes the 3rd-order mixed cumulant with respect to $\alpha \sim
\cP_{\theta,\eps^{(m)}}$. For any random variables $X,Y,Z$,
the moment-cumulant relations and H\"older's inequality give
\[|\kappa^3[X,Y,Z]| \leq C \cdot
\E[|X|^3]^{1/3}\E[|Y|^3]^{1/3}\E[|Z|^3]^{1/3}.\]
Thus,
\begin{equation}\label{eq:gradquadtheta}
\left\|\nabla_\theta \left[v^\top \nabla^2 R_N(\theta)v\right]\right\|
\leq \frac{C}{N\sigma^6}\sum_{m=1}^N \|y^{(m)}\|^3
\leq \frac{4C\|\theta^*\|^3}{\sigma^6}+\frac{4C}{N\sigma^3}
\sum_{m=1}^N \|\eps^{(m)}\|^3.
\end{equation}

On an event
\[\cA=\left\{\|\eps^{(m)}\|^2 \leq N \text{ for all } m=1,\ldots,N\right\},\]
(\ref{eq:gradquadv}) and (\ref{eq:gradquadtheta}) imply that $v^\top \nabla^2
R_N(\theta)v$ is $L_v$-Lipschitz in $v$ and $L_\theta$-Lipschitz in $\theta$,
for
\revise{
\[L_v=C'\left(\frac{\|\theta^*\|^2}{\sigma^2}+N\right)\frac{1}{\sigma^2},
\qquad
L_\theta=C'\left(\frac{\|\theta^*\|^2}{\sigma^2}+N\right)^{3/2}\frac{1}{\sigma^3}.\]}
Let $N_v$ be a $\delta_v$-net of $\{v:\|v\|=1,\langle
u^*,v \rangle=0\}$ and $N_\theta$ a $\delta_\theta$-net of $\{\theta:
\theta \in \cB(\delta_1)\}$, for $\delta_v=\eta/(4L_v\sigma^2)$ and
$\delta_\theta=\eta/(4L_\theta \sigma^2)$. This guarantees, for each
$\theta \in \cB(\delta_1)$ and unit vector $v$ with $\langle u^*,v \rangle=0$,
there exists $(\theta',v') \in N_\theta \times N_v$ such that
\[\Big|v^\top \nabla^2 R_N(\theta)v-{v'}^\top \nabla^2 R_N(\theta')v'\Big|
\leq L_\theta \|\theta-\theta'\|+L_v \|v-v'\| \leq \eta/2\sigma^2.\]
Then, applying these Lipschitz
bounds together with the pointwise bound (\ref{eq:hesspointwisebound}),
\begin{align}
&\P\left[\sup_{\theta \in \cB(\delta_1)}\,
\sup_{v:\|v\|=1,\,\langle u^*,v \rangle=0}\,
v^\top \nabla^2 R_N(\theta)v \leq \frac{1}{\sigma^2}-\frac{\eta}{\sigma^2}
\text{ and } I_3 \leq \frac{\eta \sigma^2}{6} \text{ and }
\cA \right]\nonumber\\
&\hspace{0.2in}\leq \P\left[\sup_{\theta \in N_\theta}\,\sup_{v \in N_v}
v^\top \nabla^2 R_N(\theta)v \leq \frac{1}{\sigma^2}
-\frac{\eta}{2\sigma^2} \text{ and } I_3 \leq \frac{\eta\sigma^2}{6}\right]
\leq |N_v| \cdot |N_\theta| \cdot e^{-\frac{cN}{1+K\sigma^2/\|\theta^*\|^2}}.
\label{eq:hessunionbound}
\end{align}

We may take the above nets to have cardinalities
\revise{
\begin{align*}
|N_v| \leq \left(1+\frac{2}{\delta_v}\right)^{2K}
&\leq \left[C'\left(\frac{\|\theta^*\|^2}{\sigma^2}+N\right)\right]^{2K},\\
|N_\theta| \leq \left(1+\frac{C\|\theta^*\|}{\delta_\theta}\right)^{2K}
&\leq \left[C'\left(\frac{\|\theta^*\|^2}{\sigma^2}+N\right)^{3/2}
\frac{\|\theta^*\|}{\sigma}\right]^{2K}.
\end{align*}
Observe that under the given assumptions $N \geq
C_0K(1+\frac{K\sigma^2}{\|\theta^*\|^2})
\log (K+\frac{\|\theta^*\|^2}{\sigma^2})$ and $\frac{\|\theta^*\|^2}{\sigma^2}
\geq C_1\log K$, we have
\[\frac{N}{\log N} \geq \frac{C_0K(1+\frac{K\sigma^2}{\|\theta^*\|^2})
\log (K+\frac{\|\theta^*\|^2}{\sigma^2})}{\log
[C_0K(1+\frac{K\sigma^2}{\|\theta^*\|^2})\log
(K+\frac{\|\theta^*\|^2}{\sigma^2})]}.\]
Considering separately the cases $\frac{K\sigma^2}{\|\theta^*\|^2} \leq 1$
and $1 \leq \frac{K\sigma^2}{\|\theta^*\|^2} \leq \frac{K}{C_1\log K}$,
it may be checked that this implies
\begin{equation}\label{eq:logKlogN}
\frac{N}{\log N} \geq C_0'K\left(1+\frac{K\sigma^2}{\|\theta^*\|^2}\right)
\end{equation}
where $C_0'$ may be taken to be a small absolute constant times $C_0/\log C_0$.}
Then for sufficiently large $C_0,C_0'>0$ and some constant $c'>0$, the right side of
(\ref{eq:hessunionbound}) may then be bounded as
$|N_v| \cdot |N_\theta| \cdot e^{-\frac{cN}{1+K\sigma^2/\|\theta^*\|^2}}
\leq e^{-\frac{c'N}{1+K\sigma^2/\|\theta^*\|^2}}$.

Combining this with (\ref{eq:I3bound}) and the chi-squared tail bound $\P[\cA^c]
\leq N \cdot \P[\|\eps^{(m)}\|^2>N] \leq Ne^{-cN} \leq e^{-c'N}$ for $N \geq
C_0K$, this gives
\[\P\left[\sup_{\theta \in \cB(\delta_1)}\,
\sup_{v:\|v\|=1,\,\langle u^*,v \rangle=0}\,
v^\top \nabla^2 R_N(\theta)v \leq \frac{1-\eta}{\sigma^2} \right]
\leq e^{-\frac{c'N}{1+K\sigma^2/\|\theta^*\|^2}}+\P[I_3>\tfrac{\eta \sigma^2}{6}]
+\P[\cA^c] \leq e^{-\frac{cN}{(1+K\sigma^2/\|\theta^*\|^2)^2}}\]
which implies the lemma.
\end{proof}

\section{Proofs for minimax lower bound}\label{appendix:lower}

\revise{For expositional clarity, we first show Lemma \ref{lemma:minimaxlowerP}
in the setting $\beta=0$ where the Fourier coefficients do not decay. At the
conclusion of this section, we extend the proof to all $\beta \in
[0,\frac{1}{2})$.}

\begin{proof}[Proof of Lemma \ref{lemma:KLupperbound}]
The first bound (\ref{eq:KLupperlownoise}) is basic and due to
the data processing inequality: Let $q_\theta(\alpha,y)$ denote the joint
density of $\alpha \sim \Unif([-\pi,\pi))$ and $y=g(\alpha) \cdot \theta+\sigma
\eps$. Then the data processing inequality implies
\begin{align*}
D_{\KL}(p_\theta \| p_{\theta'})
\leq D_{\KL}(q_\theta \| q_{\theta'})
&=\E_{(\alpha,y) \sim q_\theta}
\left[-\frac{\|y-g(\alpha) \cdot \theta\|^2}{2\sigma^2}
+\frac{\|y-g(\alpha) \cdot \theta'\|^2}{2\sigma^2}\right]\\
&=\mathop{\E_{\alpha \sim \Unif([-\pi,\pi))}}_{\eps \sim \mathcal{N}(0,I)}
\left[-\frac{\|\sigma \eps\|^2}{2\sigma^2}
+\frac{\|g(\alpha) \cdot (\theta-\theta')+\sigma \eps\|^2}{2\sigma^2}\right]
=\frac{\|\theta-\theta'\|^2}{2\sigma^2}.
\end{align*}

In the remainder of the proof, we show (\ref{eq:KLupperhighnoise}). Let us write
$\alpha,\alpha' \sim \Unif([-\pi,\pi))$ for independent
random rotations, $\E$ for the expectation over only $\alpha,\alpha'$ (fixing
$y$ and $\eps$), and $g=g(\alpha)$ and $g'=g(\alpha')$. We have
\[D_{\KL}(p_{\theta}\|p_{\theta'}) \leq \chi^2(p_{\theta}\|p_{\theta'})
:=\int_{\R^{2K}} \frac{[p_\theta(y)-p_{\theta'}(y)]^2}{p_\theta(y)}dy\]
where the right side is the $\chi^2$-divergence,
see e.g.\ \cite[Lemma 2.7]{tsybakov2008introduction}. We derive an upper bound
for $\chi^2(p_\theta\|p_{\theta'})$ using the idea of
\cite[Theorem 9]{bandeira2020optimal}: Let $\varphi(z)=(2\pi
\sigma^2)^{-K}\exp(-\|z\|^2/2\sigma^2)$ be the density of
$\mathcal{N}(0,\sigma^2 I_{2K})$. Then
\begin{equation}\label{eq:pthetaexpr}
p_\theta(y)=\E[\varphi(y-g\theta)]
=\E\Big[\varphi(y) \cdot e^{\frac{y^\top
g\theta}{\sigma^2}-\frac{\|\theta\|^2}{2\sigma^2}}\Big].
\end{equation}
By Jensen's inequality and the condition $\E[g]=0$,
\begin{equation}\label{eq:pthetalower}
p_\theta(y) \geq \varphi(y) \cdot e^{\frac{y^\top \E[g]\theta}{\sigma^2}
-\frac{\|\theta\|^2}{2\sigma^2}}
=\varphi(y) \cdot e^{-\frac{\|\theta\|^2}{2\sigma^2}}.
\end{equation}
Then applying (\ref{eq:pthetaexpr}), (\ref{eq:pthetalower}), and the 
moment generating function
\[\int \varphi(y)e^{\frac{y^\top (g\theta+g'\theta')}{\sigma^2}}dy
=e^{\frac{\|g\theta+g'\theta'\|^2}{2\sigma^2}}
=e^{\frac{\|\theta\|^2+\|\theta'\|^2+2\langle g\theta,g'\theta'\rangle}
{2\sigma^2}},\]
we get
\begin{align}
\chi^2(p_\theta\|p_{\theta'})
&\leq \int \frac{\Big(\E\Big[\varphi(y) \cdot
e^{\frac{y^\top g\theta}{\sigma^2}-\frac{\|\theta\|^2}{2\sigma^2}} \Big]
-\E\Big[\varphi(y) \cdot e^{\frac{y^\top g\theta'}{\sigma^2}
-\frac{\|\theta'\|^2}{2\sigma^2}}\Big]\Big)^2}
{\varphi(y) \cdot e^{-\frac{\|\theta\|^2}{2\sigma^2}}}dy\nonumber\\
&=\int \varphi(y)
\Big(e^{-\frac{\|\theta\|^2}{2\sigma^2}}
\E\Big[e^{\frac{y^\top g\theta}{\sigma^2}+\frac{y^\top g'\theta}{\sigma^2}}\Big]
-2e^{-\frac{\|\theta'\|^2}{2\sigma^2}}\E\Big[e^{\frac{y^\top g\theta}{\sigma^2}
+\frac{y^\top g'\theta'}{\sigma^2}}\Big]
+e^{-\frac{\|\theta'\|^2}{\sigma^2}+\frac{\|\theta\|^2}{2\sigma^2}}
\E\Big[e^{\frac{y^\top g\theta'}{\sigma^2}
+\frac{y^\top g'\theta'}{\sigma^2}}\Big]\Big)dy\nonumber\\
&=e^{\frac{\|\theta\|^2}{2\sigma^2}} 
\E\left[e^{\frac{\langle g\theta,g'\theta \rangle}{\sigma^2}}
-2e^{\frac{\langle g\theta,g'\theta' \rangle}{\sigma^2}}
+e^{\frac{\langle g\theta',g'\theta' \rangle}{\sigma^2}}\right]\nonumber\\
&=e^{\frac{\|\theta\|^2}{2\sigma^2}}
\E\left[e^{\frac{\langle \theta,g\theta \rangle}{\sigma^2}}
-2e^{\frac{\langle \theta,g\theta' \rangle}{\sigma^2}}
+e^{\frac{\langle \theta',g\theta' \rangle}{\sigma^2}}\right]\nonumber\\
&=e^{\frac{\|\theta\|^2}{2\sigma^2}}
\sum_{m=0}^\infty \frac{1}{\sigma^{2m}m!}
\E\big[\langle \theta,g\theta \rangle^m
-2\langle \theta,g\theta' \rangle^m+\langle \theta',g\theta' \rangle^m\big].
\label{eq:chisqbound}
\end{align}

For $m=0$ and $m=1$, the summand of (\ref{eq:chisqbound}) is 0.
We evaluate the summand for $m=2$, and upper bound it for $m \geq 3$. Let
\[\tilde{\theta}=(\theta_1,\ldots,\theta_K) \in \C^K,
\qquad \tilde{\theta}'=(\theta_1',\ldots,\theta_K') \in \C^K\]
be the complex representations of $\theta,\theta'$ as defined in Section
\ref{sec:complexrepr}. For $m=2$, applying (\ref{eq:CRisometry}),
\[\E[\langle \theta,g(\alpha)\theta' \rangle^2]
=\sum_{k_1,k_2=1}^K \E\Big[\Re(\overline{\theta_{k_1}}e^{ik_1\alpha}
\theta_{k_1}') \cdot \Re
(\overline{\theta_{k_2}}e^{ik_2\alpha}\theta_{k_2}')\Big].\]
For any $k_1,k_2 \in \{1,\ldots,K\}$, applying $\Re \bar{x}y
=(x\bar{y}+\bar{x}y)/2$ and $\E[e^{ik\alpha}]=0$ for any non-zero integer $k$,
\begin{align*}
\E\Big[\Re(\overline{\theta_{k_1}}e^{ik_1\alpha} \theta_{k_1}') \cdot \Re
(\overline{\theta_{k_2}}e^{ik_2\alpha}\theta_{k_2}')\Big]
&=\frac{1}{4}\E\Big[(e^{-ik_1\alpha}\theta_{k_1}\overline{\theta_{k_1}'}+e^{ik_1\alpha}\overline{\theta_{k_1}}\theta_{k_1}')
(e^{-ik_2\alpha}\theta_{k_2}\overline{\theta_{k_2}'}+e^{ik_2\alpha}
\overline{\theta_{k_2}}\theta_{k_2}')\Big]\\
&=\frac{1}{2}\1\{k_1=k_2\}|\theta_{k_1}|^2|\theta_{k_1}'|^2.
\end{align*}
Then
$\E[\langle \theta,g\theta' \rangle^2]=\frac{1}{2}\sum_{k=1}^K r_k^2{r_k'}^2$.
This identity holds also with $\theta=\theta'$, so
\begin{equation}\label{eq:m2term}
\E\big[\langle \theta,g\theta \rangle^2
-2\langle \theta,g\theta' \rangle^2+\langle \theta',g\theta' \rangle^2\big]
=\frac{1}{2}\sum_{k=1}^K (r_k^2-{r_k'}^2)^2.
\end{equation}

For any $m \geq 3$ and every $k_1,\ldots,k_m \in \{1,\ldots,K\}$, 
applying again $\E[e^{ik\alpha}]=0$ for $k \neq 0$, we have similarly
\begin{align*}
\E\left[\prod_{\ell=1}^m \Re(\overline{\theta_{k_\ell}}
e^{ik_\ell\alpha}\theta_{k_\ell}')\right]
&=\frac{1}{2^m}\E\left[\prod_{\ell=1}^m
(e^{-ik_\ell\alpha}\theta_{k_\ell}\overline{\theta_{k_\ell}'}+e^{ik_\ell\alpha}
\overline{\theta_{k_\ell}} \theta_{k_\ell}')\right]\\
&=\frac{1}{2^m}\sum_{s_1,\ldots,s_m \in \{+1,-1\}}
\1\{s_1k_1+\ldots+s_mk_m=0\} \cdot \prod_{\ell:s_\ell=+1}
\theta_{k_\ell}\overline{\theta_{k_\ell}'} \cdot \prod_{\ell:s_\ell=-1}
\overline{\theta_{k_\ell}}\theta_{k_\ell}'
\end{align*}
Noting that the left side is real and taking the real part on the right side,
this is equal to
\[\frac{1}{2^m} \sum_{s_1,\ldots,s_m \in \{+1,-1\}}
\1\{s_1k_1+\ldots+s_mk_m=0\} \left(\prod_{\ell=1}^m r_{k_\ell}r_{k_\ell}'\right)
\cos\left(\sum_{\ell=1}^m s_\ell \phi_{k_\ell}
-s_\ell \phi_{k_\ell}'\right).\]
Then, summing over all $k_1,\ldots,k_m \in \{1,\ldots,K\}$ and
applying this also for $\theta=\theta'$,
\begin{align}
&\E\big[\langle \theta,g\theta \rangle^m
-2\langle \theta,g\theta' \rangle^m+\langle \theta',g\theta'
\rangle^m\big]\nonumber\\
&=\frac{1}{2^m}\sum_{k_1,\ldots,k_m=1}^K
\sum_{s_1,\ldots,s_m \in \{+1,-1\}}
\1\{s_1k_1+\ldots+s_mk_m=0\} \cdot \nonumber\\
&\hspace{1in}
\left[\left(\prod_{\ell=1}^m r_{k_\ell}-\prod_{\ell=1}^m r_{k_\ell}'\right)^2
+2\left(\prod_{\ell=1}^m r_{k_\ell}r_{k_\ell}'\right)
\left(1-\cos\left(\sum_{\ell=1}^m s_\ell \phi_{k_\ell}
-s_\ell \phi_{k_\ell}'\right)\right)\right]\label{eq:mthmoment}
=:\mathrm{I}+\mathrm{II},
\end{align}
where $\mathrm{I}$ is the term involving
$(\prod_\ell r_{k_\ell}-\prod_\ell r_{k_\ell}')^2$, and $\mathrm{II}$ is the term
involving $\cos(\sum_\ell s_\ell \phi_{k_\ell}-s_\ell \phi_{k_\ell}')$.

To upper bound $\mathrm{I}$, let us write
\[\prod_{\ell=1}^m r_{k_\ell}-\prod_{\ell=1}^m r_{k_\ell}'
=\sum_{j=1}^m (r_{k_j}-r_{k_j}')r_{k_1}\ldots r_{k_{j-1}}
r_{k_{j+1}}'\ldots r_{k_m}'.\]
Then
\[\left(\prod_{\ell=1}^m r_{k_\ell}-\prod_{\ell=1}^m r_{k_\ell}'\right)^2
\leq m \cdot \sum_{j=1}^m (r_{k_j}-r_{k_j}')^2
\left(r_{k_1}\ldots r_{k_{j-1}} r_{k_{j+1}}'\ldots r_{k_m}'\right)^2.\]
So
\begin{align*}
\mathrm{I} &\leq \sum_{j=1}^m
\sum_{k_1,\ldots,k_m=1}^K \sum_{s_1,\ldots,s_m \in \{+1,-1\}}
\1\{s_1k_1+\ldots+s_mk_m=0\} \cdot
\frac{m}{2^m} (r_{k_j}-r_{k_j}')^2
\left(r_{k_1}\ldots r_{k_{j-1}} r_{k_{j+1}}'\ldots r_{k_m}'\right)^2
\end{align*}
Consider this summand for $j=1$. Note that fixing $s_1,\ldots,s_m$ and
$k_1,\ldots,k_{m-1}$, there is at most one choice for the remaining index $k_m
\in \{1,\ldots,K\}$
that satisfies $s_1k_1+\ldots+s_mk_m=0$. Thus, the summand for $j=1$ is at most
\[\max_{k_m=1}^K {r_{k_m}'}^2 \cdot
\sum_{k_1,\ldots,k_{m-1}=1}^K \sum_{s_1,\ldots,s_m \in \{+1,-1\}}
\frac{m}{2^m} (r_{k_1}-r_{k_1}')^2
\left(r_{k_2}'\ldots r_{k_{m-1}}'\right)^2
\leq m\sum_{k=1}^K (r_k-r_k')^2
\rupper^2 R^{2(m-2)}.\]
The same bound holds for each summand $j=1,\ldots,m$, yielding
\[\mathrm{I} \leq m^2\sum_{k=1}^K (r_k-r_k')^2 \rupper^2 R^{2(m-2)}.\]

To bound $\mathrm{II}$, observe that when $s_1k_1+\ldots+s_mk_m=0$, we have
\[\cos\left(\sum_{\ell=1}^m s_\ell \phi_{k_\ell}-s_\ell \phi_{k_\ell}'\right)=
\cos\left(\sum_{\ell=1}^m s_\ell \phi_{k_\ell}-s_\ell \phi_{k_\ell}'
+\alpha \cdot s_\ell k_\ell\right)\]
for any $\alpha \in \R$. Then applying $1-\cos(x) \leq x^2/2$ for any
$x \in \R$, we obtain
\[2\left(1-\cos\left(\sum_{\ell=1}^m s_\ell \phi_{k_\ell}
-s_\ell \phi_{k_\ell}'\right)\right)
\leq \inf_{\alpha \in \R}
\left(\sum_{\ell=1}^m s_\ell \phi_{k_\ell} -s_\ell \phi_{k_\ell}'
+\alpha \cdot s_\ell k_\ell\right)^2
=\inf_{\alpha \in \R} m\sum_{j=1}^m (\phi_{k_j}-\phi_{k_j}'+\alpha k_j)^2.\]
So
\begin{equation}\label{eq:IIbound}
\mathrm{II} \leq \inf_{\alpha \in \R} \sum_{j=1}^m
\sum_{k_1,\ldots,k_m=1}^K \sum_{s_1,\ldots,s_m \in \{+1,-1\}}
\1\{s_1k_1+\ldots+s_mk_m=0\} \cdot \frac{m}{2^m}(\phi_{k_j}-\phi_{k_j}'+\alpha k_j)^2
\left(\prod_{\ell=1}^m r_{k_\ell}r_{k_\ell}'\right).
\end{equation}
Applying $\sum_{k=1}^K r_kr_k' \leq
\sum_{k=1}^K (r_k^2+{r_k'}^2)/2 \leq R^2$ and a similar argument as above,
for any fixed $\alpha \in \R$, this summand for $j=1$ is at most
\begin{align*}
&\max_{k_m=1}^K r_{k_m}r_{k_m}'
\cdot \sum_{k_1,\ldots,k_{m-1}=1}^K \sum_{s_1,\ldots,s_m \in \{+1,-1\}}
\frac{m}{2^m}(\phi_{k_1}-\phi_{k_1}'+\alpha k_1)^2\Big(r_{k_1}\ldots r_{k_{m-1}}
r_{k_1}'\ldots r_{k_{m-1}}'\Big)\\
&\leq m\sum_{k=1}^K r_kr_k'(\phi_k-\phi_k'+\alpha k)^2\rupper^2 R^{2(m-2)}.
\end{align*}
The same bound holds for each summand $j=1,\ldots,m$, yielding
\[\mathrm{II} \leq \inf_{\alpha \in \R} m^2\sum_{k=1}^K
r_kr_k'(\phi_k-\phi_k'+\alpha k)^2
\cdot \rupper^2 \cdot R^{2(m-2)}.\]
Combining these bounds for $\mathrm{I}$ and $\mathrm{II}$, we arrive at
\[\E\big[\langle \theta,g\theta \rangle^m
-2\langle \theta,g\theta' \rangle^m+\langle \theta',g\theta' \rangle^m\big]
\leq m^2\rupper^2 \cdot R^{2(m-2)} \cdot \inf_{\alpha \in \R}
\sum_{k=1}^K (r_k-r_k')^2+r_kr_k'(\phi_k-\phi_k'+\alpha k)^2.\]

Let us now apply this to (\ref{eq:chisqbound}) and sum over $m \geq 3$:
We have
\[\sum_{m=3}^\infty \frac{m^2R^{2(m-2)}}{\sigma^{2m}m!}
=\sum_{m=3}^\infty \frac{mR^2}{(m-1)(m-2)\sigma^6} \cdot
\frac{R^{2(m-3)}}{\sigma^{2(m-3)}(m-3)!}
\leq \frac{3R^2}{2\sigma^6}e^{R^2/\sigma^2}.\]
Then
\begin{align*}
&\sum_{m=3}^\infty \frac{e^{\|\theta\|^2/2\sigma^2}}{\sigma^{2m}m!}
\E\big[\langle \theta,g\theta \rangle^m
-2\langle \theta,g\theta' \rangle^m+\langle \theta',g\theta' \rangle^m\big]\\
&\leq \frac{3\rupper^2 R^2e^{3R^2/2\sigma^2}}{2\sigma^6}
\cdot \inf_{\alpha \in \R}
\sum_{k=1}^K (r_k-r_k')^2+r_kr_k'(\phi_k-\phi_k'+\alpha k)^2.
\end{align*}
Applying this and (\ref{eq:m2term}) to (\ref{eq:chisqbound})
gives (\ref{eq:KLupperhighnoise}).
\end{proof}

For a specific regime of parameters $\theta,\theta' \in \R^{2K}$,
we simplify the lower bound for the loss in Proposition
\ref{prop-lossgeneral} by expressing the squared distance
$|\phi_k-\phi_k'+k\alpha|_\cA^2$ on the circle $\cA$ in terms of the
usual squared distance $(\phi_k-\phi_k'+k\alpha)^2$ on $\R$.
\begin{Lemma}\label{lemma:lossbounds}
Fix any $\theta,\theta' \in \R^{2K}$ and let
$\theta=(r_k\cos\phi_k,r_k\sin\phi_k)_{k=1}^K$ and
$\theta'=(r_k'\cos\phi_k',r_k'\sin\phi_k')_{k=1}^K$.
For each $\alpha \in \R$, let $K_0(\alpha) \in [0,K]$ be the largest integer for
which $|K_0(\alpha) \cdot \alpha| \leq \pi/2$. If $r_k,r_k' \geq \rlower$ 
and $|\phi_k-\phi_k'| \leq \pi/3$ for each $k=1,\ldots,K$, then
for a universal constant $c>0$,
\begin{equation}\label{eq:losslowerbound}
L(\theta,\theta') \geq \sum_{k=1}^K (r_k-r_k')^2
+c \inf_{\alpha \in \R} \left(
(K-K_0(\alpha))\rlower^2+
\sum_{k=1}^{K_0(\alpha)} r_kr_k'(\phi_k-\phi_k'+k\alpha)^2 \right)
\end{equation}
where the second summation is understood as 0 if $K_0(\alpha)=0$.
\end{Lemma}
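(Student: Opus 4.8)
The plan is to start from the lower bound of Proposition~\ref{prop-lossgeneral}: for a universal $c_0>0$,
\[
L(\theta,\theta')\ \ge\ \sum_{k=1}^K(r_k-r_k')^2\ +\ c_0\inf_{\alpha\in\R}\sum_{k=1}^K r_kr_k'\,|\phi_k-\phi_k'+k\alpha|_\cA^2 .
\]
Writing $\psi_k=\phi_k-\phi_k'$ (so $|\psi_k|\le\pi/3$) and $K_0=K_0(\alpha)$, the term $\sum_k(r_k-r_k')^2$ appears on both sides of the claimed inequality, so it suffices to produce a universal $c_1>0$ with
\[
\inf_{\alpha\in\R}\sum_{k=1}^K r_kr_k'|\psi_k+k\alpha|_\cA^2\ \ge\ c_1\inf_{\alpha\in\R}\Big((K-K_0(\alpha))\rlower^2+\sum_{k=1}^{K_0(\alpha)}r_kr_k'(\psi_k+k\alpha)^2\Big),
\]
after which the lemma holds with $c=c_0c_1$. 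The left-hand functional $\alpha\mapsto\sum_k r_kr_k'|\psi_k+k\alpha|_\cA^2$ is $2\pi$-periodic, so its infimum over $\R$ is attained already on $[-\pi,\pi)$; the right-hand functional is not periodic, but its infimum over $\R$ is no larger than its infimum over $[-\pi,\pi)$. Hence it is enough to prove the \emph{pointwise} bound
\[
\sum_{k=1}^K r_kr_k'|\psi_k+k\alpha|_\cA^2\ \ge\ c_1\Big((K-K_0)\rlower^2+\sum_{k=1}^{K_0}r_kr_k'(\psi_k+k\alpha)^2\Big)\qquad\text{for all }\alpha\in[-\pi,\pi).
\]
(This pointwise statement is false for general $\alpha\in\R$ --- take $\alpha\approx 2\pi$ with all $\psi_k=0$ --- so the reduction to $[-\pi,\pi)$ is essential.) The substitution $\alpha\mapsto-\alpha$, $\psi_k\mapsto-\psi_k$ leaves every quantity invariant and preserves $|\psi_k|\le\pi/3$, so I may assume $\alpha\in[0,\pi]$.

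Next I would split the sum at $K_0$. If $\alpha\le\pi/(2K)$ then $K_0=K$, the term $(K-K_0)\rlower^2$ vanishes, and for every $k$ one has $|\psi_k+k\alpha|\le\pi/3+\pi/2<\pi$, hence $|\psi_k+k\alpha|_\cA=|\psi_k+k\alpha|$ and the bound holds with $c_1=1$. If $\alpha>\pi/(2K)$, then $K_0\in\{0,\dots,K-1\}$ with $K_0\alpha\le\pi/2<(K_0+1)\alpha$; for $k\le K_0$ we again have $k\alpha\le K_0\alpha\le\pi/2$, so $|\psi_k+k\alpha|_\cA=|\psi_k+k\alpha|$, which reproduces $\sum_{k\le K_0}r_kr_k'(\psi_k+k\alpha)^2$ exactly. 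Using $r_kr_k'\ge\rlower^2$, what remains is to show
\[
\sum_{k=K_0+1}^{K}|\psi_k+k\alpha|_\cA^2\ \ge\ c_2\,(K-K_0)
\]
for a universal $c_2>0$. For this I would use that whenever $|k\alpha|_\cA\ge\pi/2$, the triangle inequality for $|\cdot|_\cA$ together with $|\psi_k|\le\pi/3$ gives $|\psi_k+k\alpha|_\cA\ge\pi/2-\pi/3=\pi/6$, so that such a term contributes at least $\pi^2/36$; hence it is enough to prove the counting bound
\[
\#\{k\in\{K_0+1,\dots,K\}:\ |k\alpha|_\cA<\pi/2\}\ \le\ (1-c_3)(K-K_0)
\]
for a universal $c_3>0$, and then $c_2=c_3\pi^2/36$ and $c_1=\min(1,c_2)$.

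The counting bound is the crux, and I would prove it by the windowing argument already used in the proof of Lemma~\ref{lemma-infor-2}. With $g=2\pi/\alpha\ge2$, the set $\{k:|k\alpha|_\cA<\pi/2\}$ is the union of open intervals of length $g/2$ centered at the integer multiples of $g$, separated by closed ``bad'' gaps of length $g/2$; moreover $K_0=\lfloor g/4\rfloor$, so the interval centered at $0$ captures precisely the integers $\{1,\dots,K_0\}$, and (using $g\ge2$, whence $K_0+1\le g/4+1\le 3g/4$) the range $\{K_0+1,\dots,K\}$ consists of integers $>g/4$, i.e.\ it begins in the next bad gap $[g/4,3g/4]$. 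A short case analysis then gives the bound: if $K-K_0$ is small enough that $\{K_0+1,\dots,K\}$ lies entirely inside that first bad gap there are no ``good'' indices at all; otherwise one counts within full periods of length $g$ (each contributing at most $\lfloor g/2\rfloor+1$ good integers and at least $\lfloor g/2\rfloor$ bad ones, a ratio bounded away from $1$) plus at most one partial period at each end of the range. The main obstacle I anticipate is the bookkeeping of these boundary contributions (small $K-K_0$, and the incomplete periods at the ends of the range); the estimates themselves are elementary. Combining the $k\le K_0$ and $k>K_0$ contributions with $c_1=\min(1,c_2)$ then yields the stated inequality.
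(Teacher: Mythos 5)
Your proposal is correct and follows essentially the same route as the paper's proof: restrict to $\alpha\in[0,\pi]$, handle $k\le K_0$ exactly (there $|\phi_k-\phi_k'+k\alpha|\le 5\pi/6$, so the circular and ordinary distances agree, matching the paper's $1-\cos\ge c(\cdot)^2$ step), and show that a universal fraction of the indices $k>K_0$ have $k\alpha$ at circular distance at least $\pi/2$ from $2\pi\Z$, so that $|\phi_k-\phi_k'|\le\pi/3$ forces each such term to contribute a constant. The counting/bookkeeping you defer is exactly what the paper's proof carries out (in each window between consecutive near-passes of $k\alpha$ to $2\pi\Z$ the far indices number $a\ge 1$ and the near ones $b\le a+1$, giving a $1/3$ fraction), and your good/bad interval version of it closes in the same way.
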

\begin{proof}
Recall the form (\ref{eq:lossrphi}) of the loss
from Proposition \ref{prop-lossgeneral},
where the infimum over $\alpha$ may be restricted to $[-\pi,\pi]$
by periodicity. We provide a lower bound for
$\alpha \in [0,\pi]$, and the case $\alpha \in [-\pi,0]$ is analogous.

If $\alpha=0$, let $K_0=K_1=\ldots=K$. Otherwise if $\alpha \in (0,\pi]$,
let $0 \leq K_0 \leq K_1 \leq K_2 \leq \ldots$ be such that each $K_m$ is the
largest integer in $[0,K]$
for which $K_m \cdot \alpha \leq 2\pi m+\frac{\pi}{2}$. Note that if $K_m<K$
strictly, then we must have also $K_m<K_{m+1}$. If $K_0 \geq 1$, then
for every $k \in [1,K_0]$, we have $k\alpha \in [0,\pi/2]$, so
$\phi_k-\phi_k'+k\alpha \in [-\pi/3,5\pi/6]$ and
\[1-\cos(\phi_k-\phi_k'+k\alpha) \geq c(\phi_k-\phi_k'+k\alpha)^2\]
for a universal constant $c>0$. Thus
\begin{equation}\label{eq:losslowerI}
\sum_{k=1}^{K_0} r_kr_k'\Big[1-\cos(\phi_k-\phi_k'+k\alpha)\Big]
\geq c\sum_{k=1}^{K_0} r_kr_k'(\phi_k-\phi_k'+k\alpha)^2.
\end{equation}
This bound is also trivially true if $K_0=0$.

Now fix any $m \geq 0$ where $K_m<K$ strictly. Consider the values
\[k \in \{K_m+1,\ldots,K_{m+1}\}.\]
For each such $k$, we have $k\alpha \in (2\pi m+\frac{\pi}{2},2\pi m+\frac{5\pi}{2}]$.
Let $a$ be the number of such values $k$ where
$k\alpha \in (2\pi m+\frac{\pi}{2},2\pi m+\frac{3\pi}{2}]$, and let
$b$ be the number of such values $k$ where
$k\alpha \in (2\pi m+\frac{3\pi}{2},2\pi m+\frac{5\pi}{2}]$.
Then we must have $a \geq 1$ because $\alpha \in [0,\pi]$. Also, the number of multiples
of $\alpha$ belonging to $(2\pi m+\frac{3\pi}{2},2\pi m+\frac{5\pi}{2}]$ is at most 1
more than the number of multiples of $\alpha$ belonging to
$(2\pi m+\frac{\pi}{2},2\pi m+\frac{3\pi}{2}]$, so $b \leq a+1$. Thus
\[\frac{a}{K_{m+1}-K_m}=\frac{a}{a+b} \geq \frac{a}{2a+1} \geq \frac13.\]
For $k=K_m+1,\ldots,K_m+a$, we must have $\phi_k-\phi_k'+k\alpha \in
(2\pi m+\frac{\pi}{6}, 2\pi m+\frac{11\pi}{6}]$, so
$1-\cos(\phi_k-\phi_k'+k\alpha) \geq c$
for a universal constant $c>0$. Then
\[\sum_{k=K_m+1}^{K_{m+1}} r_kr_k'\Big[1-\cos(\phi_k-\phi_k'+k\alpha)\Big]
\geq \sum_{k=K_m+1}^{K_m+a} c \cdot r_kr_k'
\geq c\rlower^2 \cdot \frac{K_{m+1}-K_m}{3}.\]
Now summing over all $m \geq 0$ where $K_m<K$,
\begin{equation}\label{eq:losslowerII}
\sum_{k=K_0+1}^K r_kr_k'\Big[1-\cos(\phi_k-\phi_k'+k\alpha)\Big]
\geq \frac{c\rlower^2}{3} \cdot (K-K_0).
\end{equation}
Applying (\ref{eq:losslowerI}) and (\ref{eq:losslowerII}) and the analogous
bounds for $\alpha \in [-\pi,0]$ to (\ref{eq:lossrphi}),
and taking the infimum over $\alpha \in [-\pi,\pi]$, we obtain
(\ref{eq:losslowerbound}).
\end{proof}

We conclude the proof of Lemma \ref{lemma:minimaxlowerP} \revise{for $\beta=0$}
using the following version of Assouad's hypercube lower bound from
\cite[Lemma 2]{cai2012optimal}.

\begin{Lemma}\label{lemma:assouad}
Fix $m \geq 1$, let $\{P_\tau:\tau \in \{0,1\}^m\}$
be any $2^m$ probability distributions, and let $\psi(P_\tau)$ 
take values in a metric space with metric $d$. Then for any $s>0$
and any estimator $\hat{\psi}(X)$ based on $X \sim P_\tau$,
\begin{align*}
&\sup_{\tau \in \{0,1\}^m}
\E_{X \sim P_\tau}\Big[d(\hat{\psi}(X),\psi(P_\tau))^s\Big]\\
&\geq \frac{m}{2^{s+1}} \cdot
\min_{H(\tau,\tau') \geq 1} \frac{d(\psi(P_\tau),\psi(P_{\tau'}))^s}
{H(\tau,\tau')} \cdot \min_{H(\tau,\tau')=1}
\Big(1-D_\TV(P_\tau,P_{\tau'})\Big).
\end{align*}
Here, $H(\tau,\tau')=\sum_{i=1}^m \1\{\tau_i \neq \tau_i'\}$ is the Hamming
distance between $\tau$ and $\tau'$, and $D_\TV(P_\tau,P_{\tau'})$ is the
total-variation distance between $P_\tau$ and $P_{\tau'}$.
\end{Lemma}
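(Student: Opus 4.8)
The plan is to run the classical reduction of estimation over a hypercube to a family of $m$ two-point tests. First I would pass from the worst case to the average over the vertices: $\sup_{\tau \in \{0,1\}^m} \E_{X\sim P_\tau}[d(\hat\psi(X),\psi(P_\tau))^s] \ge 2^{-m}\sum_{\tau} \E_{X\sim P_\tau}[d(\hat\psi(X),\psi(P_\tau))^s]$, so that it suffices to lower bound this averaged (Bayes) risk.

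Second, I would turn $\hat\psi$ into a hypercube-valued estimator. Set $\beta := \min_{H(\tau,\tau')\ge 1} d(\psi(P_\tau),\psi(P_{\tau'}))^s/H(\tau,\tau')$, so that $d(\psi(P_\tau),\psi(P_{\tau'}))^s \ge \beta\,H(\tau,\tau')$ for every pair $\tau,\tau'$, including $\tau=\tau'$ where both sides vanish. Given data $X$, let $\hat\tau=\hat\tau(X)$ be a (measurable) vertex of $\{0,1\}^m$ minimizing $d(\hat\psi(X),\psi(P_{\hat\tau}))$; a minimizer exists since the hypercube is finite. By optimality of $\hat\tau$ and the triangle inequality, for the true index $\tau$,
\[
d(\psi(P_{\hat\tau}),\psi(P_\tau)) \le d(\psi(P_{\hat\tau}),\hat\psi(X)) + d(\hat\psi(X),\psi(P_\tau)) \le 2\,d(\hat\psi(X),\psi(P_\tau)).
\]
Raising to the power $s>0$ and using $d(\psi(P_{\hat\tau}),\psi(P_\tau))^s \ge \beta\,H(\hat\tau(X),\tau)$ yields the pointwise bound $d(\hat\psi(X),\psi(P_\tau))^s \ge 2^{-s}\beta\,H(\hat\tau(X),\tau)$. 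Averaging over $\tau$ and writing $H(\hat\tau,\tau)=\sum_{i=1}^m \1\{\hat\tau_i\ne\tau_i\}$, it then remains to lower bound $2^{-m}\sum_\tau \sum_{i=1}^m P_\tau(\hat\tau_i(X)\ne\tau_i)$, a sum of coordinatewise error probabilities.

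Third, for each coordinate $i$ I would pair each vertex $\tau$ with the flipped vertex $\tau^{\oplus i}$. Writing $A=\{x:\hat\tau_i(x)=1\}$ and taking without loss of generality $\tau_i=0$, so $(\tau^{\oplus i})_i=1$,
\[
P_\tau(\hat\tau_i\ne\tau_i) + P_{\tau^{\oplus i}}(\hat\tau_i\ne(\tau^{\oplus i})_i) = P_\tau(A) + \bigl(1-P_{\tau^{\oplus i}}(A)\bigr) \ge 1 - D_\TV(P_\tau,P_{\tau^{\oplus i}}) \ge \min_{H(\tau,\tau')=1}\bigl(1-D_\TV(P_\tau,P_{\tau'})\bigr).
\]
Summing over the $2^{m-1}$ pairs gives $2^{-m}\sum_\tau P_\tau(\hat\tau_i\ne\tau_i) \ge \tfrac12 \min_{H=1}(1-D_\TV)$, and summing over $i=1,\dots,m$ gives $2^{-m}\sum_\tau \E_{X\sim P_\tau}[H(\hat\tau,\tau)] \ge \tfrac{m}{2}\min_{H=1}(1-D_\TV)$. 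Chaining this with the pointwise bound and the averaging step,
\[
\sup_{\tau}\E_{X\sim P_\tau}[d(\hat\psi,\psi(P_\tau))^s] \ge 2^{-s}\beta\cdot\tfrac{m}{2}\min_{H=1}(1-D_\TV) = \frac{m}{2^{s+1}}\cdot\beta\cdot\min_{H(\tau,\tau')=1}\bigl(1-D_\TV(P_\tau,P_{\tau'})\bigr),
\]
which is precisely the claimed bound once $\beta$ is unfolded as $\min_{H(\tau,\tau')\ge1}d(\psi(P_\tau),\psi(P_{\tau'}))^s/H(\tau,\tau')$.

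The only genuinely delicate point is the estimation-to-testing reduction via the closest-vertex estimator $\hat\tau$: one must check that the factor $2$ lost in the triangle inequality (hence $2^s$ after raising to the power $s$) is exactly what is absorbed, and that the inequality $d(\hat\psi,\psi(P_\tau))^s \ge 2^{-s}\beta\,H(\hat\tau,\tau)$ still holds on the event $\hat\tau=\tau$ (where it reads $d^s \ge 0$). No convexity in $s$ is needed, since the argument uses only the raw inequality $d(\psi(P_\tau),\psi(P_{\tau'}))^s \ge \beta\,H(\tau,\tau')$ and the coordinatewise two-point testing bound; the rest is bookkeeping over the hypercube.
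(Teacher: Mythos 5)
Your proposal is correct and complete: the paper itself does not prove this lemma but simply invokes it from \citet[Lemma 2]{cai2012optimal}, and your argument is exactly the standard proof of that cited result---pass to the average risk over the hypercube, reduce estimation to testing via the nearest-vertex estimator $\hat\tau$ with the triangle inequality costing the factor $2^s$, then bound the per-coordinate error by pairing each $\tau$ with its flip $\tau^{\oplus i}$ and using $P_\tau(A)+1-P_{\tau^{\oplus i}}(A)\ge 1-D_\TV(P_\tau,P_{\tau^{\oplus i}})$. All steps check out, including the degenerate case $\hat\tau=\tau$ and the bookkeeping that yields the constant $m/2^{s+1}$, so there is nothing to add.
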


\begin{proof}[Proof of Lemma \ref{lemma:minimaxlowerP}, \revise{$\beta=0$}]
We define $2^K$ parameters $\theta^\tau \in \cP_0$
indexed by $\tau \in \{0,1\}^K$: Fix a value $\phi \in [0,\pi/3]$ to be
determined. For each $\tau \in \{0,1\}^K$, set
\begin{equation}\label{eq:lowerboundPconstruction}
\phi_k^\tau=\tau_k \phi=\begin{cases} \phi & \text{ if } \tau_k=1 \\
0 & \text{ if } \tau_k=0.
\end{cases}
\end{equation}
Then let $\theta^\tau$ be the vector where
$r_k(\theta)=1$ and $\phi_k(\theta)=\phi_k^\tau$ for each $k=1,\ldots,K$.

Let $P_\tau=p_{\theta^\tau}^N$ denote the law of $N$
samples $y^{(1)},\ldots,y^{(N)} \overset{iid}{\sim} p_{\theta^\tau}$.
Let $\orbit_\theta=\{g(\alpha) \cdot \theta:\alpha \in \cA\}$ be the rotational
orbit of $\theta$. Then
$d(\orbit_\theta,\orbit_{\theta'}):=L(\theta,\theta')^{1/2}
=\min_{\alpha \in \cA} \|\theta'-g(\alpha) \cdot \theta\|$
defines a metric over the space of all such orbits. We apply
Lemma \ref{lemma:assouad} with $m=K$, $\psi(P_\tau)=\orbit_{\theta^\tau}$,
this metric $d(\orbit_\theta,\orbit_{\theta'})$, and $s=2$. Applying
(\ref{eq:KLupperlownoise}) and $|e^{is}-e^{it}| \leq |s-t|$ for all $s,t \in
\R$,
\begin{equation}\label{eq:DKLHamminglownoise}
D_{\KL}(p_{\theta^\tau}\|p_{\theta^{\tau'}})
\leq \frac{\|\theta^\tau-\theta^{\tau'}\|^2}{2\sigma^2}
=\frac{1}{2\sigma^2} \sum_{k=1}^K \big|e^{i\phi_k^\tau}
-e^{i\phi_k^{\tau'}}\big|^2
\leq \frac{\phi^2}{2\sigma^2} \cdot H(\tau,\tau')
\end{equation}
where $H(\tau,\tau')$ is the Hamming distance.
Applying (\ref{eq:KLupperhighnoise}) with the right side evaluated at
$\alpha=0$, \revise{where $\rupper=1$ and $R^2=K$}, also
\begin{equation}\label{eq:DKLHamminghighnoise}
D_{\KL}(p_{\theta^\tau}\|p_{\theta^{\tau'}})
\leq \frac{\phi^2}{A} \cdot H(\tau,\tau'),
\qquad \revise{A:=\frac{2\sigma^6}{3Ke^{3K/2\sigma^2}}}
\end{equation}
Then, setting
\begin{equation}\label{eq:phistar}
\phi=\min\left(\frac{1}{\sqrt{N}} \cdot \max(\sqrt{2\sigma^2},\sqrt{A}),
\frac{\pi}{3}\right),
\end{equation}
these bounds imply for both cases of the max that
$D_{\KL}(p_{\theta^\tau}\|p_{\theta^{\tau'}}) \leq H(\tau,\tau')/N$.
Then by Pinsker's
inequality (see e.g.\ \citep[Lemma 2.5]{tsybakov2008introduction}),
\[D_{\TV}(P_\tau,P_{\tau'})
\leq \sqrt{\frac{1}{2}D_{\KL}(P_\tau\|P_{\tau'})}
=\sqrt{\frac{N}{2}D_{\KL}(p_{\theta^\tau}\|p_{\theta^{\tau'}})}
\leq \sqrt{\frac{1}{2}H(\tau,\tau')},\]
so
\[\min_{H(\tau,\tau')=1} \Big(1-D_{\TV}(P_\tau,P_{\tau'})\Big)
\geq 1-\sqrt{1/2}>0.\]

Since $\phi_k \in [0,\pi/3]$ for every $k$, we may apply
Lemma \ref{lemma:lossbounds} to lower-bound the loss: For a universal
constant $c>0$, we have
\begin{equation}\label{eq:losslowerP}
L(\theta^\tau,\theta^{\tau'}) \geq c \inf_{K_0 \in [0,K]}
\inf_{\alpha \in \R} \left(K-K_0
+\sum_{k=1}^{K_0} (\phi_k^\tau-\phi_k^{\tau'}+k\alpha)^2 \right).
\end{equation}
For any fixed $K_0 \in [2,K]$, the inner infimum over $\alpha$ is
attained at $\alpha=-\sum_{k=1}^{K_0} k(\phi_k^\tau-\phi_k^{\tau'})
/\sum_{k=1}^{K_0} k^2$, and we have
\begin{align*}
\inf_{\alpha \in \R} \sum_{k=1}^{K_0} (\phi_k^\tau-\phi_k^{\tau'}+k\alpha)^2
&=\sum_{k=1}^{K_0} (\phi_k^\tau-\phi_k^{\tau'})^2-\frac{\left(\sum_{k=1}^{K_0}
k(\phi_k^\tau-\phi_k^{\tau'})\right)^2}{\sum_{k=1}^{K_0} k^2}\\
&=\phi^2 \cdot \left(H(\tau^{K_0},{\tau'}^{K_0})-\frac{\left(\sum_{k=1}^{K_0}
k(\tau_k-\tau_k')\right)^2}{\sum_{k=1}^{K_0} k^2}\right)
\end{align*}
where $\tau^{K_0}=(\tau_1,\ldots,\tau_{K_0})$,
${\tau'}^{K_0}=(\tau_1',\ldots,\tau_{K_0}')$, and $H(\tau^{K_0},{\tau'}^{K_0})$
is the Hamming distance of these subvectors in $\{0,1\}^{K_0}$.
Subject to a constraint that $H(\tau^{K_0},{\tau'}^{K_0})=h$, we have
\begin{align*}
\left(\sum_{k=1}^{K_0} k(\tau_k-\tau_k')\right)^2
&\leq \Big(K_0+(K_0-1)+\ldots+(K_0-h+1)\Big)^2\\
&=\left(\frac{h(2K_0-h+1)}{2}\right)^2
=h \cdot \frac{h(2K_0-h+1)^2}{4} \leq h \cdot \frac{(2K_0+1)^3}{27},
\end{align*}
where the last inequality is tight at the maximizer $h=(2K_0+1)/3$. Then
for any $K_0 \in [2,K]$,
\begin{align*}
H(\tau^{K_0},{\tau'}^{K_0})-\frac{\left(\sum_{k=1}^{K_0}
k(\tau_k-\tau_k')\right)^2}{\sum_{k=1}^{K_0} k^2}
&\geq H(\tau^{K_0},{\tau'}^{K_0}) \cdot
\left(1-\frac{(2K_0+1)^3/27}{K_0(K_0+1)(2K_0+1)/6}\right)
\geq \frac{2H(\tau^{K_0},{\tau'}^{K_0})}{27}.
\end{align*}
Applying also $K-K_0 \geq
H((\tau_{K_0+1},\ldots,\tau_K),(\tau_{K_0+1}',\ldots,\tau_K'))$
and $\phi \leq \pi/3$, we get
\begin{equation}\label{eq:finallossbound}
\inf_{\alpha \in \R} \left(K-K_0+\sum_{k=1}^{K_0}
(\phi_k^\tau-\phi_k^{\tau'}+k\alpha)^2\right)
\geq c\phi^2 H(\tau,\tau')
\end{equation}
for a universal constant $c>0$. For $K_0=0$ or $K_0=1$ (and any $K \geq 2$), we
may instead lower bound the left side by $K-K_0 \geq K/2 \geq H(\tau,\tau')/2$,
so that this bound (\ref{eq:finallossbound}) holds also. Thus, taking the
infimum in (\ref{eq:losslowerP}) over $K_0 \in [0,K]$,
\begin{equation}\label{eq:dpsilower}
d(\psi(P_\tau),\psi(P_{\tau'}))^2
=L(\theta^\tau,\theta^{\tau'}) \geq c'\phi^2 \cdot H(\tau,\tau')
\end{equation}
for a universal constant $c'>0$. Applying Lemma \ref{lemma:assouad}
with $m=K$ and $s=2$, we obtain
\begin{equation}\label{eq:assouadapplication}
\inf_{\hat{\theta}} \sup_{\theta^* \in \cP(r)}
\E_{\theta^*}[L(\theta^*,\hat{\theta})]
\geq \inf_{\hat{\theta}} \sup_{\theta^\tau:\tau \in \{0,1\}^K}
\E_{\theta^\tau}[L(\theta^\tau,\hat{\theta})]
\geq c''K\phi^2.
\end{equation}
Applying the form of $\phi$ from (\ref{eq:phistar}) concludes the proof.
\end{proof}

\revise{We now extend this argument to the more general case of $\beta \in
[0,\frac{1}{2})$. Consider the subset of $\cP_\beta$ defined by
\[\cP_\beta^*=\Big\{\theta^* \in \R^{2K}: r_k(\theta^*)=k^{-\beta}
\text{ for all } k\in \{1,\ldots,K\}~~\text{and}~~\phi_k(\theta^*)=0\text{ for
all } k \leq K/2\Big\}\]
where the phases of the first half of the Fourier frequencies are fixed to 0.
For $\theta,\theta'$ belonging to $\cP_\beta^*$, the following lemma 
modifies the KL upper bound (\ref{eq:KLupperhighnoise}) from
Lemma \ref{lemma:KLupperbound}, replacing the factor
$\rupper=\max_k r_k=1$ by a multiple of $\rlower=K^{-2\beta}$.

\begin{Lemma}\label{lemma-6.2.1}
Fix any $\beta \in [0,\frac{1}{2})$, and denote
$R^2=\sum_{k=1}^K k^{-2\beta}$ and $\rlower^2=K^{-2\beta}$.
Then for all $\theta, \theta' \in \cP_\beta^*$,
\begin{equation}
D_{\KL}(p_{\theta}\|p_{\theta'}) \leq
\frac{27\rlower^2 R^2e^{5R^2/2\sigma^2}}{\sigma^6}
\cdot \inf_{\alpha \in \R} \sum_{k=1}^K k^{-2\beta}(\phi_k-\phi_k'+\alpha k)^2
\label{eq:KLupperhighnoise-new}
\end{equation}
\end{Lemma}
\begin{proof}
Following the proof of Lemma \ref{lemma:KLupperbound}, we provide a new bound for
the quantity $\mathrm{I}+\mathrm{II}$ in (\ref{eq:mthmoment}).

Since $r_k(\theta)=r_k(\theta')$ for all $k$,
we have $\mathrm{I}=0$. For $\mathrm{II}$, notice that we may restrict the
summation over $k_1,\ldots,k_m$ in its definition to tuples where
$\max(k_1,\ldots,k_m)>K/2$, since otherwise the summand is 0 upon setting
$\alpha=0$, by the condition $\phi_k(\theta)=\phi_k(\theta')$ for all
$k \leq K/2$. Therefore, we have similarly to (\ref{eq:IIbound}),
\begin{align*}
\mathrm{II} &\leq \inf_{\alpha \in \R} \sum_{j=1}^m
\mathop{\sum_{k_1,\ldots,k_m=1}}_{\max(k_1,\ldots,k_m)>K/2}^K\;\;
\mathop{\sum_{s_1,\ldots,s_m \in \{+1,-1\}}}_{s_1k_1+\ldots+s_mk_m=0}
 \frac{m}{2^m}(\phi_{k_j}-\phi_{k_j}'+\alpha k_j)^2
\left(\prod_{\ell=1}^m r_{k_\ell}r_{k_\ell}'\right).
\end{align*}
Fix $\alpha \in \R$, consider the summand for $j=1$, and notice
that if $s_1k_1+\ldots + s_mk_m = 0$ and $\max(k_1,\ldots,k_m)>K/2$, then
the second largest index amongst $k_1,\ldots,k_m$ is at least $K/(2m)$.
Then, the summand for $j=1$ is bounded by
\begin{align*}
&\sum_{i=2}^m
\mathop{\mathop{\sum_{k_1,\ldots,k_m=1}^K}_{\max(k_1,\ldots,k_m)>K/2}}_{
k_i \geq \max(k_2,\ldots,k_m)}\;\;
\mathop{\sum_{s_1,\ldots,s_m \in \{+1,-1\}}}_{s_1k_1+\ldots+s_mk_m=0}
 \frac{m}{2^m}(\phi_{k_1}-\phi_{k_1}'+\alpha k_1)^2
\left(\prod_{\ell=1}^m r_{k_\ell}r_{k_\ell}'\right)\\
&\leq \sum_{i=2}^m\;\max_{k_i>\frac{K}{2m}} r_{k_i}r_{k_i}'
\;\sum_{k_1,\ldots,k_{i-1},k_{i+1},\ldots,k_m=1}^K
\sum_{s_1,\ldots,s_m \in \{+1,-1\}} 
\frac{m}{2^m}(\phi_{k_1}-\phi_{k_1}'+\alpha k_1)^2
\left(\prod_{\ell \neq i} r_{k_\ell}r_{k_\ell}'\right)\\
&\leq m^2 r^2\left(\frac{K}{2m}\right)^{-2\beta}\sum_{k_1=1}^K
r_{k_1}r_{k_1}'(\phi_{k_1}-\phi_{k_1}'+\alpha k_1)^2 R^{2(m-2)}.
\end{align*}
In the second line, for each fixed $i \in \{2,\ldots,m\}$,
we have used that for every choice of
$s_1,\ldots,s_m$ and $k_1,\ldots,k_{i-1},k_{i+1},\ldots,k_m$, there is at most
one choice of $k_i$ for which conditions $s_1k_1+\ldots+s_mk_m=0$,
$\max(k_1,\ldots,k_m)>K/2$, and $k_i \geq \max(k_2,\ldots,k_m)$ all hold, and
such a value $k_i$ must be at least $K/(2m)$. The same bound holds for each
summand $j=1,\ldots,m$. Then, applying $(K/2m)^{-2\beta}=(2m)^{2\beta}\rlower^2
<2m\rlower^2$ for $\beta<1/2$,
and taking the infimum over $\alpha \in \R$, this gives
\[\mathrm{II} \leq \inf_{\alpha \in \R} 2m^4 R^{2(m-2)}\rlower^2 
\sum_{k=1}^K k^{-2\beta}(\phi_k-\phi_k'+\alpha k)^2.\]
Thus for all $m \geq 3$,
\[\E[\langle \theta,g\theta \rangle^m-2\langle \theta,g\theta' \rangle^m
+\langle \theta',g\theta' \rangle^m]
\leq 2m^4 R^{2(m-2)}\rlower^2  \cdot \inf_{\alpha \in \R} \sum_{k=1}^K k^{-2\beta}
(\phi_k-\phi_k'+\alpha k)^2,\]
and the left side is 0 for $m=0,1,2$ because
$r_k(\theta)=r_k(\theta')$ for all $k$.

Now we apply this to (\ref{eq:chisqbound}) and sum over $m \geq 3$, using
\begin{align*}
\sum_{m=3}^\infty \frac{2m^4R^{2(m-2)}}{\sigma^{2m}m!}
\leq \max_{m=3}^\infty \frac{2m^2R^2}{(m-1)(m-2)\sigma^6}
\cdot \sum_{m=3}^\infty \frac{m \cdot R^{2(m-3)}}{\sigma^{2(m-3)}(m-3)!}
=\frac{9R^2}{\sigma^6} \cdot
\left(\frac{R^2}{\sigma^2}+3\right)e^{R^2/\sigma^2}
\leq \frac{27R^2}{\sigma^6}e^{2R^2/\sigma^2}
\end{align*}
This yields as desired
\[D_{\KL}(p_\theta\|p_{\theta'}) \leq \chi^2(p_\theta\|p_{\theta'})
\leq \frac{27R^2}{\sigma^6}e^{5R^2/2\sigma^2}
\rlower^2 \cdot \inf_{\alpha \in \R}
\sum_{k=1}^K k^{-2\beta}(\phi_k-\phi_k'+\alpha k)^2.\]
\end{proof}

\begin{proof}[Proof of Lemma \ref{lemma:minimaxlowerP}]
Consider the subset of points $\tau \in \{0,1\}^K$ such that $\tau_k=0$ for all
$k \leq K/2$. Fixing a value $\phi \in [0,\pi/3]$, we define corresponding to
each such $\tau \in \{0,1\}^K$ the parameter $\theta^\tau \in \cP_\beta^*$ where
$r_k(\theta)=k^{-\beta}$ and $\phi_k(\theta)=\tau_k\phi$ for each
$k=1,\ldots,K$. These parameter vectors $\theta^\tau$ are thus identified with
the vertices of a hypercube of dimension $m=\lfloor K/2 \rfloor$.

For any two such vectors $\theta^\tau$ and $\theta^{\tau'}$,
applying (\ref{eq:KLupperlownoise}), we have
analogously to (\ref{eq:DKLHamminglownoise}) that
\[D_{\KL}(p_{\theta^\tau} \| p_{\theta^{\tau'}})
\leq \frac{\|\theta^\tau-\theta^{\tau'}\|^2}{2\sigma^2}
=\frac{1}{2\sigma^2}\sum_{k>K/2} k^{-2\beta}
\big|e^{i\phi_k^\tau}-e^{i\phi_k^{\tau'}}\big|^2
\leq \frac{\phi^2}{K^{2\beta}\sigma^2} \cdot H(\tau,\tau'),\]
the last inequality using $(K/2)^{-2\beta}<2K^{-2\beta}$ when $\beta<1/2$.
Applying (\ref{eq:KLupperhighnoise-new}) with $\alpha=0$,
where $\rlower^2=K^{-2\beta}$ and $R^2=CK^{1-2\beta}$,
we have analogously to (\ref{eq:DKLHamminghighnoise}) that for constants $C,c>0$
depending only on $\beta$,
\[D_{\KL}(p_{\theta^{\tau}} \| p_{\theta^{\tau'}})
\leq \frac{\phi^2}{A} \cdot H(\tau,\tau'),
\qquad A:=\frac{c\sigma^6K^{4\beta}}{K^{1-2\beta}e^{CK^{1-2\beta}/2\sigma^2}}\]
Lower bounding both $\rlower^2$ and $r_kr_k'$ by $K^{-2\beta}$ in
Lemma \ref{lemma:lossbounds}, and applying (\ref{eq:finallossbound}), we have
\[L(\theta^{\tau}, \theta^{\tau'}) \geq c' K^{-2\beta} \phi^2 H(\tau, \tau')\]
for a universal constant $c'>0$. We can choose 
\[\phi=\min\left(\frac{1}{\sqrt{N}}\cdot
\max(\sqrt{K^{2\beta}\sigma^2},\sqrt{A}),\frac{\pi}{3}\right)\]
to ensure $1-D_{\TV}(P_\tau,P_{\tau'}) \geq 1-\sqrt{1/2}>0$ whenever
$H(\tau,\tau')=1$, as before. Finally, by using Lemma \ref{lemma:assouad}
with $m=\lfloor K/2 \rfloor$, we can conclude analogously to
(\ref{eq:assouadapplication}) that
\[\inf_{\hat{\theta}} \sup_{\theta^* \in \cP_\beta^*}
\E_{\theta^*}[L(\theta^*,\hat{\theta})] \geq c''K^{1-2\beta}\phi^2,\]
and applying the above choice of $\phi$ concludes the proof.
\end{proof}}

\paragraph{Acknowledgements}
The authors would like to thank Yihong Wu for a helpful discussion about
KL-divergence in mixture models.

\paragraph{Funding}
Z. Fan is supported in part by NSF DMS 1916198, DMS 2142476. H. H. Zhou is supported in part by NSF grants DMS 2112918, DMS 1918925, and NIH grant 1P50MH115716. 

\bibliographystyle{plainnat}
\bibliography{bibliography}

\end{document}